\DeclareMathAlphabet{\eusm}{U}{}{}{}  
\SetMathAlphabet\eusm{normal}{U}{eus}{m}{n}
\SetMathAlphabet\eusm{bold}{U}{eus}{b}{n}
\DeclareMathAlphabet{\eufrak}{U}{}{}{}  
\SetMathAlphabet\eufrak{normal}{U}{euf}{m}{n}
\SetMathAlphabet\eufrak{bold}{U}{euf}{b}{n}
\newtheorem{theorem}{Theorem}[section]
\newtheorem{proposition}[theorem]{Proposition}
\newtheorem{lemma}[theorem]{Lemma}
\newtheorem{corollary}[theorem]{Corollary}
\theoremstyle{definition}
\newtheorem{definition}[theorem]{Definition}
\theoremstyle{remark}
\newtheorem{remark}[theorem]{Remark}
\numberwithin{equation}{section}
\newcommand{\qg}{\mathbb{G}}
\newcommand{\qk}{\mathbb{K}}
\newcommand{\qx}{\mathbb{X}}
\newcommand{\pak}{\mathscr{O}(\qk)}
\newcommand{\pa}{\mathscr{O}_\qg(\qx)}
\newcommand{\pag}{\mathscr{O}(\qg)}
\newcommand{\pquo}{\mathscr{O}_\qg(\qk \backslash \qg)}
\newcommand{\pqphi}{\mathscr{O}_\qg(\Phi \backslash \qg)}
\newcommand{\conva}{\star_{_{bi}}}
\newcommand{\convnew}{\star_{_{\ell}}}
\begin{document}

\title[Invariant Markov semigroups]{Invariant Markov semigroups on quantum homogeneous spaces}

\author{Biswarup Das}
\address{Instytut Matematyczny, Uniwersytet Wroc\l awski, pl.Grunwaldzki 2/4, 50-384 Wroc\l aw, Poland}
\email{biswarup.das@math.uni.wroc.pl}

\author{Uwe Franz}
\address{Laboratoire de math\'ematiques de Besan\c{c}on, Universit\'e de Bourgogne Franche-Comt\'e, 16, route de Gray, F-25 030 Besan\c{c}on cedex, France}
\email{uwe.franz@univ-fcomte.fr}
\urladdr{http://lmb.univ-fcomte.fr/uwe-franz}

\author{Xumin Wang}
\address{Laboratoire de math\'ematiques de Besan\c{c}on, Universit\'e de Bourgogne Franche-Comt\'e, 16, route de Gray, F-25 030 Besan\c{c}on cedex, France}
\email{xumin.wang@univ-fcomte.fr}

\begin{abstract}
Invariance properties of linear functionals and linear maps on algebras of functions on quantum homogeneous spaces are studied, in particular for the special case of expected coideal *-subalgebras. Several one-to-one correspondences between such invariant functionals are established. Adding a positivity condition, this yields one-to-one correspondences of invariant quantum Markov semigroups acting on expected coideal *-subalgebras and certain convolution semigroups of states on the underlying compact quantum group. This gives an approach to classifying invariant quantum Markov semigroups on these quantum homogeneous spaces. The generators of these semigroups are viewed as Laplace operators on these spaces.

The classical sphere $S^{N-1}$, the free sphere $S^{N-1}_+$, and the half-liberated sphere $S^{N-1}_*$ are considered as examples and the generators of Markov semigroups on these spheres a classified.  We compute spectral dimensions for the three families of spheres based on the asymptotic behaviour of the eigenvalues of their Laplace operator.
\end{abstract}

\keywords{Compact quantum group, quantum homogeneous space, quantum Markov semigroup, free sphere, Laplace operator}
\subjclass[2000]{46L53 
17B37 
17B81 
46L65 
60B15 
60G51 
81R50 
}
\maketitle


\section*{Introduction}

Symmetry plays an essential role in many places in mathematics and in the natural sciences. Many systems are naturally invariant under the action of some group, like time or space translations, rotations, or reflections. It is therefore of great interest to characterize and classify all invariant equations for a given group action. See for example the recent books by Ming Liao \cite{liao18} and Vladimir Dobrev \cite{dobrev1}, that study invariant Markov processes and invariant differential operators, respectively. Liao's book is motivated by probability theory, whereas Dobrev's book deals with applications to physics.

Quantum groups \cite{woronowicz80,woronowicz87} provide a generalisation of groups and can be considered as a mathematical model for quantum symmetries. Dobrev \cite{dobrev2} has also studied invariant differential operators for quantum groups. The quantum groups considered in \cite{dobrev2} are q-deformations of semi-simple Lie groups.

But there exist also interesting quantum groups that are not deformations, but rather ``liberations'' of classical groups, see, e.g., \cite{vandaele+wang,wang,banica+speicher}. These ``liberated'' quantum groups furthermore have actions on interesting ``liberated'' noncommutative spaces, see, e.g., \cite{banica+goswami10}. This provides an interesting class of examples for noncommutative geometry.

Banica and Goswami investigated how to define a Dirac operator on two of these noncommutative spaces: the free sphere $S^{N-1}_+$ and the half-liberated sphere $S^{N-1}_\times$, cf.\ \cite[Theorem 6.4]{banica+goswami10}. The action of the free or the half-liberated orthogonal group yields a natural choice for the eigenspaces, but it does not suggest how to choose the eigenvalues.

In this paper we introduce an approach for classifying invariant Markov semigroups on noncommutative spaces equipped with an action of a compact quantum group. The generators of these semigroups can be considered as natural candidates for Laplace operators. Dirac operators could be obtained via Cipriani and Sauvageot's construction \cite{cs} of a derivation from a Dirichlet form, see also \cite{cipriani+franz+kula14}. Our method generalizes the case of an action of a classical compact group on a homogeneous space presented in \cite[Chapter 3]{liao04},\cite{liao15}, \cite[Chapter 1]{liao18}. Since here we are dealing only with compact quantum groups and actions on compact quantum spaces, everything can be done on the *-algebraic level. As concrete examples we study the classical sphere $S^{N-1}$, the half-liberated sphere $S^{N-1}_*$, and the free sphere $S^{N-1}_+$.

Our approach adds a positivity condition to the invariance condition in \cite{banica+goswami10}, and leads to the formula
\[
\lambda_k= -b P_k'(1) + \int_{-1}^{1} \frac{P_k(x)-P_k(1)}{x-1}{\rm d}\nu(x),\qquad k=0,1,\ldots,
\]
for the eigenvalues of the Laplace operator on the three spheres $S^{N-1}$, $S^{N-1}_*$, $S^{N-1}_+$, see Theorem \ref{eigenvalue for the markov semigroups on spheres}. Here $b$ is a positive real number, $\nu$ is a finite positive measure on the interval $[-1,1]$, and $(P_k)_{k=0}^\infty$ is a family of orthogonal polynomials that depends on which sphere we are considering.

We define spectral dimensions the three spheres by comparing the asymptotic behaviour of the eigenvalues of their Laplace operators to the Weyl formula. More precisely, the spectral dimension is defined as the abscissa of convergence of a certain zeta function defined in terms of the eigenvalues of the Laplace operator. We find, as expected, $d_L=N-1$ for the classical sphere $S^{N-1}$. For the half-liberated sphere $S^{N-1}_*$, we get $d_L=2(N-1)$. For the free sphere $S^{N-1}_+$, we obtain
\[
d_L=\left\{\begin{array}{cl}
2 & \mbox{ if }N=2,\\
+\infty & \mbox{ if }N\ge 3.
\end{array}\right.
\]
It should be noted that, for $N=2$, the half-liberated sphere $S^1_*$ and the free sphere $S^1_+$ are isomorphic. A more detailed study of the zeta function could probably be used to introduce further interesting ``invariants'' for these noncommutative manifolds.

We now provide a brief description for the content of each section.

In Section \ref{sec-prelim}, we recall some definitions and facts about quantum group actions, quantum quotient spaces, idempotent states, and quantum Markov semigroups

Section \ref{sec-actions} gives an overview of the actions and the notions of invariance that we will consider. Proposition \ref{prop-central} shows that convolution by a central functional defines an invariant operator.

In Section \ref{sec-invfunct}, we state and prove one-to-one correspondences between various invariant linear functionals and maps on a quantum homogeneous space and on the associated compact quantum group. In the following section we use these results to characterize invariant Markov semigroups on expected right coidalgebras, cf.\ Section \ref{sec-markovsg}.

Bi-invariance leads to examples of so-called quantum hypergroups, cf.\  \cite{chap+va99}, and in Section \ref{sec-qhyper} we show that invariant Markov semigroups on expected right coidalgebras are in one-to-one correspondence with convolution semigroups of states on a quantum hypergroup that is naturally associated to the coidalgebra.

Section \ref{sec-summary} provides a short summary of our main one-to-one correspondences.

The general theory developped in Sections \ref{sec-actions}, \ref{sec-invfunct}, \ref{sec-markovsg}, and \ref{sec-qhyper}, allows us to classify the generators of invariant Markov semigroups on quantum homogeneous spaces that are associated to an idempotent state $\Phi$ on the underlying compact quantum group $\mathbb{G}$, in particular if we have a good understanding of the quantum hypergroup $\Phi\backslash \mathbb{G}/\Phi$. This is slightly more general than in the classical case, where all homogeneous spaces are of quotient type, but follows similar ideas.

In Section \ref{sec-spheres}, we apply our approach to classify invariant Markov semigroups on the classical sphere $S^{N-1}$, on the half-liberated sphere $S_*^{N-1}$, and on the free sphere $S_+^{N-1}$. In Theorem \ref{eigenvalue for the markov semigroups on spheres} we give the general form of the eigenvalues of the generators of these semigroups. In the rest of Section \ref{sec-spheres} we study in more detail the orthogonal polynomials that occur in this formula. We also show in Proposition \ref{prop-not-quotient} that the  the half-liberated sphere $S_*^{N-1}$ and the free sphere $S_+^{N-1}$ are not of quotient type. In Subsection \ref{subsec-specdim} we define a zeta function in terms of the eigenvalues of generators we classified before, determine its abscissa of convergence, and compute from this the spectral dimensions of the spheres.

We think it would be interesting to extend this study to other expected quantum homogeneous spaces, e.g., those of Banica and Speicher ``easy'' compact quantum groups \cite{banica+speicher}, where many combinatorial techniques are available for explicit calculations. And it would of course be very useful to develop methods that also apply for not necessarily expected quantum homogeneous spaces.

\subsection*{Conventions:}

We use $\otimes$ both for the tensor product of vector spaces and *-algebras, and for the minimal tensor product of C${}^*$-algebras, the meaning will be clear from the context.

\section{Preliminaries}\label{sec-prelim}

\subsection{Compact quantum groups}

For an introduction to the theory of compact quantum groups, see \cite{woronowicz98,notes98,timmermann}.

\subsection{Actions of compact quantum groups}
\label{Subsection: actions of compact quantum groups}

We adopt the convention that for a compact quantum group $\qg$, $C^u(\qg)$ denotes the unital C*-algebra of the universal version of $\qg$, whereas $C(\qg)$ denotes that of the reduced version. We refer the reader to \cite{decommer16} for a recent survey on actions of compact quantum groups.

\begin{definition}
A \emph{right action} $\mathbb{X}\stackrel{\alpha}{\curvearrowleft}\qg$ of a compact quantum group $\mathbb{G}$ on a compact quantum space $X$ (also called a \emph{right coaction} of $C(\qg)$ on the unital C*-algebra $C(\mathbb{X})$) is a unital *-homomorphism
\[
\alpha: C(\mathbb{X}) \rightarrow C(\mathbb{X})\otimes C(\mathbb{G})
\]
such that
\begin{itemize}
\item
the \emph{coaction property} holds: 
\[
(\alpha\otimes \mathrm{id}_{C(\mathbb{G})})\circ \alpha = (\mathrm{id}_{C(\mathbb{X})}\otimes \Delta)\circ \alpha,
\]
and
\item
the density condition (also called \emph{Podle\'{s} condition})
\[
\overline{\alpha(C(\mathbb{X}))(\mathbf{1}_{C(\mathbb{X})}\otimes C(\mathbb{G}))} = C(\mathbb{X})\otimes C(\mathbb{G})
\]
holds.
\end{itemize}
\end{definition}
Associated with every right action of a compact quantum group $\qg$ on a compact quantum space $\qx$ is the Podle\'s subalgebra or the algebraic core of $C(\qx)$, which we denote by $\pa$. We refer to \cite[pp. 25 -- 27]{decommer16} for a detailed description of the  properties of $\pa$. We collect a few facts for $\pa$:
\begin{itemize}
\item
Considering $\qg\curvearrowleft\qg$ by the coproduct, the corresponding Podle\'s subalgebra (or Peter-Weyl algebra) $\pag$ is precisely the unique, dense Hopf *-algebra $\pag$ of $\qg$, which is also commonly denoted by ${\rm Pol}(\qg)$. It is spanned by the coefficients of the finite-dimensional corepresentations of $C(\qg)$.
\item 
$\pa\subset C(\qx)$ is a dense, unital * subalgebra of $C(\qx)$ \cite[Theorem 3.16]{decommer16}.  
\item
The right coaction $\alpha:C(\qx)\longrightarrow C(\qx)\otimes C(\qg)$ restricts to a right Hopf *-coaction $\pag$ on the unital * algebra $\pa$:
\[
\alpha|_{_{\pa}}:\pa\longrightarrow \pa\otimes\pag. 
\]
\end{itemize}

An action is called \emph{embeddable}, if $\mathscr{O}_\qg(\mathbb{X})$ is isomorphic to a *-subalgebra of $\pag$, such that the action corresponds to the restriction of the coproduct, i.e., if there exists an injective unital *-homomorphism $\vartheta:\pa \to \pag$ such that $(\vartheta\otimes {\rm id})\circ \alpha|_{\pa} = \Delta\circ \vartheta$.  Such actions can be given as unital *-subalgebras which are also coideals.

\begin{definition}
A \emph{left (right, resp.) coidalgebra} of $\pag$ is  unital *-subalgebra $\mathcal{C}$ of $\pag$ such that
\[
\Delta(\mathcal{C}) \subseteq \mathcal{C}\otimes \pag, 
\qquad
(\Delta(\mathcal{C}) \subseteq \pag\otimes \mathcal{C} \mbox{ resp.}).
\]
\end{definition}

\subsection{Quantum quotient space}\label{Subsubsection: Quantum quotient space} 
Let $\qk$ be a compact quantum subgroup of $\qg$, which we will take to mean: 
\begin{itemize}
\item
$\qk$ is a compact quantum group.
\item
There exists a surjective, unital *-homomorphism $\theta: C^u(\qg)\longrightarrow C^u(\qk)$ such that 
\[
(\theta\otimes\theta)\circ\Delta_u=\Delta_{u,\qk}\circ\theta 
\]
where $\Delta_u$ is the coproduct of $C^u(\qg)$ and $\Delta_{u,\qk}$ is the coproduct of $C^u(\qk)$. 
\end{itemize}

Then the C*-algebra of the left quantum quotient of $\qg$ by $\qk$, denoted $C^u(\qk\backslash\qg)$ is defined as
\[
C^u(\qk\backslash\qg):=\{x\in C^u(\qg):~(\theta\otimes \mathrm{id})(\Delta_u(x))=1_{C^u(\qk)}\otimes x\}.
\] 
$C^u(\qk\backslash\qg)$ consists of the elements of $C^u(\qg)$ that are invariant under the left action $(\theta\otimes \mathrm{id})\circ\Delta_u:C^u(\qg) \to C^u(\qk)\otimes C^u(\qg)$ of $\qk$ on $\qg$ induced by $\theta$.

We collect a few facts about the subalgebra $C^u(\qk\backslash\qg)$ below, see also \cite{decommer16, podles95}:
\begin{itemize}
\item 
$\Delta_u(C^u(\qk\backslash\qg))\subset C^u(\qk\backslash\qg)\otimes C^u(\qg)$. Letting $\Lambda:C^u(\qg)\longrightarrow C(\qg)$ be the reducing morphism,
\[
\alpha:=( \mathrm{id}\otimes \Lambda)\circ\Delta|_{_{C^u(\qk\backslash\qg)}}:C^u(\qk\backslash\qg)\longrightarrow C^u(\qk\backslash\qg)\otimes C(\qg)
\] 
is a right action of $\qg$ on $C^u(\qk\backslash\qg)$.
\item
$\mathscr{O}_\qg(\qk\backslash\qg)\subset \pag$ and it can be easily seen that $\alpha|_{_{\mathscr{O}_\qg(\qk\backslash\qg)}}=\Delta_u|_{_{\mathscr{O}_\qg(\qk\backslash\qg)}}$. 
Thus letting $W\in M\big(C(\qg)\otimes C_0(\widehat{\qg})\big)$  be the left multiplicative unitary, it follows  that 
\[
\alpha(x)=W^*(1_\qg\otimes x)W\quad(x\in\mathscr{O}_\qg(\qk\backslash\qg)). 
\]
Denoting the norm closure of $\mathscr{O}_\qg(\qk\backslash\qg)$ in $C(\qg)$ by $C(\qk\backslash\qg)$, it follows from the above equation that 
\[
\Delta|_{_{C(\qk/\qg)}}:C(\qk/\qg)\longrightarrow C(\qk\backslash\qg)\otimes C(\qg) 
\]
is a right action of $\qg$ on $C(\qk\backslash\qg)$, which restricted to $\mathscr{O}_\qg(\qk\backslash\qg)$ is the right Hopf *-algebraic coaction $\alpha|_{_{\mathscr{O}_\qg(\qk\backslash\qg)}}:\mathscr{O}_\qg(\qk\backslash\qg)\longrightarrow \mathscr{O}_\qg(\qk\backslash\qg)\otimes \pag$.
\end{itemize}

\subsection{Idempotent states}

In this paper we will be interested in actions coming from idempotent states, as in the following theorem.

\begin{theorem}\label{thm-id}
(\cite{franz+skalski09} and \cite{franz+lee+skalski16})
Let $\mathbb{G}$ be a compact quantum group.
There is a one-to-one correspondence between the following objects:
\begin{enumerate}
\item idempotent states $\Phi$ on $\pag$;
\item idempotent states $\tilde{\Phi}$ on $C^u(\mathbb{G})$;
\item expected right (equivalently, left) coidalgebras $\mathcal{A}$ in $\pag$ (denote by $\mathcal{E}:\pag\to\mathcal{A}$ the conditional expectation);
\item expected right (equivalently, left) coidalgebras $\mathsf{A}$ in $C(\mathbb{G})$ (denote by $E:C(\mathbb{G})\to\mathsf{A}$ the conditional expectation).
\end{enumerate}
\end{theorem}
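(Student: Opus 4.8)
The plan is to treat the equivalence (1)$\leftrightarrow$(3) as the heart of the matter and to obtain the remaining correspondences by extension/restriction arguments. Throughout I would write the convolution of two functionals as $\Phi\star\Psi=(\Phi\otimes\Psi)\circ\Delta$, so that ``idempotent'' means $\Phi\star\Phi=\Phi$, and use Sweedler notation $\Delta(a)=a_{(1)}\otimes a_{(2)}$.

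First I would construct the map (1)$\to$(3). Given an idempotent state $\Phi$ on $\pag$, set $\mathcal{E}_\Phi:=(\mathrm{id}\otimes\Phi)\circ\Delta$. Since $\Phi$ is a state, $\mathrm{id}\otimes\Phi$ is unital and completely positive, and precomposing with the $*$-homomorphism $\Delta$ keeps $\mathcal{E}_\Phi$ unital and completely positive. A one-line Sweedler computation using coassociativity together with $\Phi\star\Phi=\Phi$ gives $\mathcal{E}_\Phi\circ\mathcal{E}_\Phi=\mathcal{E}_\Phi$, so $\mathcal{E}_\Phi$ is an idempotent u.c.p.\ map; its image $\mathcal{A}:=\mathcal{E}_\Phi(\pag)=\{a:\mathcal{E}_\Phi(a)=a\}$ is therefore the range of a conditional expectation. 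Applying $\Delta$ to $\mathcal{E}_\Phi(a)$ and reusing coassociativity yields $\Delta(\mathcal{E}_\Phi(a))=a_{(1)}\otimes\mathcal{E}_\Phi(a_{(2)})\in\pag\otimes\mathcal{A}$, which is precisely the right-coidalgebra condition $\Delta(\mathcal{A})\subseteq\pag\otimes\mathcal{A}$; the bimodule property $\mathcal{E}_\Phi(a x b)=a\,\mathcal{E}_\Phi(x)\,b$ for $a,b\in\mathcal{A}$ then follows from complete positivity and unitality (the elements of $\mathcal{A}$ lie in the multiplicative domain). Thus $\mathcal{A}$ is an expected right coidalgebra with $\mathcal{E}=\mathcal{E}_\Phi$.

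For the inverse (3)$\to$(1), given an expected right coidalgebra $\mathcal{A}$ with conditional expectation $\mathcal{E}$, I would set $\Phi:=\varepsilon\circ\mathcal{E}$, with $\varepsilon$ the counit. The crucial point, and where I expect the main work to lie, is that $\mathcal{E}$ is forced to be equivariant, $\Delta\circ\mathcal{E}=(\mathrm{id}\otimes\mathcal{E})\circ\Delta$: one shows that the Haar-state-preserving conditional expectation onto a coidalgebra is unique, and that this unique expectation has the convolution form above. Granting equivariance, $(\mathrm{id}\otimes\Phi)\circ\Delta=(\mathrm{id}\otimes\varepsilon)\circ(\mathrm{id}\otimes\mathcal{E})\circ\Delta=(\mathrm{id}\otimes\varepsilon)\circ\Delta\circ\mathcal{E}=\mathcal{E}$ by the counit axiom, so $\mathcal{E}=\mathcal{E}_\Phi$; and $\varepsilon\circ\mathcal{E}_\Phi=(\varepsilon\otimes\Phi)\circ\Delta=\Phi$, again by the counit axiom. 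Hence the two constructions are mutually inverse, and idempotency of $\Phi$ transfers across this bijection from idempotency of $\mathcal{E}$.

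Finally I would package the outer equivalences. For (1)$\leftrightarrow$(2), states on the dense Hopf $*$-algebra $\pag$ extend uniquely to states on the universal C$^*$-algebra $C^u(\mathbb{G})$ and restrict back; since $\Delta_u$ restricts to $\Delta$ on $\pag$, the convolution product, and hence the idempotency condition, is preserved in both directions. For (3)$\leftrightarrow$(4), I would pass to norm closures: $\mathcal{E}_\Phi$ is Haar-preserving because $h\star\Phi=h$ by invariance of the Haar state, hence extends by continuity to a conditional expectation $E$ on $C(\mathbb{G})$ with range $\mathsf{A}=\overline{\mathcal{A}}$, while conversely $\mathcal{A}=\mathsf{A}\cap\pag$ recovers the algebraic object by density of the Peter--Weyl algebra. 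The ``left/right'' equivalence follows from the antipode-invariance $\Phi=\Phi\circ S$ of idempotent states, which interchanges the left convolution operator $(\Phi\otimes\mathrm{id})\circ\Delta$ with the right one and matches their images. The main obstacle remains the uniqueness and equivariance of the conditional expectation used in the (3)$\to$(1) step, together with checking that the positivity and closure operations remain mutually compatible when translating between the reduced and universal levels.
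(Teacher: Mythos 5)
The paper itself does not prove Theorem \ref{thm-id}; it imports it from \cite{franz+skalski09} and \cite{franz+lee+skalski16} and only records the correspondence maps, which are exactly the ones you construct ($\mathcal{E}_\Phi=(\mathrm{id}\otimes\Phi)\circ\Delta$, $\mathcal{A}=\mathcal{E}_\Phi(\pag)$, $\Phi=\varepsilon\circ\mathcal{E}$, norm closure, unique extension of states from $\pag$ to $C^u(\qg)$). So your architecture is the intended one, but it contains one false inference and leaves the central fact unproved. The false inference is in (1)$\to$(3): you claim that the bimodule property, and hence the fact that $\mathcal{A}=\mathcal{E}_\Phi(\pag)$ is a $*$-subalgebra, ``follows from complete positivity and unitality (the elements of $\mathcal{A}$ lie in the multiplicative domain).'' That is not a valid principle: an idempotent u.c.p.\ map need not contain its fixed points in its multiplicative domain, and its range need not be a subalgebra. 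For example, on $\mathbb{C}^3=C(\{1,2,3\})$ the unital positive (hence completely positive, by commutativity) idempotent $P$ defined by $(Pf)(1)=f(1)$, $(Pf)(2)=f(2)$, $(Pf)(3)=\tfrac{1}{2}\bigl(f(1)+f(2)\bigr)$ has fixed-point space containing $f=(1,-1,0)$ but not $f^2=(1,1,0)$, so its range is not an algebra. What rescues the argument---and what the cited proofs actually use---is that $\mathcal{E}_\Phi$ preserves the Haar state, $h\circ\mathcal{E}_\Phi=h\star\Phi=h$, and that $h$ is faithful on $\pag$: for a fixed point $a$ the Schwarz inequality gives $\mathcal{E}_\Phi(a^*a)\ge \mathcal{E}_\Phi(a)^*\mathcal{E}_\Phi(a)=a^*a$, and applying the faithful state $h$ to the positive difference forces $\mathcal{E}_\Phi(a^*a)=a^*a$; only then does Choi's multiplicative-domain theorem apply. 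You do invoke $h\star\Phi=h$, but only in the (3)$\leftrightarrow$(4) step, where it is far less critical than here.

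The unproved fact is the (3)$\to$(1) direction, which you yourself flag as ``where the main work lies'': that a Haar-state-preserving conditional expectation onto a right coidalgebra is unique and automatically equivariant, $\Delta\circ\mathcal{E}=(\mathrm{id}\otimes\mathcal{E})\circ\Delta$. Everything else in your sketch (idempotency of $\varepsilon\circ\mathcal{E}$, the identity $\mathcal{E}=(\mathrm{id}\otimes\varepsilon\circ\mathcal{E})\circ\Delta$, mutual inverseness of the two constructions) is \emph{derived} from this, so as written the proposal postulates exactly the content of the theorem. The argument in \cite{franz+lee+skalski16} first identifies $\mathcal{E}$ with the $h$-orthogonal projection onto $\mathcal{A}$ in $L^2(\qg,h)$---for $a\in\mathcal{A}$ one computes $\langle \mathcal{E}(x),a\rangle_h=h\bigl(\mathcal{E}(x^*)a\bigr)=h\bigl(\mathcal{E}(x^*a)\bigr)=h(x^*a)=\langle x,a\rangle_h$, using the bimodule property and $h$-preservation---which gives uniqueness, and then proves equivariance of that projection via the Peter--Weyl decomposition and the coideal condition; none of this appears in your note. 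A smaller gap of the same nature sits in (4)$\to$(3): to restrict $E$ and compose with $\varepsilon$ you need $E(\pag)\subseteq\pag$, which again requires this $L^2$/Peter--Weyl analysis rather than density alone. In short: the correspondence maps are correct, but the two facts that make them mutually inverse bijections are missing, and one of them is currently supported by an argument that is false in general.
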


The one-to-one correspondence is given by the following relations: $\tilde{\Phi}$ is a continuous extension of $\Phi$, and $\mathcal{A} = ( \mathrm{id} \otimes\Phi )\circ\Delta(\pag)$. The C$^*$-algebra $\mathsf{A}$ is the norm closure of $\mathcal{A}$ in $C(\mathbb{G})$. On $\pag$ we can recover the idempotent state as $\Phi=\varepsilon\circ \mathcal{E}$. Moreover, each of the maps $E$ and $\mathcal{E}$ preserves the Haar state. 

We wil denote by $\mathcal{I}(\mathbb{G})$ the set of idempotent states on $C^u(\mathbb{G})$. In view of the one-to-one correspondence in Theorem \ref{thm-id}, we will denote by $\mathcal{A}_\Phi$ and $\mathsf{A}_\Phi$ the right coidalgebras associated to $\Phi\in \mathcal{I}(\mathbb{G})$, and from now on we will denote by $\mathbb{E}_r^\Phi$ the conditional expecations both onto $\mathcal{A}_\Phi$ and onto $\mathsf{A}_\Phi$ in $\pag$ or $C(\mathbb{G})$, respectively. On $\pag$ this conditional expectation can be defined by the formula $\mathbb{B}_r^\Phi=(\mathrm{id}\otimes \Phi)\circ\Delta$.
The correspondence in Theorem \ref{thm-id} preserves the natural order, i.e., we have
\[
\Phi_1\star \Phi_2 = \Phi_1 \quad \Leftrightarrow \quad \mathcal{A}_{\Phi_1}\subseteq \mathcal{A}_{\Phi_2} 
\quad \Leftrightarrow \quad \mathsf{A}_{\Phi_1}\subseteq \mathsf{A}_{\Phi_2}, 
\]
since $\mathbb{E}_r^{\Phi_1}\circ \mathbb{E}_r^{\Phi_2} = \big(\mathrm{id}\otimes (\Phi_2\star \Phi_2)\big)\circ\Delta = \mathbb{E}|_r^{\Phi_1\star\Phi_2}$.

Theorem \ref{thm-id} has recently been generalized to locally compact quantum groups, see \cite{salmi+skalski16,kasprzak+khosravi16}.

Recalling the definition of quantum quotient spaces as given in the previous subsection, it is worthwhile to note the following:
\begin{itemize}
\item
Let $h_\qk$ be the Haar state on $C^u(\qk)$. Then $\Phi_\mathbb{K}=h_\qk\circ\theta\in\mathcal{I}(\qg)$, and it follows that $C^u(\qk\backslash\qg)$ is the right coidalgebra of $C^u(\qg)$ associated with $\Phi_\mathbb{K}=h_\qk\circ\theta$. 
\item
Letting $E_{\qk\backslash\qg}:=(h_\qk\circ\theta\otimes \mathrm{id})\circ\Delta_u$ and $E_{\qg/\qk}:=( \mathrm{id}\otimes h_\qk\circ\theta)\circ\Delta_u$ (both are conditional expectations), the unital *-subalgebra 
\mbox{$E_{\qk/\qg}(\pag)\cap E_{\qg/\qk}(\pag)$} is a double coset hyper bi-algebra, as considered in \cite{franz+schurmann00}.
\end{itemize}

For our set-up, we will be mainly concerned with expected right coidalgebras of $\qg$ (we remark that analogous results hold for left coidalgebras). As pointed out above quantum quotient spaces are special cases of these. We may note that expected right coidalgebras of $\qg$ are examples of quantum homogeneous spaces, i.e. quantum spaces on which the corresponding right action of $\qg$ is ergodic \cite{podles95}.

\subsection{Convolution semigroups of states and quantum Markov semigroups on compact quantum groups}

We recall a few handy definitions and facts from \cite{cipriani+franz+kula14}.

\begin{definition}\label{def-conv-sg-G}
A \emph{convolution semigroup} on a compact quantum group $\qg$ is a family $(\lambda_t)_{t\ge 0}:\pag\to\mathbb{C}$ such that
\begin{enumerate}
\item
$\lim_{t\searrow 0} \lambda_t(a)=\lambda_0(a)$ for all $a\in\pag$
(weak continuity);
\item
$\lambda_s \star \lambda_t = \lambda_{s+t}$ for all $s,t\ge 0$ (semigroup property).
\end{enumerate}
We call $(\lambda_t)_{t\ge 0}$ a \emph{convolution semigroup of states}, if the functionals $\lambda_t$ are furthermore normalized, i.e., $\lambda_t(1)=1$, and positive, i.e., $\lambda_t(a^*a)\ge 0$ for all $a\in\pag$ and all $t\ge0$.
\end{definition}

The semigroup property implies that $\lambda_0$ is idempotent, but note that unlike \cite{cipriani+franz+kula14} we do not require $\lambda_0=\varepsilon$. The convolution semigroups on $\qg$ that we will obtain from Markov semigroups on $\qg$-spaces will in general not start with the counit.

\begin{definition}\label{def-markov-sg}
A linear operator $T:\mathsf{A}\to\mathsf{A}$ on a unital C$^*$-algebra $\mathsf{A}$ is called a \emph{quantum Markov operator}, if it is completely positive and preserves the unit of $\mathsf{A}$.

A \emph{quantum Markov semigroup} on $\mathsf{A}$ is a family $(T_t)_{t\ge0}$ of Markov operators satisfying
\begin{enumerate}
\item
$\lim_{t\searrow 0} T_t(a) = T_0(a)$ in norm for all $a\in\mathsf{A}$ (pointwise norm-continuity);
\item
$T_s\circ T_t=T_{s+t}$ for all $s,t\ge 0$ (semigroup property).
\end{enumerate}
A linear operator $T:\mathcal{A}\to\mathcal{A}$ (or a family of linear operators $(T_t:\mathcal{A}\to\mathcal{A})_{t\ge0}$, resp.) on a unital *-algebra $\mathcal{A}$ is called a quantum Markov operator (semigroup, resp.), if it is the restriction of a quantum Markov operator (semigroup, resp.) on a C$^*$-algebra $\mathsf{A}$ containing $\mathcal{A}$ that preserves $\mathcal{A}$.
\end{definition}

In \cite[Theorem 3.2]{cipriani+franz+kula14} it was shown that for a convolution semigroup of states $(\lambda_t)_{t\ge0}$ with $\lambda_0=\varepsilon$ on a compact quantum group there always exists a unique quantum Markov semigoup $(T_t)_{t\ge0}$ (with $T_0={\rm id}$) on $C(\qg)$ that acts on elements $a\in\pag$ of the Hopf *-algebra as
\[
T_t(a) = ({\rm id}\otimes \lambda_t)\circ\Delta(a).
\]
Quantum Markov semigroups coming in this way from convolution semigroups of states are characterized by the invariance property $\Delta\circ T_t = ({\rm id}\otimes T_t)\circ\Delta)$, cf.\ \cite[Theorem 3.4]{cipriani+franz+kula14}.

\section{Actions and invariances} \label{sec-actions}

Let us start in the algebraic setting. A functional $\phi\in \pag'$ can act in three ways on another functional $f\in\pag'$:
\begin{eqnarray*}
L_\phi f &=& \phi\star f, \\ 
R_\phi f &=& f\star \phi, \\ 
\mathrm{Ad}_\phi f &:& \mathscr{O}(\mathbb{G})\ni a \mapsto \phi\big(a_{(1)}S(a_{(3)})\big) f(a_{(2)})\in\mathbb{C},
\end{eqnarray*}
and by duality it can also act in three ways on an element $a\in\pag$:
\begin{eqnarray*}
L^*_\phi a &=& \phi(a_{(1)}) a_{(2)}, \\
R^*_\phi a &=& \phi(a_{(2)}) a_{(1)}, \\
\mathrm{Ad}^*_\phi a &=& \phi\big(a_{(1)}S(a_{(3)})\big) a_{(2)}.
\end{eqnarray*}
It is straightforward to check that we have
\[
L_{\phi_1}\circ L_{\phi_2} = L_{\phi_1\star\phi_2}, \qquad R_{\phi_1}\circ R_{\phi_2} = R_{\phi_2\star \phi_1}
\]
and
\[
L^*_{\phi_1}\circ L^*_{\phi_2} = L^*_{\phi_2\star\phi_1}, \qquad R^*_{\phi_1}\circ R^*_{\phi_2} = R^*_{\phi_1\star \phi_2}
\]
for $\phi_1,\phi_2\in\pag'$. Furthermore,
\[
\mathrm{Ad}^*_{\phi_1}\big(\mathrm{Ad}^*_{\phi_2}(a)\big) = \phi_2(a_{(1)}) \phi_1(a_{(2)}) \phi_1\big(S(a_{(4)})\big) \phi_2\big(S(a_{(5)})\big) a_{(3)} = \mathrm{Ad}^*_{\phi_1\star\phi_2}(a)
\]
for $\phi_1,\phi_2\in\pag'$, $a\in\pag$, and
\[
\mathrm{Ad}_{\phi_1}\circ\mathrm{Ad}_{\phi_2} = \mathrm{Ad}_{\phi_1\star\phi_2}.
\]

If $\phi\in(\pag'$ is positive, then it extends to a unique positive functional on $C^u(\mathbb{G})$, cf.\ \cite[Theorem 3.3]{bedos+murphy+tuset01}. In this case its actions $L^*_\phi$ and $R^*_\phi$ on $\pag$ extend continuously to unique completely positive maps on $C(\mathbb{G})$ and $C^u(\mathbb{G})$, see, e.g., \cite[Lemma 3.4]{brannan12}. $L^*_\phi$ and $R^*_\phi$ are furthermore unital iff $\phi$ is a state, i.e., if $\phi(\mathbf{1})=1$.

\begin{definition}
For a subset $M\subseteq \pag'$ we define the spaces of \emph{left $M$-invariant}, \emph{right $M$-invariant}, and \emph{adjoint $M$-invariant} functionals and  polynomial functions as
\begin{eqnarray*}
\big(\pag'\big)^{L(M)} &=& \{f\in\pag'; \forall \phi\in M, L_\phi(f)=\phi(\mathbf{1}) f\}, \\
\big(\pag'\big)^{R(M)} &=& \{f\in\pag';\forall \phi\in M, R_\phi(f)=\phi(\mathbf{1}) f\}, \\
\big(\pag'\big)^{\mathrm{Ad}(M)} &=& \{f\in\pag';\forall \phi\in M, \mathrm{Ad}_\phi(f)=\phi(\mathbf{1}) f\}, \\
\pag^{L^*(M)} &=& \{a\in\pag;\forall \phi\in M, L^*_\phi(a)=\phi(\mathbf{1})a\}, \\
\pag^{R^*(M)} &=& \{a\in\pag;\forall \phi\in M, R^*_\phi(a)=\phi(\mathbf{1})a\}, \\
\pag^{\mathrm{Ad}^*(M)} &=& \{ a\in\pag;\forall \phi\in M, \mathrm{Ad}^*_\phi(a)=\phi(\mathbf{1})a\}.
\end{eqnarray*}
The \emph{conjugate $M$-invariant} functionals and polynomial functions are
\begin{eqnarray*}
\big(\pag'\big)^{\mathrm{Conj}(M)} &=& \{f\in\pag';\forall \phi\in M, L_\phi(f)=R_\phi(f)\}, \\
\pag^{\mathrm{Conj}^*(M)} &=& \{a\in\pag;\forall \phi\in M, L^*_\phi(a)=R^*_\phi(a)\}.
\end{eqnarray*}
For $M=\pag'$ they are also called \emph{central} functionals and polynomial functions.
\end{definition}

We also define a notion of invariance for functionals and linear operators on quantum homogeneous spaces.

\begin{definition}\label{Definition: G invariant maps}
Let $\alpha:\mathscr{O}_\qg(\mathbb{X})\to\mathscr{O}_\qg(\mathbb{X})\otimes\pag$ be a Hopf *-algebraic right action of a compact quantum group $\qg$. We say that a linear map $T:\mathscr{O}_\qg(\mathbb{X}) \to \mathscr{O}_\qg(\mathbb{X})$ is \emph{$\mathbb{G}$-invariant}, if
\[
\alpha\circ T = (T\otimes \mathrm{id})\circ \alpha.
\]
\end{definition}

Let give us a first general construction of $\mathbb{G}$-invariant operators and Markov semigroups on a homogenous space.

\begin{proposition}\label{prop-central}
Let $\alpha:\mathscr{O}_\qg(\mathbb{X})\to\mathscr{O}_\qg(X)\otimes \pag$ be a right action of a compact quantum group $\mathbb{G}$.

If $\phi:\pag\to\mathbb{C}$ is a central functional, then $T_\phi=(\mathrm{id}\otimes \phi)\circ \alpha:\mathscr{O}_\qg(\mathbb{X})\to\mathscr{O}_\qg(\mathbb{X})$ is $\mathbb{G}$-invariant.

If $(\varphi_t)_{t\ge 0}$ is a central convolution semigroup of states on $\pag$ with $\varphi_0=\varepsilon$, then $T_t = (\mathrm{id}\otimes \varphi_t)\circ \alpha$ defines a $\mathbb{G}$-invariant quantum Markov semigroup with $T_0={\rm id}$ on $\mathscr{O}_{\mathbb{G}}(\mathbb{X})$.
\end{proposition}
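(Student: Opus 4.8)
The plan is to verify the $\mathbb{G}$-invariance identity $\alpha\circ T_\phi = (T_\phi\otimes \mathrm{id})\circ\alpha$ directly by unwinding both sides using the coaction property and the definition of centrality. Writing $\alpha(a) = a_{(0)}\otimes a_{(1)}$ in Sweedler-type notation (where the index $(0)$ lives in $\mathscr{O}_\qg(\mathbb{X})$ and $(1)$ in $\pag$), the coaction property $(\alpha\otimes\mathrm{id})\circ\alpha = (\mathrm{id}\otimes\Delta)\circ\alpha$ reads $a_{(0)(0)}\otimes a_{(0)(1)}\otimes a_{(1)} = a_{(0)}\otimes a_{(1)(1)}\otimes a_{(1)(2)}$. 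The left-hand side $\alpha(T_\phi a) = \alpha\big((\mathrm{id}\otimes\phi)\alpha(a)\big)$ becomes $\phi(a_{(1)})\,\alpha(a_{(0)})$, and the right-hand side $(T_\phi\otimes\mathrm{id})\alpha(a)$ becomes $\phi(a_{(0)(1)})\,a_{(0)(0)}\otimes a_{(1)}$. So everything reduces to showing that applying $\phi$ in the ``middle leg'' versus the ``last leg'' of the twice-coacted expression gives the same answer.

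First I would apply the coaction property to rewrite both sides over the triple tensor product $\mathscr{O}_\qg(\mathbb{X})\otimes\pag\otimes\pag$, using the Sweedler identity above to express each in terms of the single iterated coaction. The key point is then that the two expressions differ precisely by a swap of which $\pag$-leg receives $\phi$. This is exactly where centrality of $\phi$ must enter: I expect to need the characterization that $\phi$ central means $L_\phi = R_\phi$ on all of $\pag'$, equivalently $\phi(a_{(1)})a_{(2)} = \phi(a_{(2)})a_{(1)}$ at the level of $\pag$, or dually that $\phi\star f = f\star\phi$ for every $f$. The main obstacle — and the conceptual heart of the proof — is translating the abstract centrality condition $L_\phi(f) = R_\phi(f)$ for all $f\in\pag'$ into the pointwise identity on the iterated coaction of $\mathscr{O}_\qg(\mathbb{X})$ that I need; since the action $\alpha$ factors through $\Delta$ via the coaction property, centrality with respect to the coproduct of $\qg$ should propagate to an analogous symmetry for $\alpha$, but making this rigorous requires care about the fact that the middle and last legs both live in $\pag$ and are linked by $\Delta$.

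Once the operator statement is established, the semigroup claims follow with comparatively little extra work. I would check each defining property of a quantum Markov semigroup in Definition \ref{def-markov-sg} in turn. Each $T_t = (\mathrm{id}\otimes\varphi_t)\circ\alpha$ is completely positive because $\alpha$ is a unital $*$-homomorphism (hence completely positive) and $\varphi_t$ is a state (hence completely positive), so the composition/slice is completely positive; unitality follows from $\alpha(\mathbf{1}) = \mathbf{1}\otimes\mathbf{1}$ and $\varphi_t(\mathbf{1})=1$. Here I would invoke the extension results recalled in the excerpt (that positive functionals extend to completely positive maps on the C$^*$-algebraic level) to realize $T_t$ as a genuine Markov operator on a C$^*$-algebra containing $\mathscr{O}_\qg(\mathbb{X})$, as required by the algebraic version of the definition.

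For the semigroup property $T_s\circ T_t = T_{s+t}$, I would compute $T_s(T_t a) = (\mathrm{id}\otimes\varphi_s)\alpha\big((\mathrm{id}\otimes\varphi_t)\alpha(a)\big)$ and again push $\alpha$ through using the coaction property; this collapses the two slices into a single slice by $\varphi_t\otimes\varphi_s$ over $\Delta$, which is $(\varphi_s\star\varphi_t)(\cdot) = \varphi_{s+t}(\cdot)$ by the convolution-semigroup hypothesis $\varphi_s\star\varphi_t = \varphi_{s+t}$ — so this is essentially the same computation as the associativity of the action applied to a slice. Weak continuity $\lim_{t\searrow 0}T_t(a) = T_0(a)$ follows by applying the weak continuity of $(\varphi_t)$ to each leg of $\alpha(a)$, which sits in the finite-dimensional space $\mathscr{O}_\qg(\mathbb{X})\otimes\pag$ when restricted appropriately, so the limit passes through; and $T_0 = (\mathrm{id}\otimes\varepsilon)\circ\alpha = \mathrm{id}$ by the counit axiom for the coaction. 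The $\mathbb{G}$-invariance of the whole family is then immediate from the operator statement applied to each central $\varphi_t$. The genuinely nontrivial step remains the first one; everything after it is bookkeeping with slice maps and the coaction axioms.
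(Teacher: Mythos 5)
Your proposal is correct and follows essentially the same route as the paper's proof: apply the coaction property $(\alpha\otimes\mathrm{id})\circ\alpha=(\mathrm{id}\otimes\Delta)\circ\alpha$, use centrality in the equivalent form $(\phi\otimes \mathrm{id})\circ\Delta=(\mathrm{id}\otimes\phi)\circ\Delta$ to swap which $\pag$-leg receives $\phi$, apply the coaction property again, and then verify the Markov-semigroup properties by the same slicing computations (complete positivity and unitality from $\varphi_t$ being a state and $\alpha$ a unital *-homomorphism, $T_s\circ T_t=T_{s+t}$ from $\varphi_s\star\varphi_t=\varphi_{s+t}$, and continuity legwise on the finite sum $\alpha(a)$). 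The only remark worth making is that the step you single out as the ``main obstacle'' is immediate: since linear functionals separate points of $\pag$, the condition $\phi\star f=f\star\phi$ for all $f\in\pag'$ is equivalent to the element-level identity $\phi(a_{(1)})a_{(2)}=a_{(1)}\phi(a_{(2)})$, which is precisely the form of centrality the paper states before its three-line computation.
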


\begin{proof}
A functional $\phi:\pag\to\mathbb{C}$ is central iff
\[
(\mathrm{id}\otimes \phi)\circ \Delta = (\phi\otimes \mathrm{id})\circ \Delta.
\]
Therefore if $\phi$ is central, then we have
\begin{eqnarray*}
\alpha\circ T_\phi &=& (\mathrm{id}\otimes \mathrm{id}\otimes \phi) \circ (\alpha\otimes\mathrm{id}) \circ \alpha \\
&=& (\mathrm{id}\otimes\mathrm{id}\otimes \phi) \circ (\mathrm{id}\otimes \Delta) \circ \alpha \\
&=& (\mathrm{id}\otimes \phi \otimes\mathrm{id}) \circ (\mathrm{id}\otimes \Delta) \circ \alpha   \\
&=& (T_\phi\otimes\mathrm{id}) \circ \alpha,
\end{eqnarray*}
as claimed. On the algebraic core we have $T_0=({\rm id}\otimes \varepsilon)\circ \alpha={\rm id}$.

The second statement follows, since the positivity of the $\varphi_t$ implies that the $T_t$ are completely positive, $T_t(\mathbf{1})=\mathbf{1}\varphi_t(\mathbf{1})=\mathbf{1}$ for all $t\ge 0$, and
\[
T_t(x) = x_{(0)}\varphi_t(x_{(1)}) \xrightarrow{t\searrow 0} x_{(0)}\varepsilon(x_{(1)}) = x \quad\text{ in norm}
\]
for $x\in\mathscr{O}_qg(\mathbb{X})$, by continuity of the convolution semigroup $(\varphi_t)_{t\ge 0}$, see also the proof of \cite[Theorem 3.2]{cipriani+franz+kula14}. Here we used Sweedler notation $\alpha(x) = x_{(0)}\otimes x_{(1)}$ for the action.
\end{proof}

\section{Invariant functionals, operators and their convolutions} \label{sec-invfunct}

In this section we fix an idempotent state $\Phi\in\mathcal{I}(\qg)$ and suppose $C^r(\mathbb{G}/\Phi)$, $C^r(\Phi\backslash\mathbb{G}) $, $\mathscr{O}_{\mathbb{G}}(\mathbb{G}/ \Phi)$  and $\mathscr{O}_{\mathbb{G}}(\Phi\backslash \mathbb{G}) $ denote the respective right and left coidalgebras.
$\mathbb{E}_r^\Phi$ and $\mathbb{E}_{\ell}^\Phi$ denote respectively the conditional expectations from  $\pag$ onto $\mathscr{O}_{\mathbb{G}}(\mathbb{G}/ \Phi)$ and $\mathscr{O}_{\mathbb{G}}(\Phi\backslash \mathbb{G})$.
And we use the same notations $\mathbb{E}_r^\Phi$ and $\mathbb{E}_{\ell}^\Phi$ for the conditional expectations from $C(\qg)$ onto $C^r(\mathbb{G}/\Phi)$ and $C^r(\Phi\backslash\mathbb{G})$. We may note that the restriction of the coproduct $\Delta$ to $\mathscr{O}_{\mathbb{G}}(\Phi\backslash \mathbb{G})$ and $\mathscr{O}_{\mathbb{G}}(\mathbb{G}/ \Phi)$ are respectively left and right Hopf*-algebraic coactions of $\pag$ on  $\mathscr{O}_{\mathbb{G}}(\Phi\backslash \mathbb{G}) $ and $\mathscr{O}_{\mathbb{G}}(\mathbb{G}/ \Phi)$.
We start with two lemmas which we will be using in the sequel.

\begin{lemma}\label{Lemma: useful 1st observation}
On $\pag$ the following holds:
\begin{enumerate}
\item[(a)]
$(\mathbb{E}_{\ell}^\Phi\otimes \mathrm{id})\circ\Delta=\Delta\circ \mathbb{E}_{\ell}^\Phi~;~( \mathrm{id}\otimes \mathbb{E}_r^\Phi)\circ\Delta=\Delta\circ \mathbb{E}_r^\Phi.  
$
\item[(b)]
$(\mathbb{E}_r^\Phi\otimes \mathrm{id})\circ\Delta=( \mathrm{id}\otimes \mathbb{E}_{\ell}^\Phi)\circ\Delta$.
\end{enumerate}
\end{lemma}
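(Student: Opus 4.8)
The plan is to reduce all three identities to the coassociativity of the coproduct together with the explicit formulas for the two conditional expectations on $\pag$, namely $\mathbb{E}_r^\Phi=(\mathrm{id}\otimes\Phi)\circ\Delta$ (recorded in the preliminaries) and its left-handed counterpart $\mathbb{E}_\ell^\Phi=(\Phi\otimes\mathrm{id})\circ\Delta$. Since both sides of each asserted equality are maps defined on the Hopf *-algebra $\pag$, it suffices to verify them there; I would work throughout in Sweedler notation, writing $\Delta(a)=a_{(1)}\otimes a_{(2)}$ and, by coassociativity, $(\mathrm{id}\otimes\Delta)\Delta(a)=(\Delta\otimes\mathrm{id})\Delta(a)=a_{(1)}\otimes a_{(2)}\otimes a_{(3)}$ without ambiguity.

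First I would treat the second identity of (a). Expanding the left-hand side gives $(\mathrm{id}\otimes\mathbb{E}_r^\Phi)\circ\Delta(a)=a_{(1)}\otimes a_{(2)}\Phi(a_{(3)})$, using $\Delta(a_{(2)})=a_{(2)}\otimes a_{(3)}$. For the right-hand side, since $\Phi(a_{(2)})$ is a scalar, $\Delta\circ\mathbb{E}_r^\Phi(a)=\Delta(a_{(1)})\Phi(a_{(2)})=a_{(1)}\otimes a_{(2)}\Phi(a_{(3)})$, again by coassociativity, so the two coincide. The first identity of (a) is proved identically with $\Phi$ acting on the first leg, both sides reducing to $\Phi(a_{(1)})a_{(2)}\otimes a_{(3)}$.

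Next I would verify (b) in the same manner. The left-hand side gives $(\mathbb{E}_r^\Phi\otimes\mathrm{id})\circ\Delta(a)=a_{(1)}\Phi(a_{(2)})\otimes a_{(3)}$, while the right-hand side gives $(\mathrm{id}\otimes\mathbb{E}_\ell^\Phi)\circ\Delta(a)=a_{(1)}\otimes\Phi(a_{(2)})a_{(3)}$. Because $\Phi(a_{(2)})$ is a scalar, it may be moved freely across the tensor sign, and both expressions equal $\sum\Phi(a_{(2)})\,a_{(1)}\otimes a_{(3)}$.

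I do not expect any genuine obstacle here: each identity is a purely formal consequence of coassociativity and the defining convolution formulas, the only care required being consistent bookkeeping of the Sweedler legs. It is worth noting that idempotency of $\Phi$ is never used in this particular lemma — the same computation shows that $(\mathrm{id}\otimes\phi)\circ\Delta$ and $(\phi\otimes\mathrm{id})\circ\Delta$ satisfy these relations for an arbitrary functional $\phi\in\pag'$ — although idempotency is of course what makes $\mathbb{E}_r^\Phi$ and $\mathbb{E}_\ell^\Phi$ conditional expectations in the first place.
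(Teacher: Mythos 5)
Your proof is correct and follows essentially the same route as the paper: every identity is reduced to coassociativity of $\Delta$ together with the defining formulas $\mathbb{E}_r^\Phi=(\mathrm{id}\otimes\Phi)\circ\Delta$ and $\mathbb{E}_\ell^\Phi=(\Phi\otimes\mathrm{id})\circ\Delta$, which is exactly the paper's computation for (b), just written in Sweedler notation instead of compositional form. The only difference is that the paper dispatches (a) by citing it as the known invariance property of the conditional expectations, whereas you verify it directly; your closing remark that idempotency of $\Phi$ plays no role here is also accurate.
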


\begin{proof}
The identity in (a) is actually the invariance condition for the conditional expectations, as observed in \cite{franz+lee+skalski16}.

We prove (b):

\begin{equation*}
\begin{split}
(\mathbb{E}_r^\Phi\otimes \mathrm{id})\circ\Delta&=(( \mathrm{id}\otimes  \Phi )\circ\Delta\otimes \mathrm{id})\circ\Delta\\
&=( \mathrm{id}\otimes  \Phi \otimes \mathrm{id})\circ(\Delta\otimes \mathrm{id})\circ\Delta\\
&=( \mathrm{id}\otimes  \Phi \otimes \mathrm{id})\circ( \mathrm{id}\otimes\Delta)\circ\Delta\\
&=( \mathrm{id}\otimes( \Phi \otimes \mathrm{id})\circ\Delta)\circ\Delta=( \mathrm{id}\otimes \mathbb{E}_{\ell}^\Phi)\circ\Delta.
\end{split}
\end{equation*}
\end{proof}

The following is a minor variation of the result already observed in \cite[Section 3]{franz+skalski+idempotent09}.

\begin{lemma}\label{Lemma: idempotent states commute}
Let $\Phi_1,\Phi_2$ be idempotent states on $\pag$. If $\Phi_1\ast \Phi_2=\Phi_2$, then $\Phi_2\ast \Phi_1=\Phi_2$. 
\end{lemma}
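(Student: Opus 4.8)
The plan is to exploit the interplay between convolution and the antipode $S$, together with the (known) fact that idempotent states are invariant under $S$. First I would record the general reversal identity for convolution of functionals: for any $\alpha,\beta\in\pag'$ one has $(\alpha\star\beta)\circ S = (\beta\circ S)\star(\alpha\circ S)$. This is a one-line consequence of $\Delta\circ S = (S\otimes S)\circ\tau\circ\Delta$ (with $\tau$ the flip), since $\Delta(S(a)) = S(a_{(2)})\otimes S(a_{(1)})$ in Sweedler notation, and I would relegate it to a preliminary remark.

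Second, I would invoke the result from \cite{franz+skalski+idempotent09} that every idempotent state $\Phi$ on $\pag$ satisfies $\Phi\circ S = \Phi$. This is the only external input, and it is where the positivity (state) hypothesis genuinely enters; the statement fails for arbitrary idempotent functionals. With these two ingredients the conclusion is immediate: applying $\circ\,S$ to the hypothesis $\Phi_1\star\Phi_2=\Phi_2$, the left-hand side becomes $(\Phi_1\star\Phi_2)\circ S=(\Phi_2\circ S)\star(\Phi_1\circ S)=\Phi_2\star\Phi_1$ by the reversal identity and antipode-invariance, while the right-hand side becomes $\Phi_2\circ S=\Phi_2$. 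Hence $\Phi_2\star\Phi_1=\Phi_2$.

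I expect the only real obstacle to be the appeal to antipode-invariance of idempotent states. If a self-contained argument within the paper's own framework is preferred, I would instead route through the expected coidalgebras of Theorem \ref{thm-id}. Using $\mathbb{E}_r^{\Phi_1}\circ\mathbb{E}_r^{\Phi_2}=\mathbb{E}_r^{\Phi_1\star\Phi_2}$, the hypothesis $\Phi_1\star\Phi_2=\Phi_2$ forces $\mathbb{E}_r^{\Phi_1}$ to fix the range of $\mathbb{E}_r^{\Phi_2}$ pointwise, i.e. $\mathcal{A}_{\Phi_2}\subseteq\mathcal{A}_{\Phi_1}$. Since both $\mathbb{E}_r^{\Phi_1}$ and $\mathbb{E}_r^{\Phi_2}$ preserve the Haar state, in the GNS space they are the orthogonal projections $P_1,P_2$ onto $\mathcal{A}_{\Phi_1}$ and $\mathcal{A}_{\Phi_2}$; the inclusion of ranges gives $P_1P_2=P_2$, whence $P_2P_1=(P_1P_2)^*=P_2$, that is $\mathbb{E}_r^{\Phi_2}\circ\mathbb{E}_r^{\Phi_1}=\mathbb{E}_r^{\Phi_2}$.

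Reading this last identity back as $\mathbb{E}_r^{\Phi_2\star\Phi_1}=\mathbb{E}_r^{\Phi_2}$ and recovering the states through $\Phi=\varepsilon\circ\mathbb{E}_r^{\Phi}$ yields $\Phi_2\star\Phi_1=\Phi_2$, completing the alternative route. Between the two, I would present the antipode computation as the main proof for its brevity, noting the coidalgebra argument as the self-contained variant that the phrase ``minor variation'' in the surrounding text seems to anticipate.
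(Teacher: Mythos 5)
Your primary proof coincides with the paper's own argument: the paper's proof is precisely the appeal to antipode-invariance of idempotent states, $\Phi\circ S=\Phi$ (citing \cite{franz+skalski+idempotent09}, or \cite{salmi+skalski16}), combined with $(S\otimes S)\circ\Delta=\Delta^{\mathrm{op}}\circ S$ — exactly your reversal identity — so this part matches the paper word for word in substance. Your alternative route via Theorem \ref{thm-id} is also correct and genuinely different: it replaces the external antipode-invariance input by the conditional-expectation picture, using $\mathbb{E}_r^{\Phi_1}\circ\mathbb{E}_r^{\Phi_2}=\mathbb{E}_r^{\Phi_1\star\Phi_2}$, the inclusion $\mathcal{A}_{\Phi_2}\subseteq\mathcal{A}_{\Phi_1}$, and self-adjointness of orthogonal projections; the positivity hypothesis enters there through the existence of the expectations rather than through $\Phi\circ S=\Phi$. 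Two small points deserve to be made explicit in that variant: orthogonality of $P_1,P_2$ in the GNS space follows not from Haar-state preservation alone but from it together with the $\mathcal{A}_{\Phi_i}$-bimodule property of $\mathbb{E}_r^{\Phi_i}$ (which gives $x-\mathbb{E}_r^{\Phi_i}(x)\perp\mathcal{A}_{\Phi_i}$), and you need faithfulness of the Haar state on $\pag$ to pull the Hilbert-space identity $P_2P_1=P_2$ back to the algebra identity $\mathbb{E}_r^{\Phi_2}\circ\mathbb{E}_r^{\Phi_1}=\mathbb{E}_r^{\Phi_2}$ before applying $\varepsilon$.
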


\begin{proof}
Let $S$ be the antipode of $\qg$. For any idempotent state $\phi$ on $\pag$ we have $\phi\circ S=\phi$, see \cite[Section 3, pp.~10]{franz+skalski+idempotent09}, or \cite[Proposition 4]{salmi+skalski16}, where this is shown even for locally compact quantum groups. 

This along with the identity $(S\otimes S)\circ\Delta=\Delta^{\text{op}}\circ S$ immediately implies the desired result.
\end{proof}

\subsection{Invariant functionals on expected right coidalgebras}\label{Subsection: Invariant functionals on expected right coidalgebras}

We will write $\alpha:=\Delta|_{_{\mathscr{O}_{\mathbb{G}}(\Phi\backslash \mathbb{G}) }}$.
\begin{definition}\label{Definition: h invariance and h-bi-invarince}
For $\Phi\in\mathcal{I}(\mathbb{G})$, we call a functional $f$ on $\mathscr{O}_{\mathbb{G}}(\Phi\backslash \mathbb{G}) $ \emph{$\Phi$-invariant} if $(f\otimes \Phi)\circ \alpha = f$.

We call a functional $f$ on $\pag$ \emph{$\Phi$-bi-invariant} if $f\in(\pag')^{L(\{\Phi\})}\cap(\pag')^{R(\{\Phi\})}$.
\end{definition}

\begin{theorem}\label{Theorem: from Phi invariant functional to bi Phi invariant functional and vice versa}

The following holds
\begin{enumerate}
\item[(a)]
Let $f$ be a $\Phi$-invariant functional on $\mathscr{O}_{\mathbb{G}}(\Phi\backslash \mathbb{G}) $. Then the functional defined by $\mu:=f\circ \mathbb{E}_{\ell}^\Phi$ is the unique $\Phi$-bi-invariant functional on $\pag$, whose restriction to $\mathscr{O}_{\mathbb{G}}(\Phi\backslash \mathbb{G})$ is $f$.
\item[(b)]
Let $\mu$ be a $\Phi$-bi-invariant functional on $\pag$. Then $f:=\mu|_{_{\mathscr{O}_{\mathbb{G}}(\Phi\backslash \mathbb{G}) }}$ is the unique $\Phi$-invariant functional on $\mathscr{O}_{\mathbb{G}}(\Phi\backslash \mathbb{G}) $, such that $f\circ \mathbb{E}_{\ell}^\Phi=\mu$.
\end{enumerate}
\end{theorem}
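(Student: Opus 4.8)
The plan is to reduce both one-sided invariances to statements about precomposition with the two conditional expectations and then exploit that these are idempotents fixing the respective coidalgebras. Since $\Phi$ is a state, $\Phi(\mathbf{1})=1$, so $\Phi$-bi-invariance of a functional $\mu$ on $\pag$ means simply $\Phi\star\mu=\mu=\mu\star\Phi$. The starting point is the pair of elementary identities, valid for every $\nu\in\pag'$,
\[
\nu\circ\mathbb{E}_{\ell}^\Phi=\Phi\star\nu,\qquad \nu\circ\mathbb{E}_r^\Phi=\nu\star\Phi,
\]
which are immediate from $\mathbb{E}_{\ell}^\Phi=(\Phi\otimes\mathrm{id})\circ\Delta$ and $\mathbb{E}_r^\Phi=(\mathrm{id}\otimes\Phi)\circ\Delta$. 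Hence $\mu$ is left (resp.\ right) $\Phi$-invariant if and only if $\mu=\mu\circ\mathbb{E}_{\ell}^\Phi$ (resp.\ $\mu=\mu\circ\mathbb{E}_r^\Phi$). I would also record that for $z\in\pqphi$ one has $\mathbb{E}_r^\Phi(z)\in\pqphi$ and $f\circ\mathbb{E}_r^\Phi=(f\otimes\Phi)\circ\alpha$ there, so the $\Phi$-invariance of $f$ is exactly the identity $f\circ\mathbb{E}_r^\Phi=f$ on $\pqphi$.

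For part (a), set $\mu:=f\circ\mathbb{E}_{\ell}^\Phi$. The restriction claim $\mu|_{\pqphi}=f$ is immediate since $\mathbb{E}_{\ell}^\Phi$ restricts to the identity on $\pqphi$. Left invariance is automatic from idempotency of the expectation, $\Phi\star\mu=\mu\circ\mathbb{E}_{\ell}^\Phi=f\circ\mathbb{E}_{\ell}^\Phi\circ\mathbb{E}_{\ell}^\Phi=\mu$. For right invariance I would write $\mu\star\Phi=(f\otimes\Phi)\circ(\mathbb{E}_{\ell}^\Phi\otimes\mathrm{id})\circ\Delta$, use Lemma \ref{Lemma: useful 1st observation}(a) to replace $(\mathbb{E}_{\ell}^\Phi\otimes\mathrm{id})\circ\Delta$ by $\Delta\circ\mathbb{E}_{\ell}^\Phi$, and then invoke that $\Delta$ restricts to $\alpha$ on $\pqphi$ together with the $\Phi$-invariance $(f\otimes\Phi)\circ\alpha=f$ to get $\mu\star\Phi=(f\otimes\Phi)\circ\alpha\circ\mathbb{E}_{\ell}^\Phi=f\circ\mathbb{E}_{\ell}^\Phi=\mu$. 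Uniqueness is then a one-liner: any $\Phi$-bi-invariant $\nu$ with $\nu|_{\pqphi}=f$ is in particular left invariant, so $\nu=\nu\circ\mathbb{E}_{\ell}^\Phi=f\circ\mathbb{E}_{\ell}^\Phi=\mu$, the middle equality holding because $\mathbb{E}_{\ell}^\Phi$ takes values in $\pqphi$, where $\nu=f$.

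For part (b), set $f:=\mu|_{\pqphi}$. Restricting the right invariance $\mu=\mu\circ\mathbb{E}_r^\Phi$ to $\pqphi$ and using the identity from the first paragraph gives $f=f\circ\mathbb{E}_r^\Phi=(f\otimes\Phi)\circ\alpha$, i.e.\ $f$ is $\Phi$-invariant. The left invariance $\mu=\mu\circ\mathbb{E}_{\ell}^\Phi$, combined with $\mathbb{E}_{\ell}^\Phi$ landing in $\pqphi$ (where $\mu=f$), yields exactly $\mu=f\circ\mathbb{E}_{\ell}^\Phi$. Uniqueness is trivial: any $g$ on $\pqphi$ with $g\circ\mathbb{E}_{\ell}^\Phi=\mu$ satisfies $g(z)=g(\mathbb{E}_{\ell}^\Phi(z))=\mu(z)=f(z)$ for $z\in\pqphi$, so $g=f$.

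The computations are all short; the only thing to be careful about is that $f$ is defined only on $\pqphi$, so each manipulation must be arranged so that $f$ is evaluated after a conditional expectation has projected into the coidalgebra. The single genuinely substantive step is the right-invariance of $\mu$ in (a) --- equivalently the $\Phi$-invariance of $f$ in (b) --- which is where the coidalgebra structure enters through Lemma \ref{Lemma: useful 1st observation}; the remaining invariances follow formally from the idempotency of the expectations.
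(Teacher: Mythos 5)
Your proposal is correct and follows essentially the same route as the paper's proof: the substantive step (right invariance of $\mu$ in (a), equivalently $\Phi$-invariance of $f$ in (b)) is handled exactly as in the paper via Lemma \ref{Lemma: useful 1st observation}(a) together with the $\Phi$-invariance of $f$, and the uniqueness arguments coincide. Your dictionary $\nu\circ\mathbb{E}_{\ell}^\Phi=\Phi\star\nu$, $\nu\circ\mathbb{E}_r^\Phi=\nu\star\Phi$ merely streamlines the remaining steps (e.g.\ deducing left invariance from idempotency of $\mathbb{E}_{\ell}^\Phi$ rather than the paper's expansion using Lemma \ref{Lemma: useful 1st observation}(b) and $\Phi\star\Phi=\Phi$), which is the same underlying computation.
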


\begin{proof}
We prove (a):

Let $x\in\pag$. We prove the $\Phi$-bi-invariance of $\mu$ as follows: 

Left $\Phi$-invariance:
\begin{equation*}
\begin{split}
(\Phi\otimes\mu)(\Delta(x))&=( \Phi \otimes f\circ \mathbb{E}_{\ell}^\Phi)(\Delta(x))\\
&=( \Phi \otimes f)((\mathbb{E}_r^\Phi\otimes \mathrm{id})(\Delta(x)))\quad(\text{by Lemma}~ \ref{Lemma: useful 1st observation}-(b))\\
&=( \Phi \otimes  \Phi \otimes f)(\Delta\otimes \mathrm{id})(\Delta(x))\\
&=( \Phi \ast  \Phi \otimes f)(\Delta(x))\\
&=( \Phi \otimes f)(\Delta(x))=f(\mathbb{E}_{\ell}^\Phi(x))\quad(\text{since}~ \Phi \in\mathcal{I}(\qg))\\
&=\mu(x).
\end{split}
\end{equation*}

Right $\Phi$-invariance:

\begin{equation*}
\begin{split}
(\mu\otimes  \Phi )(\Delta(x))&=(f\circ \mathbb{E}_{\ell}^\Phi\otimes  \Phi )(\Delta(x))\\
&=(f\otimes  \Phi )(\mathbb{E}_{\ell}^\Phi\otimes \mathrm{id})(\Delta(x))\\
&=(f\otimes  \Phi )\Delta(\mathbb{E}_{\ell}^\Phi(x))\quad(\text{by Lemma \ref{Lemma: useful 1st observation}}-(a))\\
&=f(\mathbb{E}_{\ell}^\Phi(x))\quad(\text{using $\Phi$-invariance of $f$})\\
&=\mu(x).
\end{split}
\end{equation*}

Now, let $\nu$ be any $\Phi$-bi-invariant functional on $\pag$ such that $\nu|_{_{\mathscr{O}_{\mathbb{G}}(\Phi\backslash \mathbb{G}) }}=f$. Then using the right $\Phi$-invariance of $\nu$ we have

\begin{equation*}
\begin{split}
\nu(x)&=( \Phi \otimes \nu)(\Delta(x))\\
&=\nu(\mathbb{E}_{\ell}^\Phi(x))\\
&=f(\mathbb{E}_{\ell}^\Phi(x))=\mu(x),
\end{split}
\end{equation*}
which proves the uniqueness.
		
(b) follows by observing that the $\Phi$-invariance of $f$ as a functional on $\mathscr{O}_{\mathbb{G}}(\Phi\backslash \mathbb{G}) $ is a consequence of the left $\Phi$-invariance of $\mu$ as a functional on $\pag$, and uniqueness can be seen easily.
\end{proof}

Thus we have a one-to-one correspondence between the set of $\Phi$-invariant functional on $\mathscr{O}_{\mathbb{G}}(\Phi\backslash \mathbb{G}) $ and that of $\Phi$-bi-invariant functionals on $\pag$. In particular, we have 

\begin{corollary}\label{Corollary: from Phi invariant state to h bi ivariant state}
There exists a one-to-one correspondence between the set of $\Phi$-invariant states on $\mathscr{O}_{\mathbb{G}}(\Phi\backslash \mathbb{G}) $ and the set of $\Phi$-bi-invariant states on $\pag$. 
\end{corollary}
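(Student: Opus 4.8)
The plan is to upgrade the bijection already established in Theorem \ref{Theorem: from Phi invariant functional to bi Phi invariant functional and vice versa} from arbitrary functionals to states. That theorem gives mutually inverse maps $f \mapsto \mu = f\circ\mathbb{E}_{\ell}^\Phi$ and $\mu \mapsto \mu|_{\mathscr{O}_{\mathbb{G}}(\Phi\backslash \mathbb{G})}$ between $\Phi$-invariant functionals on $\mathscr{O}_{\mathbb{G}}(\Phi\backslash \mathbb{G})$ and $\Phi$-bi-invariant functionals on $\pag$. Since this is already a bijection on the level of all functionals, it suffices to verify that each of the two maps sends states to states; the asserted one-to-one correspondence between states then follows by restriction. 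Recall that a state here means a positive, unital functional, so I only need to control positivity and normalization, the invariance conditions being supplied by the Theorem.

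First I would check the forward direction: if $f$ is a $\Phi$-invariant state on $\mathscr{O}_{\mathbb{G}}(\Phi\backslash \mathbb{G})$, then $\mu = f\circ\mathbb{E}_{\ell}^\Phi$ is a $\Phi$-bi-invariant state on $\pag$. Normalization is immediate from unitality of the conditional expectation, $\mu(\mathbf{1}) = f(\mathbb{E}_{\ell}^\Phi(\mathbf{1})) = f(\mathbf{1}) = 1$. For positivity I would use that $\mathbb{E}_{\ell}^\Phi$ is a positive map onto $\mathscr{O}_{\mathbb{G}}(\Phi\backslash \mathbb{G})$, so that for any $x\in\pag$ the element $\mathbb{E}_{\ell}^\Phi(x^*x)$ is a positive element of $\mathscr{O}_{\mathbb{G}}(\Phi\backslash \mathbb{G})$; applying the state $f$ yields $\mu(x^*x) = f(\mathbb{E}_{\ell}^\Phi(x^*x)) \ge 0$. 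Conversely, if $\mu$ is a $\Phi$-bi-invariant state on $\pag$, then its restriction $f = \mu|_{\mathscr{O}_{\mathbb{G}}(\Phi\backslash \mathbb{G})}$ is automatically a state: for $a\in\mathscr{O}_{\mathbb{G}}(\Phi\backslash \mathbb{G})\subseteq\pag$ we have $f(a^*a)=\mu(a^*a)\ge 0$ and $f(\mathbf{1})=\mu(\mathbf{1})=1$, while $\Phi$-invariance and $f\circ\mathbb{E}_{\ell}^\Phi=\mu$ come directly from part (b) of the Theorem.

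The only point requiring genuine input is the positivity of $\mathbb{E}_{\ell}^\Phi$ used in the forward direction. This is the left-handed analogue of $\mathbb{E}_r^\Phi=(\mathrm{id}\otimes\Phi)\circ\Delta$, namely $\mathbb{E}_{\ell}^\Phi=(\Phi\otimes\mathrm{id})\circ\Delta$ on $\pag$; since $\Delta$ is a $*$-homomorphism and $\Phi$ is a state, $\mathbb{E}_{\ell}^\Phi$ is a composition of completely positive maps and is therefore (completely) positive and unital. This is exactly the content already recorded in Theorem \ref{thm-id}, where $\mathbb{E}_{\ell}^\Phi$ is identified as a genuine conditional expectation, so I expect no further work to be needed and the corollary to follow immediately.
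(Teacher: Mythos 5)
Your proposal is correct and follows essentially the same route as the paper: the paper's entire proof is ``This is clear, because $\mathbb{E}_{\ell}^\Phi$ is completely positive,'' i.e.\ the bijection of Theorem \ref{Theorem: from Phi invariant functional to bi Phi invariant functional and vice versa} restricts to states because $f\mapsto f\circ\mathbb{E}_{\ell}^\Phi$ composes with a unital (completely) positive map and the inverse is a restriction. You have merely written out the positivity and normalization checks that the paper leaves implicit.
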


\begin{proof}
This is clear, because $\mathbb{E}_{\ell}^\Phi$ is completely positive.
\end{proof}

\begin{remark}\label{Remark: functionls on coidalgebra extends to left h-invariant functional on G}

We may note that given any functional $\nu:\mathscr{O}_{\mathbb{G}}(\Phi\backslash \mathbb{G}) \longrightarrow\mathbb{C}$ the functional $\mu:\pag\longrightarrow\mathbb{C}$ defined by $\mu:=\nu\circ \mathbb{E}_{\ell}^\Phi$ is a left $\Phi$-invariant functional on $\pag$. This follows from the computations proving left $\Phi$-invariance of $\mu$ in the proof of Theorem \ref{Theorem: from Phi invariant functional to bi Phi invariant functional and vice versa}. 
\end{remark}

\subsubsection{The case $\Phi=h_\qk\circ\theta$ of quantum quotient spaces}\label{Subsubsection: invariance in quantum quotient space}

Let $\qk$ be a compact quantum subgroup of $\qg$. Let $\theta$ be the associated surjective quantum group morphism. Then $\theta:\pag\longrightarrow\pak$ is a surjective Hopf *-morphism such that $(\theta\otimes\theta)\circ\Delta=\Delta^\prime\circ\theta$ where $\Delta^\prime$ is the coproduct of $\pak$. It can be easily observed that $\beta_r:=( \mathrm{id}\otimes\theta)\circ\Delta:\pag\longrightarrow \pag\otimes\pak$ is a right Hopf *-algebraic coaction of $\pak$ on the unital *-algebra $\pag$, and similarly $\beta_l:=(\theta\otimes \mathrm{id})\circ\Delta:\pag\longrightarrow \pak\otimes\pag$ is a left Hopf *-algebraic coaction of $\pak$ on $\pag$. 

\begin{definition}\label{Definition: definition of K invariance}
We call a functional $f$ on $\pag$ \emph{$\qk$-bi-invariant} if \[(f\otimes \mathrm{id})\circ\beta_r=f(\cdot)\mathbf{1}_\mathbb{K} = (\mathrm{id}\otimes f)\circ\beta_l.\] 
\end{definition}

Let $h_\qk$ be the Haar state of $\qk$, so that $h:=h_\qk\circ\theta\in\mathcal{I}(\qg)$.

Let $\mathrm{Irr}(\qk)$ be the set of inequivalent, irreducible unitary representations of $\qk$. For $\pi\in \mathrm{Irr}(\qg)$, denote the carrier Hilbert space of $\pi$ by $H_\pi$, and let $\delta_\pi:H_\pi\longrightarrow H_\pi\otimes \pak$ be the $\pak$-comodule induced by $\pi$, as in \cite[Theorem 1.2, Lemma 1.7]{decommer16}.
Then it follows that there exists an orthonormal basis $\{e_1,\ldots,e_{\text{dim}~\pi}\}$ of $H_\pi$ such that
$\delta_\pi(e_i)=\sum_{k=1}^{\text{dim}~\pi}e_{k}\otimes\pi_{kj}$, cf.\ \cite[Theorem 1.2, Lemma 1.5]{decommer16}. For $\pi\in \mathrm{Irr}(\qk)$, let $C(\qg)_\pi:=\text{Lin}\{T\xi:~\xi\in H_\pi,T\in \text{Mor}(\pi,\beta_r)\}$, as in \cite[Definition 3.13]{decommer16}. Then it follows from \cite[Theorem 3.16]{decommer16} and its proof that $\pag=\bigoplus_{_{\pi\in {\mathrm Irr}(\qk)}}C(\qg)_\pi$.

\begin{theorem}\label{Theorem: Necessary and sufficient condition for left K invariance}
A functional $f$ on $\pag$ is left $\qk$-invariant, i.e. $(f\otimes \mathrm{id})\circ\beta_r=f$ if and only if  for those $\pi\in{\rm Irr}(\qk)$ which are inequivalent to the trivial representation, we have $f|_{_{C(\qg)_\pi}}=0$. 
\end{theorem}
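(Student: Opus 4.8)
The plan is to exploit the spectral (isotypic) decomposition $\pag=\bigoplus_{\pi\in\mathrm{Irr}(\qk)}C(\qg)_\pi$ recalled just above the statement, and to reduce the invariance condition $(f\otimes\mathrm{id})\circ\beta_r=f(\cdot)\mathbf{1}_\qk$ (which is what the abbreviated ``$=f$'' in the statement means, in view of Definition \ref{Definition: definition of K invariance}) to a statement on each spectral component separately. Everything rests on one explicit formula for the action of $\beta_r$ on the spectral vectors. Fix $\pi\in\mathrm{Irr}(\qk)$ with the orthonormal basis satisfying $\delta_\pi(e_j)=\sum_k e_k\otimes\pi_{kj}$, and take any intertwiner $T\in\mathrm{Mor}(\pi,\beta_r)$. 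By the defining property $\beta_r\circ T=(T\otimes\mathrm{id})\circ\delta_\pi$ of a morphism of $\qk$-comodules, writing $x_j:=Te_j\in C(\qg)_\pi$ gives
\[
\beta_r(x_j)=\sum_k x_k\otimes\pi_{kj},
\qquad\text{so}\qquad
(f\otimes\mathrm{id})\bigl(\beta_r(x_j)\bigr)=\sum_k f(x_k)\,\pi_{kj},
\]
while the right-hand side of the invariance condition reads $f(x_j)\mathbf{1}_\qk$.

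For the forward implication, suppose $f$ is left $\qk$-invariant. Then for every nontrivial $\pi$ and every $j$ we have $\sum_k f(x_k)\pi_{kj}=f(x_j)\mathbf{1}_\qk$. The key point — and the only place where the hypothesis that $\pi$ is inequivalent to the trivial representation enters — is that the matrix coefficients $\{\pi_{kj}\}_k$ are linearly independent in $\pak$ and, since $\pi$ is nontrivial, the unit $\mathbf{1}_\qk$ (which is the matrix coefficient of the trivial corepresentation) lies outside their span, by the Peter--Weyl orthogonality relations for $\qk$. Comparing coefficients therefore forces $f(x_k)=0$ for all $k$ and $f(x_j)=0$. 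As the vectors $Te_j$ span $C(\qg)_\pi$ when $T$ ranges over $\mathrm{Mor}(\pi,\beta_r)$, we conclude $f|_{C(\qg)_\pi}=0$.

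For the converse, assume $f|_{C(\qg)_\pi}=0$ for all nontrivial $\pi$ and verify the invariance identity on each summand of $\pag=\bigoplus_\pi C(\qg)_\pi$, concluding by linearity. On the trivial component $C(\qg)_{\pi_0}$ one has $\beta_r(x)=x\otimes\mathbf{1}_\qk$ (this is precisely the fixed-point subalgebra), whence $(f\otimes\mathrm{id})(\beta_r(x))=f(x)\mathbf{1}_\qk$ immediately. On a nontrivial component both sides vanish: the displayed formula gives $(f\otimes\mathrm{id})(\beta_r(x_j))=\sum_k f(x_k)\pi_{kj}=0$ because $f$ kills $C(\qg)_\pi$, and likewise $f(x_j)\mathbf{1}_\qk=0$.

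The only genuine obstacle is the linear-independence input: one must know that for a nontrivial irreducible $\pi$ the coefficients $\pi_{kj}$ are linearly independent and that no linear combination of them equals a scalar multiple of $\mathbf{1}_\qk$. This is exactly the content of the Schur orthogonality relations and the Peter--Weyl decomposition of $\pak$, which I would invoke rather than reprove. A minor secondary point is that a general element of $C(\qg)_\pi$ arises from possibly many intertwiners $T$; since the coaction formula holds for each fixed $T$, the argument applies verbatim to each and then extends by linearity over the direct sum.
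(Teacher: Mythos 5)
Your proof is correct and follows essentially the same route as the paper's: the same isotypic decomposition $\pag=\bigoplus_{\pi}C(\qg)_\pi$, the same computation $\beta_r(Te_j)=\sum_k Te_k\otimes\pi_{kj}$ via the intertwiner property, the linear independence of $\{\pi_{kj}\}_k\cup\{\mathbf{1}_\qk\}$ for nontrivial $\pi$ in the forward direction, and the summand-by-summand verification (fixed points on the trivial component, vanishing on the others) for the converse. In fact your phrasing of the linear-independence input in terms of the matrix coefficients $\pi_{kj}$ is the correct reading of what the paper states (somewhat misleadingly) as independence of the set $\{e_j\}\cup\{\mathbf{1}_\qk\}$.
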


\begin{proof}
Let $f$ be left $\qk$-invariant. This implies that $(f\otimes \mathrm{id})(\beta_r(T\xi))=f(T\xi)$ for all $\xi\in H_\pi$, for all $T\in \text{Mor}(\pi,\beta_r)$. Thus in particular we have
\[
f(T(e_{j}))\mathbf{1}_\mathbb{K} =(f\otimes \mathrm{id})(\beta_r(T(e_{j})))=(f\circ T\otimes \mathrm{id})\delta_\pi(e_{j}), 
\]
from which it follows that 
\[
f(T(e_{j}))\mathbf{1}_\mathbb{K}=\sum_{k=1}^{\text{dim}\pi}f(T(e_{k}))\pi_{kj} 
\]
i.e. 
\[
\sum_{k=1}^{\text{dim}\pi}f(T(e_{k}))\pi_{kj}-f(T(e_{j}))\mathbf{1}_\qk=0.
\]
Since $\pi$ is different from the trivial representation, this means, using the linear independence of the set $\{e_{j}:~j=1,2,\cdots\text{dim}~\pi\}\cup\{\mathbf{1}_\qk\}$, $f(T(e_{j}))=0$ for all $i,j$, which implies that $f|_{_{C(\qg)_\pi}}=0$.
		
Conversely suppose $f|_{_{C(\qg)_\pi}}=0$ for all those $\pi$ different from the trivial representation.
Let $x\in \pag$. Since $\pag=\bigoplus_{\pi\in Irr(\qk)}C(\qg)_\pi$ and $\beta_r(C(\qg)_\pi)\subset C(\qg)_\pi\otimes \pak$, this implies that $(f\otimes \mathrm{id})\beta_r(x)=0=f(x)$ if $x\in C(\qg)_\pi$ for $\pi\neq \mathbf{1}1_\qk$. If $x\in C(\qg)_{1_\qk}$, then $x$ is a fixed point of the coaction $\beta_r$. Thus $(f\otimes \mathrm{id})\beta_r(x)=f(x)\mathbf{1}_\qk$. Thus $f$ is left $\qk$-invariant.
\end{proof}

It is now easy to also prove a corresponding right $\qk$-invariance version of Theorem \ref{Theorem: Necessary and sufficient condition for left K invariance}:

\begin{corollary}\label{Corollary: A necessary and sufficient condition for the right K invariance of f}
Let $f$ be a functional on $\pag$. Then $f$ is right $\qk$-invariant i.e. $( \mathrm{id}\otimes f)\circ\beta_l=f$, if and only if $f|_{_{C(\qg)_\pi}}=0$ for all those $\pi\neq1_\qk$, where now for $\pi\in \mathrm{Irr}(\qk)$, $C(\qg)_\pi:=\{T\xi:~\xi\in H_\pi,T\in \text{Mor}(\pi,\beta_l)\}$. 
\end{corollary}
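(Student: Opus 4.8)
The plan is to reduce the statement, as directly as possible, to the already-proved Theorem~\ref{Theorem: Necessary and sufficient condition for left K invariance}. Two routes are available; I would take the first as the main argument, since it matches the way $C(\qg)_\pi$ is defined in the present statement (via $\mathrm{Mor}(\pi,\beta_l)$ rather than $\mathrm{Mor}(\pi,\beta_r)$), and mention the second as an alternative.

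\emph{Direct mirror.} First I would record the left-handed analogue of the spectral decomposition used above. Since $\beta_l=(\theta\otimes\mathrm{id})\circ\Delta$ is a left Hopf $*$-coaction of $\pak$ on $\pag$, the left-comodule version of \cite[Theorem 3.16]{decommer16} yields $\pag=\bigoplus_{\pi\in\mathrm{Irr}(\qk)}C(\qg)_\pi$ with $\beta_l(C(\qg)_\pi)\subseteq\pak\otimes C(\qg)_\pi$, together with an orthonormal basis of $H_\pi$ adapted to the induced left comodule map. This is the exact left–right mirror of the facts quoted just before Theorem~\ref{Theorem: Necessary and sufficient condition for left K invariance}, so it needs no new input. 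With it in hand, both implications are verbatim copies of the proof of that theorem, with $\beta_r$ replaced by $\beta_l$ and $(f\otimes\mathrm{id})$ replaced by $(\mathrm{id}\otimes f)$. Indeed, for $T\in\mathrm{Mor}(\pi,\beta_l)$ and basis vectors $e_j\in H_\pi$ one obtains an identity of the shape $\sum_k f(T(e_k))\,c_{kj}-f(T(e_j))\mathbf{1}_\qk=0$, where the $c_{kj}$ are the relevant matrix coefficients of $\pi$; when $\pi\neq 1_\qk$, linear independence of these coefficients together with $\mathbf{1}_\qk$ forces $f|_{C(\qg)_\pi}=0$. Conversely, vanishing on all nontrivial components, combined with the fact that $C(\qg)_{1_\qk}$ is exactly the fixed-point space of $\beta_l$ (so that $(\mathrm{id}\otimes f)\beta_l(x)=f(x)\mathbf{1}_\qk$ there), gives right $\qk$-invariance of $f$.

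\emph{Antipode reduction (alternative).} Using the same tools as in Lemma~\ref{Lemma: idempotent states commute}, namely bijectivity of the antipode $S$, the intertwining $(S\otimes S)\circ\Delta=\Delta^{\mathrm{op}}\circ S$, and $\theta\circ S=S_\qk\circ\theta$ (as $\theta$ is a Hopf $*$-morphism, with $S_\qk$ the antipode of $\qk$), I would show that $f$ is right $\qk$-invariant if and only if $f\circ S$ is left $\qk$-invariant: applying the right-invariance identity $\theta(x_{(1)})f(x_{(2)})=f(x)\mathbf 1_\qk$ to $S(x)$ gives $S_\qk(\theta(x_{(2)}))\,(f\circ S)(x_{(1)})=(f\circ S)(x)\mathbf 1_\qk$, and applying $S_\qk^{-1}$ (letting the scalar factor pass through) turns this into the left-invariance identity for $f\circ S$; the steps reverse. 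Then Theorem~\ref{Theorem: Necessary and sufficient condition for left K invariance} characterizes left-invariance of $f\circ S$ by vanishing on the $\beta_r$-isotypic components $C(\qg)_\pi$ of that theorem, and since $(f\circ S)|_{C(\qg)_\pi}=f|_{S(C(\qg)_\pi)}$ one would identify $S(C(\qg)_\pi)$ with the $\beta_l$-isotypic component of the conjugate representation $\bar\pi$; as $\pi\mapsto\bar\pi$ permutes $\mathrm{Irr}(\qk)$ and fixes $1_\qk$, the two families of vanishing conditions coincide.

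I expect the main obstacle to be purely bookkeeping of representation-theoretic conventions rather than anything conceptual: in the first route, pinning down the precise form of the induced \emph{left} comodule map (which leg carries the matrix coefficient) so that the linear-independence step runs exactly as in the cited proof; and in the second route, verifying the identification $S(C(\qg)_\pi)=C(\qg)_{\bar\pi}$ of isotypic components, which hinges on the computation $S_\qk(\pi_{kj})=\pi_{jk}^{*}$ recognizing the matrix coefficients of $\bar\pi$. Both are routine but error-prone, so I would favor the first route, where the symmetry of the Decommer framework makes the whole argument a literal mirror image and no conjugate representations enter.
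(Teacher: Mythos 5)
Your main route (the direct mirror) is exactly what the paper intends: the corollary is stated without proof, introduced only by the remark that it is ``easy to also prove a corresponding right $\qk$-invariance version'' of Theorem~\ref{Theorem: Necessary and sufficient condition for left K invariance}, and your verbatim replacement of $\beta_r$ by $\beta_l$ and $(f\otimes\mathrm{id})$ by $(\mathrm{id}\otimes f)$, resting on the left-comodule version of the spectral decomposition $\pag=\bigoplus_{\pi}C(\qg)_\pi$, is precisely that argument. The antipode reduction you sketch as an alternative is also sound (and is in the spirit of Lemma~\ref{Lemma: idempotent states commute}), but it is not needed; the proposal is correct and takes essentially the same approach as the paper.
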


We now prove the main results for this subsection.
Note that $\Phi_\qk = h_\qk\circ\theta\in\mathcal{I}(\qg)$.

\begin{theorem}\label{Theorem: From bi K invariance to bi h invariance and vice versa}
A functional $f$ on $\pag$ is $\qk$-bi-invariant if and only if it is $\Phi_\qk$-bi-invariant. 
\end{theorem}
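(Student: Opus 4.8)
The plan is to split the claimed equivalence of the two two-sided conditions into two one-sided equivalences and establish each separately; by a mirror-image argument it suffices to treat one of them in detail. Concretely, I would show that $f$ is \emph{left $\qk$-invariant} (i.e.\ $(f\otimes\mathrm{id})\circ\beta_r=f(\cdot)\mathbf{1}_\qk$) if and only if $f$ is \emph{right $\Phi_\qk$-invariant} (i.e.\ $(f\otimes\Phi_\qk)\circ\Delta=f$), and symmetrically that $f$ is right $\qk$-invariant iff $f$ is left $\Phi_\qk$-invariant. Since $\qk$-bi-invariance is the conjunction of the two $\qk$-conditions and $\Phi_\qk$-bi-invariance the conjunction of the two $\Phi_\qk$-conditions, combining the two equivalences yields the theorem. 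The starting observation is the factorisation $\Phi_\qk=h_\qk\circ\theta$, which gives $(f\otimes\Phi_\qk)\circ\Delta=(f\otimes h_\qk)\circ\beta_r$ and $(\Phi_\qk\otimes f)\circ\Delta=(h_\qk\otimes f)\circ\beta_l$; that is, the scalar $\Phi_\qk$-conditions are obtained from the $\pak$-valued $\qk$-conditions by averaging the output against the Haar state $h_\qk$.

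The easy direction is immediate. If $(f\otimes\mathrm{id})\circ\beta_r=f(\cdot)\mathbf{1}_\qk$, then composing with $h_\qk$ on the $\pak$-leg and using $h_\qk(\mathbf{1}_\qk)=1$ gives $(f\otimes h_\qk)\circ\beta_r=f$, i.e.\ $(f\otimes\Phi_\qk)\circ\Delta=f$. Thus left $\qk$-invariance implies right $\Phi_\qk$-invariance.

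The substantive step is the converse, which recovers the $\pak$-valued identity from the weaker scalar one. Set $g:=(f\otimes\mathrm{id})\circ\beta_r\colon\pag\to\pak$. The key point is the intertwining identity
\[
\Delta^\prime\circ g=(g\otimes\mathrm{id})\circ\beta_r,
\]
which follows from coassociativity of $\Delta$ and the fact that $\theta$ is a coalgebra morphism (both sides applied to $x$ equal $f(x_{(1)})\theta(x_{(2)})\otimes\theta(x_{(3)})$ in Sweedler notation). Assuming right $\Phi_\qk$-invariance, the easy computation read backwards gives $h_\qk\circ g=(f\otimes\Phi_\qk)\circ\Delta=f$. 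Now apply $h_\qk\otimes\mathrm{id}$ to the intertwining identity: on the left, left-invariance of the Haar state $(h_\qk\otimes\mathrm{id})\circ\Delta^\prime=h_\qk(\cdot)\mathbf{1}_\qk$ turns $\Delta^\prime(g(x))$ into $h_\qk(g(x))\mathbf{1}_\qk=f(x)\mathbf{1}_\qk$; on the right, applying $h_\qk$ to the $g$-leg of $(g\otimes\mathrm{id})(\beta_r(x))$ and using $h_\qk\circ g=f$ yields $(f\otimes\mathrm{id})(\beta_r(x))=g(x)$. Hence $g(x)=f(x)\mathbf{1}_\qk$, which is exactly left $\qk$-invariance. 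I expect this single application of Haar invariance to the intertwining identity to be the only real obstacle; everything else is bookkeeping.

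The second equivalence, right $\qk$-invariance iff left $\Phi_\qk$-invariance, is proved by the mirror-image argument with $g':=(\mathrm{id}\otimes f)\circ\beta_l$, the intertwining identity $\Delta^\prime\circ g'=(\mathrm{id}\otimes g')\circ\beta_l$, and the right-invariance $(\mathrm{id}\otimes h_\qk)\circ\Delta^\prime=h_\qk(\cdot)\mathbf{1}_\qk$ of the Haar state. Alternatively, one can avoid the direct computation altogether by invoking the characterisations already established: since $\mathbb{E}_r^{\Phi_\qk}=(\mathrm{id}\otimes h_\qk)\circ\beta_r$ is the averaging projection onto the $\beta_r$-fixed subalgebra $C(\qg)_{1_\qk}$, right $\Phi_\qk$-invariance $f\circ\mathbb{E}_r^{\Phi_\qk}=f$ is equivalent to $f|_{C(\qg)_\pi}=0$ for all nontrivial $\pi$, which by Theorem \ref{Theorem: Necessary and sufficient condition for left K invariance} is precisely left $\qk$-invariance, while Corollary \ref{Corollary: A necessary and sufficient condition for the right K invariance of f} handles the other side.
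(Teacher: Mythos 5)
Your proposal is correct, but your primary argument takes a genuinely different route from the paper's. The paper proves the nontrivial direction representation-theoretically: it uses the decomposition $\pag=\bigoplus_{\pi\in\mathrm{Irr}(\qk)}C(\qg)_\pi$ into spectral subspaces of the coaction $\beta_r$, notes that $\mathbb{E}_r^{\Phi_\qk}=(\mathrm{id}\otimes h_\qk\circ\theta)\circ\Delta$ is the conditional expectation onto the fixed-point subalgebra and therefore kills $C(\qg)_\pi$ for nontrivial $\pi$, deduces from $(f\otimes\Phi_\qk)\circ\Delta=f$ that $f$ vanishes on all such subspaces, and then cites Theorem \ref{Theorem: Necessary and sufficient condition for left K invariance} (and Corollary \ref{Corollary: A necessary and sufficient condition for the right K invariance of f} for the other side); this is exactly the ``alternative'' you sketch in your closing sentences, so you have in effect rediscovered the paper's proof as a fallback. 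Your main argument is instead a direct Hopf-algebraic averaging computation: the intertwining identity $\Delta^\prime\circ g=(g\otimes\mathrm{id})\circ\beta_r$ for $g=(f\otimes\mathrm{id})\circ\beta_r$ (both sides equal $f(x_{(1)})\,\theta(x_{(2)})\otimes\theta(x_{(3)})$, using coassociativity and $(\theta\otimes\theta)\circ\Delta=\Delta^\prime\circ\theta$), combined with left invariance of $h_\qk$, upgrades the scalar identity $h_\qk\circ g=f$ to the $\pak$-valued identity $g=f(\cdot)\mathbf{1}_\qk$; the computation checks out, and, like the paper, it actually establishes the stronger one-sided equivalences (right $\Phi_\qk$-invariance $\Leftrightarrow$ left $\qk$-invariance, and the mirror statement). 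What your route buys: it is self-contained and more elementary, requiring no Peter--Weyl-type decomposition of $\pag$ over $\mathrm{Irr}(\qk)$ and no appeal to the earlier characterization theorems, only the Hopf-morphism property of $\theta$ and invariance of the Haar state, so it would apply verbatim to any Hopf $*$-algebra surjection onto a Hopf $*$-algebra with an invariant integral. What the paper's route buys: once Theorem \ref{Theorem: Necessary and sufficient condition for left K invariance} is available the proof is only a few lines, and it places the result inside the spectral picture (invariant functionals are exactly those supported on the trivial spectral subspace) that the paper exploits repeatedly elsewhere.
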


\begin{proof}
If $f$ is $\qk$-bi-invariant, it easily follows that $f$ is also $\Phi_\qk$-bi-invariant. 
		
We prove the converse implication. We prove only the left $\qk$-invariance of $f$. The proof of the right $\qk$-invariance is identical, with $\beta_r$ replaced by $\beta_l$.
		
We will use the notations in the proof of Theorem \ref{Theorem: Necessary and sufficient condition for left K invariance}. Recall that
\[
\pag=\bigoplus_{\pi\in Irr(\qk)}C(\qg)_\pi.
\]
Let $x\in C(\qg)_\pi$, such that $\pi\neq\mathbf{1}_\qk$. We may note that $( \mathrm{id}\otimes h_\qk\circ\theta)\circ\Delta$ is the conditioal expectation onto the fixed point subalgebra of the right coaction $\beta_r$. Since $(f\otimes h_\qk\circ\theta)(\Delta(x))=f(x)$ and $( \mathrm{id}\otimes h_\qk\circ\theta)(\Delta(x))=0$, this implies that $f(x)=0$. Left $\qk$-invariance of $f$ now follows from Theorem \ref{Theorem: Necessary and sufficient condition for left K invariance}.
\end{proof}

Let us recall the construction of the quantum quotient space $\qk\backslash\qg$ as explained in Subsection \ref{Subsubsection: Quantum quotient space}. As before, let us denote the Podle\'s algebra for the right action of $\qg$ on $\qk\backslash\qg$ by $\mathscr{O}_\qg(\qk\backslash\qg)$, and the corresponding right Hopf *-coaction of $\pag$ on $\mathscr{O}_\qg(\qk\backslash\qg)$ by $\alpha$.

\begin{definition}
A functional $f$ on $\pquo$ is called \emph{$\qk$-invariant} if
\[
(\theta\otimes f)\circ\alpha=f(\cdot)\mathbf{1}_\qk.
\] 
\end{definition}

\begin{remark}
We may note that the above definition of $\qk$-invariance of a functional on $\pquo$ reduces to the usual definition of $K$-invariant measure on quotient spaces $K\backslash G$ when $G$ is a classical compact group and $K$ is a compact subgroup, as introduced in \cite{liao04,liao15}. 
\end{remark}

Let us also recall from Subsection \ref{Subsubsection: Quantum quotient space} that $\pquo$ can equivalently be thought of as the right coidalgebra corresponding to the idempotent state $ h_\qk\circ\theta$ on $\pag$. Let $\mathbb{E}_{\qk\backslash\qg}:=( \mathrm{id}\otimes h_\qk\circ\theta)\circ\Delta$ be the corresponding conditional expectation associated with the idempotent state $\Phi_\qk= h_\qk\circ\theta$.

\begin{theorem}\label{Theorem: From K invariance to K-bi-invariance}
Let $f$ be a $\qk$-invariant functional on $\pquo$. Then there exists a unique $\qk$-bi-invariant functional $\mu$ on $\pag$ such that $f\circ \mathbb{E}_{\qk\backslash\qg}=\mu$. 
\end{theorem}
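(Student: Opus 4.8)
The plan is to reduce Theorem \ref{Theorem: From K invariance to K-bi-invariance} to the two structural results already established, namely Theorem \ref{Theorem: From bi K invariance to bi h invariance and vice versa} (equating $\qk$-bi-invariance with $\Phi_\qk$-bi-invariance) and Theorem \ref{Theorem: from Phi invariant functional to bi Phi invariant functional and vice versa} (the bijection between $\Phi$-invariant functionals on the coidalgebra and $\Phi$-bi-invariant functionals on $\pag$). The key observation is that $\pquo$ is precisely the right coidalgebra $\mathscr{O}_{\mathbb{G}}(\Phi_\qk\backslash\mathbb{G})$ associated to the idempotent state $\Phi_\qk=h_\qk\circ\theta$, as recalled in Subsection \ref{Subsubsection: invariance in quantum quotient space}, and that the conditional expectation $\mathbb{E}_{\qk\backslash\qg}=(\mathrm{id}\otimes h_\qk\circ\theta)\circ\Delta$ coincides with $\mathbb{E}_{\ell}^{\Phi_\qk}$ under this identification.

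First I would verify that a $\qk$-invariant functional $f$ on $\pquo$, in the sense $(\theta\otimes f)\circ\alpha=f(\cdot)\mathbf{1}_\qk$, is the same thing as a $\Phi_\qk$-invariant functional in the sense of Definition \ref{Definition: h invariance and h-bi-invarince}, i.e.\ $(f\otimes\Phi_\qk)\circ\alpha=f$. This is the translation step: applying $h_\qk$ to the $\qk$-valued identity $(\theta\otimes f)\circ\alpha=f(\cdot)\mathbf{1}_\qk$ and using $h_\qk(\mathbf{1}_\qk)=1$ together with $\Phi_\qk=h_\qk\circ\theta$ should yield $(\Phi_\qk\otimes f)\circ\alpha=f$, and since $\alpha=\Delta|_{\pquo}$ on the coidalgebra one passes between the left- and right-leg versions using the bi-invariance bookkeeping. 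Here I must be careful about which leg of $\alpha$ the functionals act on and match it to the convention fixed before Definition \ref{Definition: h invariance and h-bi-invarince}.

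With these two identifications in hand, the proof assembles as follows. Given the $\qk$-invariant (hence $\Phi_\qk$-invariant) functional $f$ on $\pquo=\mathscr{O}_{\mathbb{G}}(\Phi_\qk\backslash\mathbb{G})$, Theorem \ref{Theorem: from Phi invariant functional to bi Phi invariant functional and vice versa}(a) produces the unique $\Phi_\qk$-bi-invariant functional $\mu=f\circ\mathbb{E}_{\ell}^{\Phi_\qk}=f\circ\mathbb{E}_{\qk\backslash\qg}$ on $\pag$ restricting to $f$. Theorem \ref{Theorem: From bi K invariance to bi h invariance and vice versa} then upgrades ``$\Phi_\qk$-bi-invariant'' to ``$\qk$-bi-invariant'', giving exactly the functional claimed. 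Uniqueness transfers directly: any $\qk$-bi-invariant $\mu'$ with $f\circ\mathbb{E}_{\qk\backslash\qg}=\mu'$ is $\Phi_\qk$-bi-invariant and equals $f\circ\mathbb{E}_{\ell}^{\Phi_\qk}$ by the uniqueness clause of the earlier theorem.

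The main obstacle I anticipate is the first step, reconciling the $\qk$-valued invariance condition $(\theta\otimes f)\circ\alpha=f(\cdot)\mathbf{1}_\qk$ with the scalar-valued $\Phi_\qk$-invariance used in Section \ref{sec-invfunct}; one direction (collapsing via $h_\qk$) is routine, but the converse requires knowing that $\Phi_\qk$-invariance recovers the full $\qk$-valued identity, which is where Corollary \ref{Corollary: A necessary and sufficient condition for the right K invariance of f} and the decomposition $\pag=\bigoplus_{\pi}C(\qg)_\pi$ do the real work. Everything else is bookkeeping: one need only check that the two conditional expectations and the two coidalgebras genuinely coincide under $\Phi_\qk=h_\qk\circ\theta$, which is already recorded in the preliminaries, and then invoke the two cited theorems in sequence.
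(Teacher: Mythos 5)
Your proposal is correct and follows essentially the same route as the paper: translate $\qk$-invariance of $f$ into $\Phi_\qk$-invariance (with $\Phi_\qk=h_\qk\circ\theta$), invoke Theorem \ref{Theorem: from Phi invariant functional to bi Phi invariant functional and vice versa}(a) to obtain the unique $\Phi_\qk$-bi-invariant extension $\mu=f\circ \mathbb{E}_{\qk\backslash\qg}$, and then upgrade $\Phi_\qk$-bi-invariance to $\qk$-bi-invariance. The only cosmetic difference is that you cite the packaged equivalence of Theorem \ref{Theorem: From bi K invariance to bi h invariance and vice versa} for the last step, whereas the paper cites Theorem \ref{Theorem: Necessary and sufficient condition for left K invariance} and Corollary \ref{Corollary: A necessary and sufficient condition for the right K invariance of f} directly, which is the same underlying argument; your explicit attention to the leg-ordering in the $\qk$-invariance condition is a point the paper glosses over.
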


\begin{proof}
Since $f$ is a $\qk$-invariant functional on $\pquo$, this implies that $f$ is a $ h_\qk\circ\theta$-invariant functional in the sense of Definition \ref{Definition: h invariance and h-bi-invarince}. Thus by Theorem \ref{Theorem: from Phi invariant functional to bi Phi invariant functional and vice versa}, there exists a unique $ h_\qk\circ\theta$-bi-invariant functional $\mu$ on $\pag$ satisfying $f\circ \mathbb{E}_{\qk\backslash\qg}=\mu$. Now from Theorem \ref{Theorem: Necessary and sufficient condition for left K invariance} and Corollary \ref{Corollary: A necessary and sufficient condition for the right K invariance of f} it follows that $\mu$ is also $\qk$-bi-invariant as a functional on $\pag$. This proves the result.  
\end{proof}

As a consequence, we have the following:

\begin{corollary}\label{Corollary: From bi K invariance to K invariance}
Let $\mu$ be a $\qk$-bi-invariant functional on $\pag$. Then  \mbox{$f:=\mu|_{\pquo}$} is the unique $\qk$-invariant functional on $\pquo$ such that $f\circ \mathbb{E}_{\qk\backslash\qg}=\mu$.
\end{corollary}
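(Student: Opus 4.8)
The plan is to leverage Theorem \ref{Theorem: From K invariance to K-bi-invariance} as the backbone, since the hard existence-and-uniqueness work has already been done there. The statement to prove (Corollary \ref{Corollary: From bi K invariance to K invariance}) is essentially the inverse direction: starting from a $\qk$-bi-invariant functional $\mu$ on $\pag$, I want to show that its restriction $f:=\mu|_{\pquo}$ is $\qk$-invariant and recovers $\mu$ via $f\circ\mathbb{E}_{\qk\backslash\qg}=\mu$. The first observation I would make is that, by Theorem \ref{Theorem: From bi K invariance to bi h invariance and vice versa}, being $\qk$-bi-invariant is equivalent to being $\Phi_\qk$-bi-invariant, where $\Phi_\qk=h_\qk\circ\theta$. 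This immediately translates the problem into the language of Theorem \ref{Theorem: from Phi invariant functional to bi Phi invariant functional and vice versa}, which handles exactly the $\Phi$-bi-invariant / $\Phi$-invariant correspondence for a general idempotent state $\Phi$.

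First I would invoke part (b) of Theorem \ref{Theorem: from Phi invariant functional to bi Phi invariant functional and vice versa} with $\Phi=\Phi_\qk$. That theorem tells us directly that $f:=\mu|_{_{\pquo}}$ is the \emph{unique} $\Phi_\qk$-invariant functional on $\pquo$ satisfying $f\circ\mathbb{E}_{\ell}^{\Phi_\qk}=\mu$. Here I would note that the relevant conditional expectation is $\mathbb{E}_{\qk\backslash\qg}=(\mathrm{id}\otimes h_\qk\circ\theta)\circ\Delta$, which is precisely $\mathbb{E}_{\ell}^{\Phi_\qk}$ (or its right analogue, according to the coidalgebra convention fixed for quotient spaces), as recalled just before the statement. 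So the identity $f\circ\mathbb{E}_{\qk\backslash\qg}=\mu$ and the uniqueness are both immediate consequences of the cited theorem once the notational identification is made.

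The only genuine content beyond a notational translation is reconciling the two notions of invariance: Theorem \ref{Theorem: from Phi invariant functional to bi Phi invariant functional and vice versa} produces a $\Phi_\qk$-invariant functional in the sense of Definition \ref{Definition: h invariance and h-bi-invarince} (namely $(f\otimes\Phi_\qk)\circ\alpha=f$), whereas the present corollary asserts $\qk$-invariance in the sense $(\theta\otimes f)\circ\alpha=f(\cdot)\mathbf{1}_\qk$. I would argue that these coincide on $\pquo$: applying $(\mathrm{id}\otimes h_\qk)$ to $(\theta\otimes f)\circ\alpha=f(\cdot)\mathbf{1}_\qk$ recovers $(h_\qk\circ\theta\otimes f)\circ\alpha=(\Phi_\qk\otimes f)\circ\alpha$, and conversely the fact that $\pquo$ already consists of $\qk$-invariant (left-fixed) elements forces the $\pak$-valued map $(\theta\otimes f)\circ\alpha$ to be scalar-valued, so that averaging against $h_\qk$ loses no information. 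This is exactly the equivalence already exploited in the proof of Theorem \ref{Theorem: From K invariance to K-bi-invariance}, and I would simply cite that argument.

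The step I expect to require the most care is this last identification of the two invariance conditions, since it is the only place where the special structure of the quotient space (as opposed to a general expected coidalgebra) is actually used; everything else is a direct appeal to the already-established one-to-one correspondences. In the write-up I would keep this to a sentence or two, pointing back to Theorem \ref{Theorem: Necessary and sufficient condition for left K invariance} and Corollary \ref{Corollary: A necessary and sufficient condition for the right K invariance of f} for the decomposition $\pag=\bigoplus_{\pi}C(\qg)_\pi$ that underlies the equivalence, exactly as in the proof of the companion Theorem \ref{Theorem: From K invariance to K-bi-invariance}.
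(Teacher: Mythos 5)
Your overall route --- convert $\qk$-bi-invariance of $\mu$ into $\Phi_\qk$-bi-invariance via Theorem \ref{Theorem: From bi K invariance to bi h invariance and vice versa}, then apply part (b) of Theorem \ref{Theorem: from Phi invariant functional to bi Phi invariant functional and vice versa} with $\Phi=\Phi_\qk$ to obtain the identity $f\circ\mathbb{E}_{\qk\backslash\qg}=\mu$ and the uniqueness --- is sound, and it is essentially how the paper handles that part of the statement. The genuine gap is in the step you yourself flagged as delicate: passing from $\Phi_\qk$-invariance of $f$ to $\qk$-invariance. Your justification --- that since $\pquo$ consists of left-fixed elements, the $\pak$-valued map obtained by applying $\theta$ to the group leg of $\alpha$ is ``forced to be scalar-valued, so that averaging against $h_\qk$ loses no information'' --- is false. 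Left-fixedness of $x\in\pquo$ constrains the \emph{first} leg: $(\theta\otimes\mathrm{id})\Delta(x)=\mathbf{1}_\qk\otimes x$. The $\qk$-invariance condition for a functional on $\pquo$ applies $\theta$ to the \emph{other} leg of $\alpha(x)\in\pquo\otimes\pag$, i.e.\ it concerns the residual right action of $\qk$ on $\qk\backslash\qg$, under which elements of $\pquo$ are certainly not fixed (classically, a function on $K\backslash G$ need not satisfy $f(Kgk)=f(Kg)$). If scalar-valuedness were automatic, every functional on $\pquo$ would be $\qk$-invariant, which is absurd. Nor can you ``simply cite'' Theorem \ref{Theorem: Necessary and sufficient condition for left K invariance} and Corollary \ref{Corollary: A necessary and sufficient condition for the right K invariance of f}: those results characterize invariance of functionals defined on all of $\pag$, not on the subalgebra $\pquo$, so an extension or adaptation step would still be missing.

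The fix is simpler than what you attempt, and it is exactly what the paper does: use the hypothesis directly instead of routing the invariance of $f$ through $\Phi_\qk$. Since $\mu$ is $\qk$-bi-invariant and $\alpha$ is the restriction of $\Delta$, for every $x\in\pquo$ one has (writing the invariance condition with $f$ on the $\pquo$-leg and $\theta$ on the $\pag$-leg of $\Delta(x)\in\pquo\otimes\pag$)
\[
(f\otimes\theta)\bigl(\alpha(x)\bigr)=(\mu\otimes\theta)\bigl(\Delta(x)\bigr)
=\bigl((\mu\otimes\mathrm{id})\circ\beta_r\bigr)(x)=\mu(x)\mathbf{1}_\qk=f(x)\mathbf{1}_\qk ,
\]
so $f=\mu|_{\pquo}$ inherits $\qk$-invariance from the (one-sided) $\qk$-invariance of $\mu$ by pure restriction; no spectral decomposition and no averaging argument is needed. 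The decomposition machinery of Theorem \ref{Theorem: Necessary and sufficient condition for left K invariance} is genuinely needed only in the companion Theorem \ref{Theorem: From K invariance to K-bi-invariance}, where one starts from a functional on the quotient and must \emph{produce} $\qk$-bi-invariance of its extension; in the present corollary that bi-invariance is given. With this one step replaced, the rest of your proposal (extension identity and uniqueness via the $\Phi_\qk$-correspondence) goes through as written and coincides with the paper's argument.
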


\begin{proof}
We may note that the $\qk$-invariance of the functional $f$ on $\pquo$ follows from the left $\qk$-invariance of $\mu$ as a functional on $\pag$. The rest of the proof is an adaptation of the proof of Theorem \ref{Theorem: From K invariance to K-bi-invariance}. 
\end{proof}

Thus we have a one-one correspondence between $\qk$-invariant functionals on $\pquo$ and $\qk$-bi-invariant functionals on $\pag$.  This correspondence can be seen to extend the already known one-one correspondence between $K$-invariant measures on the quotient space $K\backslash G$ and $K$-bi-invariant measure on $G$ for a classical compact group $G$ and its compact subgroup $K$ \cite{liao04}.

\subsection{Convolution of functionals and invariant operators on expected right coidalgebras}

Let $\mathscr{O}_{\mathbb{G}}(\Phi\backslash \mathbb{G}) $ be an expected right coidalgebra and \mbox{$h\in\mathcal{I}(\qg)$} be the associated idempotent state. Let $\varepsilon$ denote the counit of $\qg$ and \mbox{$\alpha:=\Delta|_{_{\mathscr{O}_{\mathbb{G}}(\Phi\backslash \mathbb{G}) }}$}.

\subsubsection{Convolution of functionals on expected right coidalgebras}

\begin{definition}\label{Definition: definition for convolution of functionals on coidalgebras}
Let $f$ and $g$ be two functionals on the expected right coidalgebra $\mathscr{O}_{\mathbb{G}}(\Phi\backslash \mathbb{G}) $. We define convolution of $f$ and $g$, denoted $f\convnew g$ as the following functional on $\mathscr{O}_{\mathbb{G}}(\Phi\backslash \mathbb{G}) $:
\[
f\convnew g:=\big(f\otimes (g\circ \mathbb{E}_{\ell}^\Phi)\big)\circ\alpha. 
\]
\end{definition}

\begin{remark}
Let us make a remark on the notations used here:

For two functionals $\mu$ and $\nu$ on $\pag$, $\mu\ast\nu$ will denote the convolution defined by $\mu\ast\nu:=(\mu\otimes\nu)\circ\Delta$, whereas for two functionals $f$ and $g$ on $\mathscr{O}_{\mathbb{G}}(\Phi\backslash \mathbb{G}) $, $f\convnew g$ will denote the functional on $\mathscr{O}_{\mathbb{G}}(\Phi\backslash \mathbb{G}) $, as given in Definition \ref{Definition: definition for convolution of functionals on coidalgebras}.
\end{remark}

\begin{theorem}\label{Theorem: Convolution at the group level restricts to give convolution at the coidalgebra level}
Let $f_1$ and $f_2$ be two $\Phi$-invariant functionals on $\mathscr{O}_{\mathbb{G}}(\Phi\backslash \mathbb{G}) $ and $\mu_1$ and $\mu_2$ be their unique $\Phi$-bi-invariant extensions to $\pag$, as given by Theorem \ref{Theorem: from Phi invariant functional to bi Phi invariant functional and vice versa}.

Then the following holds:
\begin{enumerate}
\item[(a)]
$f_1\convnew f_2$ is a $\Phi$-invariant functional on $\mathscr{O}_{\mathbb{G}}(\Phi\backslash \mathbb{G}) $.
\item[(b)]
$\mu_1\ast\mu_2$ is the unique $\Phi$-bi-invariant extension of $f_1\convnew f_2$ to $\pag$.
\end{enumerate}
\end{theorem}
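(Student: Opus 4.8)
The plan is to reduce both statements to the one-to-one correspondence of Theorem \ref{Theorem: from Phi invariant functional to bi Phi invariant functional and vice versa}. Recall from that theorem that $\mu_i = f_i\circ\mathbb{E}_{\ell}^\Phi$ for $i=1,2$, that $\mathbb{E}_{\ell}^\Phi = (\Phi\otimes\mathrm{id})\circ\Delta$ restricts to the identity on $\mathscr{O}_{\mathbb{G}}(\Phi\backslash \mathbb{G})$, and that the $\Phi$-bi-invariance of each $\mu_i$ (Definition \ref{Definition: h invariance and h-bi-invarince}) unwinds, via $L_\Phi\mu = \Phi\ast\mu$, $R_\Phi\mu = \mu\ast\Phi$ and $\Phi(\mathbf{1})=1$, into the two convolution identities $\Phi\ast\mu_i = \mu_i = \mu_i\ast\Phi$. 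I would then establish two facts: (i) $\mu_1\ast\mu_2$ is $\Phi$-bi-invariant, and (ii) the restriction of $\mu_1\ast\mu_2$ to $\mathscr{O}_{\mathbb{G}}(\Phi\backslash \mathbb{G})$ equals $f_1\convnew f_2$. Granting (i) and (ii), part (b) is immediate from the uniqueness clause of Theorem \ref{Theorem: from Phi invariant functional to bi Phi invariant functional and vice versa}, and part (a) follows because, by part (b) of that theorem, the restriction to $\mathscr{O}_{\mathbb{G}}(\Phi\backslash \mathbb{G})$ of any $\Phi$-bi-invariant functional is automatically $\Phi$-invariant.

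For fact (i) I would use associativity of the convolution $\ast$ on $\pag'$, which is just coassociativity of $\Delta$. Then $\Phi\ast(\mu_1\ast\mu_2) = (\Phi\ast\mu_1)\ast\mu_2 = \mu_1\ast\mu_2$ by left $\Phi$-invariance of $\mu_1$, and symmetrically $(\mu_1\ast\mu_2)\ast\Phi = \mu_1\ast(\mu_2\ast\Phi) = \mu_1\ast\mu_2$ by right $\Phi$-invariance of $\mu_2$. Together these are exactly the left and right $\Phi$-invariance of $\mu_1\ast\mu_2$, i.e.\ its $\Phi$-bi-invariance.

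For fact (ii), fix $x\in\mathscr{O}_{\mathbb{G}}(\Phi\backslash \mathbb{G})$, so that $\alpha(x)=\Delta(x)$. Unwinding Definition \ref{Definition: definition for convolution of functionals on coidalgebras} gives $(f_1\convnew f_2)(x) = (f_1\otimes\mu_2)(\Delta(x))$, whereas $(\mu_1\ast\mu_2)(x) = (\mu_1\otimes\mu_2)(\Delta(x)) = (f_1\circ\mathbb{E}_{\ell}^\Phi\otimes\mu_2)(\Delta(x))$. These agree once one may drop $\mathbb{E}_{\ell}^\Phi$ from the first tensor leg, and this is exactly where the coidalgebra structure enters: since $\mathbb{E}_{\ell}^\Phi(x)=x$, Lemma \ref{Lemma: useful 1st observation}(a) yields $(\mathbb{E}_{\ell}^\Phi\otimes\mathrm{id})\Delta(x) = \Delta(\mathbb{E}_{\ell}^\Phi(x)) = \Delta(x)$, so that the first leg of $\Delta(x)$ already lies in $\mathscr{O}_{\mathbb{G}}(\Phi\backslash \mathbb{G})$, where $f_1\circ\mathbb{E}_{\ell}^\Phi = f_1$. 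Applying $f_1\otimes\mu_2$ gives $(\mu_1\ast\mu_2)(x) = (f_1\otimes\mu_2)(\Delta(x)) = (f_1\convnew f_2)(x)$, which is (ii). Then (a) holds as explained above, and (b) follows since $\mu_1\ast\mu_2$ is a $\Phi$-bi-invariant functional restricting to $f_1\convnew f_2$, hence equals its unique $\Phi$-bi-invariant extension $(f_1\convnew f_2)\circ\mathbb{E}_{\ell}^\Phi$.

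The main obstacle is fact (ii). The definition of $\convnew$ is asymmetric: it composes only the second functional with $\mathbb{E}_{\ell}^\Phi$ while keeping the first on the coidalgebra, whereas $\mu_1\ast\mu_2$ extends both arguments. Reconciling the two rests precisely on the observation that on the first leg the extension $\mu_1 = f_1\circ\mathbb{E}_{\ell}^\Phi$ is redundant, because $\Delta$ maps $\mathscr{O}_{\mathbb{G}}(\Phi\backslash \mathbb{G})$ into $\mathscr{O}_{\mathbb{G}}(\Phi\backslash \mathbb{G})\otimes\pag$ (equivalently, Lemma \ref{Lemma: useful 1st observation}(a) together with $\mathbb{E}_{\ell}^\Phi x = x$). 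The remaining steps are formal manipulations with coassociativity and the one-sided invariance identities; the only real care needed is bookkeeping of which tensor leg carries $\Phi$, $f_i$, or $\mu_i$.
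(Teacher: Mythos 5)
Your proposal is correct, and all of its ingredients check out: fact (i) follows from associativity of $\star$ on $\pag'$ exactly as you say; fact (ii) is valid because $(\mathbb{E}_{\ell}^\Phi\otimes\mathrm{id})\circ\Delta(x)=\Delta(\mathbb{E}_{\ell}^\Phi(x))=\Delta(x)$ for $x\in\mathscr{O}_{\mathbb{G}}(\Phi\backslash\mathbb{G})$ by Lemma \ref{Lemma: useful 1st observation}(a); and the deduction of (a) and (b) from (i), (ii) and Theorem \ref{Theorem: from Phi invariant functional to bi Phi invariant functional and vice versa} is sound. Where you differ from the paper is in part (a): the paper proves it by a separate direct computation, unwinding $(f_1\convnew f_2\otimes\Phi)\circ\alpha$ via coassociativity and the invariance $\mu_2\star\Phi=\mu_2$ to recover $f_1\convnew f_2$, and only afterwards proves (b) by checking bi-invariance of $\mu_1\star\mu_2$ (left implicit as ``easy'') and the restriction identity. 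You instead establish your facts (i) and (ii) first --- which together are essentially the paper's proof of (b), with (ii) a streamlined version of the paper's restriction computation --- and then harvest (a) for free from part (b) of Theorem \ref{Theorem: from Phi invariant functional to bi Phi invariant functional and vice versa}, namely that the restriction of any $\Phi$-bi-invariant functional is automatically $\Phi$-invariant. This is a more economical organization: one computation serves both claims, and you even need only left invariance of $\mu_1$ and right invariance of $\mu_2$ for (i). What the paper's route buys in exchange is a self-contained verification of (a) that does not pass through the extension machinery at all; this is why the paper can cite (a) independently (e.g.\ the invariance of $\mu_2$ it uses there is the one already granted by hypothesis), whereas in your argument the invariance of $f_1\convnew f_2$ is logically downstream of the correspondence theorem. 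Both are complete proofs.
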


\begin{proof}
We prove (a):

\begin{equation*}
\begin{split}
(f_1\convnew f_2\otimes \Phi)\circ\alpha&=(f_1\otimes f_2\circ \mathbb{E}_{\ell}^\Phi\otimes \Phi)\circ(\alpha\otimes \mathrm{id})\circ\alpha\\
&=(f_1\otimes f_2\circ \mathbb{E}_{\ell}^\Phi\otimes \Phi)\circ( \mathrm{id}\otimes\Delta)\circ\alpha\\
&=(f_1\otimes (f_2\circ \mathbb{E}_{\ell}^\Phi\otimes \Phi)\circ\Delta)\circ\alpha\\
&=(f_1\otimes(\mu_2\otimes \Phi)\circ\Delta)\circ\alpha\\
&=(f_1\otimes\mu_2)\circ\alpha\quad(\text{using left $\Phi$-invariance of $\mu_2$})\\
&=(f_1\otimes f_2\circ \mathbb{E}_{\ell}^\Phi)\circ\alpha=f_1\convnew f_2.
\end{split}
\end{equation*}
To prove (b): 
		
Using the fact that both $\mu_1$ and $\mu_2$ are $\Phi$-bi-invariant functionals on $\pag$, it is easy to see that $\mu_1\ast\mu_2$ is a $\Phi$-bi-invariant functional on $\pag$.

Let $x\in \mathscr{O}_{\mathbb{G}}(\Phi\backslash \mathbb{G}) $.
\begin{equation*}
\begin{split}
(\mu_1\ast\mu_2)(x)&=(\mu_1\ast\mu_2)(\mathbb{E}_{\ell}^\Phi(x))\\
&=(f_1\circ \mathbb{E}_{\ell}^\Phi\otimes f_2\circ \mathbb{E}_{\ell}^\Phi)(\Delta(\mathbb{E}_{\ell}^\Phi(x)))\\
&=(f_1\circ (\mathbb{E}_{\ell}^\Phi)^2\otimes f_2\circ \mathbb{E}_{\ell}^\Phi)(\Delta(x))\quad(\text{using the invariance of $\mathbb{E}_{\ell}^\Phi$})\\
&=(f_1\circ \mathbb{E}_{\ell}^\Phi\otimes f_2\circ \mathbb{E}_{\ell}^\Phi)(\Delta(x))\\
&=(f_1\otimes f_2\circ \mathbb{E}_{\ell}^\Phi)(\Delta(\mathbb{E}_{\ell}^\Phi(x)))\\
&=(f_1\otimes f_2\circ \mathbb{E}_{\ell}^\Phi)(\alpha(x)))=f_1\convnew f_2 (x)
\end{split}
\end{equation*}
which proves that $\mu_1\ast\mu_2|_{_{\mathscr{O}_{\mathbb{G}}(\Phi\backslash \mathbb{G}) }}=f_1\convnew f_2$. It now follows from Theorem \ref{Theorem: from Phi invariant functional to bi Phi invariant functional and vice versa} that $\mu_1\ast\mu_2$ must be the unique $\Phi$-bi-invariant extension of $f_1\convnew f_2$ to $\pag$. 
\end{proof}

\subsubsection{$\qg$-invariant operators on expected right coidalgebras}

Recall that  linear map $T:\mathscr{O}_{\mathbb{G}}(\Phi\backslash \mathbb{G}) \longrightarrow\mathscr{O}_{\mathbb{G}}(\Phi\backslash \mathbb{G}) $ is called $\qg$-invariant if $(T\otimes \mathrm{id})\circ\alpha=\alpha\circ T$, see Definition \ref{Definition: G invariant maps}.

\begin{remark}
This definition is motivated by the following observation:

If $G$ is a classical compact group, then all expected right coidalgebras of $(C(G),\Delta)$, where $\Delta$ is the canonical coproduct on $C(G)$, are of the form $C(K\backslash G)$, for some compact subgroup $K\subset G$.

A linear map $T:C(K\backslash G)\longrightarrow C(K\backslash G)$ is called $G$-invariant, if $T$ is covariant with respect to the canonical action of $G$ on $C(K\backslash G)$ \cite{liao04,liao15}, i.e. denoting the action of $G$ on $C(K\backslash G)$ by $G\ni g\mapsto \lambda_g\in Aut(C(K\backslash G))$, we have $T\circ \lambda_g=\lambda_g\circ T$ for all $g\in G$. 

Let $E_\pi(\cdot):=\int_G~\chi_{_{\pi}}(g)\lambda_g(\cdot) dg$, where $dg$ is the Haar measure of $G$, $\pi$ is an irreducible unitary representation of $G$ and $\chi_{_{\pi}}$ is its character.
It can be seen that $E_\pi$ is a completely bounded idempotent and $C(K\backslash G)_\pi:=\{E_\pi(f):~f\in C(K\backslash G)\}$ is the spectral subspace of $C(K\backslash G)$ for the action $\lambda$, corresponding to $\pi$.
Denoting $\mathscr{O}_G(K\backslash G):=\bigoplus_{\pi}C(K\backslash G)_\pi$ and $\lambda:C(K\backslash G)\longrightarrow C(K\backslash G)\otimes C(G)$ by $\lambda(f)(x,g):=\lambda_g(f)(x)$, it follows that $\lambda|_{_{\mathscr{O}_G(K\backslash G)}}:\mathscr{O}_G(K\backslash G)\longrightarrow \mathscr{O}_G(K\backslash G)\otimes \mathscr{O}_G(G)$ is a right coaction of the Hopf *-algebra $(\mathscr{O}_G(G),\Delta)$, where $\Delta$ is the restriction of the canonical coproduct on $C(G)$. 

Using the covariance of $T$, it is possible to see now that \mbox{$T(\mathscr{O}_G(K\backslash G))\subset \mathscr{O}_G(K\backslash G)$} and $(T\otimes \mathrm{id})\circ\lambda|_{_{\mathscr{O}_G(K\backslash G)}}=\lambda|_{_{\mathscr{O}_G(K\backslash G)}}\circ T$.
\end{remark}

\begin{lemma}\label{lemma: from invariant operators to Phi invariant functionals and vice versa}
Let $T:\mathscr{O}_{\mathbb{G}}(\Phi\backslash \mathbb{G}) \longrightarrow\mathscr{O}_{\mathbb{G}}(\Phi\backslash \mathbb{G}) $ be $\qg$-invariant. Then $\gamma:=\varepsilon\circ T$ is a $\Phi$-invariant functional on $\mathscr{O}_{\mathbb{G}}(\Phi\backslash \mathbb{G}) $.

Conversely, if $\gamma$ is a functional on $\mathscr{O}_{\mathbb{G}}(\Phi\backslash \mathbb{G}) $, then the formula $T:=(\gamma\otimes \mathbb{E}_{\ell}^\Phi)\circ\alpha$ defines a $\qg$-invariant map on $\mathscr{O}_{\mathbb{G}}(\Phi\backslash \mathbb{G}) $. However, $\gamma=\varepsilon\circ T$ if and only if $\gamma$ is $\Phi$-invariant.
\end{lemma}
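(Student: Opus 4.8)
The plan is to prove all three assertions by short computations in Sweedler notation, relying on coassociativity, the counit axioms, the intertwining relation $\Delta\circ\mathbb{E}_{\ell}^\Phi=(\mathbb{E}_{\ell}^\Phi\otimes\mathrm{id})\circ\Delta$ of Lemma~\ref{Lemma: useful 1st observation}(a), and the facts that $\mathbb{E}_{\ell}^\Phi=(\Phi\otimes\mathrm{id})\circ\Delta$ and $\alpha=\Delta|_{\mathscr{O}_{\mathbb{G}}(\Phi\backslash \mathbb{G})}$. Before starting I would record two elementary identities coming from the counit axiom: on all of $\pag$,
\[
(\varepsilon\otimes\Phi)\circ\Delta=\Phi=(\Phi\otimes\varepsilon)\circ\Delta,
\]
so in particular $\varepsilon\circ\mathbb{E}_{\ell}^\Phi=\Phi$; and since every $z\in\mathscr{O}_{\mathbb{G}}(\Phi\backslash \mathbb{G})$ satisfies $z=\mathbb{E}_{\ell}^\Phi(z)$, this forces $\varepsilon$ and $\Phi$ to coincide on $\mathscr{O}_{\mathbb{G}}(\Phi\backslash \mathbb{G})$.

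For the first statement, with $\gamma=\varepsilon\circ T$ I would compute, using $\qg$-invariance $(T\otimes\mathrm{id})\circ\alpha=\alpha\circ T$,
\[
(\gamma\otimes\Phi)\circ\alpha=(\varepsilon\otimes\Phi)\circ(T\otimes\mathrm{id})\circ\alpha=(\varepsilon\otimes\Phi)\circ\alpha\circ T=\Phi\circ T,
\]
where the last equality uses $(\varepsilon\otimes\Phi)\circ\Delta=\Phi$. Since $T$ takes values in $\mathscr{O}_{\mathbb{G}}(\Phi\backslash \mathbb{G})$, on which $\Phi=\varepsilon$, the right-hand side equals $\varepsilon\circ T=\gamma$; hence $(\gamma\otimes\Phi)\circ\alpha=\gamma$, which is exactly $\Phi$-invariance of $\gamma$.

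For the converse, I would first note that $T=(\gamma\otimes\mathbb{E}_{\ell}^\Phi)\circ\alpha$ really maps $\mathscr{O}_{\mathbb{G}}(\Phi\backslash \mathbb{G})$ into itself, because $\alpha$ lands in $\mathscr{O}_{\mathbb{G}}(\Phi\backslash \mathbb{G})\otimes\pag$ and $\mathbb{E}_{\ell}^\Phi$ has range $\mathscr{O}_{\mathbb{G}}(\Phi\backslash \mathbb{G})$. To check $\qg$-invariance, write $\alpha(x)=x_{(1)}\otimes x_{(2)}$ and expand both sides. On one side, applying Lemma~\ref{Lemma: useful 1st observation}(a) and coassociativity gives
\[
\alpha(T(x))=\gamma(x_{(1)})\,\Delta\big(\mathbb{E}_{\ell}^\Phi(x_{(2)})\big)=\gamma(x_{(1)})\,\mathbb{E}_{\ell}^\Phi(x_{(2)})\otimes x_{(3)};
\]
on the other side, expanding $T(x_{(1)})$ by coassociativity yields
\[
(T\otimes\mathrm{id})(\alpha(x))=T(x_{(1)})\otimes x_{(2)}=\gamma(x_{(1)})\,\mathbb{E}_{\ell}^\Phi(x_{(2)})\otimes x_{(3)},
\]
and the two coincide, so $\alpha\circ T=(T\otimes\mathrm{id})\circ\alpha$. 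Finally, for the equivalence I would compute $\varepsilon\circ T=(\gamma\otimes\varepsilon\circ\mathbb{E}_{\ell}^\Phi)\circ\alpha=(\gamma\otimes\Phi)\circ\alpha$ using $\varepsilon\circ\mathbb{E}_{\ell}^\Phi=\Phi$; this equals $\gamma$ if and only if $\gamma$ is $\Phi$-invariant, giving the stated ``if and only if''.

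None of these steps is a genuine obstacle; the computations are routine Hopf-algebra manipulations. The only points demanding care are keeping the three Sweedler legs straight --- in particular tracking which tensor factor lies in $\mathscr{O}_{\mathbb{G}}(\Phi\backslash \mathbb{G})$ and which in $\pag$ --- and having the preliminary identity $\varepsilon|_{\mathscr{O}_{\mathbb{G}}(\Phi\backslash \mathbb{G})}=\Phi|_{\mathscr{O}_{\mathbb{G}}(\Phi\backslash \mathbb{G})}$ in hand, since it is precisely what makes the first statement close up.
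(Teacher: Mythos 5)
Your proof is correct and follows essentially the same route as the paper's: the first part uses $\qg$-invariance plus the counit identity $(\varepsilon\otimes\Phi)\circ\Delta=\Phi=\varepsilon\circ\mathbb{E}_{\ell}^\Phi$ (equivalently, $\varepsilon=\Phi$ on the coidalgebra), the converse uses coassociativity together with Lemma \ref{Lemma: useful 1st observation}(a), and the final equivalence comes from $\varepsilon\circ\mathbb{E}_{\ell}^\Phi=\Phi$, exactly as in the paper (your Sweedler-notation computation versus the paper's map-composition chain is only a cosmetic difference). A minor plus: you explicitly check that $T$ lands in $\mathscr{O}_{\mathbb{G}}(\Phi\backslash \mathbb{G})$, which the paper leaves implicit.
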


\begin{proof}
The $\Phi$-invariance of $\gamma$ can be seen as follows:
\[
\begin{split}
(\gamma\otimes \Phi)(\alpha(x))&=(\varepsilon\circ T\otimes \Phi)(\alpha(x))\\
&=(\varepsilon\otimes \Phi)(\alpha(Tx))\\
&=\Phi(Tx)\\
&=\varepsilon(\mathbb{E}_{\ell}^\Phi(Tx))\\
&=\varepsilon(Tx)=\gamma(x)\quad(\text{as $Tx\in\mathscr{O}_{\mathbb{G}}(\Phi\backslash \mathbb{G}) $}).
\end{split}
\]

Now let $\gamma:\mathscr{O}_{\mathbb{G}}(\Phi\backslash \mathbb{G}) \longrightarrow\mathbb{C}$ be a functional. Then 

\begin{equation*}
\begin{split}
(T\otimes \mathrm{id})(\alpha(x))&=(\gamma\otimes \mathbb{E}_{\ell}^\Phi\otimes \mathrm{id})(\alpha\otimes \mathrm{id})(\alpha(x))\\
&=(\gamma\otimes \mathbb{E}_{\ell}^\Phi\otimes \mathrm{id})(\mathrm{id}\otimes\Delta)(\alpha(x))\\
&=\{\gamma\otimes(\mathbb{E}_{\ell}^\Phi\otimes \mathrm{id})\circ\Delta\}(\alpha(x))\\
&=(\gamma\otimes\Delta\circ \mathbb{E}_{\ell}^\Phi)(\alpha(x))\\
&=\Delta((\gamma\otimes \mathbb{E}_{\ell}^\Phi)(\alpha(x)))=\Delta(Tx),
\end{split}
\end{equation*}
which proves the $\qg$-invariance of $T$. 

We may now observe that 
\begin{equation*}
\begin{split}
\varepsilon(Tx)&=(\gamma\otimes \varepsilon\circ \mathbb{E}_{\ell}^\Phi)(\alpha)(x))=(\gamma\otimes h)(\alpha(x)), 
\end{split}
\end{equation*}

from which it follows that $\varepsilon\circ T=\gamma$ if and only if $\gamma$ is $\Phi$-invariant.
\end{proof}

The above lemma leads to the following observation:

\begin{theorem}\label{Theorem: one to one correspondence between Phi invariant functionals and invariant operators}
There exists a one-to-one correspondence between $\Phi$-invariant functionals on $\mathscr{O}_{\mathbb{G}}(\Phi\backslash \mathbb{G}) $ (denoted by $\gamma$) and $\qg$-invariant operators on $\mathscr{O}_{\mathbb{G}}(\Phi\backslash \mathbb{G}) $ (denoted by $T$), given by 
\[
\gamma\mapsto T:=(\gamma\otimes \mathbb{E}_{\ell}^\Phi)\circ\alpha; 
\]
\[
T\mapsto \gamma:=\varepsilon\circ T. 
\]
\end{theorem}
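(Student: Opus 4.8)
The plan is to verify that the two assignments are mutually inverse bijections, leaning on Lemma~\ref{lemma: from invariant operators to Phi invariant functionals and vice versa} for most of the work. That lemma already tells us that $\gamma\mapsto T:=(\gamma\otimes\mathbb{E}_{\ell}^\Phi)\circ\alpha$ sends functionals to $\qg$-invariant operators, that $T\mapsto\gamma:=\varepsilon\circ T$ sends $\qg$-invariant operators to $\Phi$-invariant functionals, and that $\varepsilon\circ\big((\gamma\otimes\mathbb{E}_{\ell}^\Phi)\circ\alpha\big)=\gamma$ exactly when $\gamma$ is $\Phi$-invariant. Reading these off, both maps are well defined between the two sets in the statement, and the composite $\gamma\mapsto T\mapsto\varepsilon\circ T$ is the identity on $\Phi$-invariant functionals. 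The only point not already contained in the lemma is the reverse composite: I must show that for every $\qg$-invariant $T$ one has $(\gamma_T\otimes\mathbb{E}_{\ell}^\Phi)\circ\alpha=T$, where $\gamma_T:=\varepsilon\circ T$.

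The first step is to read off an explicit form of $T$ from its invariance. Applying $\varepsilon\otimes\mathrm{id}$ to both sides of the $\qg$-invariance identity $\alpha\circ T=(T\otimes\mathrm{id})\circ\alpha$, and using that $\alpha$ is the restriction of $\Delta$ together with the counit property $(\varepsilon\otimes\mathrm{id})\circ\Delta=\mathrm{id}$, the left-hand side collapses to $T$, yielding
\[
T=(\varepsilon\circ T\otimes\mathrm{id})\circ\alpha=(\gamma_T\otimes\mathrm{id})\circ\alpha.
\]
Thus an invariant operator is recovered from $\gamma_T$ through $\alpha$ with the identity, rather than $\mathbb{E}_{\ell}^\Phi$, on the second leg.

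The second step is to show that for a $\Phi$-invariant functional $\gamma$ the map $\mathbb{E}_{\ell}^\Phi$ may be inserted on the second leg for free, i.e.
\[
(\gamma\otimes\mathbb{E}_{\ell}^\Phi)\circ\alpha=(\gamma\otimes\mathrm{id})\circ\alpha.
\]
Writing $\mathbb{E}_{\ell}^\Phi=(\Phi\otimes\mathrm{id})\circ\Delta$, the left-hand side becomes $(\gamma\otimes\Phi\otimes\mathrm{id})\circ(\mathrm{id}\otimes\Delta)\circ\alpha$; the coaction property $(\alpha\otimes\mathrm{id})\circ\alpha=(\mathrm{id}\otimes\Delta)\circ\alpha$ lets me regroup it as $\big((\gamma\otimes\Phi)\circ\alpha\otimes\mathrm{id}\big)\circ\alpha$, and the $\Phi$-invariance $(\gamma\otimes\Phi)\circ\alpha=\gamma$ then collapses it to $(\gamma\otimes\mathrm{id})\circ\alpha$. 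Since $\gamma_T$ is $\Phi$-invariant by the lemma, combining the two displays gives $(\gamma_T\otimes\mathbb{E}_{\ell}^\Phi)\circ\alpha=(\gamma_T\otimes\mathrm{id})\circ\alpha=T$, which completes the proof.

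The bookkeeping of the two invariance conditions is routine; the one place needing genuine care, and hence the main obstacle, is the second computation, where one must use the coaction property of $\alpha$ to turn $\mathbb{E}_{\ell}^\Phi$ on the second leg into a convolution against $\Phi$ and then absorb it through the $\Phi$-invariance of $\gamma_T$. Everything else is immediate from Lemma~\ref{lemma: from invariant operators to Phi invariant functionals and vice versa}.
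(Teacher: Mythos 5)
Your proof is correct and follows essentially the same route as the paper, which derives this theorem directly from Lemma~\ref{lemma: from invariant operators to Phi invariant functionals and vice versa} without further comment. In fact you supply the one detail the paper leaves implicit: the verification of the reverse composite, namely that every $\qg$-invariant $T$ satisfies $T=(\gamma_T\otimes\mathrm{id})\circ\alpha=(\gamma_T\otimes\mathbb{E}_{\ell}^\Phi)\circ\alpha$ with $\gamma_T=\varepsilon\circ T$, via the counit property and the absorption of $\mathbb{E}_{\ell}^\Phi$ using the coaction property together with the $\Phi$-invariance of $\gamma_T$ — both steps are carried out correctly.
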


We now relate the convolution of $\Phi$-invariant functionals on $\mathscr{O}_{\mathbb{G}}(\Phi\backslash \mathbb{G}) $ with composition of $\qg$-invariant operators on $\mathscr{O}_{\mathbb{G}}(\Phi\backslash \mathbb{G}) $.

\begin{theorem}\label{Theorem: convolution of h-invariant functionals corresponds to composition of G invariant operators}

Let $\gamma_1$ and $\gamma_2$ be two $\Phi$-invariant functionals on $\mathscr{O}_{\mathbb{G}}(\Phi\backslash \mathbb{G}) $, and $T_1$ and $T_2$ be the corresponding $\qg$-invariant operators (or vice-versa as given by Theorem \ref{Theorem: one to one correspondence between Phi invariant functionals and invariant operators}). Then we have 
\[
\gamma_1\convnew\gamma_2=\varepsilon\circ T_2\circ T_1. 
\]
\end{theorem}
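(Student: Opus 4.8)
The plan is to collapse the threefold composition $\varepsilon\circ T_2\circ T_1$ down to $\gamma_2\circ T_1$ and then recognize the result, after expanding $T_1$, as precisely the convolution $\gamma_1\convnew\gamma_2$. The whole argument rests on the fact that the correspondence of Theorem \ref{Theorem: one to one correspondence between Phi invariant functionals and invariant operators} is given by a pair of mutually inverse maps, so that applying $\varepsilon$ after $T_2$ simply recovers $\gamma_2$ on the coidalgebra.

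Concretely, I would first record the key reduction. Since $\gamma_2$ is by hypothesis a $\Phi$-invariant functional on $\mathscr{O}_{\mathbb{G}}(\Phi\backslash \mathbb{G})$ and $T_2=(\gamma_2\otimes \mathbb{E}_{\ell}^\Phi)\circ\alpha$ is the associated $\qg$-invariant operator, Lemma \ref{lemma: from invariant operators to Phi invariant functionals and vice versa} (or equivalently the inverse relation in Theorem \ref{Theorem: one to one correspondence between Phi invariant functionals and invariant operators}) gives $\varepsilon\circ T_2=\gamma_2$; indeed $\varepsilon(T_2 y)=(\gamma_2\otimes \Phi)(\alpha(y))=\gamma_2(y)$ for every $y\in\mathscr{O}_{\mathbb{G}}(\Phi\backslash \mathbb{G})$, the last equality being exactly the $\Phi$-invariance of $\gamma_2$. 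Because $T_1$ is a $\qg$-invariant operator on $\mathscr{O}_{\mathbb{G}}(\Phi\backslash \mathbb{G})$, its range lies in $\mathscr{O}_{\mathbb{G}}(\Phi\backslash \mathbb{G})$, so I may apply this identity with $y=T_1(x)$ to obtain $\varepsilon\big(T_2(T_1(x))\big)=\gamma_2\big(T_1(x)\big)$ for all $x\in\mathscr{O}_{\mathbb{G}}(\Phi\backslash \mathbb{G})$.

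It then only remains to expand $T_1=(\gamma_1\otimes \mathbb{E}_{\ell}^\Phi)\circ\alpha$ and compare with Definition \ref{Definition: definition for convolution of functionals on coidalgebras}:
\[
\varepsilon\big(T_2(T_1(x))\big)=\gamma_2\big(T_1(x)\big)=\big(\gamma_1\otimes(\gamma_2\circ \mathbb{E}_{\ell}^\Phi)\big)\big(\alpha(x)\big)=(\gamma_1\convnew\gamma_2)(x).
\]
This establishes $\gamma_1\convnew\gamma_2=\varepsilon\circ T_2\circ T_1$. The argument is short, and there is no real analytic or combinatorial obstacle; the one point carrying all the content is the reduction $\varepsilon\circ T_2=\gamma_2$, which genuinely requires the $\Phi$-invariance of $\gamma_2$ (by Lemma \ref{lemma: from invariant operators to Phi invariant functionals and vice versa}, $\varepsilon\circ T=\gamma$ holds if and only if $\gamma$ is $\Phi$-invariant). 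I would make sure to invoke this hypothesis explicitly rather than treat $\varepsilon\circ T_2=\gamma_2$ as automatic, since it is the asymmetry between the roles of $T_1$ and $T_2$ that makes the composition formula come out in the stated order.
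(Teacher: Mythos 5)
Your proof is correct, and it takes a recognizably different route from the paper's, even though both are short direct computations. The paper starts from the definition of $\gamma_1\convnew\gamma_2$, substitutes $\gamma_i=\varepsilon\circ T_i$ for \emph{both} functionals, and then uses the operator-level $\qg$-invariance $(T_1\otimes \mathrm{id})\circ\alpha=\alpha\circ T_1$ to sweep $T_1$ through the coaction, finishing with the counit property and the fact that $\mathbb{E}_{\ell}^\Phi$ acts as the identity on $\mathscr{O}_{\mathbb{G}}(\Phi\backslash \mathbb{G})$. You instead work from the other end: you first collapse $\varepsilon\circ T_2$ to $\gamma_2$ — which is exactly the final assertion of Lemma \ref{lemma: from invariant operators to Phi invariant functionals and vice versa} and, as you rightly stress, genuinely requires the $\Phi$-invariance of $\gamma_2$ — and then expand $T_1=(\gamma_1\otimes \mathbb{E}_{\ell}^\Phi)\circ\alpha$ and compare with Definition \ref{Definition: definition for convolution of functionals on coidalgebras}. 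Your route never invokes the intertwining property of either operator; it only uses the two directions of the correspondence in Theorem \ref{Theorem: one to one correspondence between Phi invariant functionals and invariant operators}, so it is slightly more economical and isolates precisely where invariance enters (only through $\gamma_2$). The paper's computation, by contrast, stays at the operator level and makes visible how the ordering $T_2\circ T_1$ (with $T_1$ applied first, matching $\gamma_1$ appearing first in $\gamma_1\convnew\gamma_2$) emerges from pushing $T_1$ through the coaction. Both arguments are complete and rigorous.
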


\begin{proof}
For $x\in\mathscr{O}_{\mathbb{G}}(\Phi\backslash \mathbb{G}) $ observe that 
\begin{equation*}
\begin{split}
(\gamma_1\convnew\gamma_2)(x)&=(\gamma_1\otimes\gamma_2\circ \mathbb{E}_{\ell}^\Phi)(\alpha(x))\\
&=(\varepsilon\circ T_1\otimes\varepsilon\circ T_2\circ E_1)(\alpha(x))\\
&=(\varepsilon\otimes\varepsilon\circ T_2\circ \mathbb{E}_{\ell}^\Phi)(\alpha(T_1x))\\
&=\varepsilon(T_2(\mathbb{E}_{\ell}^\Phi(T_1x)))\\
&=\varepsilon(T_2(T_1(x))),
\end{split}
\end{equation*}

which proves our claim.

\end{proof}

\section{Markov semigroups on expected right coidalgebras} \label{sec-markovsg}

As before, we fix $\Phi\in\mathcal{I}(\qg)$ and let $\mathbb{E}_{\ell}^\Phi:=(\Phi\otimes \mathrm{id})\circ\Delta$, $\mathbb{E}_r^\Phi:=( \mathrm{id}\otimes \Phi)\circ\Delta$, $\mathscr{O}_{\mathbb{G}}(\Phi\backslash \mathbb{G}) :=\mathbb{E}_{\ell}^\Phi(\pag)$ and $\mathscr{O}_{\mathbb{G}}(\mathbb{G}/ \Phi):=\mathbb{E}_r^\Phi(\pag)$.

A one parameter family of ($\qg$-invariant) operators  $T:=\{T_t:\mathscr{O}_{\mathbb{G}}(\Phi\backslash \mathbb{G}) \longrightarrow\mathscr{O}_{\mathbb{G}}(\Phi\backslash \mathbb{G})\}_{t\geq0}$ will be called a  semigroup of operators if $T_{t+s}=T_t\circ T_s$.

\subsection{Structure of convolution semigroups of invariant functionals on expected right coidalgebras}\label{Subsection: structure of convolution semigroup of invariant functionals on right coidalgebras}

The convolution on $\pquo$ allows us to define convolution semigroups of functionals or states on $\pquo$ in the same way as in Definition \ref{def-conv-sg-G}.

\begin{definition}\label{def-conv-sg-quo}
A \emph{convolution semigroup} on an expected rigth coidalgebra $\pqphi$ is a family of linear functionals $(\lambda_t:\pqphi\to\mathbb{C})_{t\ge 0}$ such that
\begin{enumerate}
\item
$\lim_{t\searrow 0} \lambda_t(a)=\lambda_0(a)$ for all $a\in\pquo$
(weak continuity);
\item
$\lambda_s \convnew \lambda_t = \lambda_{s+t}$ for all $s,t\ge 0$ (semigroup property).
\end{enumerate}
We call $(\lambda_t)_{t\ge 0}$ a \emph{convolution semigroup of states}, if the functionals $\lambda_t$ are furthermore normalized, i.e., $\lambda_t(1)=1$, and positive, i.e., $\lambda_t(a^*a)\ge 0$ for all $a\in\pqphi$ and all $t\ge0$.
\end{definition}

\begin{lemma}\label{Lemma: convolution semigroup of functionals on coidalgebra lifts to convolution semigroup of functionals on G}
Let $\lambda:=\{\lambda_t:\mathscr{O}_{\mathbb{G}}(\Phi\backslash \mathbb{G}) \longrightarrow\mathbb{C}\}_{t\geq0}$ be a convolution semigroup of $\Phi$-invariant functionals on $\mathscr{O}_{\mathbb{G}}(\Phi\backslash \mathbb{G}) $. For each $t\geq0$ let $\mu_t:\pag\longrightarrow\mathbb{C}$ be the unique $\Phi$-bi-invariant extension of $\lambda_t$, as given by Theorem \ref{Theorem: from Phi invariant functional to bi Phi invariant functional and vice versa}. Then $\mu:=\{\mu_t:\pag\longrightarrow\mathbb{C}\}_{t\geq0}$ is a convolution semigroup of functionals on $\pag$.
\end{lemma}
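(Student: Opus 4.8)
The plan is to verify the two defining conditions of Definition \ref{def-conv-sg-G} for the family $\mu := \{\mu_t\}_{t \geq 0}$ on $\pag$, namely the semigroup property with respect to $\ast$ and weak continuity, using what we already know about the correspondence between $\Phi$-invariant functionals on $\mathscr{O}_{\mathbb{G}}(\Phi\backslash \mathbb{G})$ and their $\Phi$-bi-invariant extensions.

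The semigroup property is essentially immediate from Theorem \ref{Theorem: Convolution at the group level restricts to give convolution at the coidalgebra level}. First I would observe that, since each $\mu_t$ is the unique $\Phi$-bi-invariant extension of $\lambda_t$, part (b) of that theorem tells us that $\mu_s \ast \mu_t$ is the unique $\Phi$-bi-invariant extension of $\lambda_s \convnew \lambda_t$. But by the semigroup property in the coidalgebra, $\lambda_s \convnew \lambda_t = \lambda_{s+t}$, whose unique $\Phi$-bi-invariant extension is by definition $\mu_{s+t}$. By the uniqueness in Theorem \ref{Theorem: from Phi invariant functional to bi Phi invariant functional and vice versa}, we conclude $\mu_s \ast \mu_t = \mu_{s+t}$, as required.

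For weak continuity I would use the explicit formula $\mu_t = \lambda_t \circ \mathbb{E}_{\ell}^\Phi$ supplied by Theorem \ref{Theorem: from Phi invariant functional to bi Phi invariant functional and vice versa}(a). For a fixed $x \in \pag$, the element $\mathbb{E}_{\ell}^\Phi(x)$ is a single fixed element of $\mathscr{O}_{\mathbb{G}}(\Phi\backslash \mathbb{G})$, so
\[
\mu_t(x) = \lambda_t\big(\mathbb{E}_{\ell}^\Phi(x)\big) \xrightarrow{t \searrow 0} \lambda_0\big(\mathbb{E}_{\ell}^\Phi(x)\big) = \mu_0(x)
\]
by the weak continuity of $(\lambda_t)_{t\ge 0}$ applied to the argument $\mathbb{E}_{\ell}^\Phi(x)$. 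Thus weak continuity of $\mu$ follows directly, with $\mu_0 = \lambda_0 \circ \mathbb{E}_{\ell}^\Phi$ being the $\Phi$-bi-invariant extension of the idempotent starting functional $\lambda_0$.

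There is no serious obstacle here; the lemma is a clean consequence of the machinery already built up in Section \ref{sec-invfunct}. The only point requiring a little care is to invoke the uniqueness clause of Theorem \ref{Theorem: from Phi invariant functional to bi Phi invariant functional and vice versa} correctly when identifying $\mu_s \ast \mu_t$ with $\mu_{s+t}$, making sure that $\mu_s \ast \mu_t$ is genuinely $\Phi$-bi-invariant (which it is, as noted in the proof of Theorem \ref{Theorem: Convolution at the group level restricts to give convolution at the coidalgebra level}(b)) so that the uniqueness applies. Once that is in place, both defining properties transfer along the correspondence without further computation.
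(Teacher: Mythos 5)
Your proof is correct and follows essentially the same route as the paper: the semigroup property via Theorem \ref{Theorem: Convolution at the group level restricts to give convolution at the coidalgebra level}(b) together with the uniqueness clause of Theorem \ref{Theorem: from Phi invariant functional to bi Phi invariant functional and vice versa}, and weak continuity via the formula $\mu_t=\lambda_t\circ\mathbb{E}_{\ell}^\Phi$. Your write-up is slightly more explicit about why the uniqueness applies, but there is no substantive difference.
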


\begin{proof}
Fix $t,s\in[0,+\infty)$. It follows that \[(\mu_t\ast\mu_s)|_{_{\mathscr{O}_{\mathbb{G}}(\Phi\backslash \mathbb{G}) }}=\lambda_t\convnew\lambda_s=\lambda_{t+s}=\mu_{t+s}|_{_{\mathscr{O}_{\mathbb{G}}(\Phi\backslash \mathbb{G}) }},\] where the first equality follows from Theorem \ref{Theorem: Convolution at the group level restricts to give convolution at the coidalgebra level}-(b). By the same, we know that $\mu_t\ast\mu_s$ is the unique $\Phi$-bi-invariant extension of $\lambda_t\convnew\lambda_s=\lambda_{t+s}$. This implies that $\mu_t\ast\mu_s=\mu_{t+s}$. 

Weak continuity easily follows from the formula $\mu_t=\lambda_t\circ \mathbb{E}_\ell^\Phi$.
\end{proof}

\begin{remark}
In general the convolution semigroup $(\mu_t)_{t\ge0}$ does not start with the counit. Instead we have $\mu_0=\lambda_0\circ\mathbb{E}^\Phi_\ell = \Phi*\lambda_0$.
\end{remark}

We next prove an automatic $\Phi$-invariance of convolution semigroup of functionals, starting at a state.

\begin{lemma}\label{Lemma: convolution semigroup of functionals starting from a state on coidalgebra is Phi invariant}
Let $\{\lambda_t:\mathscr{O}_{\mathbb{G}}(\Phi\backslash \mathbb{G}) \longrightarrow\mathbb{C}\}_{t\geq0}$ be a convolution semigroup of functionals such that $\lambda_0$ is a state on $(\mathscr{O}_{\mathbb{G}}(\Phi\backslash \mathbb{G})$, i.e., $\lambda_0(x^*x)\ge 0$ and $\lambda_0(\mathbb{1})=1$. Then for each $t\geq0$, $\lambda_t$ is $\Phi$-invariant. 
\end{lemma}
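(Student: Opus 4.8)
**Proof plan for Lemma \ref{Lemma: convolution semigroup of functionals starting from a state on coidalgebra is Phi invariant}.**

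The plan is to exploit the semigroup property to reduce the $\Phi$-invariance of every $\lambda_t$ to that of $\lambda_0$, and then to derive the $\Phi$-invariance of $\lambda_0$ from the fact that $\lambda_0$ is an \emph{idempotent state}. First I observe that the semigroup property $\lambda_s\convnew\lambda_t=\lambda_{s+t}$ with $s=0$ gives $\lambda_0\convnew\lambda_t=\lambda_t$. Unwinding the definition of $\convnew$ (Definition \ref{Definition: definition for convolution of functionals on coidalgebras}) this reads $\big(\lambda_0\otimes(\lambda_t\circ\mathbb{E}_\ell^\Phi)\big)\circ\alpha=\lambda_t$. The key structural fact I want is that $\lambda_0$ is idempotent: taking $s=t=0$ yields $\lambda_0\convnew\lambda_0=\lambda_0$, and since $\lambda_0$ is a state, Corollary \ref{Corollary: from Phi invariant state to h bi ivariant state} together with the semigroup structure will force $\lambda_0$ to be $\Phi$-invariant (its $\Phi$-bi-invariant extension $\mu_0$ is then idempotent, hence one checks $\mu_0=\Phi$, so that $\lambda_0$ is exactly the restriction of $\Phi$, which is $\Phi$-invariant).

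The cleanest route is to pass to the group level via Lemma \ref{Lemma: convolution semigroup of functionals on coidalgebra lifts to convolution semigroup of functionals on G}. There I let $\mu_t$ be the unique $\Phi$-bi-invariant extension of $\lambda_t$ and obtain a convolution semigroup $(\mu_t)_{t\ge 0}$ on $\pag$ with $\mu_s\ast\mu_t=\mu_{s+t}$. The positivity and normalization of $\lambda_0$ transfer to $\mu_0$ because $\mathbb{E}_\ell^\Phi$ is a (completely positive, unital) conditional expectation, so $\mu_0=\lambda_0\circ\mathbb{E}_\ell^\Phi$ is a state on $\pag$. Since $\mu_0\ast\mu_0=\mu_0$, the functional $\mu_0$ is an idempotent state on $\pag$, i.e.\ $\mu_0\in\mathcal{I}(\qg)$. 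Now I use that $\mu_0$ is $\Phi$-bi-invariant, meaning $\Phi\ast\mu_0=\mu_0\ast\Phi=\mu_0\cdot\Phi(\mathbf 1)=\mu_0$; combined with idempotency of both $\Phi$ and $\mu_0$ and Lemma \ref{Lemma: idempotent states commute}, one deduces $\mu_0=\Phi$.

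Granting $\mu_0=\Phi$, the $\Phi$-invariance of each $\lambda_t$ follows quickly. At the group level $\mu_0\ast\mu_t=\mu_t$ becomes $\Phi\ast\mu_t=\mu_t$, which is precisely left $\Phi$-invariance of $\mu_t$; combined with right $\Phi$-invariance (part of $\Phi$-bi-invariance) and restricting to $\mathscr{O}_{\mathbb{G}}(\Phi\backslash\mathbb{G})$, I recover that $\lambda_t=\mu_t|_{\mathscr{O}_{\mathbb{G}}(\Phi\backslash\mathbb{G})}$ satisfies $(\lambda_t\otimes\Phi)\circ\alpha=\lambda_t$, i.e.\ $\lambda_t$ is $\Phi$-invariant in the sense of Definition \ref{Definition: h invariance and h-bi-invarince}. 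The main obstacle I anticipate is the identification $\mu_0=\Phi$: a priori $\mu_0$ is merely \emph{some} idempotent state dominated in the convolution order by $\Phi$, and I must argue that $\Phi$-bi-invariance pins it down to $\Phi$ itself rather than a larger idempotent. This is where Lemma \ref{Lemma: idempotent states commute} and the order-correspondence of Theorem \ref{thm-id} (the equivalence $\Phi_1\star\Phi_2=\Phi_1\Leftrightarrow\mathcal{A}_{\Phi_1}\subseteq\mathcal{A}_{\Phi_2}$) do the real work, and one should be careful that the two-sided invariance $\Phi\ast\mu_0=\mu_0=\mu_0\ast\Phi$ genuinely forces equality and not merely comparability.
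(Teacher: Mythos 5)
Your inventory of tools (the lift $\mu_t:=\lambda_t\circ\mathbb{E}_{\ell}^\Phi$, idempotency of $\mu_0$, Lemma \ref{Lemma: idempotent states commute}) is the right one, but the pivot of your argument --- the identification $\mu_0=\Phi$ --- is false, and the worry you flag at the end is indeed fatal: the two-sided relations $\Phi\ast\mu_0=\mu_0=\mu_0\ast\Phi$ between idempotent states give, via the order correspondence of Theorem \ref{thm-id}, only the comparability $\mathsf{A}_{\mu_0}\subseteq\mathsf{A}_{\Phi}$, never equality. Concretely, take $\lambda_t:=h|_{\mathscr{O}_{\mathbb{G}}(\Phi\backslash \mathbb{G})}$ for all $t\ge0$, where $h$ is the Haar state of $\qg$: since $h\circ\mathbb{E}_{\ell}^\Phi=h$, this is a convolution semigroup of states in the sense of Definition \ref{def-conv-sg-quo}, and the conclusion of the lemma holds (because $h\star\Phi=h$), yet $\mu_0=h\circ\mathbb{E}_{\ell}^\Phi=h\neq\Phi$ whenever $\Phi$ is not the Haar state (e.g.\ $\Phi=\varepsilon$). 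So no argument can close the gap you anticipated: $\mu_0$ really can be a strictly ``larger'' idempotent than $\Phi$. There is also a circularity at the start of your plan: Lemma \ref{Lemma: convolution semigroup of functionals on coidalgebra lifts to convolution semigroup of functionals on G} and the phrase ``unique $\Phi$-bi-invariant extension'' presuppose that the $\lambda_t$ are $\Phi$-invariant, which is exactly what is to be proved. What the definition $\mu_t:=\lambda_t\circ\mathbb{E}_{\ell}^\Phi$ gives for free is only \emph{left} $\Phi$-invariance (Remark \ref{Remark: functionls on coidalgebra extends to left h-invariant functional on G}); the semigroup law $\mu_s\ast\mu_t=\mu_{s+t}$ must be checked directly (it follows from Lemma \ref{Lemma: useful 1st observation}(a)), and \emph{right} $\Phi$-invariance of the $\mu_t$ is precisely the content that has to be earned --- you cannot quote it as ``part of $\Phi$-bi-invariance.''

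The repair, which is the paper's actual proof, is to abandon $\mu_0=\Phi$ altogether; it is not needed. Keep only: $\mu_0$ is a state (as $\mathbb{E}_{\ell}^\Phi$ is positive and unital), idempotent (from $\mu_0\ast\mu_0=\mu_0$), and left $\Phi$-invariant, i.e.\ $\Phi\ast\mu_0=\mu_0$. Lemma \ref{Lemma: idempotent states commute} then yields $\mu_0\ast\Phi=\mu_0$; this is the only place where positivity of $\lambda_0$ and idempotency enter, and what they deliver is \emph{right invariance} of $\mu_0$, not its identification with $\Phi$. Now propagate along the semigroup: for every $t\ge0$,
\begin{equation*}
\mu_t\ast\Phi=(\mu_t\ast\mu_0)\ast\Phi=\mu_t\ast(\mu_0\ast\Phi)=\mu_t\ast\mu_0=\mu_t,
\end{equation*}
using $\mu_t\ast\mu_0=\mu_{t+0}=\mu_t$. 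Hence each $\mu_t$ is $\Phi$-bi-invariant, and Theorem \ref{Theorem: from Phi invariant functional to bi Phi invariant functional and vice versa}(b) shows that its restriction $\lambda_t=\mu_t|_{\mathscr{O}_{\mathbb{G}}(\Phi\backslash \mathbb{G})}$ is $\Phi$-invariant. In short, your last paragraph has the right shape, but it must be fed by the bi-invariance of $\mu_0$ (and then of all $\mu_t$) obtained as above, not by the false identity $\mu_0=\Phi$.
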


\begin{proof}
Let $\mu_t:=\lambda_t\circ \mathbb{E}_{\ell}^\Phi$. Lemma \ref{Lemma: convolution semigroup of functionals on coidalgebra lifts to convolution semigroup of functionals on G} implies that $(\mu_t)_{t\geq0}$ is a convolution semigroup of functionals on $\pag$, such that for each $t\geq0$, $\mu_t$ is a left $\Phi$-invariant functional on $\pag$. Let us first show that $\mu_t$ is $\Phi$-bi-invariant.

We may note that $\mu_0:=\lambda_0\circ \mathbb{E}_{\ell}^\Phi$ is an idempotent state on $\pag$. Moreover, as $\mu_t$ is left $\Phi$-invariant for each $t\geq0$, this implies in particular that $\Phi\ast\mu_0=\mu_0$. Hence by Lemma \ref{Lemma: idempotent states commute}, we have $\mu_0\ast \Phi=\mu_0$. This implies that $\mu_t\ast(\mu_0\ast \Phi)=\mu_t$ i.e. $\mu_t\ast \Phi=\mu_t$ for all $t\geq0$. Hence $(\mu_t)_{t\geq0}$ is a convolution semigroup of $\Phi$-bi-invariant functionals on $\pag$. Theorem \ref{Theorem: from Phi invariant functional to bi Phi invariant functional and vice versa} now yields that $\lambda_t=\mu_t|_{_{\mathscr{O}_{\mathbb{G}}(\Phi\backslash \mathbb{G}) }}$ must be $\Phi$-invariant for each $t\geq0$. This proves the claim.
\end{proof}

We will now have a look at the differentiability properties of convolution semigroups on $\pqphi$ and the associated operator semigroups.

\begin{proposition}\label{Theorem: continuous convolution semigroups on coidalgebras are characterised by generators}
Let $(\lambda_t)_{t\geq0}$ be a pointwise continuous convolution semigroup of $\Phi$-invariant functionals on $\mathscr{O}_{\mathbb{G}}(\Phi\backslash \mathbb{G}) $. Then for each $x\in\mathscr{O}_{\mathbb{G}}(\Phi\backslash \mathbb{G}) $, the function $[0,+\infty)\ni t\mapsto\lambda_t(x)\in\mathbb{C}$ is differentiable at $t=0$.
\end{proposition}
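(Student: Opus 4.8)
The plan is to reduce the statement to a finite-dimensional, matrix-valued semigroup computation by first lifting $(\lambda_t)_{t\ge 0}$ to $\pag$ and then invoking the Peter–Weyl decomposition of $\pag$. First I would apply Lemma \ref{Lemma: convolution semigroup of functionals on coidalgebra lifts to convolution semigroup of functionals on G} to obtain the convolution semigroup $(\mu_t)_{t\ge 0}$ on $\pag$, where $\mu_t = \lambda_t \circ \mathbb{E}_{\ell}^\Phi$ is the unique $\Phi$-bi-invariant extension of $\lambda_t$. Since $\mathscr{O}_{\mathbb{G}}(\Phi\backslash \mathbb{G})\subseteq\pag$ and $\mu_t$ restricts to $\lambda_t$ there, it suffices to prove that $t\mapsto\mu_t(x)$ is differentiable at $t=0$ for every $x\in\pag$; weak continuity of $(\mu_t)$ at $0$ is immediate from $\mu_t=\lambda_t\circ\mathbb{E}_{\ell}^\Phi$.

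Next I would use that $\pag=\mathrm{Pol}(\qg)$ is spanned by the matrix coefficients $(u^\pi_{ij})$ of the irreducible unitary corepresentations $\pi\in\mathrm{Irr}(\qg)$, with $\Delta(u^\pi_{ij})=\sum_k u^\pi_{ik}\otimes u^\pi_{kj}$. For a fixed $\pi$ of dimension $n_\pi$, set $A^\pi(t)=\big(\mu_t(u^\pi_{ij})\big)_{i,j=1}^{n_\pi}\in M_{n_\pi}(\mathbb{C})$. The semigroup property $\mu_{s+t}=\mu_s\star\mu_t=(\mu_s\otimes\mu_t)\circ\Delta$ together with the coproduct formula yields the matrix identity $A^\pi(s+t)=A^\pi(s)A^\pi(t)$, while weak continuity makes $t\mapsto A^\pi(t)$ continuous. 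Because $\mu_0$ is idempotent, $P:=A^\pi(0)$ satisfies $P^2=P$, and from $A^\pi(t)=A^\pi(0)A^\pi(t)=A^\pi(t)A^\pi(0)$ we get $A^\pi(t)=P A^\pi(t)=A^\pi(t)P=P A^\pi(t)P$.

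The hard part will be upgrading continuity to differentiability at $0$, and here the point is that each block is finite-dimensional. I would restrict to $V:=\operatorname{range}(P)$ and consider $B^\pi(t):=A^\pi(t)|_V\in\operatorname{End}(V)$, which is a continuous semigroup with $B^\pi(0)=\mathrm{id}_V$. A standard averaging argument then finishes the job: since $\tfrac1\varepsilon\int_0^\varepsilon B^\pi(s)\,\mathrm{d}s\to\mathrm{id}_V$ as $\varepsilon\to0$, the operator $M_\varepsilon:=\int_0^\varepsilon B^\pi(s)\,\mathrm{d}s$ is invertible for some small $\varepsilon>0$, and from $B^\pi(t)M_\varepsilon=\int_t^{t+\varepsilon}B^\pi(u)\,\mathrm{d}u$ we obtain $B^\pi(t)=\big(\int_t^{t+\varepsilon}B^\pi(u)\,\mathrm{d}u\big)M_\varepsilon^{-1}$, whose right-hand side is differentiable in $t$ by the fundamental theorem of calculus. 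This is essentially the device used in the proof of \cite[Theorem 3.2]{cipriani+franz+kula14}, the new wrinkle being that $\mu_0$ is idempotent rather than the counit, which is exactly why one must restrict to $\operatorname{range}(P)$ (where the semigroup starts at the identity) before $M_\varepsilon$ can be inverted.

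Finally, since $A^\pi(t)=P A^\pi(t)P$ is recovered from $B^\pi(t)$ by constant linear combinations of its entries, each $\mu_t(u^\pi_{ij})$ is differentiable at $0$. Any $x\in\pag$ is a finite linear combination of matrix coefficients, hence lies in finitely many such blocks, so $t\mapsto\mu_t(x)$ is differentiable at $0$; for $x\in\mathscr{O}_{\mathbb{G}}(\Phi\backslash \mathbb{G})$ this is exactly $t\mapsto\lambda_t(x)$, which completes the argument.
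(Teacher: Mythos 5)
Your proof is correct, and its skeleton matches the paper's: the paper also begins by passing to the unique $\Phi$-bi-invariant extension $(\mu_t)_{t\ge0}$ on $\pag$ via Lemma \ref{Lemma: convolution semigroup of functionals on coidalgebra lifts to convolution semigroup of functionals on G}. The difference lies in how differentiability of $(\mu_t)_{t\ge0}$ is then obtained. The paper settles it in one line by citing the discussion in \cite[Section 3]{franz+schurmann00}, whose underlying mechanism is the fundamental theorem of coalgebras (every element of $\pag$ lies in a finite-dimensional subcoalgebra) --- the same device the paper invokes later in the proof of Proposition \ref{Theorem: h-invariant functionals are generators of h-invariant convolution semigroup}. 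You instead give a self-contained argument via the Peter--Weyl decomposition of $\pag$, which is a specialization of the coalgebra argument available because $\pag$ is a CQG algebra: the span of the coefficients of an irreducible unitary corepresentation is exactly a finite-dimensional subcoalgebra, and both routes end with the same averaging trick for finite-dimensional operator semigroups. What your write-up buys is an explicit treatment of the one wrinkle that distinguishes this setting from \cite[Theorem 3.2]{cipriani+franz+kula14}: here $\mu_0$ is an idempotent state rather than the counit, so each matrix semigroup $A^\pi(t)$ starts at a projection $P$ rather than the identity, and the averaged operator $M_\varepsilon$ can only be inverted after restricting to $\mathrm{range}(P)$ --- which you do correctly, noting also that $A^\pi(t)=PA^\pi(t)P$ lets you recover all coefficients from the restricted semigroup $B^\pi(t)$ by $t$-independent linear combinations. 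One minor point you could phrase more carefully: weak continuity is assumed only at $t=0$, so a priori $t\mapsto A^\pi(t)$ is merely right-continuous (via $A^\pi(t+h)=A^\pi(t)A^\pi(h)$); this suffices, since the proposition asks only for the one-sided derivative at $t=0$, and two-sided continuity of $B^\pi$ on $(0,\infty)$ follows anyway from the invertibility of $B^\pi(h)$ for small $h>0$.
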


\begin{proof}
Let $(\mu_t)_{t\ge0}$ be the unique $\Phi$-bi-invariant extension of $(\lambda_t)_{t\ge0}$. This is a continuous convolution semigroup of linear functionals and the discussion in \cite[Section 3]{franz+schurmann00} shows that it is differentiable, which implies the differentiability of $(\lambda_t)_{t\ge0}$.
\end{proof}

The following result is an `operator' version of Proposition \ref{Theorem: continuous convolution semigroups on coidalgebras are characterised by generators}.

\begin{proposition}\label{Theorem: strongly continous G invariant semigroups are differentiable}
Let $\{T_t:\mathscr{O}_{\mathbb{G}}(\Phi\backslash \mathbb{G}) \longrightarrow\mathscr{O}_{\mathbb{G}}(\Phi\backslash \mathbb{G}) \}_{t\geq0}$ be a poinwise continuous (w.r.t.\ to the universal C$^*$-norm) one parameter semigroup such that for each $t\geq0$, $T_t$ is $\qg$-invariant. Then for each $x\in\mathscr{O}_{\mathbb{G}}(\Phi\backslash \mathbb{G}) $, the map $[0,+\infty)\ni t\mapsto T_t(x)\in\mathscr{O}_{\mathbb{G}}(\Phi\backslash \mathbb{G}) $ is differentiable at $0$.
\end{proposition}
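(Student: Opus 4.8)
The plan is to reduce the assertion to its functional counterpart, Proposition \ref{Theorem: continuous convolution semigroups on coidalgebras are characterised by generators}, by passing through the bijection between $\qg$-invariant operators and $\Phi$-invariant functionals furnished by Theorem \ref{Theorem: one to one correspondence between Phi invariant functionals and invariant operators}. Concretely, I would set $\gamma_t := \varepsilon\circ T_t$ for each $t\ge 0$. By Lemma \ref{lemma: from invariant operators to Phi invariant functionals and vice versa} each $\gamma_t$ is a $\Phi$-invariant functional on $\pqphi$, and the same correspondence returns the operator through the reconstruction formula $T_t=(\gamma_t\otimes\mathbb{E}_{\ell}^\Phi)\circ\alpha$.

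The first task is to verify that $(\gamma_t)_{t\ge 0}$ is a pointwise continuous convolution semigroup of $\Phi$-invariant functionals in the sense of Definition \ref{def-conv-sg-quo}. The semigroup property is exactly Theorem \ref{Theorem: convolution of h-invariant functionals corresponds to composition of G invariant operators}: applying it with the pair $(T_s,T_t)$ yields
\[
\gamma_s\convnew\gamma_t=\varepsilon\circ T_t\circ T_s=\varepsilon\circ T_{t+s}=\gamma_{t+s}.
\]
For weak continuity I would use that the counit extends to a bounded character of $C^u(\qg)$, hence is continuous for the universal C$^*$-norm; since $T_t(x)\to T_0(x)$ in that norm by hypothesis, $\gamma_t(x)=\varepsilon(T_t(x))\to\varepsilon(T_0(x))=\gamma_0(x)$ as $t\searrow 0$.

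With these properties established, Proposition \ref{Theorem: continuous convolution semigroups on coidalgebras are characterised by generators} applies and tells us that for every $y\in\pqphi$ the scalar function $t\mapsto\gamma_t(y)$ is differentiable at $t=0$. To transport this back to the operators, I would write $\alpha(x)=x_{(0)}\otimes x_{(1)}$ in Sweedler notation and invoke the reconstruction formula to obtain
\[
T_t(x)=\gamma_t\big(x_{(0)}\big)\,\mathbb{E}_{\ell}^\Phi\big(x_{(1)}\big).
\]
Because $x$ lies in the Peter--Weyl algebra, $\alpha(x)=\Delta(x)$ is a \emph{finite} sum of simple tensors, so this presents $T_t(x)$ as a finite linear combination $\sum_i \gamma_t(x^i_{(0)})\,\mathbb{E}_{\ell}^\Phi(x^i_{(1)})$ of the fixed vectors $\mathbb{E}_{\ell}^\Phi(x^i_{(1)})\in\pqphi$, the $t$-dependence entering only through the scalar coefficients.

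The main technical point, and the step I expect to carry the weight, is precisely this last transfer from scalar to vector-valued differentiability. It goes through because each coefficient $t\mapsto\gamma_t(x^i_{(0)})$ is differentiable at $0$ and the vectors $\mathbb{E}_{\ell}^\Phi(x^i_{(1)})$ span a finite-dimensional subspace of $\pqphi$, on which all norms are equivalent; hence the finite combination $T_t(x)$ is differentiable at $0$, with derivative the corresponding finite combination of the scalar derivatives. The only other point requiring care is to confirm that $T_t=(\gamma_t\otimes\mathbb{E}_{\ell}^\Phi)\circ\alpha$ really recovers the original $T_t$; this is guaranteed by the $\Phi$-invariance of $\gamma_t$ from Lemma \ref{lemma: from invariant operators to Phi invariant functionals and vice versa}, which is exactly the condition under which the map $\gamma\mapsto(\gamma\otimes\mathbb{E}_{\ell}^\Phi)\circ\alpha$ is inverse to $T\mapsto\varepsilon\circ T$ in Theorem \ref{Theorem: one to one correspondence between Phi invariant functionals and invariant operators}.
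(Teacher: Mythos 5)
Your proposal is correct and follows exactly the paper's own route: the paper's proof consists precisely of applying Proposition \ref{Theorem: continuous convolution semigroups on coidalgebras are characterised by generators} to $\lambda_t=\varepsilon\circ T_t$ and invoking the correspondence of Theorem \ref{Theorem: one to one correspondence between Phi invariant functionals and invariant operators}. You have merely written out the details the paper leaves implicit (the semigroup property via Theorem \ref{Theorem: convolution of h-invariant functionals corresponds to composition of G invariant operators}, weak continuity via norm-continuity of the counit on $C^u(\qg)$, and the finite-dimensional transfer from scalar to vector-valued differentiability through the reconstruction formula $T_t=(\gamma_t\otimes\mathbb{E}_{\ell}^\Phi)\circ\alpha$), all of which are correct.
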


\begin{proof}
This follows by applying Proposition \ref{Theorem: continuous convolution semigroups on coidalgebras are characterised by generators} to $\lambda_t=\varepsilon\circ T_t$ and using  Theorem \ref{Theorem: one to one correspondence between Phi invariant functionals and invariant operators}.
\end{proof}

The next result is a converse of Proposition \ref{Theorem: continuous convolution semigroups on coidalgebras are characterised by generators}.

\begin{proposition}\label{Theorem: h-invariant functionals are generators of h-invariant convolution semigroup}
Let $\psi:\mathscr{O}_{\mathbb{G}}(\Phi\backslash \mathbb{G}) \longrightarrow\mathbb{C}$ be a $\Phi$-invariant map. Then there exists a strongly continuous convolution semigroup $\{\lambda_t:\mathscr{O}_{\mathbb{G}}(\Phi\backslash \mathbb{G}) \longrightarrow\mathbb{C}\}_{t\geq0}$ consisting of $\Phi$-invariant maps and $\lambda_0=\varepsilon|_{_{\mathscr{O}_{\mathbb{G}}(\Phi\backslash \mathbb{G}) }}$, such that $\psi=\left.\frac{d}{dt}\right|_{t=0}\lambda_t$.
\end{proposition}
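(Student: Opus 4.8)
The plan is to lift the problem from the coidalgebra $\mathscr{O}_{\mathbb{G}}(\Phi\backslash \mathbb{G})$ up to the compact quantum group $\pag$, where the theory of convolution semigroups with generators is already well understood, and then push the resulting semigroup back down via restriction. Concretely, given the $\Phi$-invariant functional $\psi$ on $\mathscr{O}_{\mathbb{G}}(\Phi\backslash \mathbb{G})$, I would first pass to its unique $\Phi$-bi-invariant extension $\tilde{\psi}:=\psi\circ\mathbb{E}_{\ell}^\Phi$ on $\pag$, using Theorem \ref{Theorem: from Phi invariant functional to bi Phi invariant functional and vice versa} (and Remark \ref{Remark: functionls on coidalgebra extends to left h-invariant functional on G}). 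This $\tilde{\psi}$ is a candidate generator at the group level.

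Next I would invoke the construction of \cite{franz+schurmann00}: any suitable functional on $\pag$ (here one must check that $\tilde\psi$ is \emph{conditionally positive} and \emph{hermitian}, or at least satisfies the normalization $\tilde\psi(\mathbf 1)=0$ needed to generate a convolution semigroup with $\mu_0=\varepsilon$) generates a pointwise-continuous convolution semigroup $(\mu_t)_{t\ge 0}$ on $\pag$ via $\mu_t=\exp_\star(t\tilde\psi)$, with $\mu_0=\varepsilon$ and $\left.\frac{d}{dt}\right|_{t=0}\mu_t=\tilde\psi$. The main structural point to verify is that each $\mu_t$ is again $\Phi$-bi-invariant. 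This should follow because the set of $\Phi$-bi-invariant functionals is a convolution-closed subalgebra of $\pag'$ (by the argument in the proof of Theorem \ref{Theorem: Convolution at the group level restricts to give convolution at the coidalgebra level}-(b)), hence closed under the convolution exponential; alternatively one can differentiate the bi-invariance relations $\Phi\star\mu_t=\mu_t=\mu_t\star\Phi$ at the level of the generating equation and use uniqueness of the semigroup.

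Finally I would set $\lambda_t:=\mu_t|_{\mathscr{O}_{\mathbb{G}}(\Phi\backslash \mathbb{G})}$. By Theorem \ref{Theorem: from Phi invariant functional to bi Phi invariant functional and vice versa} each $\lambda_t$ is $\Phi$-invariant and $\mu_t$ is its unique bi-invariant extension, so the semigroup property $\mu_s\star\mu_t=\mu_{s+t}$ restricts, via Theorem \ref{Theorem: Convolution at the group level restricts to give convolution at the coidalgebra level}, to $\lambda_s\convnew\lambda_t=\lambda_{s+t}$; that is, $(\lambda_t)_{t\ge 0}$ is a convolution semigroup on the coidalgebra in the sense of Definition \ref{def-conv-sg-quo}. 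Weak continuity and $\lambda_0=\varepsilon|_{\mathscr{O}_{\mathbb{G}}(\Phi\backslash \mathbb{G})}$ descend directly from the corresponding properties of $(\mu_t)_{t\ge0}$, and differentiating $\lambda_t=\mu_t\circ\mathbb{E}_{\ell}^\Phi$ at $t=0$ gives $\left.\frac{d}{dt}\right|_{t=0}\lambda_t=\tilde\psi\circ\mathbb{E}_{\ell}^\Phi|_{\mathscr{O}_{\mathbb{G}}(\Phi\backslash \mathbb{G})}=\psi$, as desired.

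The step I expect to be the genuine obstacle is the appeal to \cite{franz+schurmann00} to produce the semigroup $(\mu_t)$ from $\tilde\psi$: the Schürmann-type exponentiation requires hypotheses on the generating functional (conditional positivity / hermiticity / the $\varepsilon$-normalization), and the statement of this proposition imposes no positivity on $\psi$, only $\Phi$-invariance. I would therefore need to confirm that at this level of generality the \emph{formal} convolution exponential $\mu_t=\sum_{n\ge 0}\frac{t^n}{n!}\tilde\psi^{\star n}$ still converges pointwise on the finite-dimensional coalgebra pieces of $\pag$ (which it does, since $\Delta$ maps each matrix coefficient into a finite-dimensional subcoalgebra, making $\tilde\psi^{\star n}$ a nilpotent-in-degree power series on each such piece), so that no positivity is actually needed for mere existence and differentiability of a convolution semigroup of functionals. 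Making this convergence argument precise, and checking it is compatible with bi-invariance, is where the real work lies.
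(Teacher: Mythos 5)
Your overall strategy---lift $\psi$ to $\tilde\psi=\psi\circ\mathbb{E}_{\ell}^\Phi$, exponentiate by convolution at the group level, restrict back---can be made to work, and your convergence argument (fundamental theorem of coalgebras, no positivity or normalization needed) is exactly the mechanism the paper also relies on. But there is a genuine error at the step you yourself flag as ``the main structural point'': the functionals $\mu_t=\sum_{n\ge0}\frac{t^n}{n!}\tilde\psi^{\star n}$ are \emph{not} $\Phi$-bi-invariant, not even for $t>0$. The set of $\Phi$-bi-invariant functionals is indeed closed under convolution, but it is not a unital subalgebra of $\pag'$ containing $\varepsilon$: its unit is $\Phi$ itself (bi-invariance of $f$ means precisely $\Phi\star f=f=f\star\Phi$), whereas the $n=0$ term of your exponential is $\varepsilon$. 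Concretely, since $\Phi\star\tilde\psi^{\star n}=\tilde\psi^{\star n}$ for $n\ge1$, one gets
\[
\Phi\star\mu_t-\mu_t=\Phi-\varepsilon\neq 0
\]
unless $\Phi=\varepsilon$. In particular $\mu_0=\varepsilon$ is not bi-invariant; the paper warns of exactly this in the remark following Lemma \ref{Lemma: convolution semigroup of functionals on coidalgebra lifts to convolution semigroup of functionals on G}, where lifted semigroups start at $\Phi\star\lambda_0$, not at $\varepsilon$. Consequently your appeal to Theorem \ref{Theorem: Convolution at the group level restricts to give convolution at the coidalgebra level} is invalid as written (that theorem applies only when the group-level functionals are the unique bi-invariant extensions of their restrictions), and your alternative argument (``differentiate the bi-invariance relations and use uniqueness'') fails at the initial condition: $t\mapsto\Phi\star\mu_t$ and $t\mapsto\mu_t$ solve the same convolution ODE but with distinct initial values $\Phi$ and $\varepsilon$.

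The gap is repairable with one modification: exponentiate inside the convolution algebra of bi-invariant functionals, whose unit is $\Phi$, i.e.\ set $\nu_t:=\Phi\star\mu_t=\Phi+\sum_{n\ge1}\frac{t^n}{n!}\tilde\psi^{\star n}$. Each $\nu_t$ is genuinely $\Phi$-bi-invariant, $\nu_s\star\nu_t=\nu_{s+t}$, and $\nu_0=\Phi$; since $\Phi=\varepsilon\circ\mathbb{E}_{\ell}^\Phi$ agrees with $\varepsilon$ on $\mathscr{O}_{\mathbb{G}}(\Phi\backslash \mathbb{G})$, the restrictions $\lambda_t:=\nu_t|_{_{\mathscr{O}_{\mathbb{G}}(\Phi\backslash \mathbb{G})}}$ still satisfy $\lambda_0=\varepsilon|_{_{\mathscr{O}_{\mathbb{G}}(\Phi\backslash \mathbb{G})}}$, and now Theorems \ref{Theorem: from Phi invariant functional to bi Phi invariant functional and vice versa} and \ref{Theorem: Convolution at the group level restricts to give convolution at the coidalgebra level} legitimately yield $\Phi$-invariance of each $\lambda_t$ and the semigroup law $\lambda_s\convnew\lambda_t=\lambda_{s+t}$; differentiating at $t=0$ gives $\psi$ as before. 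For comparison, the paper's proof avoids this pitfall by never leaving the coidalgebra: it exponentiates the $\qg$-invariant \emph{operator} $A=(\psi\otimes \mathbb{E}_{\ell}^\Phi)\circ\Delta$ attached to $\psi$ by Theorem \ref{Theorem: one to one correspondence between Phi invariant functionals and invariant operators}, using the same finite-dimensionality argument for convergence of $e^{tA}$, and sets $\lambda_t=\varepsilon\circ e^{tA}$; there the unit of the relevant algebra (operators under composition) is the identity map, so no correction term ever arises.
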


\begin{proof}
For $x\in\mathscr{O}_{\mathbb{G}}(\Phi\backslash \mathbb{G}) $, define $A(x):=(\psi\otimes \mathbb{E}_{\ell}^\Phi)(\Delta(x))$. From Theorem \ref{Theorem: one to one correspondence between Phi invariant functionals and invariant operators}, it follows that $A:\mathscr{O}_{\mathbb{G}}(\Phi\backslash \mathbb{G}) \longrightarrow\mathscr{O}_{\mathbb{G}}(\Phi\backslash \mathbb{G}) $ is a $\qg$-invariant operator. 

Fix $x\in\mathscr{O}_{\mathbb{G}}(\Phi\backslash \mathbb{G})$. We can use fundamental theorem of coalgebras to restrict to finite-dimensional subcoalgebra $X$ that contains $x$, one sees that  
\[
T_t(x):=\sum_{k=0}^\infty \frac{t^k}{k!}A^k(x)
\]
converges on $X$. Since $x$ was arbitrary, the convergence holds for all $x\in\pqphi$ and defines a semigroup of $\qg$-invariant operators.

Let $\lambda_t:=\varepsilon\circ T_t$ for each $t$. An application of Theorem \ref{Theorem: one to one correspondence between Phi invariant functionals and invariant operators} and Lemma \ref{Lemma: convolution semigroup of functionals on coidalgebra lifts to convolution semigroup of functionals on G} implies that $(\lambda_t)_{t\geq0}$ is a convolution semigroup of $\Phi$-invariant functionals on $\mathscr{O}_{\mathbb{G}}(\Phi\backslash \mathbb{G}) $. Since $X$ is finite dimensional, $\varepsilon|_{_{X}}$ is a bounded functional on $X$. From this, it follows easily that the map $[0,+\infty)\ni t\mapsto \lambda_t(x)$ is continuous at $0$. The result follows now.
\end{proof}
\begin{corollary}\label{Corollary: Phi bi invariant functionals on G are generators of Phi bi invariant convolution semigroups of functionals on G}
Let $\Phi\in\mathcal{I}(\qg)$ and $\psi:\pag\longrightarrow\mathbb{C}$ be a functional which is $\Phi$-bi-invariant. Then there exists a convolution semigroup of functionals $\{\lambda_t:\pag\longrightarrow\mathbb{C}\}_{t\geq0}$ such that for each $t\geq0$, $\lambda_t$ is $\Phi$-bi-invariant, and $\lambda_0=\Phi$ and $\frac{d}{dt}|_{_{t=0}}\lambda_t=\psi$.
\end{corollary}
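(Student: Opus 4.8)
The plan is to reduce the statement to the coidalgebra level, where Proposition~\ref{Theorem: h-invariant functionals are generators of h-invariant convolution semigroup} already manufactures a convolution semigroup with the desired generator, and then to transport the result back to $\pag$ using the one-to-one correspondence of Theorem~\ref{Theorem: from Phi invariant functional to bi Phi invariant functional and vice versa} together with the lifting Lemma~\ref{Lemma: convolution semigroup of functionals on coidalgebra lifts to convolution semigroup of functionals on G}. First I would restrict the given $\Phi$-bi-invariant functional to the expected right coidalgebra, setting $f:=\psi|_{\mathscr{O}_{\mathbb{G}}(\Phi\backslash \mathbb{G})}$. By part (b) of Theorem~\ref{Theorem: from Phi invariant functional to bi Phi invariant functional and vice versa}, $f$ is a $\Phi$-invariant functional on $\mathscr{O}_{\mathbb{G}}(\Phi\backslash \mathbb{G})$, and $\psi$ is its \emph{unique} $\Phi$-bi-invariant extension; in particular $\psi=f\circ\mathbb{E}_{\ell}^\Phi$. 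This identity is the crucial bookkeeping fact that will let me recover exactly $\psi$ (and not merely some extension of $f$) at the end.

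Next I would apply Proposition~\ref{Theorem: h-invariant functionals are generators of h-invariant convolution semigroup} to $f$, obtaining a pointwise continuous convolution semigroup $(\tilde\lambda_t)_{t\ge 0}$ of $\Phi$-invariant functionals on $\mathscr{O}_{\mathbb{G}}(\Phi\backslash \mathbb{G})$ with $\tilde\lambda_0=\varepsilon|_{\mathscr{O}_{\mathbb{G}}(\Phi\backslash \mathbb{G})}$ and $\frac{d}{dt}\big|_{t=0}\tilde\lambda_t=f$. Defining $\lambda_t:=\tilde\lambda_t\circ\mathbb{E}_{\ell}^\Phi$, Lemma~\ref{Lemma: convolution semigroup of functionals on coidalgebra lifts to convolution semigroup of functionals on G} tells me that $(\lambda_t)_{t\ge 0}$ is a convolution semigroup on $\pag$ (so $\lambda_s\ast\lambda_t=\lambda_{s+t}$ and weak continuity holds), and that each $\lambda_t$ is $\Phi$-bi-invariant, since it is the unique $\Phi$-bi-invariant extension of $\tilde\lambda_t$. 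It then remains only to check the two boundary/derivative conditions. For the initial value, $\lambda_0=\varepsilon\circ\mathbb{E}_{\ell}^\Phi=(\Phi\otimes\varepsilon)\circ\Delta=\Phi$, using $\mathbb{E}_{\ell}^\Phi=(\Phi\otimes \mathrm{id})\circ\Delta$ and the counit property. For the generator, since $\mathbb{E}_{\ell}^\Phi$ is a fixed linear map independent of $t$ and differentiation is taken pointwise in $x\in\pag$, I get $\frac{d}{dt}\big|_{t=0}\lambda_t(x)=\frac{d}{dt}\big|_{t=0}\tilde\lambda_t\big(\mathbb{E}_{\ell}^\Phi(x)\big)=f\big(\mathbb{E}_{\ell}^\Phi(x)\big)=(f\circ\mathbb{E}_{\ell}^\Phi)(x)=\psi(x)$, the last equality being precisely the uniqueness identity established in the first step.

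The argument is essentially an assembly of three previously proved results, so there is no deep new obstacle; the point requiring the most care is the \emph{identification of the generator}. One must be sure that the lifted derivative $f\circ\mathbb{E}_{\ell}^\Phi$ coincides with the original $\psi$, which is exactly the uniqueness clause of Theorem~\ref{Theorem: from Phi invariant functional to bi Phi invariant functional and vice versa} applied to the $\Phi$-bi-invariant functional $\psi$ and its restriction $f$. The only other subtlety is justifying that $\frac{d}{dt}$ commutes with precomposition by $\mathbb{E}_{\ell}^\Phi$, which is immediate because $\mathbb{E}_{\ell}^\Phi$ does not depend on $t$ and the differentiability at $0$ of $t\mapsto\tilde\lambda_t(y)$ for each fixed $y\in\mathscr{O}_{\mathbb{G}}(\Phi\backslash \mathbb{G})$ is guaranteed by Proposition~\ref{Theorem: h-invariant functionals are generators of h-invariant convolution semigroup}.
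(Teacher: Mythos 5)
Your proof is correct and follows essentially the same route as the paper's: restrict $\psi$ to $\mathscr{O}_{\mathbb{G}}(\Phi\backslash \mathbb{G})$, apply Proposition~\ref{Theorem: h-invariant functionals are generators of h-invariant convolution semigroup} to get a semigroup on the coidalgebra, and lift it back via Lemma~\ref{Lemma: convolution semigroup of functionals on coidalgebra lifts to convolution semigroup of functionals on G}. In fact you are more careful than the paper, which leaves the final identifications (that $\lambda_0=\Phi$ and that the lifted derivative $f\circ\mathbb{E}_{\ell}^\Phi$ equals $\psi$ by the uniqueness clause of Theorem~\ref{Theorem: from Phi invariant functional to bi Phi invariant functional and vice versa}) implicit.
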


\begin{proof}
Let $\mathbb{E}_{\ell}^\Phi:=(\Phi\otimes \mathrm{id})\circ\Delta$ and $\mathscr{O}_{\mathbb{G}}(\Phi\backslash \mathbb{G}) :=\mathbb{E}_{\ell}^\Phi(\pag)$. Then $\phi:=\psi|_{_{\mathscr{O}_{\mathbb{G}}(\Phi\backslash \mathbb{G}) }}$ is a $\Phi$-invariant functional on the expected right coidalgebra $\mathscr{O}_{\mathbb{G}}(\Phi\backslash \mathbb{G}) $. Then by Theorem \ref{Theorem: h-invariant functionals are generators of h-invariant convolution semigroup}, it follows that there exists a convolution semigroup $\{\beta_t:\mathscr{O}_{\mathbb{G}}(\Phi\backslash \mathbb{G}) \longrightarrow\mathbb{C}\}_{t\geq0}$ such that for each $t\geq0$, $\beta_t$ is a $\Phi$-bi-invariant functional on $\mathscr{O}_{\mathbb{G}}(\Phi\backslash \mathbb{G})$, $\beta_0=\varepsilon|_{_{\mathscr{O}_{\mathbb{G}}(\Phi\backslash \mathbb{G}) }}$ and $\frac{d}{dt}|_{_{t=0}}\beta_t=\phi$. Let $\{\lambda_t:\pag\longrightarrow\mathbb{C}\}_{t\geq0}$ be the extension of $(\beta_t)_{t\geq0}$ to a $\Phi$-bi-invariant convolution semigroup of functionals on $\pag$, as given by Lemma \ref{Lemma: convolution semigroup of functionals on coidalgebra lifts to convolution semigroup of functionals on G}. It now follows that $(\lambda_t)_{t\geq0}$ is the required convolution semigroup with the desired property.
\end{proof} 

\subsection{Structure of convolution semigroups of states on expected coidalgebras}

\begin{remark}\label{Remark: one to one correspondence between convolution semigroup of invariant functionals and invariant Markov semigroups}
It is worthwhile to note at this point that Theorem \ref{Theorem: one to one correspondence between Phi invariant functionals and invariant operators} along with Lemma \ref{Lemma: convolution semigroups of functionals starting from a state on coidalgebra is Phi invariant} essentially gives us a way to go back and forth between convolution semigroup of states on $\mathscr{O}_{\mathbb{G}}(\Phi\backslash \mathbb{G}) $ and $\qg$-invariant Markov semigroup on $\mathscr{O}_{\mathbb{G}}(\Phi\backslash \mathbb{G}) $.
\end{remark}
The following theorem gives a Schoenberg correspondence for expected right coidalgebras. 

\begin{theorem}\label{Theorem: Schoenberg correspondence for an expected right coidalgebra}
Let $\{\lambda_t:\mathscr{O}_{\mathbb{G}}(\Phi\backslash \mathbb{G}) \longrightarrow\mathbb{C}\}_{t\geq0}$ be a strongly continous convolution semigroup of functionals. Let $\psi:=\frac{d}{dt}\lambda_t|_{_{t=0}}$. Then the following are equivalent.
\begin{itemize}
\item[(i)]
$(\lambda_t)_{t\geq0}$ is a convolution semigroup of states.
\item[(ii)]
$\psi$ is a well-defined map on $\mathscr{O}_{\mathbb{G}}(\Phi\backslash \mathbb{G})$, $\lambda_0$ is positive and $\psi(x^*x)\geq0$ for all $x\in\mathscr{O}_{\mathbb{G}}(\Phi\backslash \mathbb{G})$ with $\lambda_0(x^*x)=0$, and $\psi(x^\ast)=\overline{\psi(x)}$ for all $x\in\mathscr{O}_{\mathbb{G}}(\Phi\backslash \mathbb{G})$.
\end{itemize}
\end{theorem}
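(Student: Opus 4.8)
The plan is to move both conditions up to the compact quantum group $\qg$ through the conditional expectation $\mathbb{E}_{\ell}^\Phi$, and to reduce the statement to a Schoenberg-type correspondence for convolution semigroups of states on the genuine $*$-bialgebra $\pag$. This detour is forced by the fact that $\pqphi$ is only a coidalgebra: its coaction $\alpha$ lands in $\pqphi\otimes\pag$ rather than $\pqphi\otimes\pqphi$, so there is no bialgebra structure on $\pqphi$ to which a Schoenberg argument could be applied directly. The one novelty compared to the classical situation is that the lifted semigroup need not start at the counit $\varepsilon$, but at an idempotent state.

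For the implication (i)$\Rightarrow$(ii), I would first observe that a convolution semigroup of states has $\lambda_0$ a state, so Lemma \ref{Lemma: convolution semigroup of functionals starting from a state on coidalgebra is Phi invariant} makes each $\lambda_t$ $\Phi$-invariant and Lemma \ref{Lemma: convolution semigroup of functionals on coidalgebra lifts to convolution semigroup of functionals on G} applies: $\mu_t:=\lambda_t\circ\mathbb{E}_{\ell}^\Phi$ is a convolution semigroup of functionals on $\pag$. As $\mathbb{E}_{\ell}^\Phi$ is a unital completely positive conditional expectation (Theorem \ref{thm-id}), each $\mu_t$ is a state and $\mu_0$ is an idempotent state, with generator $\Psi:=\psi\circ\mathbb{E}_{\ell}^\Phi$; well-definedness of $\psi$ is Proposition \ref{Theorem: continuous convolution semigroups on coidalgebras are characterised by generators}. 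The Schoenberg correspondence on $\pag$ (for the idempotent initial functional $\mu_0$) then yields that $\Psi$ is hermitian and conditionally positive with respect to $\mu_0$. Restricting along $\pqphi\subseteq\pag$, on which $\mathbb{E}_{\ell}^\Phi$ is the identity and which is closed under $x\mapsto x^*x$, one has $\mu_0(x^*x)=\lambda_0(x^*x)$ and $\Psi(x^*x)=\psi(x^*x)$ for $x\in\pqphi$, so the hermitianity of $\psi$ and the implication $\lambda_0(x^*x)=0\Rightarrow\psi(x^*x)\ge0$ drop out immediately, while positivity of $\lambda_0=\mu_0|_{\pqphi}$ is clear.

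For the converse (ii)$\Rightarrow$(i) I would run the same dictionary backwards. Since $\lambda_0$ is a positive idempotent it is a state, so once more each $\lambda_t$ is $\Phi$-invariant and $(\mu_t)_{t\ge0}=(\lambda_t\circ\mathbb{E}_{\ell}^\Phi)_{t\ge0}$ is a convolution semigroup on $\pag$ with generator $\Psi=\psi\circ\mathbb{E}_{\ell}^\Phi$ and idempotent initial functional $\mu_0=\lambda_0\circ\mathbb{E}_{\ell}^\Phi$. As $\mathbb{E}_{\ell}^\Phi$ is $*$-preserving and $\psi$ is hermitian, $\Psi$ is hermitian. The conditional positivity of $\Psi$ relative to $\mu_0$ is then established (see the obstacle below), after which the converse direction of the Schoenberg correspondence on $\pag$ shows that $(\mu_t)$ is a convolution semigroup of states; restricting to $\pqphi$, where $\mu_t|_{\pqphi}=\lambda_t$, shows that every $\lambda_t$ is a state, and in particular $\lambda_t(\mathbf{1})=\mu_t(\mathbf{1})=1$ gives the normalisation for free.

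I expect the main obstacle to be the Schoenberg correspondence on $\pag$ in the generality required here, that is, for a convolution semigroup whose initial functional $\mu_0$ is an idempotent state rather than $\varepsilon$: the classical statement must be adapted by replacing the augmentation ideal $\ker\varepsilon$ by the GNS null space of $\mu_0$ and redoing the Schürmann-triple reconstruction relative to $\mu_0$. Feeding into the converse is the delicate transfer of conditional positivity through $\mathbb{E}_{\ell}^\Phi$: for $y\in\pag$ with $\mu_0(y^*y)=0$ one uses the Kadison--Schwarz inequality $\mathbb{E}_{\ell}^\Phi(y^*y)\ge\mathbb{E}_{\ell}^\Phi(y)^*\mathbb{E}_{\ell}^\Phi(y)$ together with the bimodule identity $\mathbb{E}_{\ell}^\Phi(y^*y)-\mathbb{E}_{\ell}^\Phi(y)^*\mathbb{E}_{\ell}^\Phi(y)=\mathbb{E}_{\ell}^\Phi\bigl((y-\mathbb{E}_{\ell}^\Phi(y))^*(y-\mathbb{E}_{\ell}^\Phi(y))\bigr)$ to write $a:=\mathbb{E}_{\ell}^\Phi(y^*y)\in\pqphi$ as a positive element annihilated by $\lambda_0$ and then as a sum of squares each killed by $\lambda_0$, whence $\Psi(y^*y)=\psi(a)\ge0$. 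Verifying that last decomposition, so that the conditional positivity of $\psi$ can be applied term by term, is the technical heart of the argument.
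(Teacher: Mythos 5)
Your reduction is exactly the one the paper uses: lift $(\lambda_t)_{t\ge0}$ to the convolution semigroup $\mu_t=\lambda_t\circ\mathbb{E}_{\ell}^\Phi$ of $\Phi$-bi-invariant functionals on $\pag$ (via Lemmas \ref{Lemma: convolution semigroup of functionals starting from a state on coidalgebra is Phi invariant} and \ref{Lemma: convolution semigroup of functionals on coidalgebra lifts to convolution semigroup of functionals on G}), identify its generator as $\psi\circ\mathbb{E}_{\ell}^\Phi$ (as in the proof of Proposition \ref{Theorem: continuous convolution semigroups on coidalgebras are characterised by generators}), settle the equivalence on the $*$-bialgebra $\pag$, and restrict back along $\pqphi\subset\pag$. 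The gap is that what you single out as the ``main obstacle'' --- a Schoenberg correspondence on $\pag$ for semigroups whose initial functional is an idempotent state $\mu_0$ rather than $\varepsilon$, with conditional positivity taken relative to $\mu_0$ --- is left unproven in your proposal, whereas it is the decisive ingredient. It does not need to be re-derived by redoing the Sch\"urmann-triple reconstruction: it is precisely \cite[Theorem 3.3]{franz+schurmann00}, which is stated for $*$-bialgebras in exactly this relative form (that paper works with quantum hypergroups, where convolution semigroups cannot be assumed to start at the counit). Indeed, the paper's proof after the lifting step consists of this single citation. So your plan is the right one, but as written its central step is an acknowledged hole rather than a theorem.

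Your second flagged issue --- transferring conditional positivity through $\mathbb{E}_{\ell}^\Phi$, i.e.\ passing from condition (ii) for $(\lambda_t)$ to the corresponding condition for $(\mu_t)$ --- is a legitimate concern (the paper compresses it into the sentence that it suffices to prove (i) and (ii) for $(\mu_t)$, relying on complete positivity of $\mathbb{E}_{\ell}^\Phi$), but your proposed resolution cannot be completed as described. The bimodule identity gives
\[
\mathbb{E}_{\ell}^\Phi(y^*y)=\mathbb{E}_{\ell}^\Phi(y)^*\mathbb{E}_{\ell}^\Phi(y)+\mathbb{E}_{\ell}^\Phi(z^*z),\qquad z=y-\mathbb{E}_{\ell}^\Phi(y),
\]
and the first summand is a square of an element of $\pqphi$; but the second summand is again a conditional expectation of a square of an element of $\pag$ --- the very object you started from --- and since $\mathbb{E}_{\ell}^\Phi(z)=0$, applying the identity to $z$ returns it unchanged. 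The iteration therefore stalls immediately and never produces a decomposition of $\mathbb{E}_{\ell}^\Phi(y^*y)$ into squares of elements of $\pqphi$, so the hypothesis in (ii), which concerns only elements $x^*x$ with $x\in\pqphi$, cannot be applied ``term by term.'' (Moreover, even the claim that both summands are killed by $\lambda_0$ already presupposes that $\mu_0$ is positive on squares from $\pag$, which is itself an instance of the transfer you are trying to prove.) In short: correct strategy, identical to the paper's, but missing the key Schoenberg input \cite[Theorem 3.3]{franz+schurmann00}, and with a positivity-transfer sketch that is circular as stated.
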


\begin{proof}
Let us first extend $(\lambda_t)_{t\geq0}$ to a convolution semigroup \mbox{$\{\mu_t:\pag\longrightarrow\mathbb{C}\}_{t\geq0}$} of $\Phi$-bi-invariant functionals, as shown in Lemma \ref{Lemma: convolution semigroup of functionals starting from a state on coidalgebra is Phi invariant}. As in the proof of Theorem \ref{Theorem: continuous convolution semigroups on coidalgebras are characterised by generators}, $(\mu_t)_{t\geq0}$ is also strongly continuous. Moreover, since for each $t\in[0,+\infty)$, $\mu_t=\lambda_t\circ \mathbb{E}_{\ell}^\Phi$, this implies that $\mu_t$ is a state on $\pag$ for all $t$. Moreover, from the proof of Theorem \ref{Theorem: continuous convolution semigroups on coidalgebras are characterised by generators} it follows that $\psi\circ \mathbb{E}_{\ell}^\Phi=\frac{d}{dt}\mu_t|_{t=0}$. So it is enough to prove (i) and (ii) for $(\mu_t)_{t\geq0}$. Since $(\pag,\Delta)$ is a *-bialgebra, the result now follows from \cite[Theorem 3.3]{franz+schurmann00}.
\end{proof}

\section{Quantum hypergroups} \label{sec-qhyper}

\subsection{Functionals on the algebra of $\Phi$-bi-invariant functions on $\qg$}\label{Subsection: Functionals on the algebra of h-bi-invariant functions on G}

Let $ \Phi \in\mathcal{I}(\qg)$ and denote $\mathbb{E}_{\ell}^\Phi:=( \Phi \otimes{\rm id})\circ\Delta$ and $\mathbb{E}_r^\Phi:=({\rm id}\otimes  \Phi)\circ\Delta$. Let $\mathscr{O}_{\mathbb{G}}(\Phi\backslash \mathbb{G}) :=\mathbb{E}_{\ell}^\Phi(\pag)$ and $\mathscr{O}_{\mathbb{G}}(\mathbb{G}/ \Phi):=\mathbb{E}_r^\Phi(\pag)$.

\begin{definition}\label{Definition: definition of h-bi-invariant functions on qg}
The \emph{*-algebra of $\Phi$-bi-invariant functions} on $\qg$, denoted by $\mathscr{O}_{\mathbb{G}}(\Phi \backslash\mathbb{G}/ \Phi)$ is defined by $\mathscr{O}_{\mathbb{G}}(\Phi \backslash\mathbb{G}/ \Phi):=\mathscr{O}_{\mathbb{G}}(\Phi\backslash \mathbb{G}) \cap\mathscr{O}_{\mathbb{G}}(\mathbb{G}/ \Phi)=\mathbb{E}_{\ell}^\Phi(\mathbb{E}_r^\Phi(\pag))$.
\end{definition}

\begin{remark}
It is worthwhile to note that in the context of CQG algebras, the double coset hyper bialgebra considered in \cite{franz+schurmann00} is a special case of the algebra introduced in Definition \ref{Definition: definition of h-bi-invariant functions on qg}. Haonan Zhang \cite[Proposition 2.4]{haonan} has shown that $C(\Phi\backslash \qg/\Phi)=\mathbb{E}_{\ell}^\Phi(\mathbb{E}_r^\Phi(C(\qg)))$ has the structure of a compact quantum hypergroup in the sense of \cite{chap+va99}.
\end{remark}

\begin{theorem}\label{Theorem: from functionals on dch bi algebra to Phi bi invariant functionals on qg}
Let $f$ be a functional on $\mathscr{O}_{\mathbb{G}}(\Phi \backslash\mathbb{G}/ \Phi)$ and define $\mu:=f\circ \mathbb{E}_{\ell}^\Phi\circ \mathbb{E}_r^\Phi$. Then $\mu$ is the unique $\Phi$-bi-invariant functional on $\pag$ such that $\mu|_{_{\mathscr{O}_{\mathbb{G}}(\Phi \backslash\mathbb{G}/ \Phi)}}=f$.
\end{theorem}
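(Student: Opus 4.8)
The plan is to realise $\mathbb{E}_{\ell}^\Phi\circ\mathbb{E}_r^\Phi$ as an idempotent projection of $\pag$ onto $\mathscr{O}_{\mathbb{G}}(\Phi \backslash\mathbb{G}/ \Phi)$ and to reduce both the bi-invariance of $\mu$ and the uniqueness to the single observation that, for an arbitrary functional $\nu$ on $\pag$,
\[
\nu\circ\mathbb{E}_{\ell}^\Phi=\nu \ \text{ and }\ \nu\circ\mathbb{E}_r^\Phi=\nu \qquad\Longleftrightarrow\qquad \nu \text{ is } \Phi\text{-bi-invariant}.
\]
Indeed, for $x\in\pag$ one has $\nu(\mathbb{E}_{\ell}^\Phi(x))=\Phi(x_{(1)})\nu(x_{(2)})=(\Phi\star\nu)(x)$ and $\nu(\mathbb{E}_r^\Phi(x))=\nu(x_{(1)})\Phi(x_{(2)})=(\nu\star\Phi)(x)$, so (using $\Phi(\mathbf 1)=1$) the two equalities on the left are literally left $\Phi$-invariance $\Phi\star\nu=\nu$ and right $\Phi$-invariance $\nu\star\Phi=\nu$ in the sense of Definition~\ref{Definition: h invariance and h-bi-invarince}.

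First I would record that $\mathbb{E}_{\ell}^\Phi$ and $\mathbb{E}_r^\Phi$ commute. A one-line Sweedler computation using only coassociativity gives
\[
\mathbb{E}_{\ell}^\Phi\big(\mathbb{E}_r^\Phi(x)\big)=\Phi(x_{(1)})\,x_{(2)}\,\Phi(x_{(3)})=\mathbb{E}_r^\Phi\big(\mathbb{E}_{\ell}^\Phi(x)\big),
\]
so that $P:=\mathbb{E}_{\ell}^\Phi\circ\mathbb{E}_r^\Phi=\mathbb{E}_r^\Phi\circ\mathbb{E}_{\ell}^\Phi$. Since each of $\mathbb{E}_{\ell}^\Phi,\mathbb{E}_r^\Phi$ is a (hence idempotent) conditional expectation by Theorem~\ref{thm-id}, the commutation yields $P\circ P=\mathbb{E}_{\ell}^\Phi\mathbb{E}_{\ell}^\Phi\mathbb{E}_r^\Phi\mathbb{E}_r^\Phi=P$. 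Thus $P$ is an idempotent whose range is exactly $\mathbb{E}_{\ell}^\Phi(\mathbb{E}_r^\Phi(\pag))=\mathscr{O}_{\mathbb{G}}(\Phi \backslash\mathbb{G}/ \Phi)=:\mathcal{B}$, and in particular $P$ restricts to the identity on $\mathcal{B}$.

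Next I would verify that $\mu=f\circ P$ does the job. The restriction property $\mu|_{\mathcal{B}}=f$ is immediate from $P|_{\mathcal{B}}=\mathrm{id}$. For bi-invariance I apply the criterion above: using idempotency of $\mathbb{E}_r^\Phi$, the commutation, and idempotency of $\mathbb{E}_{\ell}^\Phi$,
\[
\mu\circ\mathbb{E}_r^\Phi=f\circ\mathbb{E}_{\ell}^\Phi\mathbb{E}_r^\Phi\mathbb{E}_r^\Phi=\mu,\qquad
\mu\circ\mathbb{E}_{\ell}^\Phi=f\circ\mathbb{E}_{\ell}^\Phi\mathbb{E}_r^\Phi\mathbb{E}_{\ell}^\Phi=f\circ\mathbb{E}_{\ell}^\Phi\mathbb{E}_{\ell}^\Phi\mathbb{E}_r^\Phi=\mu,
\]
so $\mu$ is $\Phi$-bi-invariant.

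Finally, for uniqueness, suppose $\nu$ is any $\Phi$-bi-invariant functional on $\pag$ with $\nu|_{\mathcal{B}}=f$. By the criterion, $\nu\circ\mathbb{E}_{\ell}^\Phi=\nu$ and $\nu\circ\mathbb{E}_r^\Phi=\nu$, whence $\nu\circ P=\nu$. Since $P(x)\in\mathcal{B}$ for every $x\in\pag$, this gives $\nu(x)=\nu(P(x))=f(P(x))=\mu(x)$, i.e.\ $\nu=\mu$. The only genuinely computational ingredient is the coassociativity identity that furnishes the commutation of the two conditional expectations; everything else is formal manipulation with idempotents, so I do not expect a serious obstacle. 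The only point requiring care is bookkeeping of which idempotency is invoked at each step (that of $\mathbb{E}_{\ell}^\Phi,\mathbb{E}_r^\Phi$ as conditional expectations, versus $\Phi\star\Phi=\Phi$ for the functional $\Phi$), and making sure the criterion is used in the correct direction for existence and for uniqueness.
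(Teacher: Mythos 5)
Your proof is correct and takes essentially the same route as the paper's: both arguments rest on the commutation $\mathbb{E}_{\ell}^\Phi\circ\mathbb{E}_r^\Phi=\mathbb{E}_r^\Phi\circ\mathbb{E}_{\ell}^\Phi$, the idempotency of these maps, and the identity that left (resp.\ right) $\Phi$-invariance of a functional $\nu$ on $\pag$ is the same as $\nu\circ\mathbb{E}_{\ell}^\Phi=\nu$ (resp.\ $\nu\circ\mathbb{E}_r^\Phi=\nu$), with the paper performing these steps inline via Sweedler-type computations rather than packaging them as an explicit criterion plus the idempotent $P=\mathbb{E}_{\ell}^\Phi\circ\mathbb{E}_r^\Phi$. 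A minor bonus of your write-up is that it supplies the ``easy computation'' establishing the commutation, which the paper asserts without proof.
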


\begin{proof}
We prove the $\Phi$-bi-invariance of $\mu$ as a functional on $\pag$. We may note that an easy computation yields $\mathbb{E}_r^\Phi\circ \mathbb{E}_{\ell}^\Phi=\mathbb{E}_{\ell}^\Phi\circ \mathbb{E}_r^\Phi$. We only show the left $\Phi$-invariance of $\mu$. The proof of right $\Phi$-invariance is identical. Let $x\in\pag$.
\begin{equation*}
\begin{split}
( \Phi \otimes\mu)(\Delta(x))&=( \Phi \otimes f\circ \mathbb{E}_{\ell}^\Phi\circ \mathbb{E}_r^\Phi)(\Delta(x)) \\
&=f(\mathbb{E}_{\ell}^\Phi(\mathbb{E}_r^\Phi(\mathbb{E}_{\ell}^\Phi(x))))\\
&=f(\mathbb{E}_r^\Phi((\mathbb{E}_{\ell}^\Phi)^2(x)))\\
&=f(\mathbb{E}_r^\Phi(\mathbb{E}_{\ell}^\Phi(x)))\\
&=f(\mathbb{E}_{\ell}^\Phi(\mathbb{E}_r^\Phi(x)))=\mu(x),
\end{split}
\end{equation*}
which proves left $\Phi$-invariance of $\mu$.

Conversely, suppose $\nu$ is a $\Phi$-invariant functional on $\pag$ such that $\nu|_{_{\mathscr{O}_{\mathbb{G}}(\Phi \backslash\mathbb{G}/ \Phi)}}=f$. Let $x\in\pag$. We have
\begin{equation*}
\begin{split}
\nu(x)&=( \Phi \otimes\nu)(\Delta(x)) \\
&=\nu(\mathbb{E}_{\ell}^\Phi(x)) \\
&=(\nu\otimes  \Phi )(\Delta(\mathbb{E}_{\ell}^\Phi(x))) \\
&=\nu(\mathbb{E}_r^\Phi(\mathbb{E}_{\ell}^\Phi(x))) \\
&=f(\mathbb{E}_{\ell}^\Phi(\mathbb{E}_r^\Phi(x)))=\mu(x),
\end{split}
\end{equation*}
which proves the uniqueness.
\end{proof}

A functional on $f$ on $\mathscr{O}_{\mathbb{G}}(\Phi \backslash\mathbb{G}/ \Phi)$ can be extended in many ways to a functional on the right coidalgebra $\mathscr{O}_{\mathbb{G}}(\Phi\backslash \mathbb{G}) $. For example, let $x\in\mathscr{O}_{\mathbb{G}}(\Phi\backslash \mathbb{G}) $. Then $x$ admits a unique decomposition $x=a+b$, where $a\in \mathbb{E}_r^\Phi(\pag)$ and \mbox{$b\in (\mathbb{E}_r^\Phi)^\perp(\pag)$}. Note that $a\in\mathscr{O}_{\mathbb{G}}(\Phi \backslash\mathbb{G}/ \Phi)$. Now the assignment \mbox{$x\mapsto f(a)+\psi(b)$}, for any functional $\psi$ on $(\mathbb{E}_r^\Phi)^\perp(\pag)$, gives a well-defined functional on $\mathscr{O}_{\mathbb{G}}(\Phi\backslash \mathbb{G}) $. However, not all such extensions will be $\Phi$-invariant as functionals on $\mathscr{O}_{\mathbb{G}}(\Phi\backslash \mathbb{G}) $. In fact we have

\begin{corollary}\label{Corollary: from functionals on dch bialgebra to Phi invariant functionals on coidalgebra}
Let $f$ be a functional on $\mathscr{O}_{\mathbb{G}}(\Phi \backslash\mathbb{G}/ \Phi)$. Then there exists a unique functional $\lambda$ on $\mathscr{O}_{\mathbb{G}}(\Phi\backslash \mathbb{G}) $ such that
\begin{itemize}
\item
$\lambda|_{_{\mathscr{O}_{\mathbb{G}}(\Phi \backslash\mathbb{G}/ \Phi)}}=f$.
\item $\lambda$ is a $\Phi$-invariant functional on $\mathscr{O}_{\mathbb{G}}(\Phi\backslash \mathbb{G}) $ in the sense of Definition \ref{Definition: h invariance and h-bi-invarince}.
\end{itemize}
\end{corollary}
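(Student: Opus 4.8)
The plan is to obtain $\lambda$ by chaining together the two extension correspondences already established: Theorem~\ref{Theorem: from functionals on dch bi algebra to Phi bi invariant functionals on qg}, which lifts a functional on the double coset algebra $\mathscr{O}_{\mathbb{G}}(\Phi \backslash\mathbb{G}/ \Phi)$ to a $\Phi$-bi-invariant functional on $\pag$, and Theorem~\ref{Theorem: from Phi invariant functional to bi Phi invariant functional and vice versa}, which identifies $\Phi$-bi-invariant functionals on $\pag$ with $\Phi$-invariant functionals on the coidalgebra $\mathscr{O}_{\mathbb{G}}(\Phi\backslash \mathbb{G}) $. Concretely, I would first set $\mu := f\circ \mathbb{E}_{\ell}^\Phi\circ \mathbb{E}_r^\Phi$, which by Theorem~\ref{Theorem: from functionals on dch bi algebra to Phi bi invariant functionals on qg} is the unique $\Phi$-bi-invariant functional on $\pag$ with $\mu|_{_{\mathscr{O}_{\mathbb{G}}(\Phi \backslash\mathbb{G}/ \Phi)}}=f$, and then define $\lambda := \mu|_{_{\mathscr{O}_{\mathbb{G}}(\Phi\backslash \mathbb{G}) }}$.

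The two required properties then follow almost immediately. Since $\mu$ is $\Phi$-bi-invariant, part~(b) of Theorem~\ref{Theorem: from Phi invariant functional to bi Phi invariant functional and vice versa} shows that $\lambda = \mu|_{_{\mathscr{O}_{\mathbb{G}}(\Phi\backslash \mathbb{G}) }}$ is $\Phi$-invariant and satisfies $\lambda\circ \mathbb{E}_{\ell}^\Phi = \mu$, which is the second bullet. For the first bullet, I would use the inclusion $\mathscr{O}_{\mathbb{G}}(\Phi \backslash\mathbb{G}/ \Phi)\subseteq \mathscr{O}_{\mathbb{G}}(\Phi\backslash \mathbb{G}) $ coming from $\mathscr{O}_{\mathbb{G}}(\Phi \backslash\mathbb{G}/ \Phi)=\mathbb{E}_{\ell}^\Phi(\mathbb{E}_r^\Phi(\pag))\subseteq \mathbb{E}_{\ell}^\Phi(\pag)$, so that $\lambda|_{_{\mathscr{O}_{\mathbb{G}}(\Phi \backslash\mathbb{G}/ \Phi)}} = \mu|_{_{\mathscr{O}_{\mathbb{G}}(\Phi \backslash\mathbb{G}/ \Phi)}} = f$.

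For uniqueness, I would take any $\Phi$-invariant functional $\lambda'$ on $\mathscr{O}_{\mathbb{G}}(\Phi\backslash \mathbb{G}) $ restricting to $f$ on $\mathscr{O}_{\mathbb{G}}(\Phi \backslash\mathbb{G}/ \Phi)$. By part~(a) of Theorem~\ref{Theorem: from Phi invariant functional to bi Phi invariant functional and vice versa}, $\mu' := \lambda'\circ \mathbb{E}_{\ell}^\Phi$ is the unique $\Phi$-bi-invariant functional on $\pag$ whose restriction to the coidalgebra is $\lambda'$. Because $\mathbb{E}_{\ell}^\Phi$ acts as the identity on its image, and $\mathscr{O}_{\mathbb{G}}(\Phi \backslash\mathbb{G}/ \Phi)$ lies inside that image, we get $\mu'|_{_{\mathscr{O}_{\mathbb{G}}(\Phi \backslash\mathbb{G}/ \Phi)}} = \lambda'|_{_{\mathscr{O}_{\mathbb{G}}(\Phi \backslash\mathbb{G}/ \Phi)}} = f$, so $\mu'$ is a $\Phi$-bi-invariant extension of $f$; the uniqueness clause of Theorem~\ref{Theorem: from functionals on dch bi algebra to Phi bi invariant functionals on qg} then forces $\mu'=\mu$, whence $\lambda' = \mu'|_{_{\mathscr{O}_{\mathbb{G}}(\Phi\backslash \mathbb{G}) }} = \lambda$. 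The only delicate point — and hence the main (if modest) obstacle — is exactly this uniqueness step: I must verify that a $\Phi$-invariant functional on the coidalgebra is already pinned down by its values on the smaller double coset algebra, which is precisely what the detour through the $\Phi$-bi-invariant functional $\mu'$ on $\pag$ supplies.
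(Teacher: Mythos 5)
Your proof is correct and follows essentially the same route as the paper's own argument: existence via $\mu = f\circ \mathbb{E}_{\ell}^\Phi\circ \mathbb{E}_r^\Phi$ and restriction to $\mathscr{O}_{\mathbb{G}}(\Phi\backslash \mathbb{G})$, and uniqueness by lifting a competing $\Phi$-invariant extension $\lambda'$ to its $\Phi$-bi-invariant extension on $\mathscr{O}(\mathbb{G})$ and invoking the uniqueness clause of Theorem \ref{Theorem: from functionals on dch bi algebra to Phi bi invariant functionals on qg}. Your explicit remark that $\mathbb{E}_{\ell}^\Phi$ acts as the identity on its image is a nice clarification of a step the paper leaves implicit, but the argument is the same.
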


\begin{proof}
Let us first prove that there exists at least one $\Phi$-invariant extension of $f$. By virtue of Theorem \ref{Theorem: from functionals on dch bi algebra to Phi bi invariant functionals on qg}, we see $\mu:=f\circ \mathbb{E}_{\ell}^\Phi\circ \mathbb{E}_r^\Phi$ is a $\Phi$-bi-invariant functional on $\pag$. Thus by Theorem \ref{Theorem: from Phi invariant functional to bi Phi invariant functional and vice versa} we see that $\lambda:=\mu|_{_{\mathscr{O}_{\mathbb{G}}(\Phi\backslash \mathbb{G}) }}$ is a $\Phi$-invariant functional on $\mathscr{O}_{\mathbb{G}}(\Phi\backslash \mathbb{G}) $. Clearly $\lambda|_{_{\mathscr{O}_{\mathbb{G}}(\Phi \backslash\mathbb{G}/ \Phi)}}=f$, which proves that there exists at least one $\Phi$-invariant extension of $f$. 

Suppose $\lambda^\prime:\mathscr{O}_{\mathbb{G}}(\Phi\backslash \mathbb{G}) \longrightarrow\mathbb{C}$ be another $\Phi$-invariant extension of $f$. Let us suppose that $\mu_1:\pag\longrightarrow\mathbb{C}$ be the unique $\Phi$-bi-invariant extension of $f$ given by Theorem \ref{Theorem: from functionals on dch bi algebra to Phi bi invariant functionals on qg}, and $\mu_2:\pag\longrightarrow\mathbb{C}$ be the unique $\Phi$-bi-invariant extension of $\lambda^\prime$ as given by Theorem \ref{Theorem: from Phi invariant functional to bi Phi invariant functional and vice versa}. Since $\mu_2|_{_{\mathscr{O}_{\mathbb{G}}(\Phi \backslash\mathbb{G}/ \Phi)}}=\lambda^\prime|_{_{\mathscr{O}_{\mathbb{G}}(\Phi \backslash\mathbb{G}/ \Phi)}}=f$, this implies that $\mu_2$ is also a $\Phi$-bi-invariant extension of $f$. By the uniqueness of such an extension as shown in Theorem \ref{Theorem: from functionals on dch bi algebra to Phi bi invariant functionals on qg}, we must have $\mu_1=\mu_2$ which in turn implies that $\lambda^\prime=\lambda$.
\end{proof}

Theorem \ref{Theorem: from functionals on dch bi algebra to Phi bi invariant functionals on qg} and Corollary \ref{Corollary: from functionals on dch bialgebra to Phi invariant functionals on coidalgebra} together yield:

All functionals on the *-algebra of $\Phi$-bi-invariant functions on $\qg$ are precisely the restrictions of $\Phi$-bi-invariant functionals on $\pag$. Hence they are also restrictions of $\Phi$-invariant functionals on the corresponding right coidalgebra.

\subsection{Convolution of functionals on the *-algebra of $\Phi$-bi-invariant functions on $\qg$}

In this subsection, we again consider the *-algebra of $\Phi$-bi-invariant functions on $\qg$ denoted by $\mathscr{O}_{\mathbb{G}}(\Phi \backslash\mathbb{G}/ \Phi)$, as defined in Definition \ref{Definition: definition of h-bi-invariant functions on qg}. We will define a coproduct on $\mathscr{O}_{\mathbb{G}}(\Phi \backslash\mathbb{G}/ \Phi)$, which will turn it into a *-bi-algebra.
\begin{definition}\label{Definition: of coproduct for the algebra if Phi bi invariant functions on qg}
Define $\widetilde{\Delta}:\pag\longrightarrow\pag\otimes\pag$ by 
\[
\widetilde{\Delta}(x):=({\rm id}\otimes  \Phi \otimes \mathrm{id})(\Delta^{(2)}(x)),\qquad x\in\mathscr{O}_{\mathbb{G}}(\Phi \backslash\mathbb{G}/ \Phi), 
\]
where $\Delta^{(2)}=({\rm id}\otimes\Delta)\circ\Delta$.
\end{definition}

\begin{lemma}\label{Lemma: coproduct for the algebra of bi invariant functions on qg}
The triple $(\mathscr{O}_{\mathbb{G}}(\Phi \backslash\mathbb{G}/ \Phi),\widetilde{\Delta}|_{_{\mathscr{O}_{\mathbb{G}}(\Phi \backslash\mathbb{G}/ \Phi)}},\varepsilon|_{_{\mathscr{O}_{\mathbb{G}}(\Phi \backslash\mathbb{G}/ \Phi)}})$ is a \emph{hyper-bialgebra} (in the sense of \cite{franz+schurmann00}), i.e.,
\begin{enumerate}
\item
$\mathscr{O}_{\mathbb{G}}(\Phi \backslash\mathbb{G}/ \Phi)$ is a unital *-algebra;
\item
the triple $(\mathscr{O}_{\mathbb{G}}(\Phi \backslash\mathbb{G}/ \Phi),\widetilde{\Delta}|_{_{\mathscr{O}_{\mathbb{G}}(\Phi \backslash\mathbb{G}/ \Phi)}},\varepsilon|_{_{\mathscr{O}_{\mathbb{G}}(\Phi \backslash\mathbb{G}/ \Phi)}})$ is a coalgebra;
\item
the comultiplication $\widetilde{\Delta}|_{_{\mathscr{O}_{\mathbb{G}}(\Phi \backslash\mathbb{G}/ \Phi)}}$ is completely positive and the counit $\varepsilon|_{_{\mathscr{O}_{\mathbb{G}}(\Phi \backslash\mathbb{G}/ \Phi)}}$ is a *-algebra homomorphism.
\end{enumerate}
\end{lemma}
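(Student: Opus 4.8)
The plan is to verify the three itemized conditions in turn, using throughout the two equivalent one-leg expressions
\[
\widetilde{\Delta} = (\mathbb{E}_r^\Phi\otimes\mathrm{id})\circ\Delta = (\mathrm{id}\otimes\mathbb{E}_{\ell}^\Phi)\circ\Delta ,
\]
obtained by writing $\Delta^{(2)}$ as $(\Delta\otimes\mathrm{id})\circ\Delta$, resp.\ $(\mathrm{id}\otimes\Delta)\circ\Delta$, and then slicing the middle leg by $\Phi$; the two forms agree by Lemma~\ref{Lemma: useful 1st observation}(b). These forms, together with the idempotency of $\mathbb{E}_{\ell}^\Phi,\mathbb{E}_r^\Phi$ and their commutation relations, will do essentially all the work. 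Condition~(1) is immediate, since $\mathscr{O}_{\mathbb{G}}(\Phi \backslash\mathbb{G}/ \Phi)=\mathscr{O}_{\mathbb{G}}(\Phi\backslash \mathbb{G})\cap\mathscr{O}_{\mathbb{G}}(\mathbb{G}/ \Phi)$ is an intersection of two unital $*$-subalgebras of $\pag$.

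The first genuine step, and the one I expect to be the main obstacle, is well-definedness: that $\widetilde{\Delta}$ corestricts to $\mathscr{O}_{\mathbb{G}}(\Phi \backslash\mathbb{G}/ \Phi)\otimes\mathscr{O}_{\mathbb{G}}(\Phi \backslash\mathbb{G}/ \Phi)$, whereas the defining formula a priori only lands in $\pag\otimes\pag$. I would use that $\mathbb{E}_{\ell}^\Phi$ and $\mathbb{E}_r^\Phi$ are commuting idempotents whose product projects onto $\mathscr{O}_{\mathbb{G}}(\Phi \backslash\mathbb{G}/ \Phi)$, so that it suffices, for $x\in\mathscr{O}_{\mathbb{G}}(\Phi \backslash\mathbb{G}/ \Phi)$, to check that each leg of $\widetilde{\Delta}(x)$ is fixed by both conditional expectations. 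For the first leg the form $\widetilde{\Delta}=(\mathbb{E}_r^\Phi\otimes\mathrm{id})\circ\Delta$ with $(\mathbb{E}_r^\Phi)^2=\mathbb{E}_r^\Phi$ gives $(\mathbb{E}_r^\Phi\otimes\mathrm{id})\widetilde{\Delta}(x)=\widetilde{\Delta}(x)$, while the form $\widetilde{\Delta}=(\mathrm{id}\otimes\mathbb{E}_{\ell}^\Phi)\circ\Delta$, Lemma~\ref{Lemma: useful 1st observation}(a), and $\mathbb{E}_{\ell}^\Phi(x)=x$ give
\[
(\mathbb{E}_{\ell}^\Phi\otimes\mathrm{id})\widetilde{\Delta}(x)=(\mathbb{E}_{\ell}^\Phi\otimes\mathbb{E}_{\ell}^\Phi)\Delta(x)=(\mathrm{id}\otimes\mathbb{E}_{\ell}^\Phi)\Delta(\mathbb{E}_{\ell}^\Phi x)=\widetilde{\Delta}(x).
\]
The second leg is handled symmetrically, exchanging the two forms and using the other half of Lemma~\ref{Lemma: useful 1st observation}(a) with $\mathbb{E}_r^\Phi(x)=x$. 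Combining the two leg-conditions places $\widetilde{\Delta}(x)$ in the tensor square of the bi-invariant algebra.

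For the coalgebra structure~(2), coassociativity will follow from a direct Sweedler computation: expanding via coassociativity of $\Delta$, both $(\widetilde{\Delta}\otimes\mathrm{id})\widetilde{\Delta}(x)$ and $(\mathrm{id}\otimes\widetilde{\Delta})\widetilde{\Delta}(x)$ collapse to $\Phi(x_{(2)})\Phi(x_{(4)})\,x_{(1)}\otimes x_{(3)}\otimes x_{(5)}$, so the identity in fact holds on all of $\pag$ and needs no idempotency. For the counit, collapsing the outer leg with $(\varepsilon\otimes\mathrm{id})\circ\Delta=\mathrm{id}=(\mathrm{id}\otimes\varepsilon)\circ\Delta$ yields $(\varepsilon\otimes\mathrm{id})\circ\widetilde{\Delta}=\mathbb{E}_{\ell}^\Phi$ and $(\mathrm{id}\otimes\varepsilon)\circ\widetilde{\Delta}=\mathbb{E}_r^\Phi$, each of which restricts to the identity on $\mathscr{O}_{\mathbb{G}}(\Phi \backslash\mathbb{G}/ \Phi)$ precisely because its elements are fixed by both conditional expectations.

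Finally, for~(3) the counit $\varepsilon|_{\mathscr{O}_{\mathbb{G}}(\Phi \backslash\mathbb{G}/ \Phi)}$ is a $*$-homomorphism because $\varepsilon$ is and restriction preserves this. Complete positivity I would read off the factorization $\widetilde{\Delta}=(\mathrm{id}\otimes\Phi\otimes\mathrm{id})\circ\Delta^{(2)}$: the iterated coproduct $\Delta^{(2)}$ is a unital $*$-homomorphism, hence completely positive, and slicing the middle leg by the state $\Phi$ is completely positive, so the composite is completely positive into $\pag\otimes\pag$; by the well-definedness step it corestricts to the $*$-subalgebra $\mathscr{O}_{\mathbb{G}}(\Phi \backslash\mathbb{G}/ \Phi)\otimes\mathscr{O}_{\mathbb{G}}(\Phi \backslash\mathbb{G}/ \Phi)$, where complete positivity is retained. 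To make the positivity notion unambiguous one may pass to the $C^*$-completions, on which $\Phi$ extends to a state and $\Delta^{(2)}$ to a $*$-homomorphism, matching the compact quantum hypergroup picture of Zhang. The only delicate point is the well-definedness step; everything else is bookkeeping with the two forms of $\widetilde{\Delta}$.
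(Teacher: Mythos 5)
Your proof is correct and takes essentially the same route as the paper: the crux in both is the corestriction of $\widetilde{\Delta}$ to $\mathscr{O}_{\mathbb{G}}(\Phi \backslash\mathbb{G}/ \Phi)\otimes\mathscr{O}_{\mathbb{G}}(\Phi \backslash\mathbb{G}/ \Phi)$, obtained from the two one-leg forms $(\mathbb{E}_r^\Phi\otimes\mathrm{id})\circ\Delta=(\mathrm{id}\otimes\mathbb{E}_{\ell}^\Phi)\circ\Delta$ together with Lemma~\ref{Lemma: useful 1st observation} and the idempotency and commutation of the conditional expectations, merely organized as four separate leg-invariance checks instead of the paper's single chain of equalities ending in $(\mathbb{E}_{\ell}^\Phi\circ\mathbb{E}_r^\Phi\otimes \mathbb{E}_r^\Phi\circ\mathbb{E}_{\ell}^\Phi)\circ\Delta$. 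Your explicit Sweedler verification of coassociativity, the counit identities $(\varepsilon\otimes\mathrm{id})\circ\widetilde{\Delta}=\mathbb{E}_{\ell}^\Phi$, $(\mathrm{id}\otimes\varepsilon)\circ\widetilde{\Delta}=\mathbb{E}_r^\Phi$, and the complete positivity argument simply spell out the parts the paper declares to follow easily.
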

\begin{proof}
It follows easily that $\widetilde{\Delta}$ is completely positive and coassociative. We only need to show that $\widetilde{\Delta}(\mathscr{O}_{\mathbb{G}}(\Phi \backslash\mathbb{G}/ \Phi))\subset\mathscr{O}_{\mathbb{G}}(\Phi \backslash\mathbb{G}/ \Phi)\otimes\mathscr{O}_{\mathbb{G}}(\Phi \backslash\mathbb{G}/ \Phi)$. So let $x\in\mathscr{O}_{\mathbb{G}}(\Phi \backslash\mathbb{G}/ \Phi)$. We have 
\begin{equation*}
\begin{split}
\widetilde{\Delta}(x)&=(\mathbb{E}_r^\Phi\otimes \mathrm{id})(\Delta^{(2)}(x))\\
&=(\mathbb{E}_r^\Phi\otimes \mathrm{id})(\Delta(\mathbb{E}_r^\Phi\circ\mathbb{E}_{\ell}^\Phi(x)))\\
&=(\mathbb{E}_r^\Phi\circ\mathbb{E}_{\ell}^\Phi\otimes \mathbb{E}_r^\Phi)(\Delta(x))\quad(\text{by (a) of Lemma \ref{Lemma: useful 1st observation}})\\
&=(\mathbb{E}_{\ell}^\Phi\circ\mathbb{E}_r^\Phi\circ\mathbb{E}_r^\Phi\otimes \mathbb{E}_r^\Phi)(\Delta(x))\quad(\text{using $\mathbb{E}_r^\Phi\mathbb{E}_{\ell}^\Phi=\mathbb{E}_{\ell}^\Phi\mathbb{E}_r^\Phi$ and $(\mathbb{E}_r^\Phi)^2=\mathbb{E}_r^\Phi$})\\
&=(\mathbb{E}_{\ell}^\Phi\circ\mathbb{E}_r^\Phi\otimes \mathbb{E}_r^\Phi\circ\mathbb{E}_{\ell}^\Phi)(\Delta(x))\quad(\text{by (b) of Lemma \ref{Lemma: useful 1st observation}}).
\end{split}
\end{equation*}
{}From the last expression one can conclude that $\widetilde{\Delta}(x)\in\mathscr{O}_{\mathbb{G}}(\Phi \backslash\mathbb{G}/ \Phi)\otimes\mathscr{O}_{\mathbb{G}}(\Phi \backslash\mathbb{G}/ \Phi)$.
\end{proof}
As a consequence we can define convolution of functionals on $\mathscr{O}_{\mathbb{G}}(\Phi \backslash\mathbb{G}/ \Phi)$.
\begin{definition}\label{Definition: definition of convolution in the algebra of biinvariant functions on qg}
Let $f,g$ be two functionals on $\mathscr{O}_{\mathbb{G}}(\Phi \backslash\mathbb{G}/ \Phi)$. We define the convolution of $f$ and $g$ as the following functional:
\[
f\conva g:=(f\otimes g)\circ\widetilde{\Delta}.
\]
\end{definition}

Alternatively, we have $f\conva g:=f\star\Phi\star g$.

\begin{theorem}\label{Theorem: Convolution at the coidalgebra level restricts to convolution on cla and also G invariant operators}

The following holds:
\begin{itemize}
\item[(a)]
Let $f,g$ be functionals on $\mathscr{O}_{\mathbb{G}}(\Phi \backslash\mathbb{G}/ \Phi)$ and $\lambda_1,\lambda_2$ be their unique $\Phi$-invariant extensions as functionals on $\mathscr{O}_{\mathbb{G}}(\Phi\backslash \mathbb{G}) $ (given by Corollary \ref{Corollary: from functionals on dch bialgebra to Phi invariant functionals on coidalgebra}). Then 
\[
\lambda_1\convnew\lambda_2|_{_{\mathscr{O}_{\mathbb{G}}(\Phi \backslash\mathbb{G}/ \Phi)}}=f\conva g. 
\]
\item[(b)]
Let $T:\mathscr{O}_{\mathbb{G}}(\Phi \backslash\mathbb{G}/ \Phi)\longrightarrow\mathscr{O}_{\mathbb{G}}(\Phi \backslash\mathbb{G}/ \Phi)$ be a linear map such that $(T\otimes \mathrm{id})\circ\widetilde{\Delta}=\widetilde{\Delta}\circ T$. Then there exists a $\qg$-invariant map $S:\mathscr{O}_{\mathbb{G}}(\Phi\backslash \mathbb{G}) \longrightarrow\mathscr{O}_{\mathbb{G}}(\Phi\backslash \mathbb{G}) $ such that $S|_{_{\mathscr{O}_{\mathbb{G}}(\Phi \backslash\mathbb{G}/ \Phi)}}=T$. 
\end{itemize}
\end{theorem}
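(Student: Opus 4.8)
The plan is to handle the two parts separately, in each case lifting the question from the hyper-bialgebra $\mathscr{O}_{\mathbb{G}}(\Phi \backslash\mathbb{G}/ \Phi)$ up to $\pag$, where the ordinary convolution $\star$ and coproduct $\Delta$ are available, and then pushing the resulting identity back down using that $\widetilde{\Delta}$ takes values in $\mathscr{O}_{\mathbb{G}}(\Phi \backslash\mathbb{G}/ \Phi)\otimes\mathscr{O}_{\mathbb{G}}(\Phi \backslash\mathbb{G}/ \Phi)$ (Lemma \ref{Lemma: coproduct for the algebra of bi invariant functions on qg}).

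For (a), let $\mu_1,\mu_2$ be the unique $\Phi$-bi-invariant extensions of $\lambda_1,\lambda_2$ to $\pag$; by uniqueness these coincide with $f\circ\mathbb{E}_{\ell}^\Phi\circ\mathbb{E}_r^\Phi$ and $g\circ\mathbb{E}_{\ell}^\Phi\circ\mathbb{E}_r^\Phi$ of Theorem \ref{Theorem: from functionals on dch bi algebra to Phi bi invariant functionals on qg}, so $\mu_i$ restricts to $f$, $g$ on $\mathscr{O}_{\mathbb{G}}(\Phi \backslash\mathbb{G}/ \Phi)$. Theorem \ref{Theorem: Convolution at the group level restricts to give convolution at the coidalgebra level}(b) already gives $\lambda_1\convnew\lambda_2=(\mu_1\star\mu_2)|_{_{\mathscr{O}_{\mathbb{G}}(\Phi\backslash \mathbb{G}) }}$, so it remains only to identify $(\mu_1\star\mu_2)|_{_{\mathscr{O}_{\mathbb{G}}(\Phi \backslash\mathbb{G}/ \Phi)}}$ with $f\conva g$. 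The decisive step is that bi-invariance together with idempotency of $\Phi$ yields $\mu_1\star\mu_2=\mu_1\star\Phi\star\mu_2$ on $\pag$ (since $\mu_1\star\Phi=\mu_1$). Evaluating at $x\in\mathscr{O}_{\mathbb{G}}(\Phi \backslash\mathbb{G}/ \Phi)$ and recalling $\widetilde{\Delta}=({\rm id}\otimes\Phi\otimes{\rm id})\circ\Delta^{(2)}$ (Definition \ref{Definition: of coproduct for the algebra if Phi bi invariant functions on qg}) gives $(\mu_1\star\mu_2)(x)=(\mu_1\otimes\mu_2)(\widetilde{\Delta}(x))$; since $\widetilde{\Delta}(x)$ lies in the tensor square of $\mathscr{O}_{\mathbb{G}}(\Phi \backslash\mathbb{G}/ \Phi)$ this equals $(f\otimes g)(\widetilde{\Delta}(x))=(f\conva g)(x)$, as desired.

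For (b), set $\gamma:=\varepsilon\circ T$, a functional on $\mathscr{O}_{\mathbb{G}}(\Phi \backslash\mathbb{G}/ \Phi)$. The first key step is a \emph{reconstruction formula} $T=(\gamma\otimes{\rm id})\circ\widetilde{\Delta}$, obtained by applying $\varepsilon\otimes{\rm id}$ to the intertwining relation $\widetilde{\Delta}\circ T=(T\otimes{\rm id})\circ\widetilde{\Delta}$: using that $(\varepsilon\otimes{\rm id})\circ\widetilde{\Delta}=\mathbb{E}_{\ell}^\Phi$, which is the identity on $\mathscr{O}_{\mathbb{G}}(\Phi \backslash\mathbb{G}/ \Phi)$, the left-hand side collapses to $Tx$ and the right-hand side to $(\gamma\otimes{\rm id})(\widetilde{\Delta}(x))$. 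I would then extend $\gamma$ to the unique $\Phi$-invariant functional $\tilde{\gamma}$ on $\mathscr{O}_{\mathbb{G}}(\Phi\backslash \mathbb{G}) $ via Corollary \ref{Corollary: from functionals on dch bialgebra to Phi invariant functionals on coidalgebra}, and define
\[
S:=(\tilde{\gamma}\otimes\mathbb{E}_{\ell}^\Phi)\circ\alpha,
\]
which is automatically $\qg$-invariant by Theorem \ref{Theorem: one to one correspondence between Phi invariant functionals and invariant operators}.

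It then remains to verify $S|_{_{\mathscr{O}_{\mathbb{G}}(\Phi \backslash\mathbb{G}/ \Phi)}}=T$. For $x$ in the hyper-bialgebra, grouping the last two legs of $\Delta^{(2)}$ shows $S(x)=(\tilde{\gamma}\otimes\mathbb{E}_{\ell}^\Phi)(\Delta(x))=(\tilde{\gamma}\otimes\Phi\otimes{\rm id})(\Delta^{(2)}(x))=(\tilde{\gamma}\otimes{\rm id})(\widetilde{\Delta}(x))$; because $\widetilde{\Delta}(x)\in\mathscr{O}_{\mathbb{G}}(\Phi \backslash\mathbb{G}/ \Phi)\otimes\mathscr{O}_{\mathbb{G}}(\Phi \backslash\mathbb{G}/ \Phi)$ and $\tilde{\gamma}$ restricts to $\gamma$ there, this equals $(\gamma\otimes{\rm id})(\widetilde{\Delta}(x))=Tx$ by the reconstruction formula. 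I expect the main obstacle to be the bookkeeping in these two matching computations — tracking which tensor leg $\Phi$ is applied to and confirming that each partial evaluation lands inside the relevant coidalgebra — but every ingredient needed (idempotency of $\Phi$, the containment $\widetilde{\Delta}(\mathscr{O}_{\mathbb{G}}(\Phi \backslash\mathbb{G}/ \Phi))\subseteq\mathscr{O}_{\mathbb{G}}(\Phi \backslash\mathbb{G}/ \Phi)^{\otimes2}$, and the $\Phi$-invariance of $\tilde{\gamma}$) is already established above.
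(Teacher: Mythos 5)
Your proposal is correct. For part (b) it is essentially the paper's own argument: the paper also sets $f:=\varepsilon\circ T$, notes the reconstruction identity $(f\otimes\mathrm{id})\circ\widetilde{\Delta}=T$, extends $f$ to a $\Phi$-invariant functional on $\mathscr{O}_{\mathbb{G}}(\Phi\backslash\mathbb{G})$ via Corollary \ref{Corollary: from functionals on dch bialgebra to Phi invariant functionals on coidalgebra}, and defines $S=(\lambda\otimes\mathbb{E}_{\ell}^\Phi)\circ\alpha$; the paper leaves the final restriction check as ``an easy computation,'' which you spell out correctly (and your verification that $(\varepsilon\otimes\mathrm{id})\circ\widetilde{\Delta}=\mathbb{E}_{\ell}^\Phi$ is exactly the missing detail). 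For part (a), however, your route differs from the paper's. The paper proves (a) by a direct computation at the coidalgebra level: starting from the definition $(\lambda_1\convnew\lambda_2)(x)=(\lambda_1\otimes\lambda_2\circ\mathbb{E}_{\ell}^\Phi)(\alpha(x))$, it uses Lemma \ref{Lemma: useful 1st observation}(b) to replace $(\mathrm{id}\otimes\mathbb{E}_{\ell}^\Phi)\circ\Delta$ by $(\mathbb{E}_r^\Phi\otimes\mathrm{id})\circ\Delta=\widetilde{\Delta}$, and then restricts. You instead lift everything to $\pag$: you identify the $\Phi$-bi-invariant extensions $\mu_i$ with $f\circ\mathbb{E}_{\ell}^\Phi\circ\mathbb{E}_r^\Phi$ and $g\circ\mathbb{E}_{\ell}^\Phi\circ\mathbb{E}_r^\Phi$, invoke Theorem \ref{Theorem: Convolution at the group level restricts to give convolution at the coidalgebra level}(b) to get $\lambda_1\convnew\lambda_2=(\mu_1\star\mu_2)|_{\mathscr{O}_{\mathbb{G}}(\Phi\backslash\mathbb{G})}$, and then use right $\Phi$-invariance plus idempotency to write $\mu_1\star\mu_2=\mu_1\star\Phi\star\mu_2$, which collapses onto $\widetilde{\Delta}$. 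This is in effect a proof via the paper's remark that $f\conva g=f\star\Phi\star g$. Your version leans on more prior machinery (Theorems \ref{Theorem: from Phi invariant functional to bi Phi invariant functional and vice versa}, \ref{Theorem: Convolution at the group level restricts to give convolution at the coidalgebra level}, \ref{Theorem: from functionals on dch bi algebra to Phi bi invariant functionals on qg}) but makes the conceptual ``lift--convolve--restrict'' structure explicit; the paper's computation is shorter and self-contained, needing only Lemma \ref{Lemma: useful 1st observation}(b). Both hinge on the same two facts: the containment $\widetilde{\Delta}(\mathscr{O}_{\mathbb{G}}(\Phi\backslash\mathbb{G}/\Phi))\subseteq\mathscr{O}_{\mathbb{G}}(\Phi\backslash\mathbb{G}/\Phi)\otimes\mathscr{O}_{\mathbb{G}}(\Phi\backslash\mathbb{G}/\Phi)$ and the fact that the extensions restrict to $f$ and $g$ there.
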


\begin{proof}
Observe that for $x\in\mathscr{O}_{\mathbb{G}}(\Phi \backslash\mathbb{G}/ \Phi)$,
\begin{equation*}
\begin{split}
(\lambda_1\convnew\lambda_2)(x)&=(\lambda_1\otimes\lambda_2\circ \mathbb{E}_{\ell}^\Phi)(\alpha(x)) \\
&=(\lambda_1\otimes\lambda_2)( \mathrm{id}\otimes \mathbb{E}_{\ell}^\Phi)(\Delta(x)) \\
&=(\lambda_1\otimes\lambda_2)(\mathbb{E}_r^\Phi\otimes \mathrm{id})(\Delta(x))\quad(\text{by (b) of Lemma \ref{Lemma: useful 1st observation}})\\
&=(\lambda_1\otimes\lambda_2)(\widetilde{\Delta}(x))\\
&=(f\otimes g)(\widetilde{\Delta}(x))=(f\conva g)(x),
\end{split}
\end{equation*}
which proves (a).

To prove (b), observe that the identity $(T\otimes \mathrm{id})\circ\widetilde{\Delta}=\widetilde{\Delta}\circ T$ implies that the functional $f:=\varepsilon\circ T$ satisfies $(f\otimes \mathrm{id})\circ\widetilde{\Delta}=T$.
Sine $f$ is a functional on $\mathscr{O}_{\mathbb{G}}(\Phi \backslash\mathbb{G}/ \Phi)$, by virtue of Corollary \ref{Corollary: from functionals on dch bialgebra to Phi invariant functionals on coidalgebra}, it extends to a $\Phi$-invariant functional $\lambda$ on $\mathscr{O}_{\mathbb{G}}(\Phi\backslash \mathbb{G})$.
Let $S:=(\lambda\otimes \mathbb{E}_{\ell}^\Phi)\circ\alpha$, which is a $\qg$-invariant operator on $\mathscr{O}_{\mathbb{G}}(\Phi\backslash \mathbb{G}) $, by virtue of Theorem \ref{Theorem: one to one correspondence between Phi invariant functionals and invariant operators}. Now an easy computation yields that $S|_{_{\mathscr{O}_{\mathbb{G}}(\Phi \backslash\mathbb{G}/ \Phi)}}=T$.
\end{proof}

\section{Summary of the one-to-one correspondences}\label{sec-summary}

We have established the following one-to-one correspondences.

\begin{theorem}\label{thm-corr-alg}
Let $\qg$ be a compact quantum group, $\Phi\in\mathcal{I}$ an idempotent state on $\qg$, and denote by $\qx=\Phi\backslash\qg$ the associated quantum space. Let $\delta=\varepsilon|_{\pqphi}$.

Then we have one-to-one correspondences between the following objects.
\begin{enumerate}
\item
semigroups of $\qg$-invariant operators on $\pqphi$ such that $(\delta\circ T_t)_{t\ge0}$ is weakly continuous;
\item
$\qg$-invariant convolution semigroups of linear functionals on $\pqphi$;
\item
$\Phi$-bi-invariant convolution semigroups of linear functionals on $\pag$;
\item
convolution semigroups of linear functionals on $\mathscr{O}_\qg(\Phi\backslash \qg/\Phi)$.
\end{enumerate}
\end{theorem}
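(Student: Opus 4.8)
The plan is to obtain the four-fold correspondence by composing three adjacent bijections, each of which has already been established at the level of single functionals or operators earlier in the paper, and then to upgrade each bijection to a correspondence of \emph{semigroups} by checking that it intertwines the relevant product (composition, $\convnew$, $\ast$, or $\conva$) and that it preserves weak continuity. No genuinely new computation is needed beyond verifying these compatibilities; the work is bookkeeping.

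First I would treat the passage between (1) and (2). Theorem~\ref{Theorem: one to one correspondence between Phi invariant functionals and invariant operators} already gives a bijection between $\qg$-invariant operators $T$ on $\pqphi$ and $\Phi$-invariant functionals $\gamma=\varepsilon\circ T$ on $\pqphi$, with inverse $\gamma\mapsto(\gamma\otimes \mathbb{E}_{\ell}^\Phi)\circ\alpha$. Since $\delta=\varepsilon|_{\pqphi}$, the functional attached to $T_t$ is exactly $\delta\circ T_t$, so the weak-continuity hypothesis in (1) is literally the weak continuity of the corresponding family of functionals in (2). To see that this bijection carries operator semigroups to convolution semigroups I would invoke Theorem~\ref{Theorem: convolution of h-invariant functionals corresponds to composition of G invariant operators}, which gives $\gamma_1\convnew\gamma_2=\varepsilon\circ T_2\circ T_1$; hence the composition law of the operators is transported into the $\convnew$-convolution law of the associated functionals, and the automatic $\Phi$-invariance of $\varepsilon\circ T_t$ recorded in Lemma~\ref{lemma: from invariant operators to Phi invariant functionals and vice versa} guarantees that each member of the resulting family is genuinely $\Phi$-invariant.

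Next I would chain the remaining two bijections. For (2)$\leftrightarrow$(3), Theorem~\ref{Theorem: from Phi invariant functional to bi Phi invariant functional and vice versa} gives the bijection $\lambda_t\leftrightarrow\mu_t=\lambda_t\circ \mathbb{E}_{\ell}^\Phi$ between $\Phi$-invariant functionals on $\pqphi$ and $\Phi$-bi-invariant functionals on $\pag$, while Lemma~\ref{Lemma: convolution semigroup of functionals on coidalgebra lifts to convolution semigroup of functionals on G} together with Theorem~\ref{Theorem: Convolution at the group level restricts to give convolution at the coidalgebra level}(b) shows that this respects the two convolutions, since $\mu_s\ast\mu_t$ is the unique $\Phi$-bi-invariant extension of $\lambda_s\convnew\lambda_t$; thus semigroups match semigroups. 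For (3)$\leftrightarrow$(4), Theorem~\ref{Theorem: from functionals on dch bi algebra to Phi bi invariant functionals on qg} gives the bijection $f\leftrightarrow\mu=f\circ \mathbb{E}_{\ell}^\Phi\circ \mathbb{E}_r^\Phi$ between functionals on $\mathscr{O}_\qg(\Phi\backslash\qg/\Phi)$ and $\Phi$-bi-invariant functionals on $\pag$, and Theorem~\ref{Theorem: Convolution at the coidalgebra level restricts to convolution on cla and also G invariant operators}(a), in combination with Corollary~\ref{Corollary: from functionals on dch bialgebra to Phi invariant functionals on coidalgebra}, shows that restriction to the bi-invariant subalgebra turns $\convnew$ into $\conva$. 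Composing the three bijections then yields the desired four-fold correspondence.

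The routine but essential point, and the place I expect to be most careful, is the bookkeeping of the continuity conditions along the chain, in both directions. Reading downward from (4), one must check that $f_t\mapsto\mu_t=f_t\circ \mathbb{E}_{\ell}^\Phi\circ \mathbb{E}_r^\Phi\mapsto\lambda_t=\mu_t|_{\pqphi}\mapsto T_t$ preserves weak continuity; this holds because each step is precomposition or restriction by a single \emph{fixed} linear map, so pointwise weak limits are preserved, and at the end $\delta\circ T_t=\lambda_t$. Reading upward, one uses the same identities in reverse. I would also flag that, unlike in \cite{cipriani+franz+kula14}, none of the four families is required to start at the counit: here $T_0$ is only an idempotent operator and $\lambda_0=\delta\circ T_0$, $\mu_0=\Phi\ast\lambda_0$ are merely idempotent functionals. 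Accordingly I would make sure that each cited correspondence is applied in its general form, valid for arbitrary starting idempotent, rather than in the special normalized case $\lambda_0=\varepsilon$.
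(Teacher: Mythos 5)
Your proposal is correct and takes essentially the same route as the paper: Theorem \ref{thm-corr-alg} is stated there as a summary whose (implicit) proof is exactly the chain you assemble, namely Theorem \ref{Theorem: one to one correspondence between Phi invariant functionals and invariant operators} together with Theorem \ref{Theorem: convolution of h-invariant functionals corresponds to composition of G invariant operators} for (1)$\leftrightarrow$(2), Theorem \ref{Theorem: from Phi invariant functional to bi Phi invariant functional and vice versa} with Lemma \ref{Lemma: convolution semigroup of functionals on coidalgebra lifts to convolution semigroup of functionals on G} and Theorem \ref{Theorem: Convolution at the group level restricts to give convolution at the coidalgebra level} for (2)$\leftrightarrow$(3), and Theorem \ref{Theorem: from functionals on dch bi algebra to Phi bi invariant functionals on qg} with Theorem \ref{Theorem: Convolution at the coidalgebra level restricts to convolution on cla and also G invariant operators} for (3)$\leftrightarrow$(4). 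Your handling of weak continuity, of the fact that the semigroups need not start at the counit, and (implicitly) of the order reversal $\gamma_1\convnew\gamma_2=\varepsilon\circ T_2\circ T_1$ --- harmless since semigroup elements commute --- is consistent with the paper's remarks.
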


If we add positivity, we can formulate the following one-to-one correspondences.

\begin{theorem}\label{thm-corr-pos}
Let $\qg$ be a compact quantum group, $\Phi\in\mathcal{I}$ an idempotent state on $\qg$. We have one-to-one correspondences between the following objects.
\begin{enumerate}
\item
$\qg$-invariant quantum Markov semigroups on $\pqphi$;
\item
$\qg$-invariant convolution semigroups of states on $\pqphi$;
\item
$\Phi$-bi-invariant convolution semigroups of states on $\pag$;
\item
convolution semigroups of states on $\mathscr{O}_\qg(\Phi\backslash \qg/\Phi)$.
\end{enumerate}
\end{theorem}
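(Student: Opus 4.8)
The plan is to promote the non-positive four-way correspondence of Theorem~\ref{thm-corr-alg} to the positive setting by checking that each of its bijective edges restricts to a bijection between the corresponding classes of positive objects. Since Theorem~\ref{thm-corr-alg} already matches up the semigroup structure and the continuity requirements, the only thing left to verify is that, along each edge, the relevant positivity condition (being a state, resp.\ being a completely positive unital operator) is transported in both directions. I would organise the argument edge by edge, along the chain $(4)\leftrightarrow(3)\leftrightarrow(2)\leftrightarrow(1)$, and then simply compose.

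For the edge $(4)\leftrightarrow(3)$ I would use the bijection of Theorem~\ref{Theorem: from functionals on dch bi algebra to Phi bi invariant functionals on qg}, under which a functional $f$ on $\mathscr{O}_\qg(\Phi\backslash \qg/\Phi)$ and its $\Phi$-bi-invariant extension $\mu=f\circ \mathbb{E}_\ell^\Phi\circ \mathbb{E}_r^\Phi$ on $\pag$ correspond. If $f$ is a state, then $\mu$ is positive and unital, because $\mathbb{E}_\ell^\Phi$ and $\mathbb{E}_r^\Phi$ are completely positive unital conditional expectations and a composition of positive maps is positive, while $\mu(\mathbf{1})=f(\mathbf{1})=1$; conversely, if $\mu$ is a state then its restriction $f=\mu|_{\mathscr{O}_\qg(\Phi\backslash \qg/\Phi)}$ to the unital $*$-subalgebra $\mathscr{O}_\qg(\Phi\backslash \qg/\Phi)$ is again a state. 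For the edge $(3)\leftrightarrow(2)$ I would invoke Corollary~\ref{Corollary: from Phi invariant state to h bi ivariant state}, which already establishes the one-to-one correspondence between $\Phi$-bi-invariant states on $\pag$ and $\Phi$-invariant states on $\pqphi$, together with Lemma~\ref{Lemma: convolution semigroup of functionals starting from a state on coidalgebra is Phi invariant}, which makes the $\Phi$-invariance of a convolution semigroup of states automatic, so that the class in $(2)$ coincides with the one matched in Theorem~\ref{thm-corr-alg}.

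The edge $(2)\leftrightarrow(1)$ carries the genuine operator content, and I would treat it through Theorem~\ref{Theorem: one to one correspondence between Phi invariant functionals and invariant operators}. In the direction $(2)\to(1)$, given a convolution semigroup of states $(\lambda_t)_{t\ge0}$ I set $T_t=(\lambda_t\otimes \mathbb{E}_\ell^\Phi)\circ\alpha$; here $\alpha=\Delta|_{\pqphi}$ is a unital $*$-homomorphism and hence completely positive, while $\lambda_t\otimes \mathbb{E}_\ell^\Phi$ is a tensor product of the completely positive maps $\lambda_t$ (a state) and $\mathbb{E}_\ell^\Phi$ (a completely positive conditional expectation), so $T_t$ is completely positive, and $T_t(\mathbf{1})=\lambda_t(\mathbf{1})\,\mathbb{E}_\ell^\Phi(\mathbf{1})=\mathbf{1}$ shows it is unital. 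In the reverse direction $(1)\to(2)$, given a quantum Markov semigroup $(T_t)_{t\ge0}$ I set $\lambda_t=\varepsilon\circ T_t$; since $\varepsilon$ is a character (hence a state) and $T_t$ is completely positive and unital, each $\lambda_t$ is a state. That the two assignments are mutually inverse is exactly Theorem~\ref{Theorem: one to one correspondence between Phi invariant functionals and invariant operators}.

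The main obstacle I anticipate lies precisely in this last edge, in meeting the exact wording of Definition~\ref{def-markov-sg}: a quantum Markov operator on the $*$-algebra $\pqphi$ must be the restriction of a completely positive unital map on a $C^*$-algebra containing it. To handle this I would pass to the $C^*$-level, extending $\lambda_t$ to a state on $C^r(\Phi\backslash\qg)$ (through its $\Phi$-bi-invariant extension $\mu_t$, which extends continuously to the ambient group $C^*$-algebra), extending $\mathbb{E}_\ell^\Phi$ to the conditional expectation furnished by Theorem~\ref{thm-id}, and extending $\alpha$ to the associated $C^*$-coaction; the composite $(\lambda_t\otimes \mathbb{E}_\ell^\Phi)\circ\alpha$ then provides a completely positive unital map on $C^r(\Phi\backslash\qg)$ restricting to $T_t$ and preserving $\pqphi$. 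Pointwise norm-continuity at $t=0$ is not a separate difficulty: by the fundamental theorem of coalgebras each element of $\pqphi$ lies in a finite-dimensional subcoalgebra on which weak and norm continuity coincide, so the continuity already built into Theorem~\ref{thm-corr-alg} propagates, and a standard $3\varepsilon$-argument using the uniform contractivity of the unital completely positive extensions transfers it to all of $C^r(\Phi\backslash\qg)$.
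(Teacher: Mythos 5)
Your proposal is correct and follows essentially the same route as the paper: Theorem \ref{thm-corr-pos} appears there as a summary whose (implicit) proof is exactly the assembly you describe, namely Theorem \ref{Theorem: from functionals on dch bi algebra to Phi bi invariant functionals on qg} for the edge $(4)\leftrightarrow(3)$, Corollary \ref{Corollary: from Phi invariant state to h bi ivariant state} together with Lemma \ref{Lemma: convolution semigroup of functionals starting from a state on coidalgebra is Phi invariant} for $(3)\leftrightarrow(2)$, and Theorem \ref{Theorem: one to one correspondence between Phi invariant functionals and invariant operators} (cf.\ Remark \ref{Remark: one to one correspondence between convolution semigroup of invariant functionals and invariant Markov semigroups}) for $(2)\leftrightarrow(1)$, with positivity transported along each edge via the complete positivity of $\mathbb{E}_\ell^\Phi$, $\mathbb{E}_r^\Phi$, $\alpha$ and $\varepsilon$, just as you do. Your additional care about the $C^*$-level extension demanded by Definition \ref{def-markov-sg} addresses a point the paper glosses over, and your treatment of it is sound.
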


All these semigroups are furthermore characterized by their derivatives at $t=0$.

\begin{definition}\label{def-gen-funct}
Let $\mathcal{A}$ be a unital *-algebra and $\phi:\mathcal{A}\to\mathbb{C}$ a state. A linear functional $\psi:\mathcal{A}\to\mathbb{C}$ is called a \emph{$\phi$-generating functional}, if
\begin{enumerate}
\item
$\psi$ is normalised, i.e., $\psi(1)=0$;
\item
$\psi$ is hermitian, i.e., $\psi(a^*)=\overline{\psi(a)}$, for all $a\in\mathcal{A}$;
\item
$\psi$ is $\phi$-conditionally positive, i.e., $\psi(a^*a)\ge 0$ for all $a\in\mathcal{A}$ with $\phi(a^*a)=0$.
\end{enumerate}
\end{definition}

\begin{theorem}\label{thm-corr-gen-funct}
Under the same hypotheses as Theorem \ref{thm-corr-pos}, the objects in Theorem \ref{thm-corr-pos} are furthermore in one-to-one correspondence with
\begin{enumerate}
\item
$\qg$-invariant quantum Markov semigroups on $\pqphi$;
\item
$\qg$-invariant $\Phi|_{\pqphi}$-generating functionals on $\pqphi$;
\item
$\Phi$-bi-invariant on $\Phi$-generating functionals on $\pag$;
\item
$\varepsilon|_{\mathscr{O}_\qg (\Phi\backslash \qg/\Phi)}$-generating functionals on $\mathscr{O}_\qg(\Phi\backslash \qg/\Phi)$.
\end{enumerate}
\end{theorem}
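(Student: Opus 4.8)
The plan is to obtain each generating functional as the derivative at $t=0$ of the corresponding convolution semigroup of states from Theorem \ref{thm-corr-pos}, and to invert this assignment by a Schoenberg-type construction. Since the four families of convolution semigroups of states are already matched by Theorem \ref{thm-corr-pos}, it suffices to establish at each of the levels (2), (3), (4) the bijection between convolution semigroups of states and generating functionals of the stated kind; the asserted correspondences between the generating functionals are then automatic and are realised by the same restriction and extension maps that produced the semigroup correspondences, namely $\psi\mapsto\psi\circ\mathbb{E}_\ell^\Phi$ from $\pqphi$ to $\pag$, the restriction $\psi\mapsto\psi|_{\pqphi}$, and $\psi\mapsto\psi\circ\mathbb{E}_\ell^\Phi\circ\mathbb{E}_r^\Phi$ from $\mathscr{O}_{\mathbb{G}}(\Phi \backslash\mathbb{G}/ \Phi)$ to $\pag$ (Theorems \ref{Theorem: from Phi invariant functional to bi Phi invariant functional and vice versa} and \ref{Theorem: from functionals on dch bi algebra to Phi bi invariant functionals on qg}). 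A preliminary observation that makes the three base states coincide is the counit identity $\varepsilon\circ\mathbb{E}_\ell^\Phi=\Phi$ on $\pag$: it gives $\varepsilon|_{\pqphi}=\Phi|_{\pqphi}=:\delta$, shows that $\delta$ is the unit of the convolution $\convnew$, and identifies $\varepsilon|$ with the counit of the hyper-bialgebra $\mathscr{O}_{\mathbb{G}}(\Phi \backslash\mathbb{G}/ \Phi)$ of Lemma \ref{Lemma: coproduct for the algebra of bi invariant functions on qg}.

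For the forward direction, starting from a $\qg$-invariant convolution semigroup of states $(\lambda_t)_{t\ge0}$ on $\pqphi$ (so $\lambda_0=\delta$) I would differentiate at $t=0$ using Proposition \ref{Theorem: continuous convolution semigroups on coidalgebras are characterised by generators} and put $\psi=\left.\frac{d}{dt}\right|_{t=0}\lambda_t$. Theorem \ref{Theorem: Schoenberg correspondence for an expected right coidalgebra}, implication (i)$\Rightarrow$(ii), then yields that $\psi$ is hermitian and $\delta$-conditionally positive; differentiating $\lambda_t(\mathbf{1})=1$ gives the normalisation $\psi(\mathbf{1})=0$, and differentiating the invariance relation $(\lambda_t\otimes\Phi)\circ\alpha=\lambda_t$ gives the $\qg$-invariance of $\psi$. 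Thus $\psi$ is a $\qg$-invariant $\Phi|_{\pqphi}$-generating functional as required in (2). Applying the maps above, and recalling that the associated semigroups at the other levels are $\mu_t=\lambda_t\circ\mathbb{E}_\ell^\Phi$ (Lemma \ref{Lemma: convolution semigroup of functionals on coidalgebra lifts to convolution semigroup of functionals on G}) and $f_t=\mu_t|_{\mathscr{O}_{\mathbb{G}}(\Phi \backslash\mathbb{G}/ \Phi)}$ (Theorem \ref{Theorem: Convolution at the coidalgebra level restricts to convolution on cla and also G invariant operators}(a)), the images $\psi\circ\mathbb{E}_\ell^\Phi$ and $\psi|_{\mathscr{O}_{\mathbb{G}}(\Phi \backslash\mathbb{G}/ \Phi)}$ are again derivatives of convolution semigroups of states, hence the generating functionals of (3) and (4); here differentiation commutes with the fixed linear maps $\mathbb{E}_\ell^\Phi$, $\mathbb{E}_r^\Phi$ and with restriction.

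For the converse I would work at the hyper-bialgebra level, where $\varepsilon|$ is a genuine counit, so the Schoenberg correspondence for hyper-bialgebras (\cite[Theorem 3.3]{franz+schurmann00}) produces from any $\varepsilon|$-generating functional on $\mathscr{O}_{\mathbb{G}}(\Phi \backslash\mathbb{G}/ \Phi)$ a unique convolution semigroup of states there with that derivative. Corollary \ref{Corollary: from functionals on dch bialgebra to Phi invariant functionals on coidalgebra} and Lemma \ref{Lemma: convolution semigroup of functionals on coidalgebra lifts to convolution semigroup of functionals on G} lift this semigroup to levels (3) and (2), and Theorem \ref{thm-corr-pos} supplies the $\qg$-invariant quantum Markov semigroup of level (1). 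Uniqueness of a convolution semigroup given its generator --- e.g.\ from the exponential series of Proposition \ref{Theorem: h-invariant functionals are generators of h-invariant convolution semigroup}, or from the uniqueness half of the Schoenberg correspondence --- shows that ``differentiate'' and ``exponentiate'' are mutually inverse, which yields all the asserted bijections.

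The step I expect to be the main obstacle is the coherent bookkeeping of the three conditional-positivity conditions, since $\delta$-, $\Phi$-, and $\varepsilon|$-conditional positivity are tested on the null spaces $\{a:\phi(a^*a)=0\}$ in $\pqphi$, $\pag$, and $\mathscr{O}_{\mathbb{G}}(\Phi \backslash\mathbb{G}/ \Phi)$ respectively, and these null spaces differ; moreover $\mathbb{E}_\ell^\Phi(a^*a)$ need not be a square, so conditional positivity cannot be transported naively through $\mathbb{E}_\ell^\Phi$ or $\mathbb{E}_r^\Phi$. I would sidestep this by never transferring conditional positivity directly: the positivity half of the statement is proved once, at whichever level the classical Schoenberg theory applies verbatim (the hyper-bialgebra, with its honest counit $\varepsilon|$, or $\pqphi$ via Theorem \ref{Theorem: Schoenberg correspondence for an expected right coidalgebra}), and every other instance is then read off from the Schoenberg correspondence at its own level together with the semigroup bijection of Theorem \ref{thm-corr-pos}.
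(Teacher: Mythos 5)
For items (1)--(3) your assembly is correct and is, as far as one can tell, exactly the paper's own (implicit) proof: Theorem \ref{thm-corr-gen-funct} appears in the summary section with no separate argument, and the intended ingredients are the ones you use --- differentiation via Proposition \ref{Theorem: continuous convolution semigroups on coidalgebras are characterised by generators}, exponentiation via Proposition \ref{Theorem: h-invariant functionals are generators of h-invariant convolution semigroup}, positivity in both directions from Theorem \ref{Theorem: Schoenberg correspondence for an expected right coidalgebra}, and transport between $\pqphi$ and $\pag$ through the \emph{semigroup} bijections of Theorem \ref{thm-corr-pos}, where states pass to states because $\mathbb{E}_{\ell}^\Phi$ is unital and completely positive and differentiation commutes with fixed linear maps. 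Your preliminary identity $\varepsilon\circ\mathbb{E}_{\ell}^\Phi=\Phi$, hence $\varepsilon|_{\pqphi}=\Phi|_{\pqphi}$, is also correct and is needed to make the reference states of the three conditional-positivity conditions agree.

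The gap is at item (4), and your own sidestep does not close it. Both routes you offer fail there. First, reading \cite[Theorem 3.3]{franz+schurmann00} as ``the Schoenberg correspondence for hyper-bialgebras'' overstates that result: the paper itself invokes it only \emph{after} extending everything to the $*$-bialgebra $(\pag,\Delta)$ (``Since $(\pag,\Delta)$ is a $*$-bialgebra, the result now follows from \dots'', proof of Theorem \ref{Theorem: Schoenberg correspondence for an expected right coidalgebra}); when the coproduct $\copro$ is merely completely positive rather than a $*$-homomorphism, positivity of the convolution exponential is precisely the point in doubt, and the remark after Definition \ref{Definition: definition of h-bi-invariant functions on qg} notes that the double-coset hyper-bialgebras actually treated in \cite{franz+schurmann00} are only a special case of $\mathscr{O}_{\mathbb{G}}(\Phi \backslash\mathbb{G}/ \Phi)$. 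Second, the fallback of proving Schoenberg at level (2) and ``reading off'' the other levels is circular for (4): to feed an $\varepsilon|$-generating functional $f$ on $\mathscr{O}_{\mathbb{G}}(\Phi \backslash\mathbb{G}/ \Phi)$ into Theorem \ref{Theorem: Schoenberg correspondence for an expected right coidalgebra} you must first know that its unique invariant extension $f\circ\mathbb{E}_{\ell}^\Phi\circ\mathbb{E}_r^\Phi$ of Theorem \ref{Theorem: from functionals on dch bi algebra to Phi bi invariant functionals on qg} is again conditionally positive --- exactly the transport you correctly declared impossible to do naively, since $\mathbb{E}_{\ell}^\Phi\circ\mathbb{E}_r^\Phi(a^*a)$ is positive but need not be a sum of squares $x^*x$ with $x$ in the hypergroup; and routing through semigroups instead presupposes a convolution semigroup of states on $\mathscr{O}_{\mathbb{G}}(\Phi \backslash\mathbb{G}/ \Phi)$ whose derivative is $f$, which is the very object to be constructed. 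So only the easy restriction direction (3)$\to$(4) is secured by your argument; its surjectivity --- equivalently, the direction from (4) back to (1)--(3) --- remains unproven, and this conditional-positivity extension is the genuine mathematical content hiding behind item (4).
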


In the examples in the next section we will determine the $\varepsilon|_{\mathscr{O}_\qg (\Phi\backslash \qg/\Phi)}$-generating functionals on $\mathscr{O}_\qg(\Phi\backslash \qg/\Phi)$ for the case where $\qg$ is one of the orthogonal quantum groups $O_N$, $O^*_N$ or $O^+_N$ and $\Phi$ is the idempotent state such that $\qx=\Phi\backslash \qg$ is one of the quantum spheres $S^{N-1}$, $S^{N-1}_*$, or $S^{N-1}_+$.

\section{Markov semigroups on quantum spheres}\label{sec-spheres}

We know that orthogonal group $O_N$ is the isometry group of  sphere $S^{N-1}$. There exist quantum versions, or ``liberated'' versions, of the orthogonal group and the sphere. These are given by their universal C$^*$-algebras which are defined as follows \cite{banica16}:
$$C^u(S^{N-1}_+)=C^*\left(x_1,\cdots,x_N | x_i=x_i^*, \sum_i x_i^2=1\right)$$
$$C^u(O_N^+)=C^*\left((u_{ij})_{i,j=1,\dots,N} |u=\bar{u}, u^t=u^{-1}\right)$$
$$C^u(S^{N-1}_*) =C^u(S^{N-1}_+)/<abc=cba, \forall a,b,c\in {x_i}>$$
$$C^u(O_N^*)=C^n(O_N^+)/<abc=cba, \forall a,b,c\in {u_{ij}}>.$$
We will use the notation $S^{N-1}_\times$ denote the three spheres above associated to the three quantum isometry groups $O_N^\times$, $\times\in\{\emptyset,*,+\}$ (where $\emptyset$ stands for no symbol). There exist unique actions $\alpha^u_\times:C^u(S^{N-1}_\times)\to C^u(S^{N-1}_\times)\otimes C^u(O^\times_N)$ of the three families orthogonal quantum groups on the corresponding spheres, such that $\alpha^u(x_i)=\sum_{j=1}^N x_j\otimes u_{ji}$ for $i\in\{1,\ldots,N\}$. Every such ``universal'' action comes with a ``reduced'' action $\alpha^r_\times:C(S^{N-1}_\times)\to C(S^{N-1}_\times)\otimes C(O^\times_N)$ and a Hopf-*-algebraic action $\alpha^H_\times:\mathscr{O}_{\mathbb{G}}(S^{N-1}_\times)\to \mathscr{O}_{\mathbb{G}}(S^{N-1}_\times)\otimes \mathscr{O}(O^\times_N)$, cf.\ \cite{decommer16}. 

Banica \cite[Proposition 5.8]{banica16} showed that the reduced function algebras of these spheres can regarded as subalgebras of the reduced function algebras of orthogonal groups. I.e. if we identify $x_i=u_{1i}$, then $C(S^{N-1}_\times) \subset C(O_N^{\times})$. One can check that $C(S^{N-1}_\times)$ is a coidalgebra of $C(O_N^{\times})$, so  we can define the corresponding idempotent state $\Phi$ such that the associated left, right, and two-sided conditional expectations $\mathbb{E}_{\ell}^\Phi$ , $\mathbb{E}_r^\Phi$, $\mathbb{E}_{bi}^\Phi$ satisfy:
\begin{gather*}
\mathscr{O}(S_\times^{N-1}) =\mathscr{O}_{O_N^\times}(\Phi\backslash O_N^\times)=\mathbb{E}_{\ell}^\Phi (\mathscr{O}(O_N^\times)) = \mbox{*-alg}\{u_{11},\ldots,u_{1N}\}, \\
S(\mathscr{O}(S_\times^{N-1})) =\mathscr{O}_{O_N^\times}(O_N^\times/\Phi) = \mathbb{E}_r^\Phi  (\mathscr{O}(O_N^\times)) = \mbox{*-alg}\{u_{11},\ldots,u_{N1}\}, \\
\mathscr{O}(S_\times^{N-1}) \cap S(\mathscr{O}(S_\times^{N-1}))=\mathscr{O}_{O_N^\times}(\Phi \backslash O_N^\times/\Phi) = \mathbb{E}_{bi}^\Phi  \big(\mathscr{O}(O_N^\times)\big) = \mbox{*-alg}\{u_{11}),
\end{gather*}
(where $S$ denotes the antipode of $\mathscr{O}(O^\times_N)$).

We know that in the classical case $S^{N-1}\cong O_{N-1}\backslash O_N$. Banica, Skalski, and So{\l}tan \cite{banica+skalski+soltan12} have shown that $S_+^{N-1}$ is not equal to the quotient $O^+_{N-1}\backslash O^+_N$. We will now show that the half-liberated and the free spheres can not be obtained as quotient spaces.
\begin{proposition}\label{prop-not-quotient}
There exists no quantum subgroup $\mathbb{K}$ of $O_N^+$ (or $O_N^*$, resp.) such that $S_+^{N-1}\cong O^+_N/\mathbb{K}$ (or $S_*^{N-1}\cong O^*_N/\mathbb{K}$, resp.) as left coidalgebras.
\end{proposition}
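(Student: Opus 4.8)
The plan is to recast the geometric statement as one about idempotent states: a left coidalgebra of $\mathscr{O}(O_N^\times)$ is of quotient type exactly when its defining idempotent state is of Haar type $h_\qk\circ\theta$ for some quantum subgroup $\qk$ (this is the content of Theorem~\ref{thm-id} together with the description of quantum quotient spaces, under which the quotient coidalgebra is recovered as $\mathbb{E}_\ell^{h_\qk\circ\theta}(\mathscr{O}(O_N^\times))$, matching the classical identification $S^{N-1}\cong O_{N-1}\backslash O_N$). So I would assume that the idempotent state $\Phi$ defining $S^{N-1}_\times$, for $\times\in\{+,*\}$, equals $h_\qk\circ\theta$ and derive a contradiction. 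The strategy is to show that $\qk$ must sit inside the obvious copy of $O_{N-1}^\times$ and then to squeeze the sphere coidalgebra down to the quotient by $O_{N-1}^\times$, reducing to the known fact that the sphere is strictly smaller than $O_{N-1}^\times\backslash O_N^\times$.

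First I would analyse the matrix $P=(\Phi(u_{ij}))_{i,j}$. Since $\Phi$ is idempotent, $P=P^2$; since every idempotent state satisfies $\Phi\circ S=\Phi$ (Lemma~\ref{Lemma: idempotent states commute}) and $S(u_{ij})=u_{ji}$ for an orthogonal quantum group, $P$ is symmetric; hence $P$ is an orthogonal projection. On the degree-one subspace $\mathrm{span}\{u_{ij}\}$ the expectation acts by $\mathbb{E}_\ell^\Phi(u_{ij})=\sum_k P_{ik}u_{kj}$, i.e. as the symmetric projection $P$ on the row index (the column index remaining free), so its image is $\mathrm{range}(P)$ carried along the column index. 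As this image must be the degree-one part $\mathrm{span}\{u_{11},\dots,u_{1N}\}$ of the sphere coidalgebra $\mathscr{O}(S^{N-1}_\times)=\mbox{*-alg}\{u_{11},\dots,u_{1N}\}$, I get $\mathrm{range}(P)=\mathbb{C}e_1$, whence $P=e_1e_1^{*}$, that is $\Phi(u_{ij})=\delta_{i1}\delta_{j1}$.

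Next I would locate $\qk$. Writing $w_{ij}=\theta(u_{ij})$, the matrix $(h_\qk(w_{ij}))=P$ is the orthogonal projection onto the $\qk$-fixed vectors of the fundamental corepresentation; since $P=e_1e_1^{*}$, the vector $e_1$ is $\qk$-fixed, so $w_{i1}=\delta_{i1}$, and the orthogonality relation $\sum_i w_{i1}w_{ij}=\delta_{1j}$ then forces $w_{1j}=\delta_{1j}$, giving the block form $w=\mathrm{diag}(1,w')$. Consequently $\theta$ factors as $\pi\circ\theta_0$ through the standard stabilizer surjection $\theta_0\colon\mathscr{O}(O_N^\times)\to\mathscr{O}(O_{N-1}^\times)$, exhibiting $\qk$ as a quantum subgroup of $O_{N-1}^\times$. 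This factorization gives the inclusion $\mathscr{O}_{O_N^\times}(O_{N-1}^\times\backslash O_N^\times)\subseteq\mathscr{O}_{O_N^\times}(\qk\backslash O_N^\times)=\mathscr{O}(S^{N-1}_\times)$, while the one-line check $(\theta_0\otimes\mathrm{id})\Delta(u_{1j})=1\otimes u_{1j}$ shows each $u_{1j}$ is left $O_{N-1}^\times$-invariant and yields the reverse inclusion; hence $S^{N-1}_\times=O_{N-1}^\times\backslash O_N^\times$.

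This equality is the contradiction. For the free sphere it is excluded by the theorem of Banica, Skalski and So\l tan \cite{banica+skalski+soltan12}, and for the half-liberated sphere by the analogous statement. I expect \emph{this last input to be the main obstacle}: one must show that the sphere is \emph{strictly} contained in the quotient by the obvious $O_{N-1}^\times$. For $\times=+$ this is the cited result; for $\times=*$ the natural route is to compute, via Frobenius reciprocity, the multiplicities of the irreducible corepresentations of $O_N^*$ in $\mathscr{O}_{O_N^*}(O_{N-1}^*\backslash O_N^*)$ as the dimensions of their $O_{N-1}^*$-fixed subspaces, and to verify that these exceed the multiplicity-free spherical spectrum of $S^{N-1}_*$ already determined in this section.
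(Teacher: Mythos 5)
Your reduction of the free case is correct and genuinely different from the paper's argument: the steps forcing $\Phi(u_{ij})=\delta_{i1}\delta_{j1}$, the block form $\theta(u)=\mathrm{diag}(1,w')$, and the double inclusion yielding $S_+^{N-1}=O^+_{N-1}\backslash O^+_N$ all go through, and the contradiction with \cite{banica+skalski+soltan12} finishes that case. The paper instead runs a single, self-contained argument covering both spheres at once: any quantum subgroup $\qk$ of a Kac-type quantum group has a \emph{tracial} Haar idempotent $h_\qk\circ\theta$, while the idempotent state $\Phi$ defining the sphere is not a trace, since the Weingarten calculus gives $\Phi(u_{11}u_{22}^2)=\tfrac{1}{N-1}\neq 0=\Phi(u_{22}u_{11}u_{22})$, with identical values in the half-liberated case because $P_2^*(4)=NC_2(4)$. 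So your route trades the paper's Weingarten computation for an external structural result, but only where that result exists.

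The genuine gap is in the half-liberated case, and it is worse than the missing citation you flagged. The ``standard stabilizer surjection'' $\theta_0\colon\mathscr{O}(O_N^*)\to\mathscr{O}(O_{N-1}^*)$, $u\mapsto\mathrm{diag}(1,u')$, does not exist: the half-commutation relation $u_{ij}u_{11}u_{kl}=u_{kl}u_{11}u_{ij}$ would be sent to $u'_{ab}u'_{cd}=u'_{cd}u'_{ab}$, so the universal quotient of $\mathscr{O}(O_N^*)$ by the ideal generated by $u_{11}-1$, $u_{1j}$, $u_{j1}$ ($j\ge 2$) is commutative, i.e.\ it is $\mathscr{O}(O_{N-1})$ (the classical group), not $\mathscr{O}(O_{N-1}^*)$. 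Hence your factorization only shows $\qk\subseteq O_{N-1}$ (classical), and the double-inclusion argument yields $S^{N-1}_*=O_{N-1}\backslash O^*_N$. The contradiction you then need is that the half-liberated sphere is not the quotient of $O_N^*$ by the \emph{classical} $O_{N-1}$ --- a statement that is not in \cite{banica+skalski+soltan12}, is not addressed by your proposed Frobenius-reciprocity computation (which targets $O^*_{N-1}$-fixed vectors, the wrong subgroup), and is proven nowhere in your proposal. The cheapest way to close this hole is exactly the paper's observation: $h_{O_{N-1}}$ is a trace, so $h_{O_{N-1}}\circ\theta_0'$ is tracial, whereas $\Phi$ is not, by the moment computation above; but once you invoke that, the traciality obstruction alone settles both cases and the whole detour through stabilizer subgroups becomes unnecessary.
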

\begin{proof}
We start with the free sphere.

If such a quantum subgroup existed, then it would be of Kac type, and therefore its Haar idempotent $\Phi_\qk=h_\qk\circ \theta$ would be tracial. We will now show that the idempotent state associated to  $\mathscr{O}(S_+^{N-1})$ by Theorem \ref{thm-id} is not a trace.

Let $\mathbb{E}^\Phi_{bi}$ denote the conditional expectation in $\mathscr{O}(O_N^+)$ onto the *-subalgebra of $\mathscr{O}(O_N^+)$ generated by $u_{11}$, then we have $\Phi=\varepsilon \circ\mathbb{E}^\Phi_{l}= \varepsilon\circ\mathbb{E}^\Phi_{bi}$.

$\mathbb{E}^\Phi_{bi}$ is the orthogonal projection onto *-subalgebra genrated by $u_{11}$  for the inner product $\langle a,b\rangle = h(a^*b)$, and since we can compute the values of the Haar state on products of the algebraic generators $u_{11},\ldots,u_{NN}$ using the Weingarten calculus, we can compute  $\mathbb{E}^\Phi_{bi}$ and then $\Phi$. We find
\[
\mathbb{E}^\Phi_{bi} (u_{22}u_{11} u_{22}) = 0
\]
since $h_{O_{N}^+}(u_{11}^ku_{22}u_{11}u_{22})=0$ for all $k\in\mathbb{N}$ (there are no matching non-crossing pairings) and
\[
\mathbb{E}^\Phi_{bi} (u_{11}u_{22}^2) = u_{11} \mathbb{E}^\Phi_{bi} (u_{22}^2) =\frac{(N-2)u_{11}+u_{11}^3}{(N-1)^2}
\]
since
\[
h_{O_{N}^+}(u_{22}^2) = \frac{1}{N} \quad\mbox{ and } \quad h_{O_{N}^+}(u_{11}^2 u_{22}^2) =\frac{1}{N^2-1}
\]
If follows that
\[
\Phi(u_{11}u_{22}^2) = \varepsilon\left(\frac{(N-2)u_{11}+u_{11}^3}{(N-1)^2}\right)=\frac{1}{N-1} \not= 0 = \Phi(u_{22}u_{11}u_{22}).
\]

The case of the half-liberated sphere $S^{N-1}_*$ is similar.
Let us recall that a pairing is called "balanced," if each pair connects a black leg to a while leg, when we label its legs alternately black and white: $\bullet \circ\bullet \circ \cdots$. Denote the set of balanced pairings of $n$ elements by $P_2^*(n)$. The Weingarten formula for $O_N^*$ uses balanced pairings. The balanced pairings $P_2^*(4)$ and the non-crossing pairings $NC_2(4)$ of four elements are the same. Thus,  we get again the same values for Haar state in the  half-liberated case,
\begin{gather*}
h_{O_{N}^*}(u_{11}^ku_{22}u_{11}u_{22})=0, \qquad k=0,1,\ldots, \\
h_{O_{N}^*}(u_{22}^2) = \frac{1}{N} \quad\mbox{ and } \quad h_{O_{N}^*}(u_{11}^2 u_{22}^2) =\frac{1}{N^2-1},
\end{gather*}
and we get the same conclusion.
\end{proof}

We want to compute the eigenvalues and eigenspaces of $O_N^\times$-invariant Markov semigroups on $\mathscr{O}(S_\times^{N-1})$. First, we will give a decomposition of the Hilbert spaces $L^2((\mathscr{O}(S_\times^{N-1}), h)$, and $L^2((\mathscr{O}(O^\times_{N-1}), h)$ where $h$ denotes the Haar state on $C(O_N^\times)$, restricted to $C(S^{N-1}_\times)$. Set
\begin{gather*}
E_k=\text{span}\{u_{i_1j_1}\cdots u_{i_rj_r}:r\leq k\}\subset L_2(\mathscr{O}(O_N^\times),h);\\
H_k=\text{span}\{x_{i_1}\cdots x_{i_r}:r\leq k\}\subset L_2(\mathscr{O}(S_\times^{N-1}),h);\\
V_k=E_k \cap E_{k-1}^\perp; \quad D_k=H_k \cap H_{k-1}^\perp;\quad d_k=\dim D_k.
\end{gather*}
Then 
$L_2(\mathscr{O}(S_\times^{N-1}),h)= \bigoplus_{k\in \mathbb{N}} D_k$ and $L_2(\mathscr{O}(O^\times_{N}),h)= \bigoplus_{k\in \mathbb{N}} V_k$.  Furthermore $H_k=\mathbb{E}_{\ell}^\Phi(E_k)$, and thus $D_k=\mathbb{E}_{\ell}^\Phi(V_k)$.

Take a complete set $\{u^\pi:  \pi\in {\rm Irr}(O_N^\times) \}$ of mutually inequivalent, irreducible unitary representations. We know that the matrix $u=(u_{ij})$ is an irreducible unitary representation of $O_N^\times$ whose coefficients generate the function algebra. We can decompose its tensor powers $u^{\boxtimes s}=\bigoplus_{\pi\in I_s} n_\pi^s u^\pi$, where  $n_\pi^s$ denotes the multiplicity of $u^\pi$, and we used the notation $I_s:=\{\pi \in {\rm Irr}(O_N^\times): n_\pi^s\geq 1  \}$ . 

Then, for any $s\geq 2$ , we define 
$$u^{(s)}:=\bigoplus_{\pi \in J_s} u^\pi,  \quad \text{ where } J_s=I_s\backslash \cup_{0 \leq i\leq s-1}I_i.$$ 
In other words, $u^{(s)}$ is the direct sum of the ``new'' irreducible corepresentations in the decomposition of $u^{\boxtimes s}$, those which did not appear in the decompositions of $u^{\boxtimes i},  \forall i <s$. 

Since the linear space spanned by  coefficients of $\{u^{\boxtimes i}\}_{0\leq i\leq s}$ is $E_s$, by decomposition $E_s=\text{span}\{u_{pq}^\pi:  \pi \in I_i, 0\leq i\leq s \}$. Thus by definition, the linear space  spanned by  coefficients of $u^{(s)}$ is $V_s$. 

For the free case, by the fusion rule of $O_N^+$, we know that $I_s=\{s, s-2, s-4,\ldots\}$, therefore $J_s=\{s\}$. So $u^{(s)}$ is exactly the $s^{th}$ irreducible unitary corepresentation of $O_N^+$. But for other two cases, $u^{(s)}$ defined here may not be irreducible, but it is the direct sum of some mutually inequivalent irreducible unitary representations.

We state the argument above as a proposition:
\begin{proposition}
There exists a sequence of unitary corepresentations $(u^{(s)})_{s\in \mathbb{N}}$ of $O_N^\times$, such that the non-zero coefficients of $u^{(s)}$ are linearly independent and span $V_s$. In the free case, $u^{(s)}$ is irreducible.
\end{proposition}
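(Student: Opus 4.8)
The plan is to deduce everything from the Peter--Weyl decomposition of $\mathscr{O}(O_N^\times)$, since the corepresentation $u^{(s)}$ has essentially already been constructed in the discussion preceding the statement; what remains is to justify the two claims about its coefficients. Recall that by Woronowicz's orthogonality relations, with respect to the inner product $\langle a,b\rangle=h(a^*b)$ the algebra $\mathscr{O}(O_N^\times)$ is the orthogonal direct sum $\bigoplus_{\pi\in\mathrm{Irr}(O_N^\times)}C^\pi$, where $C^\pi:=\text{span}\{u^\pi_{pq}\}$ is the space of matrix coefficients of $u^\pi$, the coefficients of inequivalent irreducibles being mutually orthogonal and the $(\dim\pi)^2$ coefficients of a single $u^\pi$ being linearly independent. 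This is the one external input on which the whole argument rests.

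First I would identify $E_s$ with a sum of coefficient spaces. Since the coefficients of $u^{\boxtimes i}$ are precisely the length-$i$ products $u_{k_1l_1}\cdots u_{k_il_i}$ of generators, the span of the coefficients of $u^{\boxtimes 0},\ldots,u^{\boxtimes s}$ is exactly $E_s$. Decomposing each $u^{\boxtimes i}$ into irreducibles and using that the coefficients of a corepresentation lie in the span of the coefficients of its irreducible constituents, I obtain the orthogonal decomposition $E_s=\bigoplus_{\pi\in\bigcup_{i\le s}I_i}C^\pi$, and likewise $E_{s-1}=\bigoplus_{\pi\in\bigcup_{i\le s-1}I_i}C^\pi$.

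Next I would read off $V_s$. Because the above is an orthogonal decomposition, intersecting with $E_{s-1}^\perp$ simply discards the summands already present in $E_{s-1}$, so $V_s=E_s\cap E_{s-1}^\perp=\bigoplus_{\pi\in J_s}C^\pi$ with $J_s=\bigl(\bigcup_{i\le s}I_i\bigr)\setminus\bigl(\bigcup_{i\le s-1}I_i\bigr)=I_s\setminus\bigcup_{i\le s-1}I_i$, matching the definition of $J_s$. Now view $u^{(s)}=\bigoplus_{\pi\in J_s}u^\pi$ as a single block-diagonal matrix corepresentation on $\bigoplus_{\pi\in J_s}H_\pi$: its entries off the diagonal blocks vanish, while its non-zero entries are exactly the $u^\pi_{pq}$ with $\pi\in J_s$. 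These span $\bigoplus_{\pi\in J_s}C^\pi=V_s$ and are linearly independent by Peter--Weyl, which gives both assertions. Finally, in the free case the fusion rule $I_s=\{s,s-2,s-4,\ldots\}$ forces $J_s=\{s\}$, so $u^{(s)}$ reduces to the single irreducible labelled $s$.

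The only genuine subtlety---and the step I would treat most carefully---is the orthogonality of the decomposition $E_s=\bigoplus_\pi C^\pi$: it is precisely this that lets me identify $E_{s-1}^\perp\cap E_s$ with the span of the \emph{new} coefficients rather than some larger or skew complement. If one merely knew that $E_s$ is spanned by the coefficients indexed by $\bigcup_{i\le s}I_i$ without orthogonality, the clean description of $V_s$ would fail. Everything else is bookkeeping with the definitions of $I_s$ and $J_s$ together with the elementary fact that the coefficients of $u^{\boxtimes i}$ are length-$i$ words in the generators.
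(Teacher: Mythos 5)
Your proposal is correct and follows essentially the same route as the paper, which in fact states this proposition as a summary of the discussion immediately preceding it: decompose the tensor powers $u^{\boxtimes i}$, identify $E_s$ with the span of the coefficients $u^\pi_{pq}$ for $\pi\in\bigcup_{i\le s}I_i$, conclude that $V_s$ is spanned by the coefficients of the ``new'' irreducibles indexed by $J_s$, and invoke the $O_N^+$ fusion rules for the free case. The only difference is that you make explicit the Woronowicz/Peter--Weyl orthogonality of coefficient spaces of inequivalent irreducibles, which the paper uses implicitly in passing from $E_s=\mathrm{span}\{u^\pi_{pq}\}$ to the identification of $V_s=E_s\cap E_{s-1}^\perp$ --- a worthwhile clarification, but the same argument.
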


The following lemma is the main step for characterising the idempotent state $\Phi$.
\begin{lemma}\label{phi x11=1}
There exists a basis for the Hilbert space $D_s$ associated to the corepresentation $u^{(s)}$, such that we get 
$$\Phi(s):=\left( \Phi(u_{jk}^{(s)})\right)_{1\leq j, k \leq d_s}=\delta_{j1}\delta_{i1}.$$
if we write $u^{(s)}=(u_{jk}^{(s)})_{1\leq j, k \leq d_s}$ w.r.t.\ to this basis. In other words, the corepresentation $u^{(s)}$ is unitarily equivalent to one for which applying $\Phi$ to it coefficient-wise produces a matrix with entry $1$ in the upper left corner and $0$ everywhere else.
\end{lemma}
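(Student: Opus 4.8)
The plan is to prove that the scalar matrix $P:=\Phi(s)=\big(\Phi(u^{(s)}_{jk})\big)$ is a rank-one orthogonal projection, and then to rotate the orthonormal basis of the carrier space of $u^{(s)}$ so that $P$ becomes the matrix unit $(\delta_{j1}\delta_{k1})$. First I would record two structural identities for $P$. Since $u^{(s)}$ is a corepresentation, $\Delta(u^{(s)}_{jk})=\sum_l u^{(s)}_{jl}\otimes u^{(s)}_{lk}$; applying $\Phi\otimes\Phi$ and using idempotency $\Phi\star\Phi=\Phi$ gives $P_{jk}=\sum_l P_{jl}P_{lk}$, so $P^2=P$. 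For self-adjointness I would use that $u^{(s)}$ is unitary, hence $S(u^{(s)}_{jk})=(u^{(s)}_{kj})^*$, together with the antipode-invariance $\Phi\circ S=\Phi$ of idempotent states (as in the proof of Lemma~\ref{Lemma: idempotent states commute}) and the hermitianity of the state $\Phi$: this yields $P_{jk}=\Phi(u^{(s)}_{jk})=\Phi\big(S(u^{(s)}_{jk})\big)=\overline{\Phi(u^{(s)}_{kj})}=\overline{P_{kj}}$, so $P=P^*$. Thus $P$ is an orthogonal projection.

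The heart of the argument --- and the step I expect to be the main obstacle --- is that $P$ has rank one whenever $D_s\neq0$. Here I would exploit that the algebra of $\Phi$-bi-invariant functions $\mathbb{E}^\Phi_{bi}(\mathscr{O}(O_N^\times))=\mbox{*-alg}\{u_{11}\}$ is \emph{commutative}, being generated by the single self-adjoint element $u_{11}$; this is the quantum analogue of the Gelfand-pair condition, and is the only place where the specific geometry of the spheres enters. A direct computation (using the commutation of $\mathbb{E}^\Phi_\ell$ and $\mathbb{E}^\Phi_r$ recorded in Lemma~\ref{Lemma: useful 1st observation}) gives $\mathbb{E}^\Phi_{bi}(u^{(s)}_{jk})=(Pu^{(s)}P)_{jk}$. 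Choosing a basis adapted to the decomposition of $u^{(s)}$ into irreducibles, so that $P$ is block-diagonal with blocks $P_\pi$ of rank $r_\pi$, the linear independence of the coefficients $u^{(s)}_{jk}$ shows that $\mathbb{E}^\Phi_{bi}(V_s)$ has dimension $\sum_\pi r_\pi^2$. On the other hand $\mathbb{E}^\Phi_{bi}(V_s)\subseteq V_s\cap\mbox{*-alg}\{u_{11}\}$, and since $u_{11}^n\in E_n$ for every $n$, a triangularity argument identifies $V_s\cap\mbox{*-alg}\{u_{11}\}$ with the line spanned by the degree-$s$ zonal polynomial in $u_{11}$; in particular it is at most one-dimensional. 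Hence $\sum_\pi r_\pi^2\le1$, which forces a single $r_\pi$ to equal $1$ and all others to vanish, i.e.\ $\operatorname{rank}P=\sum_\pi r_\pi=1$ (the excluded case $P=0$ being precisely $D_s=0$). In the free case $u^{(s)}$ is already irreducible and this reduces to $r^2\le1$.

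Finally I would put $P$ in normal form. As a rank-one orthogonal projection, $P$ fixes a one-dimensional subspace of the carrier space; choosing an orthonormal basis whose first vector spans it turns $P$ into $(\delta_{j1}\delta_{k1})$, which is the asserted shape of $\Phi(s)$. With respect to this basis $\mathbb{E}^\Phi_\ell(u^{(s)}_{jk})=\sum_l P_{jl}u^{(s)}_{lk}=\delta_{j1}\,u^{(s)}_{1k}$, so that $D_s=\mathbb{E}^\Phi_\ell(V_s)$ is spanned by the nonzero entries of the first row $\big(u^{(s)}_{1k}\big)_k$; these furnish the required basis of $D_s$ indexed by $k=1,\dots,d_s$, and with respect to it $u^{(s)}$ has exactly the stated form.
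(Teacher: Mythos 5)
Your proof is correct, and its skeleton is the same as the paper's: show the scalar matrix $P=\big(\Phi(u^{(s)}_{jk})\big)$ is an orthogonal projection, pin down its rank by comparing $\mathbb{E}^\Phi_{bi}(V_s)$ with the polynomial algebra $\mbox{*-alg}\{u_{11}\}$, then diagonalize. The differences are in the two key steps, and they are worth recording. For self-adjointness of $P$ you use $\Phi\circ S=\Phi$ together with unitarity of $u^{(s)}$, whereas the paper deduces it from the norm bound $\|\Phi_s\|\le 1$ (an idempotent contraction is an orthogonal projection); both are valid, and your route reuses a fact the paper already quotes for Lemma \ref{Lemma: idempotent states commute}. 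For the rank, the paper runs a \emph{global, telescoping} count: $\dim\mathbb{E}^\Phi_{bi}(E_s)=\dim{\rm Pol}_s(u_{11})=s+1$ equals $\sum_{k\le s}r_k^2$, whence $r_s^2=1$; you instead run a \emph{local} bound at level $s$: $\dim\mathbb{E}^\Phi_{bi}(V_s)=\sum_\pi r_\pi^2\le\dim\big(V_s\cap\mbox{*-alg}\{u_{11}\}\big)\le 1$. Your version has one genuine advantage: by decomposing $P$ into blocks $P_\pi$ along the irreducible components of $u^{(s)}$ you correctly get $\sum_\pi r_\pi^2$, whereas the paper's intermediate identity $\dim\mathbb{E}^\Phi_{bi}(V_k)=r_k^2$ is literally accurate only when the rank sits in a single block (true a posteriori, and automatic in the free case where $u^{(s)}$ is irreducible, but an imprecision for $S^{N-1}$ and $S^{N-1}_*$). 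The trade-off is that your inequality cannot rule out $P=0$ by itself: you must separately know $D_s\ne 0$, which you assert (``the excluded case $P=0$ being precisely $D_s=0$'') but do not prove. The paper's telescoping equality delivers $r_s=1$, hence $D_s\neq0$, unconditionally; in your setup one should add the one-line argument that ${\rm Pol}_s(u_{11})\subseteq H_s$ and ${\rm Pol}_s(u_{11})\not\subseteq H_{s-1}$ (otherwise $\mathbb{E}^\Phi_{bi}$ would map $E_{s-1}$ onto a space containing ${\rm Pol}_s(u_{11})$, contradicting $\dim{\rm Pol}_{s-1}(u_{11})=s$), so that $D_s\neq 0$ and the $P=0$ case is vacuous. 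With that line added, your argument is complete and, on the reducibility point, slightly more careful than the paper's.
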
 
\begin{proof}
Since $\Phi$ is idempotent state, we can easily check that 
\[
\|\Phi_s\|\leq 1 \text{\ \ \ \ and \ \ \   } \Phi_s^2=\Phi_s ,
\]
which means that $\Phi_s$ is a projection in $B(D_s)$. We know that every projection matrix can be written as a diagonal matrix with coefficients $1$ and $0$ by choosing some suitable basis. 
So 
\[
\Phi_s=\left(
\begin{array}{cc}
\left. 
\begin{array}{ccc}
1&      &  \\
&\ddots&  \\
&      &1  \\
\end{array}  \right\}r_s   &
\begin{array}{ccc}
& \cdots & 0\\
\end{array}   \\
\begin{array}{c}
\vdots \\
0\\
\end{array}          & 
\begin{array}{ccc}
0 & &\\
&\ddots & \\
& &0
\end{array}
\end{array}\right).
\]
Denote the rank of this matrix by $r_s$. For all $k$, we take the basis of $D_k$ as above, so that for $0\leq i \leq r_k$, $\Phi (u_{ii}^{(s)})=1 $; otherwise $\Phi (u_{ij}^{(s)})=0$. Then for any $s \in \mathbb{N}$,
\begin{equation*}
\mathbb{E}_{bi}^\Phi(u_{ij}^{(s)})=\sum_{p,q}\Phi (u_{ip}^{(s)})u_{pq}^{(s)}\Phi (u_{qj}^{(s)})=
\begin{cases}
u_{ij}^{(s)} & \mbox{if $1\leq i,j \leq r_s $}\\
0 & \mbox{otherwise}.
\end{cases}
\end{equation*}
Moreover, the conditional expectation $\mathbb{E}_{bi} ^\Phi$ sends $E_s$ onto
\[
{\rm Pol}_s(u_{11}):=\{1,u_{11}, u_{11}^2, \cdots, u_{11}^s\}.
\]
Thus,
\[
\dim({\rm Pol}_s(u_{11}))=\dim(\mathbb{E}_{bi}^\Phi(E_s))=\dim(\mathbb{E}_{bi}^\Phi \left(\bigoplus_{k=0}^s(V_k)\right)) =\sum_{k=0}^s r_k^2,
\]
which implies
$$r_s^2=\dim({\rm Pol}_s(u_{11}))-\dim({\rm Pol}_{s-1}(u_{11}))=s+1-s=1.$$ 
\end{proof}

This theorem tells us that  $u^{(s)}_{11}=\mathbb{E}_{bi}^\Phi(u_{11}^{(s)}) \in {\rm Pol}_s(u_{11})$. Moreover, the algebra $\mbox{*-alg}\{u_{11}\}$ as a subalgebra of $\mathscr{O}(O_N^\times)$ and can be identified with the algebra of polynomials on the interval $[-1,1]$. Therefore, there exists $q_k^\times \in {\rm Pol}[-1, 1]$ such that $q^\times_k(u_{11})=u_{11}^{(k)}$. Since $h_{O_N^\times}(q_n^\times(u_{11})q_m^\times(u_{11}))=h_{O_N^\times}(u^{(n)}_{11}u^{(m)}_{11})=C\delta_{nm}$, $(q_s)_{s\in \mathbb{N}}$ is a family of orthogonal polynomials. The measure of orthogonality of these polynomials is the probability meeasure obtained by evaluating the spectral measure of $u_{11}$ in the Haar state. Since $u_{11}$ is hermitian and we have $\|u_{11}\|\le 1$, we get a measure that is supported on $[-1,1]$ (which explains why we consider only the values of our polynomials on this interval).

The restriction of the counit to $\mbox{*-alg}\{u_{11}\}$ corresponds to evaluation of a polynomial in the boundary point $1$, i.e. $\varepsilon(p(u_{11}))=p(1), \forall p\in {\rm Pol}[-1,1]$. Therefore we obtain the following result, in the same manner as in \cite[Proposition 10.1]{cipriani+franz+kula14}.

\begin{proposition}\label{value of psi} \cite[Proposition 10.1]{cipriani+franz+kula14}.
Let $\psi$ be a conditionally positive functional on ${\rm Pol}[u_{11}]$. Then there exist a unique pair $(b,\nu)$ consisting of a real number $b\ge 0$ and a finite measure $\nu$ on $[-1,1]$ such that 
\[
\psi(p) = -b p'(1) + \int_{-1}^1 \frac{p(x)-p(1)}{x-1} {\rm d}\nu(x)
\]
for any polynomial $p$. Conversely, every $\psi$ of this form is conditionally positive.
\end{proposition}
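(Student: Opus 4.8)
The plan is to reduce the statement to a Hausdorff moment problem for a single positive functional extracted from $\psi$, in the spirit of the Lévy--Khintchine/Schürmann representation of generating functionals on a one-generator commutative $*$-algebra. Throughout I identify $\mathrm{Pol}[u_{11}]$ with the complex polynomial $*$-algebra $\mathbb{C}[x]$ in one self-adjoint variable $x=u_{11}$, so that the relevant reference state is $\varepsilon(p)=p(1)$ (evaluation at the endpoint $1$ of the spectrum of $u_{11}$), and the hypotheses say that $\psi$ is hermitian, normalised with $\psi(1)=0$, and conditionally positive with respect to $\varepsilon$ in the sense of Definition~\ref{def-gen-funct}.

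First I set up the ansatz and prove uniqueness. Expanding a polynomial by Taylor's formula at $x=1$, $p(x)=p(1)+p'(1)(x-1)+(x-1)^2 g_p(x)$ with $g_p\in\mathbb{C}[x]$, I obtain $\psi(p)=p'(1)\,\psi(x-1)+\psi\big((x-1)^2 g_p\big)$, which isolates a term linear in $p'(1)$ from a term that only sees $(x-1)^2\mathbb{C}[x]$. I therefore introduce the functional $L(r):=-\psi\big((x-1)^2 r\big)$ on $\mathbb{C}[x]$, where the sign is fixed so that conditional positivity of $\psi$, applied to $a=(x-1)s\in\ker\varepsilon$, becomes the positivity $L(|s|^2)\ge 0$; thus $L$ is a positive moment functional. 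The desired representation is then equivalent to the identity $(1-x)\,\mathrm{d}\nu=\mathrm{d}\tilde\mu$ between $\nu$ and a representing measure $\tilde\mu$ of $L$, and reading it off shows that the forward differences of the moments of $\nu$ are pinned down by $\psi$, namely $\int x^{k+1}\,\mathrm{d}\nu-\int x^{k}\,\mathrm{d}\nu=\psi\big((x-1)^2 x^k\big)$. The one remaining degree of freedom is the total mass of $\nu$, and this is interchangeable with $b$, because an atom of $\nu$ at the boundary point $1$ contributes exactly $p'(1)$ to the integral, just like the drift term $-b\,p'(1)$. Uniqueness is recovered by normalising $\nu(\{1\})=0$ (equivalently, letting $b$ carry the full coefficient of $p'(1)$) and invoking determinacy of the moment problem on the compact interval $[-1,1]$.

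The analytic core, and the step I expect to be the main obstacle, is to guarantee that the representing measure is supported on $[-1,1]$ (Hausdorff) rather than merely on $\mathbb{R}$ (Hamburger): conditional positivity on the polynomial algebra alone only delivers $L(|s|^2)\ge 0$, and not the extra half-line conditions $L\big((1-x^2)|s|^2\big)\ge 0$ that would localise the support, the obstruction being that $1-x^2$ is not a square in $\mathbb{C}[x]$. To get around this I would exploit that $x=u_{11}$ is genuinely bounded, $\|u_{11}\|\le 1$, by passing to the C$^*$-level: by the Schoenberg correspondence (Theorem~\ref{Theorem: Schoenberg correspondence for an expected right coidalgebra}) $\psi$ generates a convolution semigroup of states, which extends to the C$^*$-completion of $\mathscr{O}_{\mathbb{G}}(\Phi \backslash\mathbb{G}/ \Phi)$; the associated GNS/Schürmann data realise $x$ as a self-adjoint operator whose spectrum lies in $[-1,1]$, since $1-x^2\ge 0$ is a positive element of the C$^*$-algebra and is respected by the representation. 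The spectral measure of this operator in the cyclic vector is then a representing measure for $L$ supported in $[-1,1]$, and dividing it by $1-x$ on $[-1,1)$ yields the finite positive measure $\nu$; the leftover boundary mass, combined with $\psi(x-1)$, produces the coefficient $b$, whose nonnegativity $b\ge 0$ follows from the same boundary positivity (testing $\psi$ against functions of the form $(x-1)^2 x^k$ as $k\to\infty$).

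Finally, the converse is a direct verification. For $\psi$ of the stated form, $\psi(1)=0$ and hermiticity are immediate because $b$ and $\nu$ are real, while for $a=p(u_{11})$ with $\varepsilon(a)=p(1)=0$ one writes $p=(x-1)s$ and computes $\psi(\bar a a)=\int_{-1}^1 (x-1)\,|s(x)|^2\,\mathrm{d}\nu(x)$, whose integrand has constant sign on $[-1,1]$; this is precisely the conditional semi-definiteness demanded by the hypotheses. This last computation is also the cleanest independent confirmation that the orientations of the drift term and of the measure term are consistent in both directions, and it is the point at which I would double-check the sign convention against Definition~\ref{def-gen-funct} before committing the argument above.
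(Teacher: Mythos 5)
You should first be aware that the paper contains no proof of Proposition \ref{value of psi}: it is imported verbatim from \cite[Proposition 10.1]{cipriani+franz+kula14}, the surrounding text only checking that the setting matches (the counit on $\mbox{*-alg}\{u_{11}\}$ is evaluation at the endpoint $1$). So your attempt has to be measured against the argument intended by that citation. Two of your side observations are genuinely valuable. First, uniqueness of $(b,\nu)$ does require the normalisation $\nu(\{1\})=0$, since an atom of $\nu$ at $1$ contributes a multiple of $p'(1)$ and can be traded against $b$. Second, the sign of the kernel as printed is indeed inconsistent with Definition \ref{def-gen-funct}: for $a=(x-1)s\in\ker\varepsilon$ the displayed formula gives $\psi(a^*a)=\int_{-1}^1(x-1)|s(x)|^2\,\mathrm{d}\nu\le 0$, i.e.\ conditionally \emph{negative} functionals; the kernel should be $\frac{p(x)-p(1)}{1-x}$, in agreement with the formula quoted from \cite{cipriani+franz+kula14} in Remark \ref{rem-central}, whose denominator is $N-x$.

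Nevertheless your proof has a fatal gap, located exactly at the sentence ``dividing it by $1-x$ on $[-1,1)$ yields the finite positive measure $\nu$''. If $\tilde\mu$ denotes a representing measure of your moment functional $r\mapsto\pm\psi\big((x-1)^2r\big)$, the claimed formula forces $\tilde\mu=(1-x)\,\mathrm{d}\nu$, which requires \emph{both} $\tilde\mu(\{1\})=0$ and $\int(1-x)^{-1}\,\mathrm{d}\tilde\mu<\infty$; neither is established, and your remark that leftover boundary mass of $\tilde\mu$ can be folded into $b$ is false, because an atom of $\tilde\mu$ at $1$ contributes $\tfrac12 p''(1)$ to $\psi(p)$, not $p'(1)$. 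This cannot be repaired within your hypotheses, because the purely algebraic statement is \emph{false}: the functional $\psi_0(p):=\tfrac12 p''(1)$ satisfies $\psi_0(1)=0$, is hermitian, and obeys $\psi_0(a^*a)=|s(1)|^2\ge0$ for every $a=(x-1)s\in\ker\varepsilon$, so it meets every condition of Definition \ref{def-gen-funct}; its $\tilde\mu$ is $\delta_1$, supported inside $[-1,1]$, so your C${}^*$-localisation of the spectrum does not exclude it; and it is not of the stated form. Indeed, testing on $p_k=(x-1)^2x^k$ (where $\psi_0(p_k)=1$ for all $k$), the printed kernel already fails at $k=0$ since $\int(x-1)\,\mathrm{d}\nu\le 0$, while the corrected kernel would force the measure $(1-x)\,\mathrm{d}\nu$ to have all moments equal to $1$, hence to equal $\delta_1$ by determinacy of the Hausdorff moment problem, which is impossible as $(1-x)\,\mathrm{d}\nu$ gives no mass to $\{1\}$. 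Consequently, any correct proof must use a positivity stronger than conditional positivity on ${\rm Pol}[u_{11}]$, and this makes your appeal to Theorem \ref{Theorem: Schoenberg correspondence for an expected right coidalgebra} circular: that theorem demands conditional positivity relative to $\lambda_0$ on the coidalgebra (after the lift, relative to the idempotent state $\Phi$ on $\pag$), which is exactly the stronger hypothesis you would need to produce and cannot derive; note also that Schürmann's Schoenberg argument is unavailable on $({\rm Pol}[u_{11}],\widetilde{\Delta})$ itself, since $\widetilde{\Delta}$ is not a $*$-homomorphism. The proof intended by the citation runs in the opposite direction, taking as input what Theorem \ref{eigenvalue for the markov semigroups on spheres} actually provides: $\psi=\frac{d}{dt}\big|_{t=0}\lambda_t$ with each $\lambda_t$ a state whose restriction to ${\rm Pol}[u_{11}]$ is integration against a probability measure $\mu_t$ on $[-1,1]$. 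Then the positive measures $\rho_t:=t^{-1}(1-x)\mu_t$ have masses converging to $-\psi(u_{11}-1)$, weak-$*$ compactness on the compact interval yields a limit point $\rho\ge0$, and setting $b:=\rho(\{1\})$ and $\nu:=\rho|_{[-1,1)}$ produces the representation, with finiteness of $\nu$, $b\ge0$, the absence of any genuine second-order (Gaussian) term, and uniqueness all falling out of the same construction.
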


Applying the above proposition, we can compute the eigenvalues of Markov semigroups.

\begin{theorem}\label{eigenvalue for the markov semigroups on spheres}
For any $O_N^\times$-invariant strongly continuous Markov semigroup $(T_t)_{t\geq 0}$ on sphere $\mathscr{O}(S_\times^{N-1})$, there exists a pair  $(b,\nu)$, with $b$ a positive number and $\nu$  a finite measure on $[-1,1]$, such that the generator $L$ of $(T_t)_{t\ge 0}$ satisfying,
$$L(x_s)=\lambda_{s}x_s \qquad \forall x_s \in D_s, $$
where
\[
\lambda_s = - b(q_s^\times)'(1) + \int_{-1}^1 \frac{q_s^\times(x)-1}{x-1}{\rm d}\nu(x).
\]
Moreover, if $T_0=\mathrm{id}$, then for any $t\geq0$, $T_t(x_s)=e^{t\lambda_{s}}x_s,\forall x_s \in D_s.$
\end{theorem}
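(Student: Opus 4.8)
The plan is to reduce the eigenvalue computation on the sphere to the generating-functional formula on the hypergroup $\mathscr{O}_{O_N^\times}(\Phi\backslash O_N^\times/\Phi)$, which by the preceding discussion is identified with $\mathrm{Pol}[u_{11}]\cong\mathrm{Pol}[-1,1]$. Starting from an $O_N^\times$-invariant strongly continuous Markov semigroup $(T_t)_{t\ge0}$ on $\mathscr{O}(S_\times^{N-1})$, I would first pass through the chain of one-to-one correspondences summarised in Theorem \ref{thm-corr-gen-funct}. Explicitly, setting $\lambda_t=\varepsilon\circ T_t$ (Theorem \ref{Theorem: one to one correspondence between Phi invariant functionals and invariant operators}) produces an $O_N^\times$-invariant convolution semigroup of states on the sphere, whose generator $\psi:=\frac{d}{dt}|_{t=0}\lambda_t$ restricts to an $\varepsilon|_{\mathscr{O}_\qg(\Phi\backslash \qg/\Phi)}$-generating functional on $\mathrm{Pol}[u_{11}]$. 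Once $\psi$ is viewed as a conditionally positive functional on $\mathrm{Pol}[-1,1]$, Proposition \ref{value of psi} immediately supplies the pair $(b,\nu)$ and the integral formula for $\psi(p)$.

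\textbf{Identifying the eigenvectors.} The second ingredient is that the generator $L$ of $(T_t)_{t\ge0}$ must act diagonally on the spectral decomposition $L_2(\mathscr{O}(S_\times^{N-1}),h)=\bigoplus_k D_k$. I would argue this from $\qg$-invariance: since $L=\frac{d}{dt}|_{t=0}T_t$ with each $T_t$ intertwining the coaction $\alpha$, the operator $L$ commutes with the coaction, hence preserves each isotypic/spectral component $D_s=\mathbb{E}_\ell^\Phi(V_s)$ coming from $u^{(s)}$. By Schur-type reasoning (the new corepresentations appearing in distinct $D_s$ are inequivalent), $L$ is scalar on each $D_s$, giving $L(x_s)=\lambda_s x_s$. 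To compute the scalar, I would use $\lambda_s=\varepsilon(L x_s)/\varepsilon(x_s)$ together with the correspondence $T_t\mapsto\varepsilon\circ T_t$, so that $\lambda_s$ equals $\psi$ evaluated on the distinguished bi-invariant element. By Lemma \ref{phi x11=1}, the relevant coefficient is $u_{11}^{(s)}=\mathbb{E}_{bi}^\Phi(u_{11}^{(s)})=q_s^\times(u_{11})$, and applying $\psi$ to $q_s^\times$ via Proposition \ref{value of psi} yields
\[
\lambda_s = -b(q_s^\times)'(1) + \int_{-1}^1 \frac{q_s^\times(x)-q_s^\times(1)}{x-1}\,{\rm d}\nu(x),
\]
where $q_s^\times(1)=\varepsilon(q_s^\times(u_{11}))=1$ because $\varepsilon$ evaluates polynomials at the boundary point $1$ and $u_{11}^{(s)}$ is a normalised corepresentation coefficient.

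\textbf{The exponentiation.} For the final assertion, when $T_0=\mathrm{id}$ the semigroup law $T_{t+s}=T_t\circ T_s$ restricts to each finite-dimensional $L$-invariant space $D_s$, where $L$ is the scalar $\lambda_s$; solving the resulting scalar linear ODE $\frac{d}{dt}T_t(x_s)=\lambda_s T_t(x_s)$ with $T_0(x_s)=x_s$ gives $T_t(x_s)=e^{t\lambda_s}x_s$. I would invoke Proposition \ref{Theorem: strongly continous G invariant semigroups are differentiable} to guarantee differentiability so that this ODE argument is legitimate.

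\textbf{The main obstacle} I anticipate lies in rigorously justifying that $L$ is scalar on each $D_s$, i.e.\ that the $\qg$-invariance of $(T_t)_{t\ge0}$ forces simultaneous diagonalisation along the spectral subspaces. In the free case $u^{(s)}$ is genuinely irreducible so Schur's lemma applies cleanly, but in the classical and half-liberated cases $u^{(s)}$ is only a direct sum of inequivalent irreducibles, and one must check that after passing to $\mathbb{E}_\ell^\Phi$ the relevant multiplicity on the sphere reduces $D_s$ to a single copy carrying the character $q_s^\times$; the content of Lemma \ref{phi x11=1} (that $r_s=1$) is precisely what makes this collapse work, so the delicate point is to connect the rank-one conclusion there to the scalarity of $L$ on $D_s$ rather than merely on the bi-invariant line.
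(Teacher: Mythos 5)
Your proposal assembles the correct ingredients (the functional--operator correspondence, Lemma \ref{phi x11=1}, Proposition \ref{value of psi}, and the differentiability result, Proposition \ref{Theorem: strongly continous G invariant semigroups are differentiable}), but the step you yourself single out as the ``main obstacle'' is left unproved, and it is the heart of the theorem: the scalarity of $L$ on each $D_s$. Your Schur-type argument is complete only in the free case, where $u^{(s)}$ is irreducible; for the classical and half-liberated spheres you observe that $u^{(s)}$ is merely a direct sum of inequivalent irreducibles and that ``one must check'' that $\mathbb{E}_{\ell}^\Phi$ collapses $V_s$ to a single copy, without carrying this out. There is also a smaller defect: the formula $\lambda_s=\varepsilon(Lx_s)/\varepsilon(x_s)$ is ill-defined on most of $D_s$, since in the basis of Lemma \ref{phi x11=1} one has $\varepsilon(u^{(s)}_{1j})=\delta_{1j}$, so the denominator vanishes for all $j\neq 1$; that formula can only be invoked after scalarity is established, and then only on the element $u^{(s)}_{11}$.

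The paper closes this gap without any appeal to Schur's lemma, and uniformly in $\times\in\{\emptyset,*,+\}$. Since every $T_t$ is $\qg$-invariant, so is $L$, and Theorem \ref{Theorem: one to one correspondence between Phi invariant functionals and invariant operators} then gives the explicit form $L=(\psi\otimes\mathbb{E}_{\ell}^\Phi)\circ\alpha$ with $\psi=\varepsilon\circ L$. Lemma \ref{phi x11=1} yields $\mathbb{E}_{\ell}^\Phi(u^{(s)}_{kj})=\delta_{1k}u^{(s)}_{1j}$, hence $D_s=\mathrm{span}\{u^{(s)}_{1j}:1\le j\le d_s\}$ and
\[
L(u^{(s)}_{1j})=\sum_k\psi(u^{(s)}_{1k})\,\mathbb{E}_{\ell}^\Phi(u^{(s)}_{kj})=\psi(u^{(s)}_{11})\,u^{(s)}_{1j},
\]
which is precisely the scalarity you were missing, in all three cases at once; the eigenvalue formula then follows by restricting $\psi$ to the polynomial algebra generated by $u_{11}$, using $u^{(s)}_{11}=q_s^\times(u_{11})$ and $q_s^\times(1)=\varepsilon(u^{(s)}_{11})=1$, and applying Proposition \ref{value of psi}. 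Alternatively, your Schur route can be repaired with the same lemma: $\mathbb{E}_{\ell}^\Phi$ intertwines the right coaction, and the rank-one conclusion $r_s=1$ of Lemma \ref{phi x11=1} forces $\mathbb{E}_{\ell}^\Phi$ to annihilate all but one irreducible block of $V_s$, so $D_s=\mathbb{E}_{\ell}^\Phi(V_s)$ is a single irreducible comodule, pairwise inequivalent for distinct $s$, and any comodule endomorphism is scalar on each $D_s$. Either repair is short, but as written your argument proves the theorem only for $S^{N-1}_+$; your treatment of $(b,\nu)$ via Proposition \ref{value of psi} and the exponentiation via the finite-dimensional ODE are otherwise in line with the paper.
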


\begin{proof}
Theorem \ref{Theorem: strongly continous G invariant semigroups are differentiable} guarantees the existence of generator operator $L$ , and the Markov property makes $\psi:=\varepsilon\circ L$ conditionally positive. 

By Lemma \ref{phi x11=1}, we can compute $\mathbb{E}_{\ell}^\Phi (u^{(k)}_{ij})=\delta_{1i}u^{(k)}_{1j}$, which implies $D_s=\mathbb{E}_{\ell}^\Phi(V_k)=span \{u^{(s)}_{1j}: 1\leq j \leq d_k\}$. 
Then for any $u_{1j}^{(s)}\in D_s$,
\[
L(u_{1j}^{(s)})=\sum_k\psi(u_{1k}^{(s)})\mathbb{E}^\Phi_\ell(u_{kj}^{(s)})
=\psi(u_{11}^{(s)})u_{1j}^{(s)}.
\]
Now, we just need to consider $\psi|_{\mbox{*-alg}(u_{11})}$ which induces the pair  $(b,\nu)$ by Proposition \ref{value of psi}. By linearity of $L$, we can get the eigenvalues for $D_s$,
\[
\lambda_s =\psi(u_{11}^{(s)})= - b(q_s^\times)'(1) + \int_{-1}^1 \frac{q_s^\times(x)-1}{x-1}{\rm d}\nu(x),
\]
since $q_s^\times(1)=\varepsilon (u_{11}^{(s)})=1$.
\end{proof}

We point out here that we have three different families of orthogonal polynomials $\{q_s^\times(x)\}$ associated to $S_\times^{N-1}$, since the Haar states $h_{O^\times_N}$ depend on $\times\in\{\emptyset,*,+\}$. We will desccribe these orthogonal polynomials case by case.

\subsection{The classical sphere $S^{N-1}$}

Here, $(q_s(x))_{s\in \mathbb{N}}$ means the family of the orthogonal polynomials associated to classial sphere. It is well known that the distribution of $u_{11}^2$ for the classical sphere is the beta distribution with parameters $(1/2,(N-1)/2)$. In other words,
\[
h_{S^{N-1}}(\phi(u_{11}^2))=C\int_{0}^{1}\phi(t)\frac{1}{\sqrt{t}}(1-t)^{\frac{N-3}{2}}dt,
\]
where $C=\frac{\Gamma(\frac{N}{2})}{\Gamma(\frac{1}{2})\Gamma(\frac{N-1}{2})}$.
The integral vanishes on the odd polynomials, i.e. $h_{S^{N-1}}(u_{11}^{2k+1})=0$. Therefore $h_{S^{N-1}}\left(\frac{f(u_{11})+f(-u_{11})}{2}\right)=h_{S^{N-1}}(f(u_{11}))$.
\begin{align*}
h_{S^{N-1}}(f(u_{11}))&=h_{S^{N-1}}\left(\frac{f(u_{11})+f(-u_{11})}{2}\right)\\
&=C\int_{-1}^{1} f(t)(1-t)^{\frac{N-3}{2}}(1+t)^{\frac{N-3}{2}}{\rm d}t
\end{align*}

The spectral measure of $u_{11}$ is the probability measure on the interval $[-1, 1]$: $$\mu(dt)=C(1-t)^{\frac{N-3}{2}}(1+t)^{\frac{N-3}{2}}dt,$$ whose family of orthogonal polynomials is well known. Namely, we get the Jacobi polynomials (or ultraspherical polynomials) with parameters $\alpha=\beta=(N-3)/2$, which we will denote by $(J_s)_{s\in \mathbb{N}}$.

Recall that Jacobi polynomials are given by:
\[
J_s(x)=\sum_{r=0}^{s}\binom{s+(N-3)/2}{r}\binom{s+(N-3)/2}{(N-3)/2-r}\left(\frac{x-1}{2}\right)^{s-r}\left(\frac{x+1}{2}\right)^{r}.
\]
Their orthogonality relation is given by 
\[
\int_{-1}^{1}J_k(x)J_m(x)\mu(dt)=\delta_{km}\cdot C \frac{2^{N-2}}{2k+N-2}\frac{\Gamma(k+(N-1)/2)^2}{\Gamma(k+N-2)n!}.
\]
Moreover, they satisfy the differential equation
$$(1-x^2)J_s^{\prime\prime}(x)-x(N-1)J_s^\prime(x)=-s(s+N-2)J_s(x).$$
We need these polynomials in the form $q_s(x)=J_s(x)/J_s(1)$.

Therefore,
$$q_s^\prime(1)=\frac{s(s+N-2)}{N-1}.$$
We can relate our result to the \textit{Morkov sequence problem}. For a given orthonormal basis $\{f_0=\mathbf{1},f_1,f_2,\ldots\}$ of the $L^2$-space of some probability space, this problem of ask for the classification of all sequences $(\lambda_n)_{n\ge 0}$ such that $K(f_n)=\lambda_n f_n$ defines Markov operator, cf.\ \cite{bakry}. In \cite[Theorem 2]{bochner1954}, Bochner answered this problem for the Jacobi polynomials. Since we found that the Jacobi polynomials are the eigevectors for any $O_N$-invariant Markov semigroup on $S^{N-1}$, our Theorem \ref{eigenvalue for the markov semigroups on spheres} recovers \cite[Theorem 3]{bochner1954}. 

\subsection{The half-liberated sphere $S_*^{N-1}$} 

Next we consider the  half-liberated sphere $S^{N-1}_*$. 

Banica \cite[Propsition 6.6]{banica16} 
determined the law of $x_{i_1}\cdots x_{i_k}$ with respect to the Haar state $h_{S^{N-1}_*}=h_{O_N^*}|_{C(S^{N-1}_*)}$ (there is a small misprint in \cite[Propsition 6.6]{banica16}, which we correct below).

\begin{proposition}\label{law of x_i...x_k for half liberated}
The half-liberated integral of $x_{i_1}\cdots x_{i_k}$ vanishes, unless each index $a$ appears the same number of times at odd and even positions in $i_1,\ldots , i_k$. We have
\[
\int_{S_*^{N-1}}x_{i_1}\cdots x_{i_k}dx=\frac{(N-1)!\ell_1!\cdots\ell_n!}{(N+\sum \ell_i-1)!}
\]
where $\ell_i$ denotes this number of common occurrences of $i$ in the $k$-tuple $(i_1,\ldots,i_k)$. 
\end{proposition}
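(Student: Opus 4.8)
The plan is to pass from the sphere to the quantum group via the identification $x_i=u_{1i}$, so that the integral in question becomes the value of the Haar state $h_{O_N^*}$ on the first-row product $u_{1i_1}\cdots u_{1i_k}$. The Weingarten formula for $O_N^*$ expresses this as a sum over pairs of balanced pairings $\pi,\sigma\in P_2^*(k)$, weighted by the Weingarten matrix $W_{kN}$ and by the Kronecker symbols $\delta_\pi(1,\dots,1)\,\delta_\sigma(i_1,\dots,i_k)$. Since the row indices are all equal to $1$, every pairing is row-compatible and $\delta_\pi(1,\dots,1)=1$, so only the column symbol $\delta_\sigma(i_1,\dots,i_k)$ survives. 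The vanishing statement then falls out immediately: labelling the legs alternately black and white, a balanced pairing joins each odd (black) position to an even (white) position, so $\delta_\sigma(i_1,\dots,i_k)=1$ for some balanced $\sigma$ forces the multi-index to be constant along such pairs; hence each value $a$ must occur as often at odd positions as at even positions. If this balance fails, no balanced $\sigma$ contributes and the integral is $0$.

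For the explicit value I would first recognise the right-hand side $\frac{(N-1)!\,\ell_1!\cdots\ell_N!}{(N+\sum_i\ell_i-1)!}$ as the moment $\mathbb{E}\big[t_1^{\ell_1}\cdots t_N^{\ell_N}\big]$ of the uniform (Dirichlet$(1,\dots,1)$) distribution on the simplex, equivalently as $\int_{S^{N-1}_{\mathbb{C}}}|z_1|^{2\ell_1}\cdots|z_N|^{2\ell_N}\,d\mu$ over the complex sphere $S^{N-1}_{\mathbb{C}}$ with its uniform probability measure $\mu$. This points to the explicit matrix model $x_i\mapsto X_i:=\begin{pmatrix}0&z_i\\ \bar z_i&0\end{pmatrix}\in M_2\big(C(S^{N-1}_{\mathbb{C}})\big)$, where $z_1,\dots,z_N$ are the coordinate functions on $S^{N-1}_{\mathbb{C}}$. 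One checks that each $X_i$ is self-adjoint, that $\sum_i X_i^2=1$ because $\sum_i|z_i|^2=1$, and that $X_aX_bX_c=X_cX_bX_a$ because the $z_i$ commute; by universality $x_i\mapsto X_i$ thus extends to a unital $*$-homomorphism from $C(S^{N-1}_*)$. Writing $\mathrm{tr}$ for the normalised trace on $M_2$, a direct computation gives $\mathrm{tr}(X_{i_1}\cdots X_{i_k})=0$ for odd $k$ and, for even $k$, $\mathrm{tr}(X_{i_1}\cdots X_{i_k})=\mathrm{Re}\big(\prod_{j\ \mathrm{odd}}z_{i_j}\prod_{j\ \mathrm{even}}\bar z_{i_j}\big)$; integrating this over $S^{N-1}_{\mathbb{C}}$ reproduces exactly the balance condition and, when it holds, the Dirichlet moment above.

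The main obstacle is to justify that this model really computes the Haar state, i.e.\ that $\big(\mathrm{tr}\otimes\int_{S^{N-1}_{\mathbb{C}}}\!d\mu\big)$ composed with $x_i\mapsto X_i$ agrees with $h_{S^{N-1}_*}=h_{O_N^*}|_{C(S^{N-1}_*)}$. I would argue this by uniqueness: the composite is a state on $C(S^{N-1}_*)$, and since the action of $O_N^*$ on the sphere is ergodic there is a unique invariant state, so it suffices to verify invariance of this state under the coaction $\alpha^u_*$, using the standard identification of $O_N^*$ with a balanced-pairing easy quantum group and the $U_N$-invariance of $\mu$. Alternatively, one can bypass the model entirely and evaluate the balanced-pairing Weingarten sum $\sum_{\pi,\sigma\in P_2^*(k)}W_{kN}(\pi,\sigma)\,\delta_\sigma(i_1,\dots,i_k)$ directly; the delicate point there is the combinatorial identity that collapses this sum to $\frac{(N-1)!\,\ell_1!\cdots\ell_N!}{(N+\sum_i\ell_i-1)!}$, which is where the misprint in \cite[Proposition 6.6]{banica16} must be corrected. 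Either route resolves the statement, and I expect the invariance/uniqueness argument to be the cleaner of the two.
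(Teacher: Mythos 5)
Your overall route coincides with that of the source on which the paper relies: the paper does not prove Proposition~\ref{law of x_i...x_k for half liberated} at all, but quotes it from \cite[Proposition 6.6]{banica16} (correcting a misprint), and Banica's own argument is essentially your second route, i.e.\ the model $x_i\mapsto\left(\begin{smallmatrix}0&z_i\\ \bar z_i&0\end{smallmatrix}\right)$ over the complex sphere. Within that route, your computations are correct: the normalised trace of $X_{i_1}\cdots X_{i_k}$ vanishes for odd $k$ and equals $\mathrm{Re}\big(\prod_{j\,\mathrm{odd}}z_{i_j}\prod_{j\,\mathrm{even}}\bar z_{i_j}\big)$ for even $k$; torus invariance of the uniform measure gives the vanishing unless the multi-index is balanced; and in the balanced case the integral is the Dirichlet$(1,\dots,1)$ moment $\int_{S^{N-1}_{\mathbb{C}}}|z_1|^{2\ell_1}\cdots|z_N|^{2\ell_N}\,d\mu=\frac{(N-1)!\,\ell_1!\cdots\ell_N!}{(N+\sum_i\ell_i-1)!}$. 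The Weingarten argument for the vanishing statement is also sound.

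The genuine gap sits exactly at what you yourself call the main obstacle, and your proposed fix does not close it. The ergodicity reduction is legitimate (the sphere is an expected coidalgebra, the action is ergodic, so the invariant state is unique and equals $h_{O_N^*}|_{C(S^{N-1}_*)}$), but the invariance of $\omega=(\mathrm{tr}\otimes\int d\mu)\circ\pi$ is a nontrivial identity \emph{inside} $\mathscr{O}(O_N^*)$, namely $\sum_{j_1,\dots,j_k}\omega(x_{j_1}\cdots x_{j_k})\,u_{j_1i_1}\cdots u_{j_ki_k}=\omega(x_{i_1}\cdots x_{i_k})\,\mathbf{1}$. The natural way to exploit ``the $U_N$-invariance of $\mu$,'' as you suggest, is to push both sides through the companion model $\rho:\mathscr{O}(O_N^*)\to M_2(C(U_N))$, $u_{ij}\mapsto\left(\begin{smallmatrix}0&v_{ij}\\ \bar v_{ij}&0\end{smallmatrix}\right)$; there the identity does follow from invariance of $\mu$ under $z\mapsto zg$, $g\in U_N$. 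But this only shows the difference of the two sides lies in $\ker\rho$; to conclude it vanishes in $\mathscr{O}(O_N^*)$ you need injectivity of $\rho$ — the faithfulness of the half-liberation model, equivalently the projective isomorphism $PO_N^*\cong PU_N$ — which is a substantive theorem that you neither name nor prove (appealing to ``easiness'' at this point is a non sequitur: easiness is what powers your first route, not this one). Your first route has the symmetric defect: collapsing the balanced-pairing Weingarten sum to $\frac{(N-1)!\,\ell_1!\cdots\ell_N!}{(N+\sum_i\ell_i-1)!}$ is precisely the content of the proposition, and you explicitly defer it. So both routes are correct in outline but each stops exactly where the real work begins; to complete the proof one must either import the faithfulness theorem for the $M_2(C(U_N))$ model or carry out the integral computation directly, as is done for the moments of $u_{11}$ in Corollary~\ref{distrubition of x11 for the semi-free case} of the paper via spherical coordinates.
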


This proposition allows to describe the spectral distribution of $u_{11}=x_1$ w.r.t.\ the Haar state.

\begin{corollary}\label{distrubition of x11 for the semi-free case}
The distribution of $u_{11}$ in the half-liberated case is given by:
\[
h_{S^{N-1}_*}(f(u_{11}))=\int_{-1}^{1}f(t)\mu(dt),\qquad \forall f\in C([-1,1])
\]
where $\mu(dt)=(N-1)(1-t^2)^{N-2}|t|dt$.
\end{corollary}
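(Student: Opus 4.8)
The plan is to identify the measure $\mu$ by matching moments, using determinacy of the moment problem for measures supported on the compact interval $[-1,1]$. Since $u_{11}=x_1$ is self-adjoint with $\|u_{11}\|\le 1$, the functional $f\mapsto h_{S^{N-1}_*}(f(u_{11}))$ is integration against a unique probability measure on $[-1,1]$, and this measure is determined by its moments $m_k:=h_{S^{N-1}_*}(u_{11}^k)=h_{S^{N-1}_*}(x_1^k)$. Hence it suffices to compute these moments from Proposition \ref{law of x_i...x_k for half liberated} and to check that they coincide with the moments of $\mu(dt)=(N-1)(1-t^2)^{N-2}|t|\,dt$.

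First I would compute the left-hand moments. In the monomial $x_1^k$ every index equals $1$, so the index $1$ occupies $\lceil k/2\rceil$ odd positions and $\lfloor k/2\rfloor$ even positions. By the vanishing criterion of Proposition \ref{law of x_i...x_k for half liberated}, the integral is zero unless these two counts agree, i.e.\ unless $k=2m$ is even; hence $m_{2m+1}=0$. For $k=2m$ the only nonzero multiplicity is $\ell_1=m$, so $\sum_i\ell_i=m$ and the formula gives
\[
m_{2m}=h_{S^{N-1}_*}(x_1^{2m})=\frac{(N-1)!\,m!}{(N+m-1)!}.
\]

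Next I would compute the moments of $\mu$. The odd moments vanish because $\mu$ is symmetric under $t\mapsto -t$. For the even ones, the substitution $u=t^2$ turns the integral into a Beta integral:
\[
\int_{-1}^{1}t^{2m}\,\mu(dt)=2(N-1)\int_0^1 t^{2m+1}(1-t^2)^{N-2}\,dt=(N-1)\int_0^1 u^{m}(1-u)^{N-2}\,du=(N-1)B(m+1,N-1).
\]
Writing $B(m+1,N-1)=\dfrac{m!\,(N-2)!}{(N+m-1)!}$ yields exactly $\dfrac{(N-1)!\,m!}{(N+m-1)!}=m_{2m}$, and the case $m=0$ confirms that $\mu$ has total mass one. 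Thus $\mu$ and the spectral measure of $u_{11}$ share all moments, so by determinacy they coincide.

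The computation is essentially routine; the only point requiring care is the correct bookkeeping when specialising Proposition \ref{law of x_i...x_k for half liberated} to $x_1^{2m}$ --- in particular reading off that the relevant multiplicity is $\ell_1=m$ (the common number of odd/even occurrences, \emph{not} the total number $2m$ of factors), so that the denominator is $(N+m-1)!$ rather than $(N+2m-1)!$. Once this is pinned down, the agreement with the Beta integral is immediate.
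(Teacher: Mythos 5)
Your proof is correct, but it follows a genuinely different route from the paper's. The paper proves the corollary by repeating Banica's explicit computation: it identifies $h_{S^{N-1}_*}(x^{2k})$ with the integral $\int_{S_{\mathbb{C}}^{N-1}}|z_1|^{2k}\,dz$ over the complex sphere and evaluates it in spherical coordinates, obtaining $2(N-1)\int_0^1 t^{2k+1}(1-t^2)^{N-2}\,dt$, and then symmetrizes using the vanishing of odd moments. You instead take Proposition \ref{law of x_i...x_k for half liberated} as the starting point, specialize it to $x_1^k$ (correctly reading off that the integral vanishes for odd $k$ and equals $\frac{(N-1)!\,m!}{(N+m-1)!}$ for $k=2m$, with $\ell_1=m$ the \emph{common} count of odd/even occurrences), compute the even moments of the candidate measure as Beta integrals $(N-1)B(m+1,N-1)$, and conclude by determinacy of the moment problem on the compact interval $[-1,1]$. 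Your argument is shorter and arguably closer to the paper's own framing (the text introduces the Proposition precisely because it ``allows to describe the spectral distribution''), and it needs no reference to the embedding of $C(S^{N-1}_*)$ into $M_2\big(C(S^{N-1}_{\mathbb{C}})\big)$; the paper's re-derivation, on the other hand, is independent of the Proposition and thus doubles as a check on the corrected misprint in Banica's formula, while also exhibiting the geometric origin of the density. Both computations agree, and your appeal to moment determinacy (via $\|u_{11}\|\le 1$, self-adjointness, and Weierstrass approximation on $[-1,1]$) is exactly the standard justification, which the paper itself invokes implicitly when it speaks of the spectral measure of $u_{11}$ supported on $[-1,1]$.
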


\begin{proof}
This  proof repeats the arguments of \cite[Propsitions 6.5 and 6.6]{banica16}.

Let $C=\frac{2^N}{2N\pi^N}\cdot \Gamma(N+1)=(\frac{2}{\pi})^N(N-1)!$, then
\begin{align*}
h_{S^{N-1}_*}(x^{2k})&=\int_{S_{\mathbb{C}}^{N-1}}|z_1|^{2k}dz\\
&=C\int_{S_{\mathbb{R}}^{2N-1}}(x_1^2+y_1^2)^kd(x,y)\\
&=C\int_{0}^{\pi /2}\int_{0}^{\pi /2} (\cos^2\theta_1+\sin^2\theta_1 \cos^2\theta_2)^k\sin^{2N-2}\theta_1 sin^{2N-3}\theta_2 d\theta_1d\theta_2\\
&\qquad \cdot \int_{0}^{\pi /2}\sin^{2N-4}\theta_3 d\theta_{3}\cdots \int_{0}^{\pi /2}\sin \theta_{2N-2}d\theta_{2N-2}\cdot \int_{0}^{\pi /2}d\theta_{2N-1}.
\end{align*}
First we can calculate that
\begin{align*}
C'&=C\int_{0}^{\pi /2}\sin^{2N-4}\theta_3 d\theta_{3}\cdots \int_{0}^{\pi /2}\sin \theta_{2N-2}d\theta_{2N-2}\cdot \int_{0}^{\pi /2}d\theta_{2N-1}\\
&=\left(\frac{2}{\pi}\right)^N(N-1)!\cdot \left(\frac{\pi}{2}\right)^{N-1} \frac{(2N-4)!!}{(2N-3)!!}\frac{(2N-5)!!}{(2N-4)!!}\cdots\frac{1!!}{2!!}\\
&=\frac{4}{\pi}(N-1)
\end{align*}
where $m!!=(m-1)(m-3)\cdots 1$. Let $t=\sqrt{\cos^2\theta_1+\sin^2\theta_1 \cos^2\theta_2}$, $u=\cos \theta_1$, then
\begin{align*}
h_{S^{N-1}_*}(x^{2k})&=C'\int_{0}^{1}\int_{0}^{t}t^{2k}(1-t^2)^{N-2}\frac{t}{\sqrt{t^2-u^2}}dudt\\
&=2(N-1)\int_{0}^{1}t^{2k}\cdot(1-t^2)^{N-2}tdt.
\end{align*}
Since the odd moments of $u_{11}$ vanish, we have $h_{S^{N-1}_*}(f(u_{11}))=h_{S^{N-1}_*}\left(\frac{f(u_{11})+f(-u_{11})}{2}\right)$. and
\begin{align*}
h_{S^{N-1}_*}(f(u_{11}))&=2(N-1)\int_{0}^{1}\left(\frac{f(u_{11})+f(-u_{11})}{2}\right)\cdot(1-t^2)^{N-2}tdt\\
&=(N-1)\int_{-1}^{1}f(u_{11})(1-t^2)^{N-2}|t|dt.
\end{align*}
\end{proof}

Now we determine the family of orthogonal polynomials associated to the probability measure $\mu$ defined in Corollary \ref{distrubition of x11 for the semi-free case}.

The standard notation for hypergeometric functions is 
$${ }_rF_s\left({a_1, \cdots , a_r \atop b_1, \cdots, b_s}; x\right)=\sum_{n=0}^{\infty}\frac{(a_1)_n\cdots (a_r)_n}{(b_1)_n \cdots (b_s)_n}\frac{x^n}{n!}$$
where the shifted factorial $(a)_n$ is defined by
\begin{equation*}
(a)_n=
\begin{cases}
a(a+1)\cdots (a+n-1), & n=1,2,\cdots\\
1, & n=0. 
\end{cases}
\end{equation*}
They satisfy 
\begin{equation}\label{the derivative function of hypergeometric function}
\frac{d}{dx} { }_rF_s\left({a_1, \cdots , a_r \atop b_1, \cdots, b_s}; x\right)=\frac{\prod_{i=1}^ra_i}{\prod_{j=1}^sb_i}{ }_rF_s\left({a_1+1, \cdots , a_r+1 \atop b_1+1, \cdots, b_s+1}; x\right).
\end{equation}

And by Gauss' theorem we have
$${ }_2F_1\left({a, b \atop c }; 1\right)=\frac{\Gamma(c)\Gamma(c-a-b)}{\Gamma(c-a) \Gamma(c-b)}.$$

\begin{definition}
We define the family \emph{half-liberated spherical polynomials} (or ``*-polynomials'') by
\begin{align*}
P_{2k}(x)&=(-1)^k\left(\begin{array}{c}N+2k-2 \\ k \end{array}\right)^{-1}{ }_2F_1\left(\begin{array}{c} -k,\, N+k-1\\ 1 \end{array};\,x^2\right) \\
&=\sum_{r=0}^{k}(-1)^{k+r}\left(\begin{array}{c}k \\ r \end{array}\right)^2\left(\begin{array}{c} N+2k-2 \\ k-r\end{array}\right)^{-1} x^{2r}, \\
P_{2k+1}(x)&=x\cdot(-1)^k(k+1)\left(\begin{array}{c} N+2k-1 \\ k \end{array}\right)^{-1}{ }_2F_1\left(\begin{array}{c}-k,\, N+k\\ 2 \end{array} ;\,x^2\right) \\   
&=\sum_{r=0}^{k}(-1)^{k+r} \left(\begin{array}{c} k \\ r \end{array}\right)\binom{k+1}{r+1}\binom{N+2k-1}{k-r}^{-1}x^{2r+1}. 
\end{align*}
\end{definition} 

\begin{proposition}
The family of ``*-polynomials'' satisfies the following three-term recurrence relation:
\[
P_s(x)=xP_{s-1}(x)-\omega_{s-2}P_{s-2}(x) \qquad \forall s\geq 2,
\]
where $\omega_\ell=\frac{[(\ell+2)/2](N-1+[\ell/2])}{(N+\ell)(N+\ell-1)}$.
Moreover, the ``*-polynomials'' are the orthogonal polynomials for the probability measure $\mu(dt)=(N-1)(1-t^2)^{N-2}|t|dt$.
\end{proposition}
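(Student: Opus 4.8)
The plan is to exploit the symmetry of the measure $\mu$ and reduce everything to the classical theory of Jacobi polynomials. First I would record two elementary structural facts. The polynomials $P_s$ are monic: reading off the top coefficient (the term $r=k$) in each of the two explicit sums gives $1$. Moreover, since $\mu(dt)=(N-1)(1-t^2)^{N-2}|t|\,dt$ is even, its monic orthogonal polynomials necessarily satisfy $P_s(-x)=(-1)^sP_s(x)$ together with a recurrence whose middle coefficient vanishes, i.e. $P_s(x)=xP_{s-1}(x)-\omega_{s-2}P_{s-2}(x)$ with $\omega_{s-2}=\|P_{s-1}\|_\mu^2/\|P_{s-2}\|_\mu^2>0$, where $\|\cdot\|_\mu$ denotes the $L^2(\mu)$-norm. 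Thus once orthogonality is established, the recurrence holds automatically with $\omega_\ell=\|P_{\ell+1}\|_\mu^2/\|P_\ell\|_\mu^2$, and the only remaining task is to identify these coefficients.

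For orthogonality I would substitute $u=t^2$. Since $\mu$ is even, $\int_{-1}^1 P_sP_m\,d\mu=0$ whenever $s,m$ have opposite parity, because then the integrand is odd. For equal parity the substitution $u=t^2$ pushes $\mu$ forward to the Beta measure $(N-1)(1-u)^{N-2}\,du$ on $[0,1]$, and comparing the hypergeometric expressions shows that $P_{2k}(x)$ is a constant multiple of the shifted Jacobi polynomial $P_k^{(N-2,0)}(2x^2-1)$, while $P_{2k+1}(x)/x$ is a constant multiple of $P_k^{(N-2,1)}(2x^2-1)$; here I use the classical representation $P_k^{(\alpha,\beta)}(x)=(-1)^k\binom{k+\beta}{k}\,{}_2F_1(-k,k+\alpha+\beta+1;\beta+1;(1+x)/2)$. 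Consequently $\int P_{2k}P_{2k'}\,d\mu$ reduces to the orthogonality of $\{P_k^{(N-2,0)}\}$ against $(1-u)^{N-2}\,du$, while $\int P_{2k+1}P_{2k'+1}\,d\mu$ reduces to that of $\{P_k^{(N-2,1)}\}$ against $u(1-u)^{N-2}\,du$, the extra factor $u=t^2$ arising from the two leading $x$'s. Both are classical, so orthogonality follows.

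It then remains to evaluate the ratios $\omega_\ell$. Using the identifications above, $\|P_{2k}\|_\mu^2$ and $\|P_{2k+1}\|_\mu^2$ equal, up to the explicit normalising constants $\binom{N+2k-2}{k}^{-1}$ and $\binom{N+2k-1}{k}^{-1}$ and the Jacobian of $u=t^2$, the classical squared norms of Jacobi polynomials, which are Beta integrals with closed forms. Forming the ratios $\omega_{2k-1}=\|P_{2k}\|_\mu^2/\|P_{2k-1}\|_\mu^2$ and $\omega_{2k}=\|P_{2k+1}\|_\mu^2/\|P_{2k}\|_\mu^2$ and simplifying the resulting products of Gamma functions should yield $\omega_{2k-1}=\frac{k(N+k-2)}{(N+2k-1)(N+2k-2)}$ and $\omega_{2k}=\frac{(k+1)(N+k-1)}{(N+2k)(N+2k-1)}$, which are exactly the odd and even specialisations of the stated formula $\omega_\ell=\frac{[(\ell+2)/2](N-1+[\ell/2])}{(N+\ell)(N+\ell-1)}$. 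As a sanity check, $\omega_0=1/N$ and $\omega_1=\frac{N-1}{N(N+1)}$ agree with the first instances $P_2=x^2-\tfrac1N$ and $P_3=x^3-\tfrac{2}{N+1}x$ obtained directly from the definition.

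The main obstacle is this last step: the parity split forces one to carry two separate chains of normalising constants and Jacobian factors and to collapse the resulting ratios of Gamma functions correctly into the single floor-function expression for $\omega_\ell$. An alternative that avoids norms is to verify the recurrence directly from the two explicit coefficient sums, treating $s$ even and $s$ odd separately and matching the coefficients of $x^{2r}$ (resp. $x^{2r+1}$); this reduces to a pair of binomial (equivalently, contiguous-relation) identities for the ${}_2F_1$'s, but the same even/odd bookkeeping is the delicate point either way.
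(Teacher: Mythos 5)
Your proposal is correct, but it runs in the opposite direction from the paper's own proof, so a comparison is worthwhile. The paper first verifies the three-term recurrence directly from the explicit coefficient sums (the two identities $xP_{2k}-\tfrac{k(N+k-2)}{(N+2k-2)(N+2k-1)}P_{2k-1}=P_{2k+1}$ and $xP_{2k-1}-\tfrac{k(N+k-2)}{(N+2k-2)(N+2k-3)}P_{2k-2}=P_{2k}$, i.e.\ exactly the coefficient-matching you relegate to a fallback alternative); it then computes $\int_{-1}^1 P_{2k}\,d\mu=\delta_{0k}$ from the half-liberated moment formula (Proposition \ref{law of x_i...x_k for half liberated}) via Gauss's ${}_2F_1$ summation, and finally bootstraps full orthogonality by induction, using the recurrence to trade $xP_n$ for $P_{n+1}+\omega_{n-1}P_{n-1}$ inside the integral; the norms come out as the product $\|P_s\|_\mu^2=\omega_0\omega_1\cdots\omega_{s-1}$. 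You instead prove orthogonality first, reducing it by the quadratic substitution $u=t^2$ to classical Jacobi orthogonality --- and your identifications are exact: $P_{2k}(x)=\binom{N+2k-2}{k}^{-1}P_k^{(N-2,0)}(2x^2-1)$ and $P_{2k+1}(x)=x\,\binom{N+2k-1}{k}^{-1}P_k^{(N-2,1)}(2x^2-1)$ with your stated ${}_2F_1$ representation of Jacobi polynomials --- and then invoke the general fact that the monic orthogonal polynomials of an even measure satisfy a recurrence with vanishing middle coefficient and $\omega_\ell=\|P_{\ell+1}\|_\mu^2/\|P_\ell\|_\mu^2$. I checked the Gamma-ratio simplifications you left as ``should yield'': the Jacobi norm formula gives $\|P_{2k}\|_\mu^2=\binom{N+2k-2}{k}^{-2}\tfrac{N-1}{N+2k-1}$ and $\|P_{2k+1}\|_\mu^2=\binom{N+2k-1}{k}^{-2}\tfrac{(N-1)(k+1)}{(N+2k)(N+k-1)}$, whence $\omega_{2k-1}=\frac{k(N+k-2)}{(N+2k-1)(N+2k-2)}$ and $\omega_{2k}=\frac{(k+1)(N+k-1)}{(N+2k)(N+2k-1)}$, matching the floor-function expression; your sanity values $\omega_0=\tfrac{1}{N}$, $\omega_1=\tfrac{N-1}{N(N+1)}$, $P_2=x^2-\tfrac{1}{N}$, $P_3=x^3-\tfrac{2}{N+1}x$ are also right. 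The trade-off: the paper's route is self-contained within its own toolkit (its moment formula plus Gauss's theorem) and never needs Jacobi norm evaluations, at the price of an opaque coefficient verification; your route imports classical Jacobi theory, which makes orthogonality immediate, explains the even/odd split conceptually as the standard symmetric-weight reduction, and yields closed-form norms rather than only a product formula. Both arguments are complete.
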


\begin{proof}
We can easily check that for any $k\geq 1$,
$$xP_{2k}(x)-\frac{k(N+k-2)}{(N+2k-2)(N+2k-1)}P_{2k-1}(x)=P_{2k+1}(x),$$
$$xP_{2k-1}(x)-\frac{k(N+k-2)}{(N+2k-2)(N+2k-3)}P_{2k-2}(x)=P_{2k}(x).$$
Therefore the three-term recurrence relation holds.

By the Proposition \ref{law of x_i...x_k for half liberated}, we can calculate
\begin{align*}
\int_{-1}^{1}P_{2k}(x)\mu(dx)&=\sum_{r=0}^{k}(-1)^{k+r}{k \choose r}^2{N+2k-2 \choose k-r}^{-1}\frac{(N-1)!r!}{(N+r-1)!}\\
&=(-1)^k\binom{N+2k-2}{k}^{-1}\sum_{r=0}^{k}\frac{(-1)^rk!(N+k+r-2)!(N-1)!}{(k-r)!(N+k-2)!(N+r-1)!r!};\\
&=(-1)^k\binom{N+2k-2}{k}^{-1}{ }_2F_1\left({-k, N+k-1 \atop N }; 1\right)\\
&=(-1)^k\binom{N+2k-2}{k}^{-1}\frac{\Gamma(N)}{\Gamma(1-k) \Gamma(N+k)}\\
&=\delta_{0k};                          
\end{align*}
and all of  the odd moments vanish, i.e., $\int_{-1}^{1}P_{2k+1}(x)\mu(dx)=0$.

We now prove the orthogonality by induction.

Clearly, $\forall n>0$, $\int_{-1}^{1}P_{n}(x)P_0(x)\mu(dx)=0.$

Assume that for any $0\leq k\leq s$, $\int_{-1}^{1}P_{n}(x)P_k(x)\mu(dx)=0$ holds for all $n>k$.
Then consider $s+1$, and $n>s+1$. Using the three-term recurrence relation, we get
\begin{align*}
\int_{-1}^{1}P_{n}(x)P_{s+1}(x)\mu(dx)&=\int_{-1}^{1}P_{n}(x)(xP_{s}(x)-\omega_{s-1}P_{s-1}(x))\mu(dx)\\ 
&=\int_{-1}^{1}xP_{n}(x)P_{s}(x)\mu(dx)+0\\
&=\int_{-1}^{1}(P_{n+1}(x)+\omega_{n-1}P_{n-1})P_{s}(x)\mu(dx)\\
&=0.
\end{align*}
Moreover, 
\begin{align*}
\int_{-1}^{1}P_{s}^2(x)\mu(dx)&=\int_{-1}^{1}P_s(x)(xP_{s-1}-\omega_{s-2}P_{s-2})\mu(dx)\\
&=\int_{-1}^{1}(P_{s+1}(x)+\omega_{s-1}P_{s-1}(x))P_{s-1}(x)\mu(dx)\\
&=\omega_{s-1}\int_{-1}^{1}P_{s-1}^2(x)\mu(dx)=\omega_0\omega_1\cdots\omega_{s-1},
\end{align*} so that
\[
\int_{-1}^{1}P_{m}(x)P_{n}(x)\mu(dx)=\omega_0\omega_1\cdots\omega_{n-1}\cdot\delta_{mn}.
\]
\end{proof}

\begin{remark}
We change the normalisation of these polynomial to get the sequence $q^*_s(x)=\frac{P_s(x)}{P_s(1)}$ which satisfies the conditions of Theorem \ref{eigenvalue for the markov semigroups on spheres}.

We have
\begin{align*}
P_{2k}(1)&=(-1)^k{N+2k-2 \choose k}^{-1}{ }_2F_1\left({-k, N+k-1\atop 1};1\right)\\
&=(-1)^k\frac{k!(N+k-2)!}{(N+2k-2)!}\frac{\Gamma(1)\Gamma(2-N)}{\Gamma(k+1)\Gamma(2-N-k)}\\
&=\frac{(N+k-2)!(N+k-2)!}{(N+2k-2)!(N-2)!}\\
P_{2k+1}(1)&=(-1)^k(k+1){N+2k-1 \choose k}^{-1}{ }_2F_1\left({-k, N+k\atop 2};1\right)\\
&=(-1)^k(k+1)\frac{k!(N+k-1)!}{(N+2k-1)!}\frac{\Gamma(2)\Gamma(2-N)}{\Gamma(k+2)\Gamma(2-N-k)}\\
&=\frac{(N+k-1)!(N+k-2)!}{(N+2k-1)!(N-2)!}          
\end{align*}
Therefore
$$q^*_{2k}(x)=(-1)^k{N+k-2 \choose k}^{-1}{ }_2F_1\left({-k, N+k-1\atop 1};x^2\right)$$

\[
q^*_{2k+1}(x)=x\cdot(-1)^k(k+1){N+k-2 \choose k}^{-1}{ }_2F_1\left({-k, N+k\atop 2};x^2\right).
\]
\end{remark}
The following formula gives the eigenvalues of the generator of the $O_N^*$-invariant semigroup on the half-liberated sphere $S_*^{N-1}$ associated to the pair $b=1$ and $\nu=0$. By analogy with the classical sphere, these values can be considered as the eigenvalues of the Laplace operator of the half-liberated sphere (up to a rescaling by $N-1$, see Remark \ref{rem-laplace}).
\begin{corollary}\label{the value of q_s^* '(1)}
For any $k\geq 0$,
\begin{align*}
(q_{2k}^{*})'(1)&=\frac{2k(N+k-1)}{N-1}\\
(q_{2k+1}^{*})'(1)&=\frac{(2k+1)N+2k^2-1}{N-1}.
\end{align*}
\end{corollary}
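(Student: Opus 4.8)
The plan is to work directly from the explicit hypergeometric expressions for $q^*_{2k}$ and $q^*_{2k+1}$ recorded in the preceding remark, rather than from the three-term recurrence. Write $q^*_{2k}(x)=c_k\,F(x^2)$ with $c_k=(-1)^k\binom{N+k-2}{k}^{-1}$ and $F(y)={}_2F_1\!\left({-k,\,N+k-1\atop 1};y\right)$, and $q^*_{2k+1}(x)=a_k\,x\,G(x^2)$ with $a_k=(-1)^k(k+1)\binom{N+k-2}{k}^{-1}$ and $G(y)={}_2F_1\!\left({-k,\,N+k\atop 2};y\right)$. The chain rule then reduces the two desired quantities to evaluations at $y=1$: one finds $(q^*_{2k})'(1)=2c_k\,F'(1)$ and $(q^*_{2k+1})'(1)=a_k\bigl(G(1)+2G'(1)\bigr)$, where the term $a_kG(1)=q^*_{2k+1}(1)=1$ is already pinned down by the normalisation $q^*_s(1)=1$ worked out in the remark.

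Next I would compute the hypergeometric derivatives using the identity \eqref{the derivative function of hypergeometric function}, which gives $F'(1)=-k(N+k-1)\,{}_2F_1\!\left({1-k,\,N+k\atop 2};1\right)$ and $G'(1)=\tfrac{-k(N+k)}{2}\,{}_2F_1\!\left({1-k,\,N+k+1\atop 3};1\right)$. Each of the resulting ${}_2F_1$'s, as well as $G(1)$ itself, is then evaluated at the unit argument by Gauss' theorem. For $F'(1)$ the parameters $(a,b,c)=(1-k,N+k,2)$ give $c-a-b=1-N$; for $G(1)$ the parameters $(-k,N+k,2)$ give $c-a-b=2-N$; and for $G'(1)$ the parameters $(1-k,N+k+1,3)$ give $c-a-b=1-N$. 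Substituting into $\tfrac{\Gamma(c)\Gamma(c-a-b)}{\Gamma(c-a)\Gamma(c-b)}$ produces in every case a ratio of the shape $\Gamma(1-N)/\Gamma(2-N-k)$ or $\Gamma(2-N)/\Gamma(2-N-k)$.

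The last step is to simplify these Gamma ratios. Although the individual values $\Gamma(1-N)$, $\Gamma(2-N)$, $\Gamma(2-N-k)$ have poles at integer $N$, their quotients are finite and equal the Pochhammer products $\Gamma(z)/\Gamma(z-m)=\prod_{j=1}^{m}(z-j)$; concretely $\Gamma(1-N)/\Gamma(2-N-k)=(-1)^{k-1}(N+k-2)!/(N-1)!$ and $\Gamma(2-N)/\Gamma(2-N-k)=(-1)^{k}(N+k-2)!/(N-2)!$. Feeding these back and cancelling the binomial $\binom{N+k-2}{k}^{-1}=k!\,(N-2)!/(N+k-2)!$ collapses everything to $(q^*_{2k})'(1)=2k(N+k-1)/(N-1)$ and $(q^*_{2k+1})'(1)=1+2k(N+k)/(N-1)=\bigl((2k+1)N+2k^2-1\bigr)/(N-1)$, as claimed. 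The one point requiring care — and the only genuine obstacle — is the appeal to Gauss' theorem in a regime where $c-a-b<0$ and the Gamma functions are singular: this is legitimate here precisely because the upper parameter $1-k$ (resp.\ $-k$) is a non-positive integer, so every series in sight terminates and Gauss' identity reduces to the purely algebraic Chu--Vandermonde summation, in which the apparent poles cancel in the quotient. (The degenerate value $k=0$ is consistent: the prefactor $-k$ kills the derivative terms, yielding $(q^*_0)'(1)=0$ and $(q^*_1)'(1)=1$.)
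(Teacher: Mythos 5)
Your proof is correct and follows essentially the same route as the paper's: both start from the explicit hypergeometric expressions for $q^*_{2k}$ and $q^*_{2k+1}$, differentiate via the identity \eqref{the derivative function of hypergeometric function}, evaluate the resulting ${}_2F_1$'s at the unit argument by Gauss' theorem, and simplify the Gamma-function ratios (the paper's use of the ratio form $P_s(x)/P_s(1)$ versus your explicit constants $c_k$, $a_k$ with the normalisation $q^*_s(1)=1$ is only a cosmetic difference). Your closing observation that Gauss' theorem is legitimate here because the series terminate, so the identity is really the algebraic Chu--Vandermonde summation and the apparent poles of $\Gamma(1-N)$, $\Gamma(2-N)$, $\Gamma(2-N-k)$ cancel in the quotients, is a worthwhile point of rigor that the paper passes over silently.
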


\begin{proof}
$q_0^{'}(1)=0$ is obvious.

For $k\geq 1$, by the equation \eqref{the derivative function of hypergeometric function}, we have
\begin{align*}
(q_{2k}^{*})'(1)&=\left.\frac{2x\frac{d}{dx^2}\ {   }_2F_1\left({-k, N+k-1\atop 1};x^2\right)}{{ }_2F_1\left({-k, N+k-1\atop 1};1\right)}\right|_{x=1}\\
&=\frac{-2k(N+k-1){ }_2F_1\left({-k+1, N+k\atop 2};1\right)}{{ }_2F_1\left({-k, N+k-1\atop 1};1\right)}\\
&=-2k(N+k-1)\frac{\Gamma(2)\Gamma(1-N)}{\Gamma(k+1)\Gamma(2-N-k)}\frac{\Gamma(k+1)\Gamma(2-N-k)}{\Gamma(1)\Gamma(2-N)}\\
&=\frac{2k(N+k-1)}{N-1};\\
(q_{2k+1}^{*})'(1)&=\left. \frac{\frac{d}{dx}\left(x\cdot {\ }_2F_1\left({-k, N+k\atop 2};x^2\right)\right)}{{ }_2F_1\left({-k, N+k\atop 2};1\right)}\right|_{x=1}\\
&=\left. \frac{{ }_2F_1\left({-k, N+k\atop 2};x^2\right)+x\cdot\left(2x\frac{d}{dx^2}{\ }_2F_1\left({-k, N+k\atop 2};x^2\right)\right)}{{ }_2F_1\left({-k, N+k\atop 2};1\right)}\right|_{x=1}\\    
&=\frac{{ }_2F_1\left({-k, N+k\atop 2};1\right)+2\cdot \frac{-k(N+k)}{2}{\ }_2F_1\left({-k+1, N+k+1\atop 3};1\right)}{{ }_2F_1\left({-k, N+k\atop 2};1\right)}\\
&=\frac{(2k+1)N+2k^2-1}{N-1}.        
\end{align*}
\end{proof}

\subsection{The free sphere $S_+^{N-1}$}

Finally, we consider about the free case.

In fact, due to the asymptotic semicircle law of $\sqrt{N+2}u_{11}$ when $N\rightarrow \infty$ \cite{banica+collins+zinn-justin09}, we expect that $q^+_s(x)\rightarrow U_s(\sqrt{N}x)/\sqrt{N^s}$, where $U_s(x)$ is the $s^{\rm th}$ Chebyshev polynomial of the second kind. Therefore, $\lim_{N\rightarrow \infty}q^+_s(x)=x^s$. So for the special case where the generating functional $\psi$ is associated to the pair $b=1$, $\nu=0$, the eigenvalues for the subspace $D_s$ converge as $N\to\infty$, $\lim_{N\rightarrow \infty}\lambda_s=-(x^s)'(1)=-s$. We now derive relations between polynomials $(q^+_s)_{s\ge 0}$ for general finite $N$.

\begin{proposition}\label{relation of q^+_s}
For any $N\in \mathbb{N}$, the orthogonal polynomials defined as above satisfy the following three-term recurrence relation:
\[
a_{s+1}q^+_{s+2}(x)=U_{s+1}(N)q^+_{s+1}(x)x-a_sq^+_s(x) \qquad \forall s\geq 0
\]
where $q_0^+(x)=1$, $q_1^+(x)=x$,
\[
a_s= \sum_{k=0}^{s}(-1)^{s+k}U_k(N)=
\begin{cases}
U_m(N)(U_m(N)-U_{m-1}(N))&\mbox{if } s=2m, \\
U_m(N)(U_{m+1}(N)-U_m(N))&\mbox{if } s=2m+1,
\end{cases}
\]
and where $U_s(N)$ denotes the value of the $s^{\rm th}$ Chebyshev polynomial of the second kind at the point $N$. 
\end{proposition}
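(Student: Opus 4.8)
The plan is to use that $(q_s^+)_{s\ge 0}$ is, by construction, the sequence of orthogonal polynomials for the spectral distribution $\mu$ of the self-adjoint generator $u_{11}$ in the Haar state, and to pin down the recurrence coefficients from just two inputs: the normalisation $q_k^+(1)=\varepsilon(u_{11}^{(k)})=1$ and the $L^2$-norms coming from Schur orthogonality. First I would note that $\mu$ is symmetric: since $h_{O_N^+}(u_{11}^m)$ is computed by the Weingarten formula as a sum over non-crossing pairings of $m$ points, it vanishes for odd $m$, so all odd moments are zero (exactly as in the classical and half-liberated cases treated above). Hence the orthogonal polynomials obey a three-term recurrence with no middle term,
\[
x\,q_s^+(x)=\alpha_s\,q_{s+1}^+(x)+\beta_s\,q_{s-1}^+(x),\qquad s\ge 1,
\]
with $q_0^+=1$, $q_1^+(x)=x$ and $\alpha_0=1$. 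Evaluating at $x=1$ and using $q_k^+(1)=1$ gives the normalisation $\alpha_s+\beta_s=1$, so it suffices to determine $\alpha_s$.

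The key computational input is that $O_N^+$ is of Kac type, so the Schur orthogonality relations carry no modular weights: $h_{O_N^+}\big((u_{ij}^{(s)})^\ast u_{kl}^{(s)}\big)=\delta_{ik}\delta_{jl}/\dim u^{(s)}$. Since in the free case $u^{(s)}$ is irreducible with $\dim u^{(s)}=U_s(N)$, and $u_{11}^{(s)}=q_s^+(u_{11})$ is self-adjoint, this yields
\[
\|q_s^+\|_{L^2(\mu)}^2=h_{O_N^+}\big((u_{11}^{(s)})^2\big)=\frac{1}{U_s(N)}.
\]
Self-adjointness of multiplication by $x$ on $L^2(\mu)$ then gives $\alpha_s\|q_{s+1}^+\|^2=\langle x q_s^+,q_{s+1}^+\rangle=\langle q_s^+,x q_{s+1}^+\rangle=\beta_{s+1}\|q_s^+\|^2$, whence $\beta_{s+1}=\alpha_s\,U_s(N)/U_{s+1}(N)$. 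Substituting into $\alpha_{s+1}+\beta_{s+1}=1$ and setting $c_s:=\alpha_s\,U_s(N)$ reduces everything to the scalar recursion $c_{s+1}+c_s=U_{s+1}(N)$ with $c_0=U_0(N)=1$.

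Finally I would identify $c_s$ with $a_s$. The defining sum $a_s=\sum_{k=0}^{s}(-1)^{s+k}U_k(N)$ telescopes to $a_s+a_{s-1}=U_s(N)$ with $a_0=1$, which is exactly the recursion satisfied by $(c_s)$; hence $\alpha_s=a_s/U_s(N)$ and, using $a_s+a_{s-1}=U_s(N)$, $\beta_s=1-\alpha_s=a_{s-1}/U_s(N)$. Multiplying the recurrence by $U_s(N)$ gives $U_s(N)\,x\,q_s^+(x)=a_s\,q_{s+1}^+(x)+a_{s-1}\,q_{s-1}^+(x)$, and re-indexing $s\mapsto s+1$ produces precisely the stated relation. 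The closed forms for $a_s$ in the even and odd cases then follow from the standard Chebyshev identities $U_m(N)^2-U_{m-1}(N)^2=U_{2m}(N)$ and $U_m(N)\big(U_{m+1}(N)-U_{m-1}(N)\big)=U_{2m+1}(N)$, which I would verify directly from $U_{s+1}=N U_s-U_{s-1}$. The only genuinely non-formal step is the norm computation $\|q_s^+\|^2=1/U_s(N)$: it relies on the Kac property of $O_N^+$ (so that the orthogonality relations are weight-free) together with the irreducibility of $u^{(s)}$ and the symmetry of $\mu$; everything after that is formal manipulation of the scalar recursion.
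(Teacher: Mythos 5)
Your proof is correct, and it ends up at exactly the same scalar recursion as the paper, namely $a_{s+1}+a_s=U_{s+1}(N)$ with $a_0=1$, but it gets there by a somewhat different route. The paper does not invoke the symmetry of the spectral measure at all: the absence of a middle term in the three-term recurrence is deduced from the fusion rules of $O_N^+$, namely $u^{(s+1)}\otimes u = u^{(s+2)}\oplus u^{(s)}$, which places $u_{11}^{(s+1)}u_{11}$ in $V_{s+2}\oplus V_s$; applying $\mathbb{E}^{\Phi}_{bi}$ then shows that $q_{s+1}^+(x)\,x$ is a combination of $q_{s+2}^+$ and $q_s^+$ only. The paper then pins down the two coefficients by passing to the monic normalisation ($\lambda_s q_s^+$ monic), evaluating at $x=1$, and expanding $0=h\bigl(q_{s+2}^+(u_{11})\,q_s^+(u_{11})\bigr)$ using the same norm input $h\bigl((u_{11}^{(s)})^2\bigr)=1/U_s(N)$ that you use; your self-adjointness-of-multiplication-by-$x$ step is essentially a repackaging of that orthogonality computation, so the quantitative core of the two arguments is identical. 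What your route buys: once the two inputs $q_s^+(1)=1$ and $\|q_s^+\|_{L^2(\mu)}^2=1/U_s(N)$ are in hand, everything is classical orthogonal-polynomial theory, and you justify the norm input more explicitly than the paper does (Kac property, Schur orthogonality, irreducibility of $u^{(s)}$ in the free case, $\dim u^{(s)}=U_s(N)$), as well as the parity claim via Weingarten. What the paper's route buys: the fusion-rule argument explains structurally why only the terms $q_{s+2}^+$ and $q_s^+$ can occur, and it is the form of the argument that would survive in situations where one knows the fusion rules but not, a priori, that the spectral measure is symmetric. Both arguments conclude identically, via the telescoping identity $a_s+a_{s-1}=U_s(N)$ and the Chebyshev identities $U_m^2-U_{m-1}^2=U_{2m}$ and $U_m(U_{m+1}-U_{m-1})=U_{2m+1}$ for the closed forms, so your proposal is a valid substitute for the paper's proof.
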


\begin{proof}
For free orthogonal quantum group, the irreducible corepresentations have the following fusion rule \cite{banica92}:
\[
u^{(s+1)}\otimes u=u^{(s+2)}\oplus u^{(s)}.
\]
This implies that $u_{11}^{(s+1)}u_{11} \in V_{s+2} \oplus V_{s}$. Applying the two-sided conditional expectation  $\mathbb{E}_{bi}^\Phi$ to both sides, we see that $u_{11}^{(s+1)}\centerdot u_{11}$ can be written as the linear combination of $u_{11}^{(s+2)}$ and $u_{11}^{(s)}$. 

Let $\lambda_s$ be a number such that the coefficient of the highest degree of the polynomial  $\lambda_s q^+_s(x)$ is $1$. Since $q_s^+(1)=1$, we have
\[
\lambda_{s+2}q^+_{s+2}(x)=\lambda_{s+1}q^+_{s+1}(x)x-(\lambda_{s+1}-\lambda_{s+2})q^+_s(x).
\]
By the orthogonality of $\left(q^+_s(u_{11})\right)_{s\geq 0}$ and $h_{S^{N-1}_+}((q^+_s(u_{11}))^2)=h_{S^{N-1}_+}\left( \left( u_{11}^{(s)}\right)^2 \right) =1/U_s(N)$, we have

\begin{align*}
0&=h_{S^{N-1}_+}(\lambda_{s+2}q^+_{s+2}(u_{11})q^+_s(u_{11}))\\
&=h_{S^{N-1}_+}(\lambda_{s+1}q^+_{s+1}(u_{11})q^+_s(u_{11})u_{11})-(\lambda_{s+1}-\lambda_{s+2})h_{S^{N-1}_+}((q^+_s(u_{11}))^2)\\
&=\frac{ \lambda_{s+1}^2}{\lambda_s}h_{S^{N-1}_+}((q^+_{s+1}(u_{11}))^2)+0-(\lambda_{s+1}-\lambda_{s+2})h_{S^{N-1}_+}((q^+_s(u_{11}))^2)\\
&=\frac{ \lambda_{s+1}^2}{\lambda_sU_{s+1}(N)}-\frac{\lambda_{s+1}-\lambda_{s+2}}{U_s(N)}.
\end{align*}

Therefore 
\[
\frac{\lambda_{s+2}}{\lambda_{s+1}}=1-\frac{\lambda_{s+1}}{\lambda_{s}}\centerdot\frac{U_s(N)}{U_{s+1}(N)}.
\]
Set $a_s=\frac{\lambda_{s+1}}{\lambda_{s}}\cdot U_s(N)$, then 
\[
a_{s+1}q^+_{s+2}(x)=U_{s+1}(N)q^+_{s+1}(x)x-a_sq^+_s(x),
\]
and
\[
a_{s+1}=U_{s+1}(N)-a_s.
\]
From the latter equation we can  get 
\[
a_s=\sum_{k=0}^s(-1)^{s+k}U_k(N).
\]
\end{proof}

The following formula gives the eigenvalues of the generator of the $O_N^+$-invariant semigroup on the free sphere $S_+^{N-1}$ associated to the pair $b=1$ and $\nu=0$. By analogy with the classical sphere, these values can be considered as the eigenvalues of the Laplace operator of the free sphere (up to a rescaling by $N-1$, see Remark \ref{rem-laplace}).
\begin{corollary} \label{the value of q_s'}
\[
(q_s^+)'(1)=\sum_{r=0}^{s-1}\frac{\sum_{k=0}^rU_k(N)}{\sum_{k=0}^r(-1)^{r+k}U_k(N)}\qquad \forall s\geq 1
\]
\end{corollary}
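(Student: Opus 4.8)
The plan is to differentiate the three-term recurrence from Proposition \ref{relation of q^+_s} and evaluate at the boundary point $x=1$. Writing $\beta_s:=(q_s^+)'(1)$ and recalling that $q_s^+(1)=1$ for every $s$, I would differentiate
\[
a_{s+1}q^+_{s+2}(x)=U_{s+1}(N)q^+_{s+1}(x)x-a_sq^+_s(x)
\]
and set $x=1$. The product rule on the middle term contributes both $\beta_{s+1}$ and $q^+_{s+1}(1)=1$, so this yields the recurrence
\[
a_{s+1}\beta_{s+2}=U_{s+1}(N)(\beta_{s+1}+1)-a_s\beta_s,
\]
with initial data $\beta_0=0$ (since $q_0^+=1$) and $\beta_1=1$ (since $q_1^+(x)=x$).

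Next I would pass to the increments $\gamma_s:=\beta_{s+1}-\beta_s$, so that $\beta_s=\sum_{r=0}^{s-1}\gamma_r$ because $\beta_0=0$. Subtracting $a_{s+1}\beta_{s+1}$ from both sides of the recurrence and invoking the identity $a_{s+1}=U_{s+1}(N)-a_s$ established inside the proof of Proposition \ref{relation of q^+_s} — equivalently $U_{s+1}(N)-a_{s+1}=a_s$ — the mixed terms in $\beta_{s+1}$ collapse and one is left with the clean first-order recurrence
\[
a_{s+1}\gamma_{s+1}=a_s\gamma_s+U_{s+1}(N).
\]

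The key step is then a one-line induction showing $a_s\gamma_s=\sum_{k=0}^{s}U_k(N)$. The base case $a_0\gamma_0=U_0(N)=1$ holds since $a_0=U_0(N)=1$ and $\gamma_0=\beta_1-\beta_0=1$. Assuming $a_s\gamma_s=\sum_{k=0}^{s}U_k(N)$, the recurrence immediately gives $a_{s+1}\gamma_{s+1}=\sum_{k=0}^{s}U_k(N)+U_{s+1}(N)=\sum_{k=0}^{s+1}U_k(N)$, closing the induction. Dividing by $a_s=\sum_{k=0}^{s}(-1)^{s+k}U_k(N)$ identifies $\gamma_s$ with the $r=s$ summand of the claimed expression.

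Summing the increments finally produces
\[
(q_s^+)'(1)=\beta_s=\sum_{r=0}^{s-1}\gamma_r=\sum_{r=0}^{s-1}\frac{\sum_{k=0}^{r}U_k(N)}{\sum_{k=0}^{r}(-1)^{r+k}U_k(N)},
\]
which is exactly the assertion. I do not anticipate any genuine obstacle here; the only care required is index bookkeeping, namely applying the telescoping identity $U_{s+1}(N)-a_{s+1}=a_s$ with the correct indices and confirming the two base values $\beta_0=0$, $\beta_1=1$ from the explicit normalisations $q_0^+=1$, $q_1^+(x)=x$.
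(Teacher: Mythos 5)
Your proof is correct and takes essentially the same route as the paper: differentiate the three-term recurrence of Proposition \ref{relation of q^+_s}, evaluate at $x=1$ using $q_s^+(1)=1$, invoke $U_{s+1}(N)=a_{s+1}+a_s$ to collapse to the first-order recurrence $a_{s+1}\gamma_{s+1}=a_s\gamma_s+U_{s+1}(N)$ for the increments, and sum. The only difference is cosmetic: you spell out the base values $\beta_0=0$, $\beta_1=1$ and the one-line induction giving $a_s\gamma_s=\sum_{k=0}^{s}U_k(N)$, which the paper leaves implicit.
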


\begin{proof}
Appling Proposition \ref{relation of q^+_s} and taking derivatives on both sides, we get 
$$a_{s+1}(q^+_{s+2})'(x)=U_{s+1}(N)\left( (q^+_{s+1})'(x)x+q^+_{s+1}(x)\right) -a_s(q^+_s)'(x).$$
Since $q_s^+(1)=1$, we have
$$a_{s+1}(q^+_{s+2})'(1)=U_{s+1}(N)(q^+_{s+1})'(1)+U_{s+1}(N)-a_s(q^+_s)'(1).$$
Rewrite this equation using $U_{s+1}(N)=a_{s+1}+a_s$,
\[
a_{s+1}\left((q^+_{s+2})'(1)-(q^+_{s+1})'(1)\right)=U_{s+1}(N)+a_s\left((q^+_{s+1})'(1)-(q^+_{s})'(1)\right).
\]
Therefore,
$$a_s\left((q^+_{s+1})'(1)-(q^+_{s})'(1)\right)=\sum_{k=0}^sU_k(N).$$
This implies
\[
(q^+_{s+1})'(1)=\sum_{r=0}^s\frac{\sum_{k=0}^rU_k(N)}{a_r}
\]
\end{proof}

We can get an estimate of these eigenvalues that grows linearly in $s$.
\begin{corollary}\label{coro-lambda-asymp}
For any $N\geq 2$,
\[
s\leq (q_s^+)'(1)\leq \frac{N+2}{N-2} s, \qquad \forall s\geq 0,
\]
(where the upper becomes $+\infty$ for $N=2$).
\end{corollary}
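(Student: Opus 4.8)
The plan is to bound each summand in the formula of Corollary \ref{the value of q_s'} separately. Writing $S_r=\sum_{k=0}^r U_k(N)$ and $a_r=\sum_{k=0}^r(-1)^{r+k}U_k(N)$, that corollary expresses $(q_s^+)'(1)=\sum_{r=0}^{s-1} c_r$ with $c_r=S_r/a_r$, a sum of exactly $s$ terms. Hence it suffices to prove the pointwise estimate $1\le c_r\le \frac{N+2}{N-2}$ for every $r\ge 0$ (with the upper bound read as vacuous when $N=2$); summing over $r=0,\dots,s-1$ then yields the claim, and the case $s=0$ is trivial since $q_0^+\equiv 1$. The only structural input I will use is the three-term relation $U_{k+1}(N)+U_{k-1}(N)=N\,U_k(N)$ (with $U_{-1}:=0$, $U_0=1$, $U_1=N$), which is precisely the recurrence satisfied by the quantum dimensions $U_s(N)=\dim u^{(s)}$ underlying the fusion rule invoked in the proof of Proposition \ref{relation of q^+_s}.

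It is convenient to split $S_r$ and $a_r$ according to the parity of the index. Put $A_r=\sum_{0\le k\le r,\,k\not\equiv r} U_k(N)$ and $B_r=\sum_{0\le k\le r,\,k\equiv r} U_k(N)$ (congruences mod $2$). Since $(-1)^{r+k}=+1$ exactly when $k\equiv r$, one has $S_r=A_r+B_r$ and $a_r=B_r-A_r$, so that $c_r=\frac{A_r+B_r}{B_r-A_r}$. First I record that $a_r>0$: the $U_k(N)$ are strictly increasing in $k$ for $N\ge 2$ (immediate by induction from the recurrence), so grouping the alternating sum $a_r=U_r-U_{r-1}+\cdots$ into consecutive positive pairs (plus the leftover term $U_0=1$ when $r$ is even) shows $a_r>0$; this makes $c_r$ well defined and positive. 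The lower bound is then immediate: $c_r\ge 1$ is equivalent to $A_r+B_r\ge B_r-A_r$, i.e.\ to $A_r\ge 0$, which holds since $A_r$ is a sum of nonnegative terms.

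For the upper bound assume $N>2$. Clearing the positive denominators $B_r-A_r$ and $N-2$, the inequality $c_r\le\frac{N+2}{N-2}$ reduces after simplification to $N A_r\le 2B_r$. This is where the recurrence does the work: writing $N A_r=\sum_{k\not\equiv r}\bigl(U_{k+1}(N)+U_{k-1}(N)\bigr)$ and noting that each shifted index $k\pm 1$ has parity $\equiv r$, a short bookkeeping of which same-parity indices $j\in\{0,\dots,r\}$ are hit, and with what multiplicity, gives the exact identity $N A_r=2B_r-U_r(N)-\eta_r$, where $\eta_r=1$ if $r$ is even and $\eta_r=0$ if $r$ is odd. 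In particular $N A_r<2B_r$, which is the desired strict inequality, whence $c_r<\frac{N+2}{N-2}$. I expect this telescoping count to be the only delicate point: one must verify that the indices $k+1$ and $k-1$ (discarding $U_{-1}=0$) sweep out the same-parity indices up to $r$ so as to produce exactly $2B_r$ minus the two boundary corrections $U_r$ and $\eta_r$, and treating $r$ even and $r$ odd separately makes this transparent. Finally, for $N=2$ the upper bound is $+\infty$ and there is nothing to prove there, while the lower bound $A_r\ge 0$ is unaffected (here $U_k(2)=k+1$), completing the argument.
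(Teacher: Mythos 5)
Your proof is correct and follows essentially the same route as the paper: both reduce the claim to the termwise bound $1\le S_r/a_r\le\frac{N+2}{N-2}$ on each summand in Corollary \ref{the value of q_s'}, and establish it by splitting the sums according to parity and applying the recurrence $NU_k(N)=U_{k+1}(N)+U_{k-1}(N)$. Your write-up is in fact slightly more careful than the paper's: you verify positivity of the alternating sums $a_r$ and the lower bound explicitly, and your identity $NA_r=2B_r-U_r(N)-\eta_r$ is the corrected form of the paper's displayed identities (the paper's odd-index identity carries a spurious extra $+U_0(N)$ term, as one checks at $m=0$, though this does not affect its conclusion).
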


\begin{proof}
Using the relation $U_s(N)N=U_{s+1}(N)+U_{s-1}(N)$, we have
\[
\sum_{k=0}^{m}U_{2k}(N)=\frac{1}{2}\left(U_{2m}(N)+U_{0}(N)\right)+\frac{N}{2}\left(\sum_{k=1}^m U_{2k-1}(N)\right),
\]
and
\[
\sum_{k=0}^{m}U_{2k+1}(N)=\frac{1}{2}\left(U_{2m+1}(N)+U_{1}(N)\right)+U_0(N)+\frac{N}{2}\left(\sum_{k=1}^m U_{2k}(N)\right).
\]
Therefore
\[
\frac{\sum_{k=0}^rU_k(N)}{\sum_{k=0}^r(-1)^{r+k}U_k(N)}\leq \frac{N/2+1}{N/2-1}.
\]
\end{proof}

\begin{remark}\label{rem-laplace}
For the classical sphere, we know that the Laplace operator is the operator whose eigenvector are the Jacobi polynomials $J_s$ and whose eigenvalues are $\lambda_{s}=s(s+N-2)=-(N-1)q_s^\prime(1)$. So the generator for classical spheres in Theorem \ref{eigenvalue for the markov semigroups on spheres}, is induced from the generating functional $\psi$ associated to the pair $(b,\nu)=(N-1,0)$ is the Laplace operator. In the same manner, we may define the Laplace operator $\Delta_*$ on the half-liberated sphere and the Laplace operator $\Delta_+$ on free sphere.
\end{remark}

\begin{remark}\label{rem-central}
Recall that we showed in Proposition \ref{prop-central} that central convolution semigroups of states on $C^u(\mathbb{G})$ also induce $\mathbb{G}$-invariant Markov semigroups on any quantum space $\mathbb{X}$ equipped with a right $\mathbb{G}$-action. The generating functionals of central convolution semigroups of states on $C^u(O^+_N)$ were classified in \cite[Corollary 10.3]{cipriani+franz+kula14}. This gives the formula
\begin{equation}\label{eq-central}
\lambda_s = -b\frac{U'_s(N)}{U_s(N)} + \int_{-N}^N \frac{U_s(x)-U_s(N)}{U_s(N)(N-x)}\nu({\rm d}x),\qquad s=0,1,\ldots
\end{equation}
with $b$ a positive real number, $\nu$ a finite positive measure on the interval $[-N,N]$ and $(U_s)_{s=0}^\infty$ the Chebyshev polynomials of the second kind defined by $U_0(x)=1$, $U_1(x)=x$, $U_{s+1}(x) = xU_s(x)-U_{s-1}(x)$ for $s\ge 1$.

Recall again that by \cite[Theorem 5.3]{banica+collins+zinn-justin09} the distribution of $\sqrt{N+2}\,u_{11}$ converges uniformally to the semicircle distribution, which is the measure of orthogonality of the Chebyshev polynomials. This suggests that the eigenvalues given by Theorem \ref{eigenvalue for the markov semigroups on spheres} and in Equation \eqref{eq-central} for the free sphere $S_+^{N-1}$ should be close for large $N$.
\end{remark}

\subsection{Spectral dimensions}\label{subsec-specdim}
The Weyl formula for the eigenvalues of the Laplace-Beltrami operator $\Delta_\mathcal{M}$ on a compact Riemannian $C^\infty$-manifold $(\mathcal{M},g)$ of dimension $N$ states that
\[
N(\lambda) \sim_{\lambda\to+\infty} \frac{\lambda^{N/2}|\mathcal{M}| }{(4\pi)^{N/2} \Gamma\left(\frac{N}{2}+1\right)}
\]
cf.\ \cite{mp49}, where $|\mathcal{M}|$ denotes the volume of $(\mathcal{M},g)$, $N(\lambda)$ denotes the number eigenvalues of the Laplace-Beltrami operator that are less then or equal to $\lambda$, and $f\sim_{\lambda\to+\infty} g$ stands for ``asymptotically equivalent,'' i.e., for $\lim_{\lambda\to\infty} \frac{f(\lambda)}{g(\lambda)} = 1$. This implies that the zeta-function $\zeta_M(z) = \sum_{\lambda\in\sigma(\Delta_M)} m_\lambda \lambda^z$, where $m_\lambda$ denotes the multiplicity of the eigenvalue $\lambda$, has a simple pole in $\frac{N}{2}$, and that this value is also the abscissa of convergence of the series. For this reason, we define the ``spectral dimension'' $d_L$ of the spheres $S^{N-1}_*$ (w.r.t.\ a generator $L$) as the abscissa of convergence of the series $\sum_{s=0}^\infty m_s \lambda_s^{-z/2}$, where $(\lambda_s)_{s\ge 0}$ are the eigenvalues of $L$ which we classified in Theorem \ref{eigenvalue for the markov semigroups on spheres}. Note that this definition is equivalent to Connes' definition in \cite{connes04a,connes04b}, if we construct a Dirac operator $D_L$ from $L$ as in \cite{cipriani+franz+kula14}, since the eigenvalues of $D_L$ will be $(\pm\sqrt{\lambda_s})_{s\ge0}$

The spectral dimension $d_L$ is equal to the infimum of all $d>0$ such that the sum $\sum_s m_s(-\lambda_s)^{-d/2}$ is finite.

For simplicity, we will only consider the special case $b=1$ and $\nu=0$ of the eigenvalues given in Theorem \ref{eigenvalue for the markov semigroups on spheres}.

\subsubsection{The classical sphere $S^{N-1}$}

By definition of $D_s$,
\[
\dim D_s=\dim H_s -\dim H_{s-1},
\]
where
\[
H_s=\text{span}\{x_1^{k_1}\cdots x_{N}^{k_N}:k_1+\cdots + k_N \le s\}
\]

Since $x_1^2=1-\sum_{i=2}^N x_i^2$, we only need consider $k_1=0 $ or $k_1=1$ in above formula.

Recall that $|\{(k_1, k_2, \cdots, k_n) \in \mathbb{N}^n: k_1+k_2+\cdots k_n=M\}|=\binom{M+n-1}{n-1}$.

For $k_1=0$, 
$$\dim \text{span}\{x_2^{k_2}\cdots x_N^{k_N}: k_2+\cdots+k_N=s \}=\binom{s+N-2}{N-2};$$
and for $k_1=1$,
$$\dim \text{span}\{x_1x_2^{k_2}\cdots x_N^{k_N}: k_2+\cdots+k_N=s-1 \}=\binom{s+N-3}{N-2}$$
therefore,
\[
m_s=\dim D_s=\binom{s+N-2}{N-2}+\binom{s+N-3}{N-2}\asymp s^{N-2}.
\]
where the notation $a_s\asymp b_s$ for two sequences of strictly positive numbers means that they are of the same order of magnitude. More precisely, $a_s\asymp b_s$ means that there exist constants $c,C>0$ such that for all $s\in\mathbb{N}$, $c a_s\le b_s \le C a_s$.

For the eigenvalues we have $-\lambda_s=\frac{s(s+N-2)}{N-1}\asymp s^{2}$, and therefore we find $d_L=N-1$, as expected.

\subsubsection{The half-liberated sphere $S^{N-1}_*$}

Again, $\dim D_s=\dim H_s -\dim H_{s-1}$. Consider first the even case, i.e. $s=2m$.

Let $X=x_{\ell_1}x_{\ell_2}\cdots x_{\ell_{2m-1}} x_{\ell_{2m}}\in D_{2m}$. Use black dots ``$\bullet$'' for odd positions and white dots ``$\circ$'' for even positions, i.e., associate the diagram
\[
\bullet \circ \bullet \circ \cdots \bullet \circ.
\]
to the monomial $X$.
Since we have the relation $x_{\ell_1}x_{\ell_2}x_{\ell_3}=x_{\ell_3}x_{\ell_2}x_{\ell_1}$ for the generators, we can freely permute the generators $x_{\ell_{2k-1}}$ that are placed on black dots ``$\bullet$'' (i.e., in odd positions) among each other. Similarly, generators $x_{\ell_{2k}}$ sitting on white dots ``$\circ$'' (i.e., in even positions) can be permuted among each other..

Write $x_{\ell_{2k-1}}=a_{i_k}$ and $x_{\ell_{2k}}=b_{j_k}$, respectively, for the generators on black and white dots, then $X=a_{i_1}b_{j_1}\cdots a_{i_m}b_{j_m}$.

Since $a_{i_k}$ is commute among each other, we set $a_{i_1}a_{i_2}\cdots a_{i_m}=x_1^{p_1}x_2^{p_2}\cdots x_N^{p_N}$ with $p_1+p_2+\cdots p_N=m$. 
Similary, set $b_{i_1}b_{i_2}\cdots b_{i_m}=x_1^{q_1}x_2^{q_2}\cdots x_N^{q_N}$ with $q_1+q_2+\cdots q_N=m$.

Since $x_1^2=1-\sum_{i=2}^N x_i^2$, we can assume $p_1=0$ or  $q_1=0$. Indeed, if both monomials $a_{i_1}a_{i_2}\cdots a_{i_m}$ and  $b_{i_1}b_{i_2}\cdots b_{i_m}$ contain the generator $x_1$, then we can we could move $x_1$ to the first position in both the subwords $a_{i_1}a_{i_2}\cdots a_{i_m}$ and  $b_{i_1}b_{i_2}\cdots b_{i_m}$, and replace the resulting $x_1^2$ by $1-\sum_{i=2}^N x_i^2$. In this way get one monomial that is in $H_{s-2}$, and in the remaining terms the powers of $x_1$ in both subwords are reduced by $1$. Iterating this procedure we can express $X$ as a linear combination of monomials which have $p_1=0$ or  $q_1=0$.

Therefore,
 \begin{gather*}
 	\dim D_s = \dim\text{span}\left\{x_2^{p_2}\cdots x_N^{p_N}: \sum_{k=2}^N p_k=m\right\}  \cdot \dim\text{span}\left\{x_2^{q_2}\cdots x_N^{q_N}: \sum_{k=2}^N q_k=m\right\}\\
 	+\dim\text{span}\left\{x_2^{p_2}\cdots x_N^{p_N}: \sum_{k=2}^N p_k=m\right\}  \cdot \dim\text{span}\left\{x_1^{q_1}x_2^{q_2}\cdots x_N^{q_N}: q_1>0, \sum_{k=1}^N q_k=m\right\}\\
 	+\dim\text{span}\left\{x_1^{p_1}x_2^{p_2}\cdots x_N^{p_N}:p_1>0,\sum_{k=1}^N p_k=m\right\}  \cdot \dim\text{span}\left\{x_2^{q_2}\cdots x_N^{q_N}:\sum_{k=2}^N q_k=m\right\}\\
 	=\binom{m+N-2}{N-2}^2+2 \binom{m+N-2}{N-2}\binom{m+N-2}{N-1}\asymp m^{2N-3}\asymp s^{2N-3}.
 \end{gather*} 

Similary, when $s=2m+1$,
\begin{align*}
	\dim D_s&=\binom{m+N-2}{N-2}\binom{m+N-1}{N-2}+\binom{m+N-1}{N-2}\binom{m+N-1}{N-1}\\
	&\qquad+\binom{m+N}{N-1}\binom{m+N-2}{N-2}\\
	&\asymp s^{2N-3}.
\end{align*}

On the other hand, by Corollary~\ref{the value of q_s^* '(1)},
$-\lambda_s\asymp s^{2}$. Hence,
\[
d_L=2(N-1).
\]

Banica showed in \cite[Theorem 1.14]{banica16}. that $C(S^{N-1}_*)$ can be embedded into the C$^*$-algebra $M_2\big(C(S^{N-1}_{\mathbb{C}})\big)$ of continuous functions with values in $2\times2$-matrices on the complex sphere $S^{N-1}_\mathbb{C}=\{z=(z_1,\ldots,z_N)\in\mathbb{C}^N:\sum_{i=1}^N |z_i|^2=1\}$.
This embedding sends the generators $x_i$, $i=1,\ldots,N$, to the functions $\pi(x_i):S^{N-1}_{\mathbb{C}}\ni z=(z_1,\ldots,z_N)\mapsto \left(\begin{array}{cc} 0 & z_i \\ \overline{z}_i & 0 \end{array}\right)$.
Evaluating these functions in a point $z\in S^{N-1}_{\mathbb{C}}$ defines a unique 2-dimensional representation $\pi_z:C(S^{N-1}_{\mathbb{C}})\to M_2(\mathbb{C})$.
Two of these 2-dimensional representations $\pi_{z}$ and $\pi_{w}$, $z,w\in S^{N-1}_{\mathbb{C}}$, are unitarily equivalent if and only if there exists a complex number $\lambda$ with $|\lambda|=1$ such that $z=\lambda w$. This means that the embedding passes to the projective complexe sphere $P^{N-1}_{\mathbb{C}}=S^{N-1}_{\mathbb{C}}/\sim$, where $\sim$ is the equivalence relation on $S^{N-1}_{\mathbb{C}}$ defined by
\[
z_1\sim z_2 \Leftrightarrow \exists \lambda\in\mathbb{C}, z_1=\lambda z_2.
\]
Since the dimension of $P^{N-1}_{\mathbb{C}}$ as a real manifold is $2(N-1)$, this provides a heuristic explanation for the value of the spectral dimension $d_L$ for the half-liberated sphere $S^{N-1}_*$.

\subsubsection{The free sphere $S^{N-1}_+$}

For the free case, $D_s=\text{span}\{u_{1i}^{(s)}:1\leq  i\leq d_s\}$ where $u_{ij}^{(s)}$ are the cofficients of the $s^{\text{th}}$ irreducible corepresentation $u^{(s)}$, which has dimension $d_s=U_s(N)$. 

Let us first consider the case $N=2$. Since $U_s(2)=s+1$, we get $a_s=[s/2]+1$ and $\sum_{k=0}^r=\frac{(r+1)(r+2)}{2}$. By Corollary 7.14, we have
\begin{eqnarray*}
-\lambda_{2k+1} &=& (q_{2k+1}^+)\prime(1)=2k^2+4k+1, \\
-\lambda_{2k} &=& (q_{2k}^+)\prime(1)=2k^2+2k.
\end{eqnarray*}
Therefore,
\[
-\lambda_s \asymp s^2, \qquad m_s\asymp s.
\]
This implies $d_L=2$ for $N=2$. For $N=2$, the defining relation of the free sphere $S^2_+$ can be written as $x_2^2=1-x_1^2$, which implies $x_1x_2^2=x_2^2 x_1$, as well as the other half-commutation relations $x_i x_j x_k=x_kx_jx_i$, $i,j,k\in\{1,2\}$. So we have $C^u(S^2_+)\cong C^u(S^2_*)$, i.e., the free and the half-liberated two-dimensional spheres coincide.

By Corollary \ref{coro-lambda-asymp}, $\lambda_s\asymp s$ for $N\ge 3$. Furthermore, in this case $m_s=U_s(N)\asymp N^s$. Hence,
\[
d_L = \left\{\begin{array}{cl}
2 & \mbox{ if } N=2, \\
+\infty & \mbox{ if } N\ge 3.
\end{array}\right.
\]

This resembles the computation in \cite[Remark 10.4]{cipriani+franz+kula14}, where we found
\[
d_D = \left\{\begin{array}{cl}
3 & \mbox{ if } N=2, \\
+\infty & \mbox{ if } N\ge 3.
\end{array}\right.
\]
for the spectral dimension of a spectral triple constructed from a central generating functional on the free orthogonal quantum group $O_N^+$. 

\section*{Acknowledgements}

We thank Teo Banica, Adam Skalski, and Haonan Zhang for fruitful discussions and useful suggestions.

UF and XW were supported by the French ``Investissements d'Avenir'' program, project  ISITE-BFC (contract ANR-15-IDEX-03). XW was supported by the China Scholarship Council. We also acknowledge support by the French MAEDI and MENESR and by the Polish MNiSW through the Polonium programme.


\begin{thebibliography}{Xyz99}

\bibitem[BaM18]{bakry}
Dominique Bakry, L.\ Mbarki, The Markov sequence problem for the Jacobi
polynomials and on the simplex, Preprint  hal-01811880, 2018.

\bibitem[Ba92]{banica92}
Teo Banica.
Th\'eorie des repr\'esentations du groupe quantique compact libre $O(n)$, 
C.\ R.\ Acad.\ Sci.\ Paris Ser.\ I Math., 322(1992), 241-244

\bibitem[Ba16]{banica16}
Teo Banica.
Quantum isometries, noncommutative spheres, and related integrals.
In: ``Topological quantum groups'' Lectures from the Graduate School held in B\c{e}dlewo, June 28-July 11, 2015. Uwe Franz, Adam Skalski and Piotr So{\l}tan (eds.), Banach Center Publications, 111. Polish Academy of Sciences, Institute of Mathematics, Warsaw, 2017.

\bibitem[BCZJ09]{banica+collins+zinn-justin09}
Teo Banica, Beno{\^\i}t Collins, and P.\ Zinn-Justin.
Spectral analysis of the free orthogonal matrix,
Int.\ Math.\ Res.\ Not.\ 17 (2009), 3286-3309.

\bibitem[BG10]{banica+goswami10}
Teo Banica, Debashish Goswami.
Quantum isometries and noncommutative spheres.
Comm.\ Math.\ Phys.\ 298 (2010), no.~2, 343-356.

\bibitem[BSS12]{banica+skalski+soltan12}
Teodor Banica, Adam Skalski, Piotr So{\l}tan.
Noncommutative homogeneous spaces: The matrix case,
J.\ Geom.\ Phys.\ 62(6), 1451-1466, 2012.

\bibitem[BS09]{banica+speicher}
Teodor Banica, Roland Speicher,
Liberation of orthogonal Lie groups.
Adv.\ Math.~222, No.~4, 1461-1501 (2009). 
 
\bibitem[BMT01]{bedos+murphy+tuset01}
Erik B\'edos, Gerald J.\ Murphy, Lars Tuset.
Co-amenability of compact quantum groups. 
J.\ Geom.\ Phys.\ 40(2): 129-153, 2001/

\bibitem[BMT03]{amenability+bedos}
Erik B\'edos, Gerald J.\ Murphy, Lars Tuset.
Amenability and co-amenability of algebraic quantum groups II.
J.\ Funct.\ Anal.\ 201 (2003), 303-340.

\bibitem[Bo54]{bochner1954}
Salomon Bochner. 
Positive Zonal Functions on Spheres.
Proceedings of the National Academy of Sciences of the United States of America, 1954, 40(12):1141-1147.

\bibitem[Br12]{brannan12}
Michael Brannan.
Approximation properties for free orthogonal and free unitary quantum groups. 
J.\ Reine Angew.\ Math.\ (Crelles Journal) 672: 223-251, 2012.

\bibitem[ChV99]{chap+va99}
Yu.A.\ Chapovsky, L.I.\ Vainerman.
Compact quantum hypergroups.
J.\ Operator Theory 41 (1999), no.~2, 261-289. 

\bibitem[CFK14]{cipriani+franz+kula14}
Fabio Cipriani, Uwe Franz, Anna Kula.
Symmetries of L\'evy processes on compact quantum groups, their Markov semigroups and potential theory.
J.\ Funct.\ Anal.\ 266 (2014), no.~5, 2789-2844.

\bibitem[CS03]{cs}
Fabio Cipriani, Jean-Luc Sauvageot.
Derivations as square roots of Dirichlet forms.
J.\ Funct.\ Anal.\ 201, No.~1, 78-120 (2003).

\bibitem[DC16]{decommer16}
Kenny De Commer.
Actions of compact quantum groups
In: ``Topological quantum groups'' Lectures from the Graduate School held in B\c{e}dlewo, June 28-July 11, 2015. Uwe Franz, Adam Skalski and Piotr So{\l}tan (eds.), Banach Center Publications, 111. Polish Academy of Sciences, Institute of Mathematics, Warsaw, 2017.

\bibitem[Co04a]{connes04a}
Alain Connes.
Cyclic cohomology, noncommutative geometry and quantum group symmetries.
Noncommutative geometry, 1-71, {\em Lecture Notes in Math.}, 1831, Springer, Berlin, 2004.

\bibitem[Co04b]{connes04b}
Alain Connes.
Cyclic cohomology, quantum group symmetries and the local index
formula for $SU_q (2)$.
J.\ Inst.\ Math.\ Jussieu 3, (2004), no.~1, 17-68.

\bibitem[D16]{dobrev1}
Vladimir Dobrev,
Invariant differential operators. Volume 1: Noncompact semisimple Lie algebras and groups, De Gruyter Studies in Mathematical Physics 35, De Gruyter, 2016. 

\bibitem[D17]{dobrev2}
Vladimir Dobrev,
Invariant differential operators. Volume 2: Quantum groups,
De Gruyter Studies in Mathematical Physics 39,
De Gruyter 2017. 

\bibitem[FLS16]{franz+lee+skalski16}
Uwe Franz, Hunhee Lee, Adam Skalski.
Integration over the quantum diagonal subgroup and associated Fourier-like algebras,
Int.\ J.\ Math.\ Vol.\ 27, 1650073 (2016), DOI 10.1142/S0129167X16500737.

\bibitem[FS09a]{franz+skalski09}
Uwe Franz, Adam Skalski.
A new characterisation of idempotent states on finite and compact quantum groups.
C.\ R.\ Math.\ Acad.\ Sci.\ Paris 347 (2009), no.~17-18, 991-996.

\bibitem[FS09b]{franz+skalski+idempotent09}
Uwe Franz, Adam Skalski.
On idempotent states on quantum groups.
J.\ Alg. 322 (2009), no.~5, 1774-1802.

\bibitem[FS00]{franz+schurmann00}
Uwe Franz, Michael Sch\"urmann.
L\'evy processes on quantum hypergroups.
In ``Infinite dimensional harmonic analysis (Kyoto, 1999)'', pp.\ 93-114, Gr\"abner, Altendorf, 2000. 

\bibitem[KK16]{kasprzak+khosravi16}
Pawe{\l} Kasprzak, F.\ Khosravi, Piotr So{\l}tan.
Integrable actions and quantum subgroups,
preprint arXiv:1603.06084, 2016.

\bibitem[L04]{liao04}
Ming Liao.
L\'evy processes in Lie groups.
Cambridge Tracts in Mathematics, 162. Cambridge University Press, Cambridge, 2004.

\bibitem[L15]{liao15}
Ming Liao.
Convolution of probability measures on Lie groups and homogenous spaces.
Potential Anal.\ 43 (2015), no.~4, 707-715. 

\bibitem[L18]{liao18}
Ming Liao.
Invariant Markov processes under Lie group actions.
Springer, 2018.

\bibitem[MM17]{Meahara+Martini17}
Hiroshi Maehara, Horst Martini.
Geometric probability on the sphere.
Jahresber.\ Dtsch.\ Math.-Ver.\ 119 (2017), no.~2, 93-132. 

\bibitem[MP49]{mp49}
S.\ Minakshisundaram, {\AA}. Pleijel.
Some properties of the eigenfunctions of the Laplace-operator on Riemannian manifolds.
Canadian J.\ Math.\ 1, (1949), 242-256. 

\bibitem[MVD98]{notes98}
Ann Maes, Alfons Van Daele,
Notes on compact quantum groups.
Nieuw Arch.\ Wiskd., IV.\ Ser.~16, No.~1-2, pp.~73-112 (1998).

\bibitem[P87]{podles87}
Piotr Podle\'s.
Quantum spheres.
Lett.\ Math.\ Phys. 14 (1987), no.~3, 193-202. 

\bibitem[P95]{podles95}
Piotr Podle\'s.
Symmetries of quantum spaces. Subgroups and quotient spaces of quantum SU(2) and SO(3) groups.
 Comm.\ Math.\ Phys.\ 170 (1995), no.~1, 1-20. 

\bibitem[SS16]{salmi+skalski16}
Pekka Salmi, Adam Skalski. 
Idempotent states on locally compact quantum groups II. 
The Quarterly Journal of Mathematics (2016):1-11.

\bibitem[T08]{timmermann}
Thomas Timmermann,
An invitation to quantum groups and duality. From Hopf algebras to multiplicative unitaries and beyond.
EMS Textbooks in Mathematics, European Mathematical Society, 2008.

\bibitem[VDW96]{vandaele+wang}
Van Daele, Alfons; Wang, Shuzhou
Universal quantum groups. (English) Zbl 0870.17011
Int. J. Math. 7, No. 2, 255-263 (1996). 

\bibitem[W98]{wang}
Shuzhou Wang,
Quantum symmetry groups of finite spaces.
Commun.\ Math.\ Phys.~195, No.~1, 195-211 (1998). 

\bibitem[W80]{woronowicz80}
Stanis{\l}aw L.\ Woronowicz,
Pseudospaces, pseudogroups and Pontrjagin duality. In: Mathematical problems in theoretical physics, Proc.\ int.\ Conf., Lausanne 1979, Lect. Notes Phys. 116, 407-412 (1980).

\bibitem[W87]{woronowicz87}
Stanis{\l}aw L.\ Woronowicz,
Compact matrix pseudogroups.
Commun. Math. Phys. 111, 613-665 (1987)
Articles in this Issue

\bibitem[W98]{woronowicz98}
Stanis{\l}aw L.\ Woronowicz,
Compact quantum groups. In:
Connes, A. (ed.) et al., Quantum symmetries/ Sym\'etries quantiques. Proceedings of the Les Houches summer school, Session LXIV, Les Houches, France, August 1 - September 8, 1995.
North-Holland, 1998. 

\bibitem[Zh18]{haonan}
Haonan Zhang.
Infinitely divisible states on finite quantum groups.
Preprint arXiv:1809.04417v1, September 2018.

\end{thebibliography}
\end{document}